\newcommand{\nc}{\mathbb C}
\newcommand{\nz}{\mathbb Z}
\newcommand{\snz}{\mbox{\scriptsize{${\mathbb Z}$}}}
\newcommand{\gtg}{\mathfrak g}
\newcommand{\gtsl}{\mathfrak{sl}}
\newcommand{\gth}{\mathfrak h}
\newcommand{\gtt}{\mathfrak t}
\newcommand{\tf}{\widetilde{f}}
\newcommand{\te}{\widetilde{e}}
\newcommand{\eps}{\varepsilon}
\newcommand{\vphi}{\varphi}
\newcommand{\cB}{{\mathcal B}}
\newcommand{\cM}{{\mathcal M}}
\newcommand{\cBZ}{{\mathcal{BZ}}}
\newcommand{\onu}{\overline{\nu}}
\newcommand{\ophi}{\overline{\phi}}
\newcommand{\dimc}{\dim_{\mbox{\scriptsize{${\mathbb C}$}}}}
\newcommand{\homc}{\mbox{Hom}_{\mbox{\scriptsize{${\mathbb C}$}}}}
\newcommand{\dimv}{\underline{\dim}}
\newcommand{\htf}{\widehat{{f}}}
\newcommand{\hte}{\widehat{{e}}}
\newcommand{\heps}{\widehat{\varepsilon}}
\newcommand{\hvphi}{\widehat{\varphi}}
\newcommand{\out}{\mbox{out}}
\newcommand{\sout}{\mbox{\scriptsize{out}}}
\newcommand{\inn}{\mbox{in}}
\newcommand{\sinn}{\mbox{\scriptsize{in}}}
 \newtheorem{thm}{Theorem}[subsection]
 \newtheorem{prop}[thm]{Proposition}
 \newtheorem{lemma}[thm]{Lemma}
 \newtheorem{cor}[thm]{Corollary}
 \newtheorem{defn}[thm]{Definition}
 \newtheorem{rem}{Remark}
\newenvironment{Ac}%
 {\hspace*{-1.2em}\textbf{Acknowledgment.}\hspace{0.7em}}{}
 {\hspace*{-1.2em}\textbf{Notation.}\hspace{0.7em}}{}
\title[Toward Berenstein-Zelevinsky data in affine type $A$, part III]
{Toward Berenstein-Zelevinsky data in affine type $A$, \\
part III: Proof of the connectedness}
\author{Satoshi Naito, Daisuke Sagaki, and Yoshihisa Saito}
\dedicatory{
Dedicated to Professor Michio Jimbo on the occasion of his sixtieth
birthday.}
\address{Satoshi Naito: Department of Mathematics, Tokyo Institute of 
Technology, 2-12-1, Oh-Okayama Meguro-ku, Tokyo 152-8551, Japan.}
\email{naito@math.titech.ac.jp}
\address{Daisuke Sagaki: Institute of Mathematics, University of Tsukuba, 
Ibaraki 305-8571, Japan.}
\email{sagaki@math.tsukuba.ac.jp}
\address{Yoshihisa Saito: Graduate School of Mathematical Sciences, 
University of Tokyo, 3-8-1, Komaba
Meguro-ku, Tokyo 153-8914, Japan.}
\email{yosihisa@ms.u-tokyo.ac.jp}
\keywords{Crystal bases, Berenstein-Zelevinsky data}
\thanks{{\it Mathematics Subject Classification} (2010):
Primary 17B37; Secondary 17B67, 81R10, 81R50.}
\begin{document}
\bigskip
\begin{abstract}
We prove the connectedness of the crystal
$\left(\cBZ_{\snz}^{\sigma};\mbox{wt},\widehat{\eps}_p,
\widehat{\vphi}_p,\hte_p,\htf_p\right)$, which we introduced in \cite{NSS}. 
\end{abstract}
\maketitle
\section{Introduction}
This paper is a continuation of our previous works (\cite{NSS} and \cite{NSS2}).
In \cite{NSS}, motivated by the works (\cite{Kam1} and \cite{Kam2}) of Kamnitzer on 
Mirkovi\'c-Vilonen polytopes in finite types, we introduced an affine
analog of Berenstein-Zelevinsky datum (BZ datum for short) in  
type $A_{l-1}^{(1)}$. Let us recall its construction briefly. 
For a finite interval $I$ in $\nz$,  we denote by $\cBZ_I$ the set of those BZ data
of type $A_{|I|}$ which satisfy  a certain normalization condition, called the 
$w_0$-normalization condition in \cite{NSS2}. The family
$\{\cBZ_I~|~\mbox{$I$ is a finite interval in $\nz$}\}$ forms a projective
system, and hence the set $\cBZ_{\nz}$ of BZ data of type $A_{\infty}$ is defined
to be a kind of projective limit of this projective system.   
Furthermore, for $l\geq 3$, we define the set $\cBZ_{\nz}^{\sigma}$ of BZ data 
of type $A_{l-1}^{(1)}$ to be the fixed point subset of $\cBZ_{\nz}$ under a natural
action of the automorphism $\sigma:\nz\to\nz$ given by $\sigma(j)=j+l$ for 
$j\in\nz$.
Note that a BZ datum of type 
$A_{l-1}^{(1)}$ is realized as a collection of those integers, indexed by the set 
of infinite Maya diagrams, which satisfy the ``{\it edge inequalities}'', 
``{\it tropical Pl\"ucker relations}'', 
and some additional conditions (see Definition \ref{defn:BZ} and
\ref{defn:affineBZ} for details). 
The set $\cBZ_{\nz}^{\sigma}$ has a $U_q(\widehat{\gtsl}_l)$-crystal
structure, which is naturally induced by that on
$\cBZ_I$. 
In \cite{NSS}, we proved that there exists a distinguished connected component 
$\cBZ_{\nz}^{\sigma}({\bf O})$ of $\cBZ_{\nz}^{\sigma}$, which is isomorphic 
as a crystal to the crystal basis $B(\infty)$ of the negative part 
$U_q^-(\widehat{\gtsl}_{l})$ of $U_q(\widehat{\gtsl}_{l})$. 
We anticipated that the connected component 
$\cBZ_{\nz}^{\sigma}({\bf O})$ is identical to the whole of
$\cBZ_{\nz}^{\sigma}$, but we could not prove it in \cite{NSS}.
The purpose of this paper is prove the anticipated identity, that is, to prove 
the connectedness of the crystal graph of $\cBZ_{\nz}^{\sigma}$. 

In \cite{NSS2}, we introduced the notion of $e$-BZ data of type $A_{l-1}^{(1)}$,
which are defined in the same way as BZ data with another 
normalization condition, called the $e$-normalization condition in 
\cite{NSS2}. In this paper, we mainly treat $e$-BZ data
instead of BZ data for the following reasons. First, it is known that 
the set $(\cBZ_{\nz}^e)^{\sigma}$ of $e$-BZ data of type $A_{l-1}^{(1)}$ is
isomorphic as a crystal to $\cBZ_{\nz}^{\sigma}$, and hence the connectedness 
of the crystal graph of $\cBZ_{\nz}^{\sigma}$ is equivalent to that of 
$(\cBZ_{\nz}^e)^{\sigma}$.
Second, in \cite{NSS2}, we showed that there is a natural correspondence 
between $e$-BZ data and (certain) limits of irreducible
Lagrangians of varieties associated to quivers of finite type $A$. 
Thus, we can use geometrical (or quiver-theoretical) methods for 
the study of $e$-BZ data. This is an advantage of $e$-BZ data.    

Our main result (Theorem \ref{thm:main}) states that the crystal
$(\cBZ_{\nz}^e)^{\sigma}$ is isomorphic to $B(\infty)$. 
Because we already know that a distinguished connected component of 
$(\cBZ_{\nz}^e)^{\sigma}$ is isomorphic to $B(\infty)$, 
Theorem \ref{thm:main} tells us that this connected component 
is identical to the whole of $(\cBZ_{\nz}^e)^{\sigma}$.
In other words, we obtain a new explicit realization
of $B(\infty)$ in terms of an affine analog of a BZ datum. 
Our strategy for proving Theorem \ref{thm:main} is as follows.
In \cite{KS}, Kashiwara and the third author gave those conditions which
characterize $B(\infty)$ uniquely (see Theorem \ref{thm:seven-cond}
for details). We will establish Theorem \ref{thm:main} by verifying that the 
$(\cBZ_{\nz}^e)^{\sigma}$ indeed satisfies these conditions. 

This paper is organized as follows. In Section 2, we give a quick review of
results in our previous works. In Section 3, we introduce a new 
crystal structure,  called the {\it ordinary crystal structure}, on
$\cBZ_I^e$. Here, $\cBZ_I^e$ is the set of $e$-BZ data associated to a finite 
interval $I$. Since $(\cBZ_{\nz}^e)^{\sigma}$ is the set of $\sigma$-fixed
points of a kind of projective limit of $\cBZ_I^e$'s, we can define
the ordinary crystal structure on $(\cBZ_{\nz}^e)^{\sigma}$ induced naturally
by that of $\cBZ_I^e$'s. However, in order to overcome some technical
difficulties in following this procedure, we need a quiver-theoretical
interpretation of $\cBZ_I^e$. We treat these technicalities in Section 4.
In Section 5, we prove our main result (Theorem \ref{thm:main}) 
by checking the conditions in Theorem \ref{thm:seven-cond} for the 
$(\cBZ_{\nz}^e)^{\sigma}$. In our proof, the ordinary
crystal structure on $(\cBZ_{\nz}^e)^{\sigma}$ introduced in
Section 4, plays a crucial role.

Finally, let us mention some related works, which appeared recently.
The first one is by Muthiah \cite{M}. In \cite{BFG}, Braverman, Finkelberg, and
Gaitsgory introduced analogs of Mirkovi\'c-Vilonen cycles in the case of an 
affine Kac-Moody group, and defined a crystal structure on the set of 
those cycles. After that, Muthiah studied the crystal structure of those cycles in an 
explicit way, and proved that it is isomorphic to the crystal $\cBZ_{\nz}^{\sigma}$ 
in affine type $A$. 
The second one is by Baumann, Kamnitzer, and Tingley \cite{BKT}.
Let $\gtg$ be a symmetric affine Kac-Moody Lie algebra.
In \cite{BKT}, they introduced the notion of affine Mirkovi\'c-Vilonen polytopes 
by using the theory of preprojective algebras of the same type as $\gtg$,
and showed that there exists a bijection 
between the set of affine Mirkovi\'c-Vilonen polytopes and the crystal
basis $B(-\infty)$ of the positive part $U_q^+(\gtg)$ of $U_q(\gtg)$.
It seems to us that these works are closely related to results in this
paper. However, an explicit relationship between them is still 
unclear; this is our future problem. 

\medskip
\begin{Ac}
Research of SN is supported in part by Grant-in-Aid for Scientific 
Research (C), No.20540006. 
Research of DS is supported in part by Grant-in-Aid for Young 
Scientists (B), No.19740004. 
Research of YS is supported in part by Grant-in-Aid for Scientific 
Research (C), No.20540009.
\end{Ac}
  
\section{Review of known results}
\subsection{Preliminaries on root data}
Let $\gtt$ be a vector space over $\nc$ with basis 
$\{\epsilon_i\}_{i\in \snz}$; we set
$h_i:=\epsilon_i-\epsilon_{i+1}$, $i\in\nz$.
We define $\Lambda_i,~\Lambda_i^c\in \gtt^*:=\homc(\gtt,\nc)$, $i\in \nz$ by
$$
\langle\epsilon_j,\Lambda_i\rangle_{\snz}:=\left\{\begin{array}{rl}
1 & \mbox{if }j\leq i,\\
0 & \mbox{if }j>i,
\end{array}\right.\qquad
\langle\epsilon_j,\Lambda_i^c\rangle_{\snz}=\left\{\begin{array}{rl}
0 & \mbox{if }j\leq i,\\
1 & \mbox{if }j>i,
\end{array}\right.
$$
where $\langle\cdot,\cdot\rangle_{\snz}:\gtt\times\gtt^*\to\nc$ 
is the canonical pairing, and set
$\alpha_i:=-\Lambda_{i-1}+2\Lambda_i-\Lambda_{i+1}$, $i\in\nz.$
Let $W_{\nz}:=\langle \sigma_i~|~i\in I
\rangle(\subset GL(\gtt))$ be the Weyl group of type $A_{\infty}$, where
$\sigma_i$ is the automorphism of $\gtt$ defined by
$\sigma_i(t)=t-\langle t,\alpha_i\rangle_{\nz}h_i$, $t\in\gtt$;  
the group $W_{\nz}$ also acts on $\gtt^*$ by 
$\sigma_i(\lambda)=\lambda-\langle h_i,\lambda\rangle_{\nz}\alpha_i$, 
$\lambda\in\gtt^*$.

Let $I=[n+1,n+m]$ be a finite interval in $\nz$ whose cardinality is equal 
to $m$, and consider a finite-dimensional subspace 
$\gth_I:=\bigoplus_{i\in I}\nc h_i$ of $\gtt^*$. 
For each $i\in I$, set $\alpha_i^I:=\pi_I(\alpha_i)$ and $\varpi_i^I
:=\pi_I(\Lambda_i)$,
where $\pi_I:\gtt^*\to\gth_I^*:=\homc(\gth_I,\nc)$ is the natural projection;
we denote by $\langle\cdot,\cdot\rangle_I$ the canonical pairing between
$\gth_I$ and $\gth_I^*$. Then we can regard 
$(\{\alpha_i^I\}_{i\in I},\{h_i\}_{i\in I},\gth_I^*,\gth_I)$ as the root datum of type
$A_m$. Also, the set $\{\varpi_i^I\}_{i\in I}$ can be regarded as the set of
fundamental weights.  Let $W_I$ be the subgroup of $W_{\nz}$ generated by 
$\sigma_i$, $i\in I$. Since $\sigma_i$ stabilizes the subspace $\gth_I$
of $\gtt$ for all $i\in I$, we can regard $W_I$ as a subgroup of $GL(\gth_I)$; the
group $W_I$ acts on $\gth_I^*$ in a usual way. 
\subsection{BZ data associated to a finite interval}
Set $\widetilde{I}:=I\cup \{n+m+1\}$. A subset ${\bf k}\subset \widetilde{I}$ is 
called a Maya diagram associated to $I$; we denote by $\cM_I$ the set
of all Maya diagrams associated to $I$, and set $\cM_I^{\times}:=\cM_I\setminus
\{\phi,\widetilde{I}\}$. We identify $\cM_I^{\times}$ with $\Gamma_I:=
\bigsqcup_{i\in I}W_I\varpi_i^I$ via the bijection $[n+1,i]\leftrightarrow \varpi_i^I$. 
Under this identification, $\langle\cdot,\cdot\rangle_I$ induces a 
pairing between $\gth_I$ and $\cM_I^{\times}$, which is given explicitly 
as follows:
$$\langle h_i,{\bf k}\rangle_I=\begin{cases}
1 & \mbox{if $i\in {\bf k}$ and $i+1\not\in{\bf k}$},\\
-1 & \mbox{if $i\not\in {\bf k}$ and $i+1\in{\bf k}$},\\
0 & \mbox{otherwise}.
\end{cases}\eqno{(2.2.1)}$$

Let ${\bf M}=(M_{\bf k})_{{\bf k}\in\cM_I^{\times}}$ be a collection of integers
indexed by $\cM_I^{\times}$. For each ${\bf k}\in \cM_I^{\times}$, we call
$M_{\bf k}$ the ${\bf k}$-component of ${\bf M}$, and denote it by 
$({\bf M})_{\bf k}$.

\begin{defn}\label{defn:fin-BZ}
{\rm (1)} A collection ${\bf M}=(M_{\bf k})_{{\bf k}\in\cM_I^{\times}}$ of integers
indexed by $\cM_I^{\times}$ is called a Berenstein-Zelevinsky datum {\rm (}BZ
datum for short{\rm )} associated to $I$ if it satisfies the following 
conditions:
\vskip 1mm
\noindent
{\rm (BZ-1)} for all indices $i\ne j$ in $\widetilde{I}$ and all 
${\bf k}\in \cM_I$ such that ${\bf k}\cap\{i,j\}=\phi$,
$$M_{{\bf k}\cup\{i\}}+M_{{\bf k}\cup\{j\}}\leq 
M_{{\bf k}\cup\{i,j\}}+M_{\bf k};$$
{\rm (BZ-2)} for all indices $i<j<k$ in $\widetilde{I}$ and all
${\bf k}\in \cM_I$ such that ${\bf k}\cap\{i,j,k\}=\phi$,
$$M_{{\bf k}\cup\{i,k\}}+M_{{\bf k}\cup\{j\}}=\mbox{\rm min}
\left\{M_{{\bf k}\cup\{i,j\}}+M_{{\bf k}\cup\{k\}},~
M_{{\bf k}\cup\{j,k\}}+M_{{\bf k}\cup\{i\}}\right\}.$$
Here, $M_{\phi}=M_{\widetilde{I}}=0$ by convention.
\vskip 2mm
\noindent
{\rm (2)} A BZ datum ${\bf M}=(M_{\bf k})_{{\bf k}\in\cM_I^{\times}}$ is called
a $w_0$-BZ {\rm (}{\it resp}., $e$-BZ{\rm )} datum if it satisfies the
following normalization condition:
\vskip 1mm
\noindent
{\rm (BZ-0)} for every $i\in I$,
$M_{[i+1,n+m+1]}=0$ {\rm (}{\it resp.}, $M_{[n+1,i]}=0${\rm )}.
\vskip 1mm
\noindent
We denote by $\cBZ_I$ {\rm (}{\it resp}., $\cBZ_I^e${\rm )} the set of all
$w_0$-BZ {\rm (}{\it resp}., $e$-BZ{\rm )} data.
\end{defn}

For ${\bf M}=(M_{\bf k})_{{\bf k}\in\cM_I^{\times}}\in\cBZ_I$, define a new 
collection ${\bf M}^*=(M_{\bf k}^*)_{{\bf k}\in\cM_I^{\times}}$ of integers by
$$M_{\bf k}^*:=M_{{\bf k}^c},$$
where ${\bf k}^c:=\widetilde{I}\setminus {\bf k}$ is the complement of 
${\bf k}$ in $\widetilde{I}$. Then, ${\bf M}^*$ is an element of $\cBZ_I^e$,
and the map $\ast:{\bf M}\mapsto {\bf M}^*$ gives a bijection from
$\cBZ_I$ to $\cBZ_I^e$. We also denote its inverse by $\ast$. \\

Let $K=[n'+1,n'+m']$ be a subinterval of $I$, and define 
$$\cM_I^{\times}(K):=\{{\bf k}\in\cM_I^{\times}~|~{\bf k}=[n+1,n']\cup {\bf k}'
\mbox{ for some }{\bf k}'\in \cM_K^{\times}\}.$$
Then, $\cM_K^{\times}$ is naturally identified 
with $\cM_I^{\times}(K)$ via the bijection 
${\bf k}'\mapsto [n+1,n']\cup{\bf k}'$. We denote its inverse by
$\mbox{res}_K^I:\cM_I^{\times}(K)\overset{\sim}{\to}\cM_K^{\times}$.

For ${\bf M}=(M_{\bf k})_{{\bf k}\in\cM_I^{\times}}\in \cBZ_I^e$, we define
a new collection ${\bf M}_K=(M_{\bf m}')_{{\bf m}\in\cM_K^{\times}}$ of
integers indexed by $\cM_K^{\times}$ by
$$M_{\bf m}':=M_{(\mbox{res}_K^I)^{-1}({\bf m})}.$$
Then, ${\bf M}_K$ is an $e$-BZ datum associated to $K$.

\subsection{Crystal structure on BZ data associated to a finite interval}
First, we recall the crystal structure on $\cBZ_I$. 
For ${\bf M}=(M_{\bf k})_{{\bf k}\in\cM_I^{\times}}\in\cBZ_I$ and $i\in I$, 
define 
$$\mbox{wt}({\bf M}):=\sum_{i\in I}M_{[n+1,i]}\alpha_i^I,$$
$$\eps_i({\bf M}):=-\left(M_{[n+1,i]}+M_{[n+1,i-1]\cup\{i+1\}}-
M_{[n+1,i-1]}-M_{[n+1,i+1]}\right),$$
$$\vphi_i({\bf M}):=\eps_i({\bf M})+\langle h_i,
\mbox{wt}({\bf M})\rangle_I.$$

\begin{prop}\label{prop:ord-action}
{\rm (1)} Let ${\bf M}=(M_{\bf k})_{{\bf k}\in\cM_I^{\times}}\in\cBZ_I$. If 
$\eps_i({\bf M})>0$, then there exits a unique $w_0$-BZ datum ${\bf M}'=
(M_{\bf k}')_{{\bf k}\in\cM_I^{\times}}$ such that
\begin{itemize}
\item[(i)] $M'_{[n+1,i]}=M_{[n+1,i]}+1$,
\item[(ii)] $M'_{\bf k}=M_{\bf k}$ for all ${\bf k}\in\cM_I^{\times}
\setminus \cM_I^{\times}(i)$,
\end{itemize} 
where $\cM_I^{\times}(i):=\{{\bf k}\in\cM_I^{\times}~|~i\in {\bf k}\mbox{ and }
i+1\not\in{\bf k}\}$.
\vskip 1mm
\noindent
{\rm (2)} There exits a unique $w_0$-BZ datum ${\bf M}''=
(M_{\bf k}'')_{{\bf k}\in\cM_I^{\times}}$ such that
\begin{itemize}
\item[(iii)] $M''_{[n+1,i]}=M_{[n+1,i]}-1$,
\item[(iv)] $M''_{\bf k}=M_{\bf k}$ for all ${\bf k}\in\cM_I^{\times}
\setminus \cM_I^{\times}(i)$.
\end{itemize} 
\end{prop}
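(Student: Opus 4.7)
The plan is to determine the components $M'_{\mathbf{k}}$ and $M''_{\mathbf{k}}$ on the layer $\cM_I^\times(i)$, which are the only components allowed to move, by propagating the prescribed value at $[n+1,i]$ through the tropical Pl\"ucker relations (BZ-2), and then verifying that the resulting collection satisfies all of (BZ-0), (BZ-1), (BZ-2). The $w_0$-normalization (BZ-0) is free, because every Maya diagram $[j+1,n+m+1]$ that appears there lies outside $\cM_I^\times(i)$ (it never contains $i$ without containing $i+1$), so the values at those Maya diagrams are unchanged.

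For uniqueness, I would show that each $\mathbf{k}\in\cM_I^\times(i)\setminus\{[n+1,i]\}$ is the unique ``unknown'' in a suitable BZ-2 relation, once the components outside $\cM_I^\times(i)$ and the components on $\cM_I^\times(i)$ smaller than $\mathbf{k}$ (with respect to an appropriate partial order on $\cM_I^\times(i)$, for instance by the size of the symmetric difference with $[n+1,i]$) have already been fixed. This forces $M'_{\mathbf{k}}$ and $M''_{\mathbf{k}}$ step by step, and simultaneously produces candidate formulas to use in the existence step. To prove existence I would take these forced expressions as definitions and then verify that the candidates satisfy every BZ-1 inequality and every BZ-2 equality, not only the ones used in the propagation; the verification splits into cases according to how many of the Maya diagrams occurring in a given relation lie in $\cM_I^\times(i)$, and in each case the new relation reduces to the corresponding old relation for $\mathbf{M}$, possibly combined with a neighboring BZ-1 inequality. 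The hypothesis $\eps_i(\mathbf{M})>0$ in part (1) is precisely what allows the raise $M'_{[n+1,i]}=M_{[n+1,i]}+1$ to remain consistent with the BZ-1 inequality
\[
M_{[n+1,i]}+M_{[n+1,i-1]\cup\{i+1\}}\leq M_{[n+1,i-1]}+M_{[n+1,i+1]},
\]
which, after the substitution, is equivalent to $\eps_i(\mathbf{M})\geq 1$. In part (2) no such condition is needed because decreasing $M_{[n+1,i]}$ only relaxes the corresponding inequality.

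The main obstacle in this plan is the combinatorial bookkeeping in the existence step: showing that a single propagation scheme produces a collection satisfying \emph{every} BZ-1 and \emph{every} BZ-2 relation requires a careful enumeration of the intersection patterns of the set $\{i,i+1\}$ with the index triples appearing in (BZ-2) and the pairs appearing in (BZ-1), together with the consistent use of the BZ-1 inequalities for $\mathbf{M}$ in order to dispose of the $\min$ on the right-hand side of the new BZ-2 equalities for $\mathbf{M}'$ and $\mathbf{M}''$.
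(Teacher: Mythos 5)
Your uniqueness scheme is essentially the right one, and it is the same mechanism the paper uses for the mirror statement about $e$-BZ data (Proposition \ref{prop:te}): there one propagates along the single chain of intervals $[s+1,i]$, $n\leq s\leq i-1$, via the tropical Pl\"ucker relation for $\mathbf{k}=[s+1,i-1]$ and $s<i<i+1$, starting from the normalized component, and then uses the fact that a BZ datum is determined by its interval components. Your more ambitious propagation over all of $\cM_I^{\times}(i)$ also works for uniqueness, since each relation with indices $a<i<i+1$ (resp.\ $i<i+1<c$) involves exactly two members of $\cM_I^{\times}(i)$ and isolates outside the $\min$ the one not containing $a$ (resp.\ the one containing $c$). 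Your observation that the hypothesis $\eps_i(\mathbf{M})>0$ is exactly what keeps the raised value compatible with the edge inequality at $[n+1,i]$ is also correct.

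The gap is in the existence half, and it is not mere bookkeeping. First, the values you propose to ``take as definitions'' are produced by propagation along a chosen path, while each component of $\cM_I^{\times}(i)$ is forced by many different Pl\"ucker relations; so either well-definedness must be argued or a canonical path fixed, and in any case the entire system of (BZ-1) and (BZ-2) constraints must then be verified. Second, the resulting components on $\cM_I^{\times}(i)$ are not a uniform shift of those of $\mathbf{M}$ but are given by the min-plus formula recorded in Proposition \ref{prop:ord-AM}, namely $(\tf_i\mathbf{M})_{\mathbf{k}}=\min\left\{M_{\mathbf{k}},\,M_{s_i\mathbf{k}}+c_i(\mathbf{M})\right\}$, and checking every edge inequality and every tropical Pl\"ucker relation for such a modification is the genuinely hard content of the statement; you explicitly defer it as ``the main obstacle,'' which means the substantive step is not done. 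This is precisely the step that neither this paper nor its sources carries out by direct verification: Proposition \ref{prop:ord-action} is quoted from \cite{NSS} and \cite{Kam2}, where existence comes from transport of structure (the identification of BZ data with Lusztig data or with MV polytopes), and the paper's own proof of the analogous Proposition \ref{prop:te} likewise obtains existence for free from Lemma \ref{lemma:sharp4}, by conjugating the already-established operator $\te_i^*$ by the involution $\sharp$. Until you either complete the case analysis in full or substitute such a transport-of-structure argument, the existence assertion remains unproved.
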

We set
$$\te_i{\bf M}:=\begin{cases}
{\bf M}' & \mbox{if }~\eps_i({\bf M})>0,\\
0 & \mbox{if }~\eps_i({\bf M})=0,
\end{cases}\quad\mbox{and}\quad
\tf_i{\bf M}:={\bf M}''.$$
\begin{prop}
The set $\cBZ_I$, equipped with the maps $\mbox{\rm wt}$, $\eps_i$, 
$\vphi_i$, $\te_i$, $\tf_i$, is a crystal, which is isomorphic to
$\left(B(\infty);\mbox{\rm wt},\eps_i,\vphi_i,\te_i,\tf_i\right)$.
\end{prop}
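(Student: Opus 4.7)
The plan is to identify $\cBZ_I$ with Kamnitzer's crystal of Mirković--Vilonen polytopes in type $A_m$ (where $m = |I|$), and then to invoke his theorem from \cite{Kam1, Kam2} that the crystal of MV polytopes is isomorphic to $B(\infty)$.

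First I would verify that $(\cBZ_I;\mbox{wt},\eps_i,\vphi_i,\te_i,\tf_i)$ satisfies the axioms of an abstract Kashiwara crystal. The fact that $\te_i$ (when $\eps_i > 0$) and $\tf_i$ send $\cBZ_I$ into $\cBZ_I$ is exactly Proposition \ref{prop:ord-action}. Since $\te_i, \tf_i$ alter only the single component $M_{[n+1,i]}$, by $\pm 1$, a short inspection of the defining formulas yields the crystal relations
\[
\mbox{wt}(\tf_i{\bf M}) = \mbox{wt}({\bf M}) - \alpha_i^I,\quad
\eps_i(\tf_i{\bf M}) = \eps_i({\bf M}) + 1,\quad
\vphi_i(\tf_i{\bf M}) = \vphi_i({\bf M}) - 1,
\]
together with the analogues for $\te_i$ and the involutive identities $\te_i\tf_i {\bf M} = {\bf M}$, $\tf_i\te_i {\bf M} = {\bf M}$ (whenever the operators are defined).

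To identify the resulting crystal with $B(\infty)$, I would transport $\cBZ_I$ into Kamnitzer's framework: each ${\bf M} \in \cBZ_I$ is viewed as a lattice polytope in $\gth_I^*$ whose vertex indexed by ${\bf k} \in \cM_I^{\times}$ is encoded by $M_{\bf k}$, via the identification $\cM_I^{\times} \simeq \bigsqcup_{i \in I} W_I \varpi_i^I$ recorded in Section 2.2. Under this dictionary, (BZ-1) becomes the edge inequalities of \cite{Kam1}, (BZ-2) becomes the tropical Plücker relations, and (BZ-0) forces the $w_0$-extreme vertex to sit at the origin. It then remains to check that our Kashiwara operators, specified minimally by shifting the single component $M_{[n+1,i]}$, agree with Kamnitzer's MV polytope operators, which are specified globally via Lusztig data for a reduced word of $w_0$. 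Once this compatibility is verified, the isomorphism with $B(\infty)$ follows from \cite{Kam2}.

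The main obstacle is precisely this last compatibility: our $\tf_i$ is defined by a minimal one-component update (with the other components forced by (BZ-1) and (BZ-2) through the uniqueness in Proposition \ref{prop:ord-action}), whereas Kamnitzer's is defined globally on the polytope. Matching the two requires unwinding how the single-component update propagates through the tropical Plücker relations; the normalization bookkeeping ($w_0$-BZ data versus $e$-BZ data, and conventions for extreme vertices) is routine by comparison.
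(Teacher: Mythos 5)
This proposition is one the paper recalls as known and does not prove: it is imported from Kamnitzer's work (and from \cite{NSS}/\cite{NSS2}), so there is no in-paper argument to compare against line by line. Your route --- identify $w_0$-normalized BZ data with Kamnitzer's MV polytopes in type $A_m$ and quote the isomorphism with $B(\infty)$ from \cite{Kam1}, \cite{Kam2} --- is exactly the provenance of the result, so in that sense you are reconstructing the intended citation rather than diverging from it. The one substantive remark is that the ``main obstacle'' you flag at the end is not actually an obstacle: Kamnitzer's crystal operators on BZ data are \emph{defined} by precisely the minimal one-component characterization recalled in Proposition \ref{prop:ord-action} (change $M_{[n+1,i]}$ by $\pm 1$, fix all components outside $\cM_I^{\times}(i)$, and let the tropical Pl\"ucker relations determine the rest); the global description via Lusztig data for a reduced word of $w_0$ is a theorem derived from that definition, not the definition itself. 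So no reconciliation of two competing definitions is needed. An alternative route, closer to the machinery the paper actually assembles, is to transport the isomorphism $\cB_I\cong B(\infty)$ through the map $\Psi_I$ of Theorem \ref{thm:finA} (a crystal isomorphism $\cB_I\to\cBZ_I^e$ for the $\ast$-structures, from \cite{BFZ} and \cite{S}) and then through the bijection $\ast:\cBZ_I^e\to\cBZ_I$, which by construction intertwines the $\ast$-crystal structure on $\cBZ_I^e$ with the structure on $\cBZ_I$ in the statement; this buys a purely combinatorial/quiver-theoretic argument that avoids the geometry of MV cycles, at the cost of the tableau-minimization formulas for ${\bf M}({\bf a})$. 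Either way your outline is sound.
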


The explicit form of the action of the lowering Kashiwara operator $\tf_i$
on $\cBZ_I$ is given by the following:
\begin{prop}\label{prop:ord-AM}
For ${\bf M}=(M_{\bf k})_{{\bf k}\in \cM_I^{\times}}\in \cBZ_I$, we have
$$(\tf_i{\bf M})_{\bf k}=\left\{\begin{array}{ll}
\mbox{\em min}\left\{M_{\bf k},~M_{s_i{\bf k}}
+c_i({\bf M})\right\} &
\mbox{if }~{\bf k}\in \cM_I^{\times}(i),\\
M_{\bf k} & \mbox{\em otherwise},
\end{array}\right.\eqno{(2.3.1)}$$
where $c_i({\bf M})=\langle h_i,\mbox{\rm wt}({\bf M})\rangle_I+
\eps_i({\bf M})-1$.
\end{prop}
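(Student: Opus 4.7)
The plan is to define a candidate $\tilde{\bf M} = (\tilde{M}_{\bf k})_{{\bf k}\in\cM_I^\times}$ by the right-hand side of (2.3.1) and to deduce $\tilde{\bf M} = \tf_i{\bf M}$ from the uniqueness assertion in Proposition \ref{prop:ord-action}(2). Condition (iv) of that proposition is immediate from the formula. For condition (iii), note that $[n+1,i]\in\cM_I^\times(i)$ with $s_i[n+1,i] = [n+1,i-1]\cup\{i+1\}$; a direct expansion using (2.2.1), the definition of $\mbox{wt}({\bf M})$, and the formula for $\eps_i({\bf M})$ simplifies $c_i({\bf M})$ to $M_{[n+1,i]} - M_{[n+1,i-1]\cup\{i+1\}} - 1$, whence
$$\tilde{M}_{[n+1,i]} = \min\{M_{[n+1,i]},~M_{[n+1,i]}-1\} = M_{[n+1,i]}-1.$$

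It remains to verify that $\tilde{\bf M}$ itself is a $w_0$-BZ datum. The normalization condition (BZ-0) is easy: for any $i'\in I$, whenever $i\in [i'+1, n+m+1]$ one also has $i+1\in [i'+1, n+m+1]$, so $[i'+1, n+m+1]\notin \cM_I^\times(i)$ and hence $\tilde{M}_{[i'+1, n+m+1]} = M_{[i'+1, n+m+1]} = 0$ by (BZ-0) for ${\bf M}$.

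The main obstacle is verifying (BZ-1) and (BZ-2) for $\tilde{\bf M}$. For each of the four Maya diagrams entering (BZ-1), or the six entering (BZ-2), one sorts them according to membership in $\cM_I^\times(i)$, based on the relative positions of the distinguished indices with respect to $\{i, i+1\}$. Using (2.3.1) to rewrite each affected coordinate as a minimum and distributing over the tropical operations reduces the desired relation to a collection of inequalities and equalities among values of ${\bf M}$ taken both at the original diagrams and at their $s_i$-images. Each of these is a consequence of (BZ-1) and (BZ-2) applied to ${\bf M}$ itself, combined with the constancy of $c_i({\bf M})$ in ${\bf k}$; the combinatorics of how $s_i$ permutes the relevant Maya diagrams is precisely what makes the minima close up. The bookkeeping is elementary but is the longest part of the argument. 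Once (BZ-1) and (BZ-2) for $\tilde{\bf M}$ are established, $\tilde{\bf M}$ is a $w_0$-BZ datum satisfying (iii) and (iv), and the uniqueness in Proposition \ref{prop:ord-action}(2) identifies it with $\tf_i{\bf M}$, yielding (2.3.1).
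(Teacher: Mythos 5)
The paper itself states Proposition \ref{prop:ord-AM} without proof, as part of its review of known results (it is imported from the earlier works \cite{Kam2}, \cite{NSS2}, \cite{S}), so there is no internal argument to compare against; I therefore assess your proposal on its own terms. Your overall strategy --- define a candidate $\tilde{\bf M}$ by the right-hand side of (2.3.1), check that it is a $w_0$-BZ datum satisfying (iii) and (iv), and invoke the uniqueness in Proposition \ref{prop:ord-action}(2) --- is a legitimate route, and the parts you actually carry out are correct: condition (iv) is immediate, (BZ-0) holds because $[i'+1,n+m+1]\notin\cM_I^{\times}(i)$, and your simplification $c_i({\bf M})=M_{[n+1,i]}-M_{[n+1,i-1]\cup\{i+1\}}-1$ correctly yields (iii).

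The genuine gap is the step you dispose of as ``elementary bookkeeping,'' namely (BZ-1) and (BZ-2) for $\tilde{\bf M}$. Take (BZ-1) in the case $i\in{\bf k}$, $i+1\notin{\bf k}\cup\{a,b\}$, $a,b\notin\{i,i+1\}$, so that all four diagrams lie in $\cM_I^{\times}(i)$. Writing $A_{\bf m}:=M_{\bf m}$ and $A'_{\bf m}:=M_{s_i{\bf m}}+c_i({\bf M})$, you must prove
$$\min\bigl\{A_{{\bf k}\cup\{a\}},A'_{{\bf k}\cup\{a\}}\bigr\}+\min\bigl\{A_{{\bf k}\cup\{b\}},A'_{{\bf k}\cup\{b\}}\bigr\}\leq \min\bigl\{A_{{\bf k}\cup\{a,b\}},A'_{{\bf k}\cup\{a,b\}}\bigr\}+\min\bigl\{A_{{\bf k}},A'_{{\bf k}}\bigr\}.$$
Applying (BZ-1) for ${\bf M}$ to the base ${\bf k}$ and to the base $s_i{\bf k}$ gives the all-unprimed and all-primed inequalities, but these do not imply the display: when the two minima on the right are attained by alternatives of opposite type, you need mixed inequalities such as $M_{{\bf k}\cup\{a\}}+M_{s_i{\bf k}\cup\{b\}}\leq M_{{\bf k}\cup\{a,b\}}+M_{s_i{\bf k}}$, which compare $M$-values at a diagram and at the $s_i$-image of a \emph{different} diagram and are not instances of (BZ-1) or (BZ-2) for ${\bf M}$. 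The same difficulty occurs, in aggravated form, for the equalities in (BZ-2). Establishing exactly these mixed relations is the content of the Anderson--Mirkovi\'c conjecture in type $A$, whose known proof (Kamnitzer) is a genuinely involved manipulation of tropical Pl\"ucker relations, not a routine case check. As written, your proposal therefore leaves the essential difficulty of the proposition unaddressed.
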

Through the bijection $\ast: \cBZ_I\overset{\sim}{\to}\cBZ_I^e$, we can define
the $\ast$-crystal structure on $\cBZ_I^e$. Namely, for 
${\bf M}=(M_{\bf k})_{{\bf k}\in\cM_I^{\times}}\in\cBZ_I^e$, we set
$$\mbox{wt}({\bf M}):=\mbox{wt}({\bf M}^*),\quad
\eps_i^*({\bf M}):=\eps_i({\bf M}^*),\quad
\vphi_i^*({\bf M}):=\vphi_i({\bf M}^*),$$
$$\te_i^*:=\ast\circ\te_i\circ\ast,\quad\mbox{and}\quad
\tf_i^*:=\ast\circ\tf_i\circ\ast.$$

It is easy to obtain the following corollaries.
\begin{cor}\label{prop:ast-action}
Let ${\bf M}=(M_{\bf k})_{{\bf k}\in\cM_I^{\times}}\in\cBZ_I^e$. 
\vskip 1mm
\noindent
{\rm (1)} If $\eps_i^*({\bf M})>0$, then $\te_i^*{\bf M}$ is a unique 
$e$-BZ datum such that
\begin{itemize}
\item[(i)] $(\te_i^*{\bf M})_{[i+1,n+m+1]}=M_{[i+1,n+m+1]}+1$,
\item[(ii)] $(\te_i^*{\bf M})_{\bf k}=M_{\bf k}$ for all ${\bf k}\in\cM_I^{\times}
\setminus \cM_I^{\times}(i)^*$,
\end{itemize} 
where $\cM_I^{\times}(i)^*:=\{{\bf k}\in\cM_I^{\times}~|~
i\not\in {\bf k}\mbox{ and }i+1\in{\bf k}\}$.
\vskip 1mm
\noindent
{\rm (2)} $\tf_i^*{\bf M}$ is a unique $e$-BZ datum such that
\begin{itemize}
\item[(iii)] $(\tf_i^*{\bf M})_{[i+1,n+m+1]}=M_{[i+1,n+m+1]}-1$,
\item[(iv)] $(\tf_i^*{\bf M})_{\bf k}=M_{\bf k}$ for all ${\bf k}\in\cM_I^{\times}
\setminus \cM_I^{\times}(i)^*$.
\end{itemize} 
\vskip 1mm
\noindent
{\rm (3)}
For ${\bf M}=(M_{\bf k})_{{\bf k}\in \cM_I^{\times}}\in \cBZ_I^e$, we have
$$(\tf_i^*{\bf M})_{\bf k}=\left\{\begin{array}{ll}
\mbox{\em min}\left\{M_{\bf k},~M_{s_i{\bf k}}
+c_i^*({\bf M})\right\} &
\mbox{if }~{\bf k}\in \cM_n^{\times}(i)^*,\\
M_{\bf k} & \mbox{otherwise},
\end{array}\right.\eqno{(2.3.2)}$$
where $c_i^*({\bf M}):=\langle h_i,\mbox{\rm wt}({\bf M})\rangle_I+
\eps_i^*({\bf M})-1$.
\end{cor}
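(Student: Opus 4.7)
The plan is to deduce all three parts of the Corollary from Propositions \ref{prop:ord-action} and \ref{prop:ord-AM} by transporting them through the bijection $\ast:\cBZ_I^e\overset{\sim}{\to}\cBZ_I$, $({\bf M})_{\bf k}\mapsto ({\bf M})_{{\bf k}^c}$. By the very definitions $\te_i^*=\ast\circ\te_i\circ\ast$, $\tf_i^*=\ast\circ\tf_i\circ\ast$, $\eps_i^*({\bf M})=\eps_i({\bf M}^*)$, and $\mbox{wt}({\bf M})=\mbox{wt}({\bf M}^*)$, every assertion about the starred operators on ${\bf M}\in\cBZ_I^e$ translates into an assertion about the unstarred ones on ${\bf M}^*\in\cBZ_I$, with indexing Maya diagrams replaced by their complements in $\widetilde{I}$.

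For parts (1) and (2), the key observation is that complementation ${\bf k}\mapsto{\bf k}^c$ is a bijection $\cM_I^{\times}\to\cM_I^{\times}$ carrying $\cM_I^{\times}(i)$ onto $\cM_I^{\times}(i)^*$, since ``$i\in{\bf k}^c$ and $i+1\notin{\bf k}^c$'' is equivalent to ``$i\notin{\bf k}$ and $i+1\in{\bf k}$''; likewise $[n+1,i]^c=[i+1,n+m+1]$. Applying $\ast$ to the characterizing conditions (i)--(iv) of Proposition \ref{prop:ord-action} for $\te_i({\bf M}^*)$ and $\tf_i({\bf M}^*)$, and relabeling ${\bf k}$ as ${\bf k}^c$, then produces exactly the characterizations (i)--(iv) of the Corollary. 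Uniqueness transfers automatically because $\ast$ is a bijection.

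For part (3), I would apply formula (2.3.1) to ${\bf M}^*$ and use $(\tf_i^*{\bf M})_{\bf k}=(\tf_i{\bf M}^*)_{{\bf k}^c}$. The nontrivial case ${\bf k}\in\cM_I^{\times}(i)^*$ amounts to ${\bf k}^c\in\cM_I^{\times}(i)$, and requires identifying $M^*_{s_i{\bf k}^c}$ with $M_{s_i{\bf k}}$. The small point to check is that the $W_I$-action on $\cM_I^{\times}$ commutes with complementation, i.e., $s_i({\bf k}^c)=(s_i{\bf k})^c$; this follows from the explicit rule that $s_i$ swaps $i$ and $i+1$ exactly when one of them lies in the Maya diagram and acts trivially otherwise, a rule manifestly symmetric under ${\bf k}\leftrightarrow{\bf k}^c$. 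Combined with $c_i({\bf M}^*)=c_i^*({\bf M})$ (immediate from the definitions of $\mbox{wt}$ and $\eps_i^*$), this converts (2.3.1) applied to ${\bf M}^*$ termwise into (2.3.2).

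As the authors indicate by stating this as a corollary, there is no genuine obstacle; the only bookkeeping one must keep straight is the identification of $\cM_I^{\times}(i)$ with $\cM_I^{\times}(i)^*$ under complementation, and the compatibility of the simple reflections $s_i$ with complementation.
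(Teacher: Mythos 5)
Your proposal is correct and is exactly the argument the paper intends: the corollary is stated without proof ("It is easy to obtain the following corollaries") as the transport of Propositions \ref{prop:ord-action} and \ref{prop:ord-AM} through the bijection $\ast$, and your bookkeeping — complementation exchanging $\cM_I^{\times}(i)$ with $\cM_I^{\times}(i)^*$, sending $[n+1,i]$ to $[i+1,n+m+1]$, commuting with $s_i$, and the identity $c_i({\bf M}^*)=c_i^*({\bf M})$ — supplies precisely the omitted details.
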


\begin{cor}
The set $\cBZ_I^e$, equipped with the maps $\mbox{\rm wt}$, $\eps_i^*$, 
$\vphi_i^*$, $\te_i^*$, $\tf_i^*$, is a crystal, which is isomorphic to
$\left(B(\infty);\mbox{\rm wt},\eps_i^*,\vphi_i^*,\te_i^*,\tf_i^*\right)$.
\end{cor}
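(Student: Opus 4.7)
The plan is to observe that this corollary is a direct transport of crystal structure along the bijection $\ast: \cBZ_I \overset{\sim}{\to} \cBZ_I^e$. Since the maps $\mbox{wt}$, $\eps_i^*$, $\vphi_i^*$, $\te_i^*$, $\tf_i^*$ on $\cBZ_I^e$ were defined precisely as the conjugates (or pullbacks) of $\mbox{wt}$, $\eps_i$, $\vphi_i$, $\te_i$, $\tf_i$ through $\ast$, the crystal axioms already satisfied on $\cBZ_I$ will transfer automatically, and $\ast$ itself will manifestly become a crystal isomorphism.

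Concretely, I would first recall from the preceding proposition that $\cBZ_I$, equipped with $\mbox{wt}, \eps_i, \vphi_i, \te_i, \tf_i$, is a crystal isomorphic to $B(\infty)$. Next, using the definitions $\te_i^* = \ast \circ \te_i \circ \ast$, $\tf_i^* = \ast \circ \tf_i \circ \ast$, $\mbox{wt}({\bf M}) = \mbox{wt}({\bf M}^*)$, $\eps_i^*({\bf M}) = \eps_i({\bf M}^*)$, and $\vphi_i^*({\bf M}) = \vphi_i({\bf M}^*)$, together with the involutive identity $\ast \circ \ast = \mbox{id}_{\cBZ_I}$, I would check that the bijection $\ast$ intertwines all five pieces of data between the two systems. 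The crystal axioms for $\cBZ_I^e$ (for example $\vphi_i^*({\bf M}) = \eps_i^*({\bf M}) + \langle h_i, \mbox{wt}({\bf M}) \rangle_I$, the relation $\te_i^* \tf_i^* {\bf M} = {\bf M}$, and the compatibility of $\eps_i^*$ with $\te_i^*, \tf_i^*$) are then obtained by simply applying the corresponding axiom for $\cBZ_I$ to ${\bf M}^*$.

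It follows formally that $\ast$ is a crystal isomorphism between $\bigl(\cBZ_I; \mbox{wt}, \eps_i, \vphi_i, \te_i, \tf_i\bigr)$ and $\bigl(\cBZ_I^e; \mbox{wt}, \eps_i^*, \vphi_i^*, \te_i^*, \tf_i^*\bigr)$; composing with the crystal isomorphism $\cBZ_I \simeq B(\infty)$ supplied by the preceding proposition then yields the desired $\cBZ_I^e \simeq B(\infty)$. There is essentially no serious obstacle: the entire content of the corollary lies in the careful choice of definitions made just before its statement. The only small bookkeeping point is to confirm that $\ast: {\bf M} \mapsto {\bf M}^*$ does send $\cBZ_I$ bijectively onto $\cBZ_I^e$ (swapping the $w_0$-normalization $M_{[i+1,n+m+1]} = 0$ with the $e$-normalization $M_{[n+1,i]} = 0$ via ${\bf k} \leftrightarrow {\bf k}^c$), but this has already been established earlier in Section 2.2.
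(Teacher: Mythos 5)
Your proposal is correct and matches the paper's (implicit) argument: the paper states this corollary without proof as an easy consequence of the fact that the $\ast$-structure on $\cBZ_I^e$ is defined by transporting the crystal structure of $\cBZ_I$ along the bijection $\ast$, which is exactly the transport-of-structure argument you give. Nothing is missing.
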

\subsection{Lusztig data vs. BZ data}
Let $\Delta^+_I=\{(i,j)~|~i,j\in \widetilde{I}\mbox{ with }i<j\}$, and set 
$$\cB_I:=\left\{{\bf a}=(a_{i,j})_{(i,j)\in\Delta^+_I}~|~
a_{i,j}\in\nz_{\geq 0}\mbox{ for any }(i,j)\in \Delta^+_I\right\},$$
which is just the set of all $m(m+1)/2$-tuples of nonnegative integers 
indexed by $\Delta^+_I$. Here, $m$ is the cardinality of the interval $I$. 
An element of $\cB_I$ is called a Lusztig datum associated to $I$.

We define two crystal structures on $\cB_I$ 
(see \cite{S} and \cite{NSS2} for details). For ${\bf a}\in \cB_I$, set
$$\mbox{wt}({\bf a}):=-\sum_{i\in I}r_i\alpha_i^I,\quad\mbox{where}\quad
r_i:=\sum_{k=n+1}^i\sum_{l=i+1}^{n+m+1}a_{k,l},\quad i\in I.$$
For $i\in I$, we set
$$A^{(i)}_k({\bf a}):=\sum_{s=n+1}^k(a_{s,i+1}-a_{s-1,i}),\quad 
n+1\leq k\leq i,$$
$$A^{\ast (i)}_l({\bf a}):=\sum_{t=l+1}^{n+m+1}(a_{i,t}-a_{i+1,t+1}),\quad 
i\leq l\leq n+m+1,$$
where $a_{n,i}=0$ and $a_{i+1,n+m+2}=0$ by convention, and define 
$$\eps_i({\bf a}):=
\mbox{max}\left\{A_{n+1}^{(i)}({\bf a}),\ldots,A_i^{(i)}({\bf a})\right\},
\quad\vphi_i({\bf a}): = \eps_i({\bf a})+\langle h_i,\mbox{wt}({\bf a})
\rangle,$$
$$\eps_i^*({\bf a}):=\mbox{max}\left\{A^{\ast (i)}_i({\bf a}),\ldots,
A^{\ast (i)}_{n+m}({\bf a})\right\},\quad
\varphi_i^*({\bf a}):=\eps_i^*({\bf a})+\langle h_i,\mbox{wt}({\bf a})
\rangle.$$
Also, set
$$k_e:=\mbox{min}\left\{n+1\leq k\leq i\left|~\eps_i({\bf a})
=A_k^{(i)}({\bf a})\right.\right\},$$
$$k_f:=\mbox{max}\left\{n+1\leq k\leq i\left|~\eps_i({\bf a})
=A_k^{(i)}({\bf a})\right.\right\},$$
$$l_e:=\mbox{max}\left\{i\leq l\leq n+m\left|~\eps_i^*({\bf a})
=A_l^{\ast (i)}({\bf a})\right.\right\},$$
$$l_f:=\mbox{min}\left\{i\leq l\leq n+m\left|~\eps_i^*({\bf a})
=A_l^{\ast (i)}({\bf a})\right.\right\}.$$
For a given ${\bf a}\in \cB_I$, we define four $m(m+1)/2$-tuples of integers 
${\bf a}^{(p)}=\left(a_{k,l}^{(p)}\right)$, $p=1,2,3,4$, by 
\begin{align*}
{a}^{(1)}_{k,l}&:=\left\{\begin{array}{ll}
a_{k_e,i}+1 & \mbox{if }k=k_e,~l=i,\\
a_{k_e,i+1}-1 & \mbox{if }k=k_e,~l=i+1,\\
a_{k,l} & \mbox{otherwise}.
\end{array}\right.\\
{a}_{k,l}^{(2)}&:=\left\{\begin{array}{ll}
a_{k_f,i}-1 & \mbox{if }k=k_f,~l=i,\\
a_{k_f,i+1}+1 & \mbox{if }k=k_f,~l=i+1,\\
a_{k,l} & \mbox{otherwise},
\end{array}\right.\\
{a}_{k,l}^{(3)}&:=\left\{\begin{array}{ll}
a_{i,l_e+1}-1 & \mbox{if }k=i,~l=l_e+1,\\
a_{i+1,l_e+1}+1 & \mbox{if }k=i+1,~l=l_e+1,\\
a_{k,l} & \mbox{otherwise}.
\end{array}\right.\\
{a}^{(4)}_{k,l}&:=\left\{\begin{array}{ll}
a_{i,l_f+1}+1 & \mbox{if }k=i,~l=l_f+1,\\
a_{i+1,l_f+1}-1 & \mbox{if }k=i+1,~l=l_f+1,\\
a_{k,l} & \mbox{otherwise}.
\end{array}\right.
\end{align*}
Now, we define Kashiwara operators on $\cB_I$ as follows:
$$\te_i{\bf a}:=\left\{\begin{array}{ll}
{\bf 0}&\mbox{if }\eps_i({\bf a})=0,\\
{\bf a}^{(1)}&\mbox{if }\eps_i({\bf a})>0,
\end{array}\right.\quad\mbox{and}\quad \tf_i{\bf a}:={\bf a}^{(2)},$$
$$\te_i^*{\bf a}:=\left\{\begin{array}{ll}
{\bf 0}&\mbox{if }\eps_i^*({\bf a})=0,\\
{\bf a}^{(3)}&\mbox{if }\eps_i^*({\bf a})>0,
\end{array}\right.\quad\mbox{and}\quad \tf_i^*{\bf a}:={\bf a}^{(4)}.$$

\begin{prop}[\cite{S}]
Each of $(\cB_I,\mbox{\rm wt},\eps_i,\vphi_i,\te_i,\tf_i)$ and  
$(\cB_I,\mbox{\rm wt},\eps_i^*,\vphi_i^*,\te_i^*,\tf_i^*)$ is 
a crystal, which is isomorphic to $B(\infty)$.
\end{prop}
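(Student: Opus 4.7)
The plan is to realize $\cB_I$ as the PBW parameterization of $B(\infty)$ attached to a fixed reduced expression of the longest element $w_0 \in W_I$, and to verify that the abstract crystal structure of $B(\infty)$, transported through this bijection, coincides with the explicit piecewise-linear recipe in the statement. Fix the reduced expression
\[
\underline{w}_0 = (s_{n+1})(s_{n+2}s_{n+1})(s_{n+3}s_{n+2}s_{n+1})\cdots(s_{n+m}s_{n+m-1}\cdots s_{n+1}),
\]
of length $N = m(m+1)/2 = |\Delta^+_I|$, whose induced convex ordering of $\Delta^+_I$ sends positive roots to pairs $(k,l)$ via $\beta_{(k,l)} = \epsilon_k - \epsilon_l$. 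The PBW monomials $F_{\underline{w}_0}(\mathbf{a})$ indexed by $\mathbf{a} \in \cB_I$ form a $\mathbb{Q}(q)$-basis of $U_q^-$, and by Lusztig's theorem their images in $L(\infty)/q^{-1}L(\infty)$ constitute the crystal basis, providing a bijection $\cB_I \leftrightarrow B(\infty)$. The weight formula then follows at once from the $q$-weights of the PBW monomials.

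\textbf{Verification for the first structure.} After the identifications above, the action of $\te_i$ and $\tf_i$ on $\cB_I$ is determined, via Kashiwara's grand loop argument, by the maximal power of $F_i$ that can be factored out of $F_{\underline{w}_0}(\mathbf{a})$ once the monomial is rewritten in a form starting with $F_i$. Using Saito's reflection isomorphism on PBW bases to carry out this rewriting for our $\underline{w}_0$, one finds that the $i$-depth of $\mathbf{a}$ equals $\max_{n+1 \le k \le i} A^{(i)}_k(\mathbf{a}) = \eps_i(\mathbf{a})$. The indices $k_e$ and $k_f$ are then identified as the minimal and maximal positions at which the PBW monomial can be modified in normal form by incrementing $a_{k_e,i}$ and decrementing $a_{k_e,i+1}$ (respectively the opposite at $k_f$), thereby producing exactly the moves $\mathbf{a} \mapsto \mathbf{a}^{(1)}$ and $\mathbf{a} \mapsto \mathbf{a}^{(2)}$.

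\textbf{The $*$-structure and the main obstacle.} For the second crystal structure, apply Kashiwara's $*$-involution on $U_q^-$, which intertwines $\te_i \leftrightarrow \te_i^*$ and $\tf_i \leftrightarrow \tf_i^*$. Under the bijection above, this $*$-involution corresponds to the PBW parameterization attached to the reverse reduced expression $\underline{w}_0^{\mathrm{op}}$, whose induced convex order on $\Delta^+_I$ is opposite to that of $\underline{w}_0$. Repeating the analysis of the previous paragraph with $\underline{w}_0^{\mathrm{op}}$ yields the formulas with $A^{*(i)}_l$, $l_e$, $l_f$ and the moves $\mathbf{a}^{(3)}, \mathbf{a}^{(4)}$. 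The main technical obstacle throughout is the explicit identification of the piecewise-linear quantities $A^{(i)}_k$, together with the distinguished positions $k_e, k_f, l_e, l_f$, with the $i$-depth of the PBW monomial and its normal-form modifications; this rests ultimately on Saito's detailed braid-move formulas in \cite{S}. A conceptually cleaner route, closer to the quiver-theoretical methods used elsewhere in this paper, is to interpret $\mathbf{a}$ as the vector of generic multiplicities in a Reineke-type stratification of the representation variety of the equi-oriented $A_m$-quiver, whereupon $A^{(i)}_k$ acquires a transparent meaning as a $\mathrm{Hom}$- or $\mathrm{Ext}$-dimension controlling the minimal modification by the simple module $S_i$.
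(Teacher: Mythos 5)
This proposition is quoted in the paper directly from \cite{S} with no proof supplied, so there is no internal argument to compare against; your sketch (PBW parameterization of $B(\infty)$ for the lexicographic reduced word of $w_0$, Lusztig--Saito's identification of PBW monomials modulo $q^{-1}L(\infty)$ with the crystal basis, braid-move computation of the $i$-depth giving $\eps_i({\bf a})=\max_k A^{(i)}_k({\bf a})$, and Kashiwara's $\ast$-involution together with the reversed reduced word for the second structure) is precisely the standard route taken in the cited reference, including its quiver-theoretic reformulation. The only caveat is that the genuine computational content --- the verification that the braid-move normal form produces exactly the quantities $A^{(i)}_k$, $k_e$, $k_f$, $l_e$, $l_f$ and the moves ${\bf a}^{(1)},\dots,{\bf a}^{(4)}$ --- is asserted rather than carried out, but that is exactly the part both you and the paper delegate to \cite{S}.
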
 

Following \cite{NSS2}, we call the first one the ordinary crystal structure on 
$\cB_I$; the second one is called the $\ast$-crystal structure on 
$\cB_I$.

\begin{defn}[\cite{BFZ}]
Let ${\bf k}=\{k_{n+1}<k_{n+2}<\cdots<k_{n+u}\}\in\cM_I^{\times}$. 
For such a ${\bf k}$,
a ${\bf k}$-tableau is an upper-triangular matrix 
$C=(c_{p,q})_{n+1\leq p\leq q\leq n+u}$, with integer entries, satisfying
the condition
$$c_{p,p}=k_p,\quad n+1\leq p\leq n+u,$$
and the usual monotonicity condition for semi-standard tableaux:
$$c_{p,q}\leq c_{p,q+1},\quad c_{p,q}<c_{p+1,q}.$$
\end{defn}
For ${\bf a}=(a_{i,j})\in\cB_I$, define a collection ${\bf M}({\bf a})=
(M_{\bf k}({\bf a}))_{{\bf k}\in \cM_I^{\times}}$ of integers by
$$M_{\bf k}({\bf a}):=-\sum_{j=n+1}^{n+u}\sum_{i=n+1}^{k_j-1}a_{i,k_j}+
\mbox{min}\left\{\left.\sum_{n+1\leq p<q\leq n+u}a_{c_{p,q},c_{p,q}+(q-p)}
~\right|\begin{array}{c}C=(c_{p,q})\mbox{ is }\\ 
\mbox{a ${\bf k}$-tableau}\end{array}\right\}.$$
The following lemma is verified easily by direct calculation.

\begin{lemma}\label{lemma:BZ-int} 
Let ${\bf k}=\{k_{n+1}<k_{n+2}<\cdots < k_{n+u}\}$ be a Maya diagram
associated to $I$. 
\vskip 1mm
\noindent
{\rm (1)} If there exists $s$ such that $k_l=l$ for all $n+1\leq l\leq s$, 
then we have
\begin{align*}
M_{\bf k}({\bf a})&=-\sum_{j=s+1}^{n+u}\sum_{i=n+1}^{k_j-1}a_{i,k_j}
+\mbox{\rm min}\left\{\left.
\sum_{q=s+1}^{n+u}\sum_{p=n+1}^{q-1}a_{c_{p,q},c_{p,q}+(q-p)}
~\right|\begin{array}{c}C=(c_{p,q})~\mbox{\rm is }\\ 
\mbox{\rm a ${\bf k}$-tableau}\end{array}\right\}.
\end{align*}
In particular, $M_{\bf k}({\bf a})$ depends only on $a_{i,j}$ with $j\geq s+1$.
\vskip 1mm
\noindent
{\rm (2)} If there exists $t$ such that $k_{l-m+u-1}=l$ for all 
$t+1\leq l\leq n+m+1$, then we have
\begin{align*}
M_{\bf k}({\bf a})&=-\sum_{j=n+1}^{t-m+u-1}\sum_{i=n+1}^{k_j-1}a_{i,k_j}
-\sum_{j=t-m+u}^{n+u}\sum_{i=n+1}^{t}a_{i,j+m-u+1}\\
&\qquad\qquad
+\mbox{\rm min}\left\{\left.
\sum_{p=n+1}^{t-m+u-1}\sum_{q=p+1}^{n+u}
a_{c_{p,q},c_{p,q}+(q-p)}
~\right|\begin{array}{c}C=(c_{p,q})~\mbox{\rm is }\\ 
\mbox{\rm a ${\bf k}$-tableau}\end{array}\right\}.
\end{align*}
In particular, $M_{\bf k}({\bf a})$ depends only on $a_{i,j}$ with $i\leq t$.
\end{lemma}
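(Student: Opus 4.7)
The plan is to handle both parts by the same strategy: the shape of ${\bf k}$ pins down certain entries of every ${\bf k}$-tableau, and the resulting fixed contribution to the minimum cancels exactly against a fixed piece of the linear sum in the definition of $M_{\bf k}({\bf a})$.

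For Part (1), the hypothesis $k_l=l$ for $n+1\leq l\leq s$ forces $c_{p,q}=p$ in every ${\bf k}$-tableau $C=(c_{p,q})$ whenever $n+1\leq p\leq q\leq s$: row monotonicity with $c_{p,p}=k_p=p$ gives $c_{p,q}\geq p$, while strict column monotonicity $c_{p,q}<c_{p+1,q}<\cdots<c_{q,q}=q$ forces $c_{p,q}\leq q-(q-p)=p$. Hence the portion of $\sum_{p<q}a_{c_{p,q},c_{p,q}+(q-p)}$ with $q\leq s$ is the constant $\sum_{n+1\leq p<q\leq s}a_{p,q}$, and the compatibility constraint $c_{p,s+1}\geq p$ that this places on the free region is already implied by $c_{p,s+1}\geq c_{p,p}=p$. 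Splitting the outer sum $\sum_{j=n+1}^{n+u}\sum_{i=n+1}^{k_j-1}a_{i,k_j}$ at $j=s$ produces the extra piece $\sum_{n+1\leq i<j\leq s}a_{i,j}$, which cancels the above constant. That the residual formula involves only $a_{i,j}$ with $j\geq s+1$ follows from $c_{p,q}+(q-p)\geq c_{p,p}+(q-p)=k_p+(q-p)\geq s+1$ whenever $q\geq s+1$.

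Part (2) is dual. Since $a_{i,j}$ is defined only on $\Delta_I^+$, every entry actually appearing in the sum satisfies the implicit upper bound $c_{p,q}\leq n+m+1-(q-p)$. Combined with the hypothesis $k_p=p+m-u+1$ for $p\geq t-m+u$, the chain $p+m-u+1=c_{p,p}\leq c_{p,q}\leq c_{p,n+u}\leq p+m-u+1$ forces $c_{p,q}=p+m-u+1$ throughout the ``lower tail'' $t-m+u\leq p\leq q\leq n+u$. Substituting $i=p+m-u+1$, $j=q$ rewrites the fixed contribution $\sum_{p<q,\,p\geq t-m+u}a_{p+m-u+1,q+m-u+1}$ as the extra piece $\sum_{j=t-m+u}^{n+u}\sum_{i=t+1}^{j+m-u}a_{i,j+m-u+1}$ produced by splitting $\sum_{j=n+1}^{n+u}\sum_{i=n+1}^{k_j-1}a_{i,k_j}$ at $j=t-m+u$ and splitting the inner sum at $i=t$. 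The cancellation yields the claimed formula, and its dependence on $a_{i,j}$ with $i\leq t$ is then immediate from $c_{p,q}\leq c_{p,n+u}\leq p+m-u+1\leq t$ for $p\leq t-m+u-1$.

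The main, modest obstacle is the compatibility check in Part (2) that the strict inequality $c_{t-m+u-1,q}<c_{t-m+u,q}=t+1$ inherited from the determined tail is already implied by the bound $c_{t-m+u-1,q}\leq n+m+1-(q-(t-m+u-1))=t$ coming from the implicit upper bound on the free side. With this in hand, the minimum in the claimed formula is the unconstrained minimum over tableaux of the reduced shape, and the remainder of the verification is routine indexing.
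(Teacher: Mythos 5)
Your argument is correct and is exactly the ``direct calculation'' the paper alludes to without writing out: the initial (resp.\ final) segment of ${\bf k}$ rigidifies the corresponding block of every ${\bf k}$-tableau via the two monotonicity conditions, and the resulting constant contribution cancels against the matching piece of the linear sum in the definition of $M_{\bf k}({\bf a})$. One small slip in your last paragraph: $n+m+1-\bigl(q-(t-m+u-1)\bigr)=n+u+t-q$, which equals $t$ only for $q=n+u$; the bound $c_{t-m+u-1,q}\leq t$ you want follows instead from the chain $c_{t-m+u-1,q}\leq c_{t-m+u-1,n+u}\leq t$ that you already invoke two sentences earlier -- and in any case that compatibility check is superfluous for the lemma as stated, since its minima are still taken over full ${\bf k}$-tableaux rather than over a reduced shape.
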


\begin{thm}[\cite{BFZ}, \cite{S}]\label{thm:finA}
Let $\Psi_I$ denote the map ${\bf a}\mapsto {\bf M}({\bf a})$.
For every ${\bf a}\in\cB_I$, $\Psi_I({\bf a})={\bf M}({\bf a})$ is an 
$e$-BZ datum. Moreover, $\Psi_I:\cB_I\to \cBZ^{e}_I$ is an isomorphism 
of crystals with respect to the $\ast$-crystal structures on $\cB_I$ and 
$\cBZ_I^e$.
\end{thm}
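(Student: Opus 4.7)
The plan is to establish three things in turn: (a) the image $\Psi_I(\ba)$ actually lies in $\cBZ_I^e$; (b) the map $\Psi_I$ is a bijection; and (c) it intertwines the $*$-crystal structure on $\cB_I$ with that on $\cBZ_I^e$.

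For (a), the $e$-normalization (BZ-0) is immediate from Lemma \ref{lemma:BZ-int}(1): taking ${\bf k}=[n+1,i]$ and $s=i=n+u$ makes both the outer sum and the minimum in the formula for $M_{\bf k}(\ba)$ empty, so $M_{[n+1,i]}(\ba)=0$. The core of (a) is then the verification of the edge inequalities (BZ-1) and the tropical Pl\"ucker relations (BZ-2). Writing out the ${\bf k}$-tableau minima on both sides and cancelling the common single-column sums, these reduce to tropicalizations of the classical Pl\"ucker relations among flag minors; this is the substance of the Berenstein-Fomin-Zelevinsky theorem quoted as \cite{BFZ}, and I would adapt their argument directly to the present indexing.

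For (b), one constructs the inverse of $\Psi_I$ by recovering each $a_{i,j}$ from ${\bf M}\in\cBZ_I^e$ as an explicit alternating combination of four $M_{\bf k}$'s indexed by Maya diagrams differing by single elements; the tropical Pl\"ucker relations established in (a) guarantee that the answer is independent of the choices made and that the recovered tuple is indeed non-negative. For (c), the weight identity follows from Lemma \ref{lemma:BZ-int}(2) applied to ${\bf k}=[i+1,n+m+1]$ (with $t=i$), which yields $M_{[i+1,n+m+1]}(\ba)=-r_i$; transporting through the involution $*$ matches this with $\mbox{wt}(\ba)=-\sum_i r_i\alpha_i^I$. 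Compatibility with $\tf_i^*$ is then tested against the characterization in Corollary \ref{prop:ast-action}(2): one must verify that $M_{\bf k}(\ba^{(4)})=M_{\bf k}(\ba)$ for every ${\bf k}\notin\cM_I^{\times}(i)^*$ and that $M_{[i+1,n+m+1]}(\ba^{(4)})=M_{[i+1,n+m+1]}(\ba)-1$. The first assertion follows from Lemma \ref{lemma:BZ-int}, which forces $M_{\bf k}(\ba)$ to depend only on those entries $a_{k,l}$ that are left unchanged in the transition $\ba\mapsto\ba^{(4)}$, once $i$ and $i+1$ lie on the same side of ${\bf k}$; the second is a direct computation controlled by the choice of $l_f$. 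The same argument handles $\te_i^*$, and the identities for $\eps_i^*$ and $\vphi_i^*$ follow automatically from these together with the weight identity and the crystal axioms.

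The main obstacle is the tropical Pl\"ucker step inside (a), whose verification is the technical heart of \cite{BFZ}; once that is granted, the rest is disciplined bookkeeping based on Lemma \ref{lemma:BZ-int} and Corollary \ref{prop:ast-action}, since both the inversion in (b) and the Kashiwara-operator compatibility in (c) turn on controlling which entries of $\ba$ any given $M_{\bf k}(\ba)$ can depend upon.
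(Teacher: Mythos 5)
The paper itself offers no proof of Theorem \ref{thm:finA}; it is imported wholesale from \cite{BFZ} and \cite{S}, so your sketch can only be measured against the standard argument in those sources. Your overall architecture is the right one: (BZ-0) from Lemma \ref{lemma:BZ-int}(1) with $s=i$, the tropical Pl\"ucker relations delegated to \cite{BFZ}, inversion via the chamber-ansatz combination $M_{[i,j]}+M_{[i+1,j-1]}-M_{[i+1,j]}-M_{[i,j-1]}$, the weight identity $M_{[i+1,n+m+1]}({\bf a})=-r_i$ from Lemma \ref{lemma:BZ-int}(2), and the reduction of $\tf_i^*$-equivariance to the uniqueness characterization in Corollary \ref{prop:ast-action}(2). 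The computation $M_{[i+1,n+m+1]}({\bf a}^{(4)})=M_{[i+1,n+m+1]}({\bf a})-1$ is indeed immediate.

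The gap is in the step you dismiss as following from Lemma \ref{lemma:BZ-int}: the claim that $M_{\bf k}({\bf a}^{(4)})=M_{\bf k}({\bf a})$ for every ${\bf k}\in\cM_I^{\times}\setminus\cM_I^{\times}(i)^*$. First, that set includes all of $\cM_I^{\times}(i)$, where $i$ and $i+1$ lie on \emph{opposite} sides of ${\bf k}$, so your parenthetical hypothesis does not even cover the cases you need. Second, and more seriously, Lemma \ref{lemma:BZ-int} applies only to Maya diagrams containing an initial segment $[n+1,s]$ or a terminal segment $[t+1,n+m+1]$, and even then it constrains only the column index ($j\geq s+1$) or the row index ($i\leq t$) of the entries on which $M_{\bf k}$ depends; it never separates $a_{i,l_f+1}$ from $a_{i+1,l_f+1}$, which sit in the same column and adjacent rows. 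Already for $I=[1,2]$ and ${\bf k}=\{2\}$ the lemma yields no information (no nontrivial initial or terminal run), yet $M_{\{2\}}({\bf a})=-a_{1,2}$ must be checked to be unaffected by the move at $(2,3)$. In general $M_{\bf k}({\bf a}^{(4)})-M_{\bf k}({\bf a})$ is a difference of two minima over ${\bf k}$-tableaux each of which genuinely involves the modified entries, and its vanishing depends on the extremal choice of $l_f$ (the minimality of $l$ attaining $\eps_i^*({\bf a})$), not merely on which entries occur. This is exactly the technical content supplied in \cite{S} by passing to the Lagrangian quiver model, where $M_{\bf k}$ becomes a cokernel dimension and $\tf_i^*$ a controlled modification at the vertex $i$, or in \cite{BFZ} by the transition maps of the Chamber Ansatz; without one of these inputs your step (c) does not close. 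A minor additional point: deducing the $\eps_i^*$-identity ``automatically from the crystal axioms'' presupposes that $\Psi_I$ already intertwines $\te_i^*$ including its vanishing locus, which itself requires comparing the two explicit formulas for $\eps_i^*$, so some direct computation there cannot be avoided.
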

\subsection{BZ data arising from the Lagrangian construction of $B(\infty)$}
Let $({I},H)$ be the double quiver of type $A_{m}$. 
Here, the finite interval ${I}=[n+1,n+m]$ in $\nz$ is considered 
as the set of vertices, and $H$ as the set of arrows. 
Let $\out(\tau)$ ({\it resp}., $\inn(\tau)$) denote the 
outgoing ({\it resp}., incoming) vertex of $\tau\in H$. For a given $\tau\in H$,
we denote by $\overline{\tau}$ the same edge as $\tau$ with the reverse 
orientation. Then, the map
$\tau\mapsto \overline{\tau}$ defines an involution of $H$. An orientation
$\Omega$ is a subset of $H$ such that $\Omega\cap\overline{\Omega}=\phi$ 
and $\Omega\cup\overline{\Omega}=H$. 
Note that $(I,\Omega)$ is a Dynkin quiver of type $A_m$. 

Let $\nu=(\nu_i)_{i\in I}\in \nz_{\geq 0}^I$. In the following, we regard $\nu$
as an element of $Q_+:=\bigoplus_{i\in I}\nz_{\geq 0}\alpha_i^I$ via 
the map $\nu\mapsto\sum_{i\in I}\nu_i\alpha_i^I$.  
Let $V(\nu)=\bigoplus_{i\in I}V(\nu)_i$ be an $I$-graded complex 
vector space with dimension vector $\dimv V(\nu)=\nu$. Set
$$E_{V(\nu),\Omega}:=\bigoplus_{\tau\in\Omega}
\homc(V(\nu)_{\sout(\tau)}, V(\nu)_{\sinn(\tau)}),\quad 
X(\nu):=\mathop{\bigoplus}_{\tau\in H}\homc(V(\nu)_{\sout(\tau)},
V(\nu)_{\sinn(\tau)}).$$
We will write an element of $E_{V(\nu),\Omega}$ or $X(\nu)$ as 
$B=(B_{\tau})$, where 
$B_{\tau}$ is an element of 
$\homc(V(\nu)_{\sout(\tau)},V(\nu)_{\sinn(\tau)})$. Define a 
symplectic form $\omega$ on $X(\nu)$ by
$$\omega(B,B'):=
\sum_{\tau\in H}\eps(\tau)\mbox{tr}(B_{\overline{\tau}}B_{\tau}'),$$
where $\eps(\tau)=1$ for $\tau\in\Omega$ and $\eps(\tau)=-1$ for $\tau\in
\overline{\Omega}$, and regard $X(\nu)$ as the cotangent bundle 
$T^*E_{V(\nu),\Omega}$ of $E_{V(\nu),\Omega}$ via the symplectic form $\omega$.  

Also, the reductive group $G(\nu):=\prod_{i\in I}
GL(V(\nu)_i)$ acts on $E_{V(\nu),\Omega}$ and $X(\nu)$ by :
$(B_{\tau})\mapsto (g_{\sinn(\tau)}B_{\tau}g_{\sout(\tau)}^{-1})$
for $g=(g_i)\in G(\nu)$.
Since the action of $G(\nu)$ on $X(\nu)$ preserves the symplectic form 
$\omega$, we can consider the corresponding moment map
$\mu:X(\nu)\to \bigl(\gtg(\nu)\bigr)^*\cong \gtg(\nu)$. Here $\gtg(\nu)=
\mbox{Lie }G(\nu)$,
and we identify $\gtg(\nu)$ with its dual via the Killing form. We set
$$\Lambda(\nu):=\mu^{-1}(0).$$
Then, $\Lambda(\nu)$ is a $G(\nu)$-invariant closed Lagrangian subvariety of 
$X(\nu)$. We denote by $\mbox{Irr}\Lambda(\nu)$ the set of all irreducible 
components of $\Lambda(\nu)$.\\

Let $\nu,\nu',\overline{\nu}\in Q_+$, with $\nu=\nu'+\overline{\nu}$.
Consider the diagram
$$\Lambda(\nu')\times\Lambda(\overline{\nu})\overset{q_1}{\longleftarrow}
\Lambda(\nu',\onu)\overset{q_2}{\longrightarrow}\Lambda(\nu).
\eqno{(2.5.1)}$$
Here, $\Lambda(\nu',\onu)$ denotes the variety of those $(B,\phi',\ophi)$ 
for which $B\in \Lambda(\nu)$, and $\phi'=(\phi_i'),~\ophi=(\ophi_i)$ 
give an exact sequence of $I$-graded complex vector spaces
$$0\longrightarrow V(\nu')_i\overset{\phi_i'}{\longrightarrow} V(\nu)
\overset{\ophi_i}{\longrightarrow}V(\onu)\longrightarrow 0$$
such that $\mbox{Im }\phi'$ is stable by $B$; note that $B$ induces 
$B':V(\nu')\to V(\nu')$ and $\overline{B}:V(\onu)\to V(\onu)$. 
The maps $q_1$ and $q_2$ are defined by
$q_1(B,\phi',\ophi):=(B',\overline{B})$ and $q_2(B,\phi',\ophi):=B$, 
respectively.
For $i\in I$ and $\Lambda\in\mbox{Irr}\Lambda(\nu)$, we set
$$\eps_i(\Lambda):=\eps_i(B)\quad\mbox{and}\quad 
\eps_i^*(\Lambda):=\eps_i^*(B),$$
where $B$ is a general point of $\Lambda$, and 
$$\eps_i(B):=\dimc\mbox{\rm Coker}\left(\mathop{\bigoplus}_{\tau;\sinn(\tau)=i}
V(\nu)_{\sout(\tau)}
\overset{\oplus B_{\tau}}{\longrightarrow}V(\nu)_i\right),$$
$$\eps_i^*(B):=\dimc\mbox{\rm Ker}\left(V(\nu)_i
\overset{\oplus B_{\tau}}{\longrightarrow}
\mathop{\bigoplus}_{\tau;\sout(\tau)=i}V(\nu)_{\sinn(\tau)}\right);$$
also for $k,l\in \nz_{\geq 0}$, we set
$$\bigl(\mbox{Irr}\Lambda(\nu)\bigr)_{i,k}:=
\{\Lambda\in \mbox{Irr}\Lambda(\nu)~|~\eps_i(\Lambda)=k\}~~\mbox{ and }~~
\bigl(\mbox{Irr}\Lambda(\nu)\bigr)_{i}^l:=
\{\Lambda\in \mbox{Irr}\Lambda(\nu)~|~\eps_i^*(\Lambda)=l\}.$$ 
Suppose now that  $\onu=c\alpha_i$ ({\it resp}., $\nu'=c\alpha_i$) for 
$c\in\nz_{\geq 0}$. Since $\Lambda(c\alpha_i)=\{0\}$, we have the following 
diagrams as special cases of (2.5.1): 
$$\Lambda(\nu')\cong\Lambda(\nu')\times\Lambda(c\alpha_i)
\overset{q_1}{\longleftarrow}
\Lambda(\nu',c\alpha_i)\overset{q_2}{\longrightarrow}\Lambda(\nu),
\eqno{(2.5.2)}$$
$$\Lambda(\onu)\cong\Lambda(c\alpha_i)\times\Lambda(\onu)
\overset{q_1}{\longleftarrow}
\Lambda(c\alpha_i,\onu)\overset{q_2}{\longrightarrow}\Lambda(\nu).
\eqno{(2.5.3)}$$
Diagrams (2.5.2) and (2.5.3) induce bijections 
$$\te_i^{max}:\bigl(\mbox{\rm Irr}\Lambda(\nu)\bigr)_{i,c}
\overset{\sim}{\to}
\bigl(\mbox{\rm Irr}\Lambda(\nu')\bigr)_{i,0}
\quad\mbox{and}\quad
\te_i^{\ast max}:\bigl(\mbox{\rm Irr}\Lambda(\nu)\bigr)_{i}^c
\overset{\sim}{\to}
\bigl(\mbox{\rm Irr}\Lambda(\onu)\bigr)_{i}^0,$$
respectively. Then, we define maps 
$$\te_i,\te_i^*:\bigsqcup_{\nu\in Q_+}\mbox{Irr}\Lambda(\nu)\to
\bigsqcup_{\nu\in Q_+}\mbox{Irr}\Lambda(\nu)\sqcup\{0\}\quad\mbox{and}\quad
\tf_i,\tf_i^*:\bigsqcup_{\nu\in Q_+}\mbox{Irr}\Lambda(\nu)\to
\bigsqcup_{\nu\in Q_+}\mbox{Irr}\Lambda(\nu)$$
as follows. If $c>0$, then 
$$\begin{array}{llllllllllll}
\te_i:& \bigl(\mbox{\rm Irr}\Lambda(\nu)\bigr)_{i,c} & 
\overset{\sim}{\longrightarrow} & 
\bigl(\mbox{\rm Irr}\Lambda(\nu')\bigr)_{i,0} & 
\overset{\sim}{\longrightarrow} & 
\bigl(\mbox{\rm Irr}\Lambda(\nu+\alpha_i)\bigr)_{i,c-1},\\
\te_i^*:& \bigl(\mbox{\rm Irr}\Lambda(\nu)\bigr)_i^c & 
\overset{\sim}{\longrightarrow} & 
\bigl(\mbox{\rm Irr}\Lambda(\onu)\bigr)_i^0 & 
\overset{\sim}{\longrightarrow} & 
\bigl(\mbox{\rm Irr}\Lambda(\nu+\alpha_i)\bigr)_i^{c-1};
\end{array}$$
and $\te_i\Lambda=0$, $\te_i^*\Lambda'=0$ for 
$\Lambda\in\bigl(\mbox{\rm Irr}\Lambda(\nu)\bigr)_{i,0}$,
$\Lambda'\in\bigl(\mbox{\rm Irr}\Lambda(\nu)\bigr)_{i}^0$, respectively.
Also, we define 
$$\begin{array}{llllllllllll}
\tf_i:& \bigl(\mbox{\rm Irr}\Lambda(\nu)\bigr)_{i,c} & 
\overset{\sim}{\longrightarrow} & 
\bigl(\mbox{\rm Irr}\Lambda(\nu')\bigr)_{i,0} & 
\overset{\sim}{\longrightarrow} & 
\bigl(\mbox{\rm Irr}\Lambda(\nu-\alpha_i)\bigr)_{i,c+1},\\
\tf_i^*:& \bigl(\mbox{\rm Irr}\Lambda(\nu)\bigr)_i^c & 
\overset{\sim}{\longrightarrow} & 
\bigl(\mbox{\rm Irr}\Lambda(\onu)\bigr)_i^0 & 
\overset{\sim}{\longrightarrow} & 
\bigl(\mbox{\rm Irr}\Lambda(\nu-\alpha_i)\bigr)_i^{c+1}.
\end{array}$$
Let $\ast : B\mapsto {}^tB$ be an automorphism of $X(\nu)$, where ${}^tB$
is the transpose of $B\in X(\nu)$. Then, $\Lambda(\nu)$ is stable under 
$\ast$, and it induces an automorphism of $\mbox{Irr}{\Lambda(\nu)}$.

\begin{lemma}[\cite{KS}]\label{lemma:KS}
We have $\te_i^*=\ast\circ\te_i\circ\ast$ and  $\tf_i^*=\ast\circ\tf_i\circ
\ast$.
\end{lemma}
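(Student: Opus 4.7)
The plan is to show that the transposition $\ast$ intertwines every ingredient in the definitions of $\te_i,\te_i^*,\tf_i,\tf_i^*$: it swaps the invariants $\eps_i$ and $\eps_i^*$, and it exchanges the Steinberg-type diagrams $(2.5.2)$ and $(2.5.3)$. From this it will follow that $\te_i^{max}$ and $\te_i^{\ast max}$ are conjugate by $\ast$, and the identities for $\te_i^*$ and $\tf_i^*$ claimed in the lemma then follow formally from the compositional definitions of these operators.

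First I would verify the numerical compatibility $\eps_i({}^tB)=\eps_i^*(B)$. Set $f_i(B):=\bigoplus_{\tau;\,\sinn(\tau)=i}B_{\tau}$ and $g_i(B):=\bigoplus_{\tau;\,\sout(\tau)=i}B_{\tau}$, so that $\eps_i(B)=\dimc\mbox{Coker}\,f_i(B)$ and $\eps_i^*(B)=\dimc\mbox{Ker}\,g_i(B)$. Under the standard self-identifications $V(\nu)_j\cong V(\nu)_j^*$ and using the involution $\tau\leftrightarrow\overline{\tau}$ on $H$, a short direct computation yields $f_i({}^tB)=\bigl(g_i(B)\bigr)^t$. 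The elementary linear-algebra identity $\dimc\mbox{Ker}\,h=\dimc\mbox{Coker}\,h^t$ then gives $\eps_i({}^tB)=\eps_i^*(B)$, and consequently $\ast$ restricts to a bijection $\bigl(\mbox{Irr}\Lambda(\nu)\bigr)_{i,c}\overset{\sim}{\to}\bigl(\mbox{Irr}\Lambda(\nu)\bigr)_{i}^{c}$ for every $c\geq 0$.

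Second, I would verify that the diagrams $(2.5.2)$ and $(2.5.3)$ are exchanged by $\ast$. Given $B\in\Lambda(\nu)$ and a $B$-stable graded subspace $V'\subset V(\nu)$ of dimension $\nu-c\alpha_i^I$, its orthogonal complement $V'^{\perp}\subset V(\nu)$ (with respect to the fixed self-pairings) is ${}^tB$-stable of dimension $c\alpha_i^I$. The assignment $(B,V')\mapsto({}^tB,V'^{\perp})$ is therefore a bijection between the fibers of $q_2:\Lambda(\nu-c\alpha_i^I,c\alpha_i^I)\to\Lambda(\nu)$ over $B$ and of $q_2:\Lambda(c\alpha_i^I,\nu-c\alpha_i^I)\to\Lambda(\nu)$ over ${}^tB$; moreover, under the canonical identification $V(\nu)/V'^{\perp}\cong (V')^*$, the induced quotient ${}^tB'$ from $(2.5.3)$ equals $\ast(B')$, where $B'$ is the induced quotient from $(2.5.2)$. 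Passing to generic points of irreducible components then yields $\te_i^{\ast max}\circ\ast=\ast\circ\te_i^{max}$.

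Since $\te_i,\te_i^*,\tf_i,\tf_i^*$ are each defined as the composition of a max-bijection with the inverse of another one (for neighboring dimension vectors), the identity from the previous step propagates directly to the two claimed formulas. The principal difficulty is the second step: one must check carefully that the orthogonal-complement correspondence $V'\mapsto V'^{\perp}$ is well defined on the variety level, is $G(\nu)$-equivariant, and intertwines $q_1$ and $q_2$, and that the identification of induced quotient maps with transposed maps is performed correctly with respect to the symplectic form $\omega$ on $X(\nu)$. This is essentially symplectic-linear-algebra bookkeeping, but is where all the geometric content of the lemma resides.
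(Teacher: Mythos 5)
The paper does not prove this lemma at all: it is quoted verbatim from \cite{KS}, so there is no internal argument to compare against. Your reconstruction is correct and is essentially the argument one finds in \cite{KS}: the identity $\eps_i({}^tB)=\eps_i^*(B)$ (via $f_i({}^tB)={}^t\bigl(g_i(B)\bigr)$) shows $\ast$ carries $\bigl(\mbox{Irr}\Lambda(\nu)\bigr)_{i,c}$ onto $\bigl(\mbox{Irr}\Lambda(\nu)\bigr)_i^c$, the orthogonal-complement correspondence $(B,V')\mapsto({}^tB,V'^{\perp})$ identifies the diagram (2.5.2) with (2.5.3) compatibly with $q_1$, $q_2$ and the $G(\nu)$-actions, hence $\te_i^{\ast max}\circ\ast=\ast\circ\te_i^{max}$, and the claimed identities for $\te_i^*,\tf_i^*$ follow since each Kashiwara operator is a composite of one max-bijection with the inverse of another. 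The only points you leave implicit --- that the fibers of $q_2$ are torsors over the set of $B$-stable subspaces (so the subspace-level correspondence suffices on irreducible components) and that generic points go to generic points under $\ast$ --- are routine and do not constitute gaps.
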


\begin{thm}[\cite{KS}]\label{thm:KS}
{\rm (1)} 
For $\Lambda\in \mbox{\em Irr}\Lambda(\nu)$, we set
$\mbox{\rm wt}\Lambda:=-\nu$, $\vphi_i(\Lambda):=\eps_i(\Lambda)+
\langle h_i,\mbox{\rm wt}\Lambda\rangle$.
Then,
$\left(\bigsqcup_{\nu\in Q_+}\mbox{\em Irr}\Lambda(\nu);\mbox{\rm wt}, \eps_i, 
\vphi_i, \te_i, \tf_i\right)$ is a crystal, which is isomorphic to
$\left(B(\infty);\mbox{\rm wt}, \eps_i, \vphi_i, \te_i, \tf_i\right)$.
\vskip 1mm
\noindent
{\rm (2)} Set $\vphi_i^*(\Lambda)=\eps_i^*(\Lambda)+
\langle h_i,\mbox{\rm wt}\Lambda\rangle_I$. Then, 
$\left(\bigsqcup_{\nu\in Q_+}\mbox{\em Irr}\Lambda(\nu);\mbox{\rm wt}, \eps_i^*, 
\vphi_i^*, \te_i^*, \tf_i^*\right)$ is a crystal, which is isomorphic to
$\left(B(\infty);\mbox{\rm wt}, \eps_i^*, \vphi_i^*, \te_i^*, \tf_i^*\right)$.
\end{thm}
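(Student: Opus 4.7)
The plan is to identify the crystal $\bigl(\bigsqcup_{\nu \in Q_+}\mbox{Irr}\Lambda(\nu);\mbox{wt},\eps_i,\vphi_i,\te_i,\tf_i\bigr)$ with $B(\infty)$ by verifying the list of characterizing axioms supplied by Theorem \ref{thm:seven-cond}. Once part (1) is established, part (2) follows immediately: Lemma \ref{lemma:KS} identifies $\te_i^*,\tf_i^*$ with $\ast\circ\te_i\circ\ast$ and $\ast\circ\tf_i\circ\ast$, so the involution $\ast$ transports any crystal isomorphism for the ordinary structure into one for the $\ast$-structure.

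First I would check that $\eps_i(\Lambda)$ and $\eps_i^*(\Lambda)$ are well-defined. By upper semi-continuity of the corank of $\bigoplus_{\tau:\inn(\tau)=i}B_\tau$ (resp.\ of the kernel dimension of $\bigoplus_{\tau:\out(\tau)=i}B_\tau$) in the family $\Lambda(\nu)$, both are constant on a dense open subset of each irreducible component, and this yields the stratifications $(\mbox{Irr}\Lambda(\nu))_{i,c}$ and $(\mbox{Irr}\Lambda(\nu))_i^c$. Second, I would study the Hecke correspondence (2.5.2): a standard argument shows that $q_2:\Lambda(\nu',c\alpha_i)\to\Lambda(\nu)$ is a principal $G(c\alpha_i)$-bundle onto $(\mbox{Irr}\Lambda(\nu))_{i,c}$, while $q_1$ is smooth with irreducible Grassmann-type fibers. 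Each projection therefore gives a bijection on irreducible components, producing $\te_i^{max}$; composing with the inverse of the analogous bijection at level $\nu-\alpha_i$ yields $\te_i$ and $\tf_i$ on each stratum. The crystal relations $\mbox{wt}(\te_i\Lambda)=\mbox{wt}\Lambda+\alpha_i$, $\eps_i(\te_i\Lambda)=\eps_i(\Lambda)-1$, $\vphi_i(\te_i\Lambda)=\vphi_i(\Lambda)+1$, and $\tf_i\te_i=\te_i\tf_i=\mbox{id}$ on their domains then follow from the explicit construction.

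For the connectedness axiom I would observe that if $\nu\ne0$, then at a general point $B$ of any $\Lambda\in\mbox{Irr}\Lambda(\nu)$ the moment condition $\mu(B)=0$ prevents $\bigoplus_{\tau:\inn(\tau)=i}B_\tau$ from being surjective at every vertex simultaneously (a simple dimension count), so some $\eps_i(\Lambda)>0$ and $\te_i\Lambda\ne 0$. Iterating, every element can be reached from the unique element of $\mbox{Irr}\Lambda(0)$ by applying $\tf_i$'s, giving the highest-weight and generation conditions.

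The main obstacle is the subtlest of the seven axioms, which governs the interaction of $\te_i,\tf_i$ with $\te_j^*,\tf_j^*$. Geometrically, one needs a transversality statement at a generic $B\in\Lambda$: the reduction producing $\te_i$ (quotienting by a complement of the image of incoming arrows at $i$) and the reduction producing $\te_j^*$ (restricting to a complement of the kernel of outgoing arrows at $j$) must be performable simultaneously on a dense open subset whenever they are compatible. The essential input is that, at a generic representation satisfying $\mu(B)=0$, these two subspace data are in general position; this is proved by an inductive dimension count combined with the explicit action of $\ast:B\mapsto{}^tB$ established in Lemma \ref{lemma:KS}. Once this compatibility is in hand all the hypotheses of Theorem \ref{thm:seven-cond} are met, and the identification with $B(\infty)$ follows.
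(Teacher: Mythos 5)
This statement is quoted verbatim from \cite{KS}; the paper offers no proof of it, so there is nothing internal to compare against. Your proposal essentially reconstructs the strategy of the original Kashiwara--Saito argument: define the operators via the Hecke-type correspondences (2.5.2)--(2.5.3), verify the elementary crystal axioms, and then check the characterization of $B(\infty)$ (Theorem \ref{thm:seven-cond}) with the hard work concentrated in the interaction between the ordinary and the $\ast$-operators; part (2) then does follow from part (1) by conjugating with $\ast$ via Lemma \ref{lemma:KS}. So the route is the right one.

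The genuine gap is at the step you yourself flag as the main obstacle. What is actually needed to build the strict embedding of Theorem \ref{thm:seven-cond} is not a qualitative transversality statement but the precise quantitative relations recorded in this paper as Proposition \ref{prop:fin-strict}: for instance
$\eps^*_i(\Lambda)=\max\bigl\{\eps^*_i(\te_i^{\,c}\Lambda),\,c-\langle h_i,\mbox{wt}(\te_i^{\,c}\Lambda)\rangle\bigr\}$ with $c=\eps_i(\Lambda)$, together with the accompanying dichotomy for $\eps_i(\te_i^*\Lambda)$ and the identity $\te_i^{\,c'}(\te_i^*\Lambda)$. Saying that the image $W=\mbox{Im}\bigl(\bigoplus_{\tau:\,\sinn(\tau)=i}B_\tau\bigr)$ and the kernel $K=\mbox{Ker}\bigl(\bigoplus_{\tau:\,\sout(\tau)=i}B_\tau\bigr)$ are ``in general position'' at a generic $B$ is not enough and is not even quite the right assertion: the preprojective relation $\mu(B)=0$ at the vertex $i$ forces a nontrivial containment between $K$ and $W$ (generically one has to determine $\dimc V(\nu)_i/(W)$, $\dimc K$, and $\dimc(K\cap W)$ exactly, and the answer is what produces the $\max$ and the two cases, not an open-dense genericity principle). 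An ``inductive dimension count'' is named but never set up, so the decisive lemma remains unproved; everything downstream (conditions (5)--(7) of Theorem \ref{thm:seven-cond}) depends on it. A smaller imprecision: the generation statement (``some $\eps_i(\Lambda)>0$ when $\nu\ne 0$'') does not follow from a dimension count on $\mu^{-1}(0)$ alone; it uses nilpotency of the points of $\Lambda(\nu)$ (automatic in finite type $A$, but that is the fact to invoke). As written, the proposal is an accurate roadmap of the \cite{KS} proof rather than a proof.
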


A Maya diagram ${\bf k}\in\cM_I^{\times}$ can be written as a disjoint union 
of intervals:
$$\begin{array}{c}
{\bf k}=[s_1+1,t_1]\sqcup [s_2+1,t_2]\sqcup\cdots\sqcup[s_l+1,t_l],\\
\mbox{where }n\leq s_1<t_1<s_2<t_2<\cdots <s_l<t_l\leq n+m+1;\end{array}$$
the interval $K_p=[s_p+1,t_p]$ is called the $p$-th component of ${\bf k}$ 
for $1\leq p\leq l$. Define two subsets $\out({\bf k})$ and $\inn({\bf k})$ of 
$I$ by
$$\out({\bf k}):=\{t_p|~1\leq p\leq l\}\cap I,
\quad \inn({\bf k}):=\{s_p|~1\leq p\leq l\}\cap I.$$
Also, we define two subsets $I_t$ and $I_s$ of $I$
by
$$I_t:=\begin{cases}
\out({\bf k})\cup\{n+1,n+m\} & \mbox{if } s_1\geq n+2,~t_l=n+m+1,\\
\out({\bf k})\cup\{n+1\} & \mbox{if } s_1\geq n+2,~t_l\leq n+m,\\
\out({\bf k})\cup\{n+m\} & \mbox{if } s_1\leq n+1,~t_l=n+m+1,\\
\out({\bf k})\cup\{n+1,n+m\} & \mbox{if } s_1\geq n+1,~t_l\leq n+m,
\end{cases}$$
$$I_s:=\begin{cases}
\inn({\bf k})\cup\{n+1,n+m\} & \mbox{if } s_1=n,~t_l\leq n+m-1,\\
\inn({\bf k})\cup\{n+1\} & \mbox{if } s_1= n,~t_l\geq n+m,\\
\inn({\bf k})\cup\{n+m\} & \mbox{if } s_1\geq n+1,~t_l\leq n+m-1,\\
\inn({\bf k})\cup\{n+1,n+m\} & \mbox{if } s_1\geq n+1,~t_l\geq n+m.
\end{cases}$$
Then, there exists a unique orientation $\Omega({\bf k})$ such that
$I_t$ is identical to the set of source vertices of the quiver $(I,\Omega({\bf k}))$, 
and $I_s$ is identical to the set of sink vertices of this quiver. \\

For $B=(B_{\tau})_{\tau\in H}\in X(\nu)$, we set
$$M_{\bf k}(B):=-\dimc\mbox{Coker}
\left(\mathop{\bigoplus}_{k\in \sout({\bf k})}V(\nu)_k
\overset{\oplus B_{\mu}}{\longrightarrow}
\mathop{\bigoplus}_{l\in \sinn({\bf k})}V(\nu)_l\right),$$
where $\mu:=\tau_{i_1}\tau_{i_2}\cdots\tau_{l_q}$ is a path from 
$k\in \out({\bf k})$ to $l\in\inn({\bf k})$ under the orientation 
$\Omega({\bf k})$, and 
$B_{\mu}:=B_{\tau_{i_1}} B_{\tau_{i_2}}\cdots B_{\tau_{i_q}}$ is the 
corresponding composite of linear maps. 
For $\Lambda\in \mbox{Irr}\Lambda(\nu)$ and ${\bf k}\in\cM_I^{\times}$, define
$$M_{\bf k}(\Lambda):=M_{\bf k}(B),$$
where $B=(B_{\tau})_{\tau\in H}$ is a general point of $\Lambda$.
\begin{prop}[\cite{S}]
{\rm (1)} For each $\Lambda\in\mbox{\rm Irr}\Lambda(\nu)$, a collection 
${\bf M}(\Lambda):=(M_{\bf k}(\Lambda))_{{\bf k}\in\cM_I^{\times}}$ of integers
is an $e$-BZ datum. 
\vskip 1mm
\noindent
{\rm (2)} The map $\Psi_I:\bigsqcup_{\nu\in Q_+}\mbox{\em Irr}\Lambda(\nu)\to
\cBZ_I^e$, defined by $\Lambda\mapsto {\bf M}(\Lambda)$, gives rise to
an isomorphism of crystals
$\left(\bigsqcup_{\nu\in Q_+}\mbox{\em Irr}\Lambda(\nu);\mbox{\rm wt}, \eps_i^*, 
\vphi_i^*, \te_i^*, \tf_i^*\right)\overset{\sim}{\to}
\left(\cBZ_I^e;\mbox{\rm wt}, \eps_i^*,\vphi_i^*, \te_i^*, \tf_i^*\right)$.
\end{prop}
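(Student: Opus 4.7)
The plan is to deduce the proposition from Theorem \ref{thm:finA} by identifying, for each $\Lambda \in \mbox{Irr}\Lambda(\nu)$, the collection ${\bf M}(\Lambda)$ with ${\bf M}({\bf a})$ for a suitable Lusztig datum ${\bf a} = {\bf a}(\Lambda) \in \cB_I$.

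First, fix an orientation $\Omega$ of the double quiver $(I, H)$. By Gabriel's theorem together with the cotangent bundle description $\Lambda(\nu) \supset E_{V(\nu), \Omega}$, Lusztig's construction furnishes a bijection $\Xi_\Omega : \cB_I \overset{\sim}{\to} \bigsqcup_{\nu \in Q_+} \mbox{Irr}\Lambda(\nu)$ sending ${\bf a} = (a_{i,j})_{(i,j) \in \Delta^+_I}$ to the closure in $\Lambda(\nu)$ of the conormal bundle over the $G(\nu)$-orbit in $E_{V(\nu), \Omega}$ consisting of representations in which the interval indecomposable supported on $[i+1, j]$ occurs with multiplicity $a_{i,j}$. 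By the work of Kashiwara--Saito and Saito, $\Xi_\Omega$ is a crystal isomorphism with respect to the $\ast$-crystal structures on both sides, where the target carries the structure of Theorem \ref{thm:KS}(2).

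Next, for each ${\bf k} \in \cM_I^{\times}$ one would prove $M_{\bf k}(\Lambda) = M_{\bf k}({\bf a}(\Lambda))$, the right-hand side being the tableau quantity defined just before Theorem \ref{thm:finA}. Since the left-hand side depends only on $\Lambda$, not on the orientation used for Lusztig's parameterization, one is free to choose $\Omega = \Omega({\bf k})$. For a general point $B = (B_\tau)_{\tau \in H}$ of $\Lambda$ with this choice, $B_\tau = 0$ for $\tau \in \overline{\Omega({\bf k})}$, so the representation $(V(\nu), B|_{\Omega({\bf k})})$ of the Dynkin quiver $(I, \Omega({\bf k}))$ decomposes as $\bigoplus_{(i,j)} M(i,j)^{\oplus a_{i,j}}$; the cokernel of the composite-of-paths map from $\bigoplus_{k \in \sout({\bf k})} V(\nu)_k$ to $\bigoplus_{l \in \sinn({\bf k})} V(\nu)_l$ then splits along this decomposition and can be computed indecomposable by indecomposable. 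Summing the contributions and matching them with the tableau minimum is the heart of the argument. The normalization condition (BZ-0) is then automatic: for ${\bf k} = [n+1, i]$ one has $s_1 = n$, hence $\inn({\bf k}) \cap I = \emptyset$, and the target of the map defining $M_{\bf k}(B)$ is zero.

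Once this identification is established, part (1) is immediate from Theorem \ref{thm:finA}, and for part (2) the map of the proposition factors as the composite of the Theorem \ref{thm:finA} isomorphism with $\Xi_\Omega^{-1}$, a composition of $\ast$-crystal isomorphisms. The main obstacle is the cokernel computation just sketched: one must show that for a generic representation of $(I, \Omega({\bf k}))$ with prescribed Lusztig multiplicities $(a_{i,j})$, the dimension of the relevant cokernel equals the minimum over ${\bf k}$-tableaux from Theorem \ref{thm:finA}. This reduces to a combinatorial identity comparing Hom-space dimensions between interval indecomposables with a semi-standard tableau count, and it is here that essentially all of the work lies.
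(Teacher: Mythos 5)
The paper does not prove this proposition at all --- it is quoted verbatim from \cite{S}, so there is no in-paper argument to compare against. Your high-level strategy (parametrize $\mbox{Irr}\Lambda(\nu)$ by Lusztig data, identify $M_{\bf k}(\Lambda)$ with the tableau quantity $M_{\bf k}({\bf a})$, and then invoke Theorem \ref{thm:finA}) is indeed the route taken in \cite{S}, and your verification of (BZ-0) for ${\bf k}=[n+1,i]$ is correct. But what you have written is a reduction, not a proof: the entire content of the proposition is the identity $M_{\bf k}(\Lambda)=M_{\bf k}({\bf a}(\Lambda))$, and you explicitly defer it as ``the heart of the argument.'' Everything else (that $\Xi_\Omega$ is a $\ast$-crystal isomorphism, that Theorem \ref{thm:finA} then finishes parts (1) and (2)) is citation, so the proposal contains essentially none of the mathematics being asserted.

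Moreover, the sketch of the deferred step is not correctly set up. First, a general point $B$ of $\Lambda$ does \emph{not} satisfy $B_\tau=0$ for $\tau\in\overline{\Omega({\bf k})}$: $\Lambda$ is the closure of a conormal bundle, and a general point carries a generic (nonzero) covector in the conormal fibre. This happens to be harmless, since $M_{\bf k}(B)$ only involves the components $B_\tau$ with $\tau\in\Omega({\bf k})$, but it signals a more serious confusion. When you ``choose $\Omega=\Omega({\bf k})$,'' the multiplicities with which the interval indecomposables occur in $(V(\nu),B|_{\Omega({\bf k})})$ are the Lusztig datum of $\Lambda$ \emph{relative to the orientation $\Omega({\bf k})$}, whereas the tableau formula for $M_{\bf k}({\bf a})$ in Theorem \ref{thm:finA} is expressed in terms of the Lusztig datum ${\bf a}=\Psi_I^{-1}({\bf M})$ relative to one fixed orientation. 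These two tuples differ by Lusztig's piecewise-linear transition maps between orientations, and the minimum over ${\bf k}$-tableaux is precisely the tropical expression that arises from this change of orientation (equivalently, from computing the generic rank of the composite path maps in terms of the fixed datum). Treating the two data as the same $(a_{i,j})$, as your decomposition $\bigoplus_{(i,j)}M(i,j)^{\oplus a_{i,j}}$ does, collapses exactly the step where all the work lies; with that conflation the ``combinatorial identity'' you propose to prove at the end is not even the right statement.
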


\subsection{BZ data associated to $\nz$}
\begin{defn}
{\rm (1)} For a given integer $r\in\nz$, a subset ${\bf k}$ of $\nz$ is 
called a Maya diagram of charge $r$ if it satisfies the following condition: 
there exist nonnegative integers $p$ and $q$ such that
$$\nz_{\leq r-p}\subset {\bf k}\subset \nz_{\leq r+q},\quad
|{\bf k}\cap\nz_{>r-p}|=p,\eqno{(2.6.1)}$$
where $|{\bf k}\cap\nz_{>r-p}|$ denotes the cardinality of the finite set 
${\bf k}\cap\nz_{>r-p}$.
We denote by $\cM_{\snz}^{(r)}$ the set of all Maya diagrams of charge $r$,
and set $\cM_{\snz}:={\bigcup_{r\in \snz}}\cM_{\snz}^{(r)}$.
\vskip 1mm
\noindent
{\rm (2)} For a Maya diagram ${\bf k}$ of charge $r$, let 
${\bf k}^c:=\nz\setminus {\bf k}$ be the complement of ${\bf k}$ in $\nz$. 
We call ${\bf k}^c$ the complementary Maya diagram of charge $r$ associated
to ${\bf k}\in \cM_{\snz}^{(r)}$. We denote by $\cM_{\snz}^{(r),c}$ 
the set of all 
complementary Maya diagrams of charge $r$,
and set $\cM_{\snz}^c:={\bigcup_{r\in \snz}}\cM_{\snz}^{(r),c}$.
\end{defn}
 
A map $c: \cM_{\snz} \to \cM_{\snz}^c$ defined by ${\bf k}\mapsto {\bf k}^c$ 
is a bijection; the inverse of this map is also denoted by $c$.

We identify $\Xi_{\snz}:=\bigsqcup_{r\in\nz}W_{\nz}\Lambda_r$ 
({\it resp}., $\Gamma_{\snz}:=\bigsqcup_{r\in\nz}W_{\nz}\Lambda_r^c$) with 
$\cM_{\snz}$ ({\it resp}., $\cM_{\snz}^c$) via the bijection 
$\Lambda_r\leftrightarrow
\nz_{\leq r}$ ({\it resp}., $\Lambda_r^c\leftrightarrow\nz_{> r}$).
Under the identification $\Xi_{\snz}\cong\cM_{\snz}$ ({\it resp}., 
$\Gamma_{\snz}\cong\cM_{\snz}^c$), there is an induced action 
of $\sigma_i\in W_{\nz}$ on $\cM_{\snz}$ ({\it resp}., $\cM_{\snz}^c$). 
It is easy to see that the explicit form of 
this action is just the transposition $(i,i+1)$ of $\nz$. For $\xi\in \Xi_{\nz}$
({\it resp}., $\gamma\in\Gamma_{\nz})$, we denote by ${\bf k}(\xi)$ ({\it resp}.,
${\bf k}(\gamma)$) the corresponding Maya diagram.

Let $I$ be a finite interval in $\nz$, and $\mbox{res}_I: \cM_{\nz}\to \cM_I$ 
a map defined by 
$\mbox{res}_I({\bf k})={\bf k}\cap\widetilde{I}$ for ${\bf k}\in\cM_{\nz}$. 
Set $\cM_{\snz}(I):=\left\{{\bf k}\in\cM_{\snz}~|~{\bf k}
=\nz_{\leq n}\cup{\bf k}_I\mbox{ for some }{\bf k}_I\in\cM_I^{\times}\right\}$. 
Then the map 
$\mbox{res}_I$ induces a bijection from $\cM_{\nz}(I)$ to $\cM_I^{\times}$.
For ${\bf k}\in \cM_{\snz}(I)$, if we set $\Omega_I({\bf k}):=
(\mbox{res}_I)^{-1}\bigl(\widetilde{I}\setminus\mbox{res}_{I}({\bf k})\bigr)$
for ${\bf k}\in \cM_{\nz}(I)$,
then $\Omega_I({\bf k})\in \cM_{\snz}(I)$ and the map 
$\Omega_I:\cM_{\snz}(I)\to\cM_{\snz}(I)$ is a bijection.
Also, if we define $\mbox{res}_I^c:=\mbox{res}_I\circ c:\cM_{\nz}^c\to \cM_I$, 
then it induces bijections  $\mbox{res}_I^c:\cM_{\nz}^c(I):=(\cM_{\nz}(I))^c
\overset{\sim}{\to}\cM_I^{\times}$ and $\Omega_I^c:\cM_{\nz}^c(I)
\overset{\sim}{\to}\cM_{\nz}^c(I)$ in a similar way.\\

Let ${\bf M}=(M_{\bf k})_{{\bf k}\in \cM_{\snz}}$ be a collection of integers
indexed by $\cM_{\snz}$. For such an ${\bf M}$, we define  
${\bf M}_I:=(M_{\bf k})_{{\bf k}\in \cM_{\snz}(I)}$. By the bijection
$\mbox{res}_I:\cM_{\snz}(I)\overset{\sim}{\to} \cM_I^{\times}$, 
${\bf M}_I$ can be regarded as a collection of integers indexed by 
$\cM_I^{\times}$. Similarly, 
for ${\bf M}=(M_{\bf k})_{{\bf k}\in \cM_{\snz}^c}$, we define 
${\bf M}_I:=(M_{\bf k})_{{\bf k}\in \cM_{\snz}^c(I)}$, which is 
regarded as a collection of integers indexed by $\cM_I^{\times}$.

\begin{defn}\label{defn:BZ}
{\em (1)} A collection ${\bf M}=(M_{\bf k})_{{\bf k}\in \cM_{\snz}^c}$ of integers 
is called a complementary BZ  {\rm (}c-BZ for short{\rm )} datum associated to 
$\nz$ if it satisfies the following conditions:
\vskip 1mm
{\rm (1-a)} For each finite interval $K$ in $\nz$, 
${\bf M}_K=(M_{\bf k})_{{\bf k}\in\cM_K^{\times}}$ is an 
element of $\mathcal{BZ}_K$.
\vskip 1mm
{\rm (1-b)} For each ${\bf k}\in \cM_{\nz}^c$, there exists a finite interval 
$I$ in $\nz$ such that
\vskip 1mm
\hspace*{5mm}{\rm (1-i)} ${\bf k}\in\cM_{\snz}^c(I)$,
\vskip 1mm
\hspace*{4mm}{\rm (1-ii)} for every finite interval $J\supset I$, 
${M}_{\Omega_J^c(\bf k)}={M}_{\Omega_I^c(\bf k)}$.
\vskip 1mm
\noindent
{\rm (2)} A collection ${\bf M}=(M_{\bf k})_{{\bf k}\in \cM_{\snz}}$ of integers 
is called an $e$-BZ datum associated to $\nz$ if it satisfies the following 
conditions:
\vskip 1mm
{\rm (2-a)} For each finite interval $K$ in $\nz$, 
${\bf M}_K=(M_{\bf k})_{{\bf k}\in\cM_K^{\times}}$ is an 
element of $\mathcal{BZ}_K^{e}$.
\vskip 1mm
{\rm (2-b)} For each ${\bf k}\in \cM_{\snz}$, there exists a finite interval 
$I$ in $\nz$ such that 
\vskip 1mm
\hspace*{5mm}{\rm (2-i)} ${\bf k}\in\cM_{\snz}(I)$,
\vskip 1mm
\hspace*{4mm}{\rm (2-ii)} for every finite interval $J\supset I$, 
${M}_{\Omega_J(\bf k)}={M}_{\Omega_I(\bf k)}$.
\end{defn}
We denote by $\mathcal{BZ}_{\snz}$ ({\it resp}., $\mathcal{BZ}_{\snz}^e$) the
set of all c-BZ ({\it resp}., $e$-BZ) data associated to $\nz$.\\

For a given c-BZ datum ${\bf M}=(M_{\bf k})_{{\bf k}\in \cM_{\snz}^c}\in 
\mathcal{BZ}_{\snz}$, we define a new collection  
${\bf M}^*=(M^*_{\bf k})_{{\bf k}\in \cM_{\snz}}$ of integers by 
$M^*_{\bf k}:=M_{{\bf k}^c}.$
As in the case of finite intervals, ${\bf M}^*$ is an element of 
$\mathcal{BZ}_{\snz}^e$ and the map $\ast:\mathcal{BZ}_{\snz}\to
\mathcal{BZ}_{\snz}^e$ is a bijection. 
The inverse of this bijection is also denoted by $\ast$. We note that $\ast^2=
\mbox{id}$.\\

Let ${\bf M}=(M_{\bf k})_{{\bf k}\in \cM_{\snz}^c}\in \mathcal{BZ}_{\snz}$ be a 
c-BZ datum. For each complementary Maya diagram ${\bf k}\in 
\cM_{\snz}^c$, we denote by $\mbox{Int}^c({\bf M};{\bf k})$ the set of all 
finite intervals $I$ in $\nz$ satisfying condition (1-b) in the definition
above. 

For ${\bf M}\in \mathcal{BZ}_{\snz}$, we define another collection 
$\Theta({\bf M})=(\Theta(M)_{\bf k})_{{\bf k}\in \cM_{\snz}}$ of integers 
as follows. Fix ${\bf k}\in\cM_{\snz}$ and take the complement 
${\bf k}^c\in\cM_{\snz}^c$ of ${\bf k}$. 
Since ${\bf M}\in \mathcal{BZ}_{\snz}$, there exists a finite 
interval $I\in \mbox{Int}^c({\bf M};{\bf k}^c)$. Then we define 
$\Theta(M)_{\bf k}:=M_{(\mbox{res}_I^c)^{-1}(\mbox{res}_I({\bf k}))}$;
this definition does not depend on the choice of $I$. \\

Now, let ${\bf M}=(M_{\bf k})_{{\bf k}\in \cM_{\snz}}\in \cBZ_{\nz}^e$.
Note that ${\bf M}^*\in \cBZ_{\nz}$. We set 
$$\mbox{Int}^e({\bf M};{\bf k}):=\mbox{Int}^c({\bf M}^*;{\bf k}^c).
\eqno{(2.6.2)}$$
\begin{lemma}\label{lemma:int^e}
The set $\mbox{\rm Int}^e({\bf M};{\bf k})$ is identical to the set of all finite 
intervals $I$ in $\nz$ satisfying condition (2-b) in Definition \ref{defn:BZ}. 
\end{lemma}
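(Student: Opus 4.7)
The plan is to unwind both sides of the claimed equality of sets by tracking how the involution $\ast$ and complementation $c$ intertwine with the bijections $\mbox{res}_I$, $\mbox{res}_I^c$ and with the maps $\Omega_I$, $\Omega_I^c$. Once this is done, each of the defining conditions (1-i)--(1-ii) of $\mbox{Int}^c({\bf M}^*;{\bf k}^c)$ becomes, term by term, the corresponding condition (2-i)--(2-ii) of Definition \ref{defn:BZ} applied to ${\bf M}$ at ${\bf k}$.

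First, I would handle the index-set condition. By definition, $\cM_{\snz}^c(I) = (\cM_{\snz}(I))^c$, so the map $c$ restricts to a bijection between $\cM_{\snz}(I)$ and $\cM_{\snz}^c(I)$. Hence condition (1-i) for ${\bf M}^*$ at ${\bf k}^c$, namely ${\bf k}^c \in \cM_{\snz}^c(I)$, is literally the same as condition (2-i) for ${\bf M}$ at ${\bf k}$, namely ${\bf k} \in \cM_{\snz}(I)$.

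Next, I would prove the key compatibility $\Omega_I^c({\bf k}^c) = (\Omega_I({\bf k}))^c$ for every ${\bf k} \in \cM_{\snz}(I)$. This follows from the definitions: since $\mbox{res}_I^c = \mbox{res}_I \circ c$ and $c$ is an involution, one has $\mbox{res}_I^c({\bf k}^c) = \mbox{res}_I({\bf k})$, and then $(\mbox{res}_I^c)^{-1}(\widetilde{I}\setminus \mbox{res}_I({\bf k})) = c\bigl((\mbox{res}_I)^{-1}(\widetilde{I}\setminus \mbox{res}_I({\bf k}))\bigr)$, which is exactly $c(\Omega_I({\bf k})) = (\Omega_I({\bf k}))^c$. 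From this, together with the defining formula $M^*_{\bf m} = M_{{\bf m}^c}$, we deduce
\[
M^*_{\Omega_I^c({\bf k}^c)} \;=\; M_{(\Omega_I^c({\bf k}^c))^c} \;=\; M_{\Omega_I({\bf k})},
\]
and similarly $M^*_{\Omega_J^c({\bf k}^c)} = M_{\Omega_J({\bf k})}$ for every $J \supset I$. Therefore the stability condition (1-ii) for ${\bf M}^*$ at ${\bf k}^c$ is translated, under these identifications, into precisely the stability condition (2-ii) for ${\bf M}$ at ${\bf k}$.

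Combining the two equivalences yields the lemma. The argument is entirely formal—only a careful bookkeeping of how $\ast$, $c$, $\mbox{res}_I$, $\mbox{res}_I^c$, $\Omega_I$, $\Omega_I^c$ interact—so I do not expect a genuine obstacle; the only point requiring verification is the compatibility $\Omega_I^c \circ c = c \circ \Omega_I$, which is immediate from the definition of $\mbox{res}_I^c$ together with $c^2 = \mbox{id}$.
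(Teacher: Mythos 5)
Your proof is correct and follows essentially the same route as the paper's: reduce (1-i) to (2-i) via $\cM_{\snz}^c(I)=(\cM_{\snz}(I))^c$, and reduce (1-ii) to (2-ii) via the compatibility $\Omega_I^c({\bf k}^c)=(\Omega_I({\bf k}))^c$ together with $M^*_{\bf m}=M_{{\bf m}^c}$. The only difference is that you derive the compatibility $\Omega_I^c\circ c=c\circ\Omega_I$ directly from the definition $\mbox{res}_I^c=\mbox{res}_I\circ c$, whereas the paper simply cites it from Lemma 3.3.1 of [NSS2].
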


\begin{proof}
It suffices to show that $I\in \mbox{Int}^e({\bf M};{\bf k})=
\mbox{Int}^c({\bf M}^*;{\bf k}^c)$ if and only if $I$ satisfies 
condition (2-b). By the definition of $\cM_{\nz}^c(I)$, the condition 
${\bf k}^c\in \cM_{\nz}^c(I)$ is equivalent to the condition 
${\bf k}\in \cM_{\nz}(I)$. 
Suppose that  this condition is satisfied. Recall the following equation 
in Lemma 3.3.1 of \cite{NSS2}: 
$\Omega_I^c({\bf k}^c)=\bigl(\Omega_I({\bf k})\bigr)^c$ for 
${\bf k}\in \cM_{\nz}(I)$. 
From this, we deduce that
$$M^*_{\Omega_I^c({\bf k}^c)}=M_{\bigl(\Omega_I^c({\bf k}^c)\bigr)^c}
=M_{\Omega_I({\bf k})}.$$
Thus, condition (1-ii) for ${\bf M}^*$ and ${\bf k}^c$ is
equivalent to condition (2-ii) for ${\bf M}$ and ${\bf k}$.
This proves the lemma.
\end{proof} 
\subsection{Action of Kashiwara operators}
First, we define the action of the raising Kashiwara operators 
$\te_p$, $p\in\nz$, on $\mathcal{BZ}_{\snz}$. For ${\bf M}=
(M_{\bf k})_{{\bf k}\in \cM_{\snz}^c}\in \mathcal{BZ}_{\snz}$ and $p\in\nz$, 
set
$$\eps_p({\bf M}):=-\left(\Theta(M)_{{\bf k}(\Lambda_p)}+
\Theta(M)_{{\bf k}(\sigma_p\Lambda_p)}-\Theta(M)_{{\bf k}(\Lambda_{p+1})}
-\Theta(M)_{{\bf k}(\Lambda_{p-1})}\right).$$
Let $I\in \mbox{Int}^c({\bf M};{\bf k}(\Lambda_p)^c)\cap 
\mbox{Int}^c({\bf M};{\bf k}(\sigma_p\Lambda_p)^c)\cap
\mbox{Int}^c({\bf M};{\bf k}(\Lambda_{p+1})^c)\cap
\mbox{Int}^c({\bf M};{\bf k}(\Lambda_{p-1})^c).$ Then it is known that
$\eps_p({\bf M})=
\eps_p({\bf M}_I)$, and hence this is a nonnegative integer. 
 
If $\eps_p({\bf M})=0$, then we set $\te_p{\bf M}={0}$. Suppose that 
$\eps_p({\bf M})>0$. Then we define $\te_p{\bf M}=
(M_{\bf k}')_{{\bf k}\in \cM_{\snz}^c}$ as follows. For ${\bf k}\in \cM_{\snz}^c$,
take a finite interval $I$ in $\nz$ such that
${\bf k}\in\cM_{\snz}^c(I)$ and 
$I\in\mbox{Int}^c({\bf M};{\bf k}(\Lambda_p)^c)\cap 
\mbox{Int}^c({\bf M};{\bf k}(\sigma_p\Lambda_p)^c)\cap
\mbox{Int}^c({\bf M};{\bf k}(\Lambda_{p+1})^c)\cap
\mbox{Int}^c({\bf M};{\bf k}(\Lambda_{p-1})^c).$
Set
$$M'_{\bf k}:=\bigl(\te_p{\bf M}_I\bigr)_{\mbox{res}_I^c({\bf k})}.$$
Here we note that $\te_p{\bf M}_I$ is defined since 
${\bf M}_I\in\mathcal{BZ}_I$.\\

Second, let us define the action of the lowering Kashiwara operators 
$\tf_p,~p\in\nz,$ on $\mathcal{BZ}_{\snz}$. For ${\bf M}=
(M_{\bf k})_{{\bf k}\in \cM_{\snz}^c}\in \mathcal{BZ}_{\snz}$ and $p\in\nz$,
we define $\tf_p{\bf M}=
(M_{\bf k}'')_{{\bf k}\in \cM_{\snz}^c}$ as follows. For ${\bf k}\in \cM_{\snz}^c$,
take a finite interval $I$ in $\nz$ such that
${\bf k}\in\cM_{\snz}^c(I)$ and 
$I\in \mbox{Int}^c({\bf M};{\bf k}(\Lambda_p)^c)\cap 
\mbox{Int}^c({\bf M};{\bf k}(\sigma_p\Lambda_p)^c).$ 
Set
$$M''_{\bf k}:=\bigl(\tf_p{\bf M}_I\bigr)_{\mbox{res}_I^c({\bf k})}.$$

\begin{prop}[\cite{NSS}]
{\rm (1)} The definition above of $M'_{\bf k}$ {\rm (}{\it resp}., 
$M''_{\bf k}${\rm )} does not depend
on the choice of $I$. 
\\
{\rm (2)} For each ${\bf M}=
(M_{\bf k})_{{\bf k}\in \cM_{\snz}^c}\in \mathcal{BZ}_{\snz}$ and $p\in\nz$,
$\te_p{\bf M}$ {\rm (}{\it resp}., $\tf_p{\bf M}${\rm )} is contained in 
$\mathcal{BZ}_{\snz}\cup\{0\}$ {\rm (}{\it resp}., 
$\mathcal{BZ}_{\snz}${\rm )}.
\end{prop}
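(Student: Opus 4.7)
Both assertions rest on a \emph{compatibility lemma}: for nested finite intervals $I\subset J$ in $\nz$ with $p\in I$, and for any ${\bf N}\in \cBZ_J$, one should have $(\te_p{\bf N})_I=\te_p({\bf N}_I)$ and $(\tf_p{\bf N})_I=\tf_p({\bf N}_I)$, under the standard identification of $\cM_I^\times$ as a subset of $\cM_J^\times$ via ${\bf k}\mapsto [n_J+1,n_I]\cup{\bf k}$. First observe that the four Maya diagrams $[n+1,p\pm 1]$, $[n+1,p]$, $[n+1,p-1]\cup\{p+1\}$ entering the definition of $\eps_p$ lift coherently under this identification, so $\eps_p({\bf N}_I)=\eps_p({\bf N})$. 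When this common value is positive, the uniqueness in Proposition \ref{prop:ord-action}(1) reduces $\te_p$-compatibility to verifying that $(\te_p{\bf N})_I$ meets the two conditions characterizing $\te_p({\bf N}_I)$; this holds because ${\bf k}\in\cM_I^\times\setminus\cM_I^\times(p)$ lifts into $\cM_J^\times\setminus\cM_J^\times(p)$, on which $\te_p$ acts as the identity at the larger level. For $\tf_p$-compatibility we invoke the explicit formula of Proposition \ref{prop:ord-AM}: the quantity $c_p$ depends only on the same four values, and the transposition $s_p$ commutes with the inclusion, so the formula transports verbatim.

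Part (1) is then immediate. Given two admissible intervals $I_1,I_2$ for ${\bf k}\in\cM_{\snz}^c$ in the definition of $M'_{\bf k}$, one chooses $J\supset I_1\cup I_2$ lying in the intersection $\mbox{Int}^c({\bf M};{\bf k}(\Lambda_{p-1})^c)\cap\mbox{Int}^c({\bf M};{\bf k}(\Lambda_p)^c)\cap\mbox{Int}^c({\bf M};{\bf k}(\sigma_p\Lambda_p)^c)\cap\mbox{Int}^c({\bf M};{\bf k}(\Lambda_{p+1})^c)$; such a $J$ exists because each of these sets is cofinal in the poset of finite intervals. The compatibility lemma applied to $I_\alpha\subset J$ gives $(\te_p{\bf M}_J)_{I_\alpha}=\te_p{\bf M}_{I_\alpha}$, so evaluation at $\mbox{res}_{I_\alpha}^c({\bf k})$ shows that the value $M'_{\bf k}$ computed via $I_\alpha$ agrees with the value computed via $J$, for $\alpha=1,2$. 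The independence argument for $M''_{\bf k}$ is identical.

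For part (2), condition (1-a) follows at once: for a finite interval $K$, pick any admissible $I\supset K$ and note that $(\te_p{\bf M})_K=(\te_p{\bf M}_I)_K$, which lies in $\cBZ_K$ since $\te_p{\bf M}_I\in\cBZ_I$ by Proposition \ref{prop:ord-action} and restriction from $\cBZ_I$ to $\cBZ_K$ preserves the $w_0$-BZ property. For condition (1-b), one fixes ${\bf k}\in\cM_{\snz}^c$ and chooses an $I$ in the above fourfold intersection that also witnesses (1-b-i) and (1-b-ii) for ${\bf k}$ under ${\bf M}$; then for any $J\supset I$ the compatibility lemma together with the stabilization already established for ${\bf M}$ forces $(\te_p{\bf M})_{\Omega_J^c({\bf k})}=(\te_p{\bf M})_{\Omega_I^c({\bf k})}$. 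The $\tf_p$-case is parallel. The main technical obstacle throughout is the bookkeeping in the compatibility lemma, reconciling the various identifications $\mbox{res}_I$, $\mbox{res}_I^c$, $\Omega_I^c$, and $\mbox{res}_K^I$ so that passage between indexing by $\cM_{\snz}^c$ and by $\cM_I^\times$ commutes with Kashiwara operators; no conceptual input beyond Propositions \ref{prop:ord-action} and \ref{prop:ord-AM} is required.
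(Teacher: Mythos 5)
This proposition is quoted from \cite{NSS} and the present paper gives no proof of it, so there is nothing in-text to compare against; I can only assess your argument on its own terms. Your architecture (a restriction-compatibility lemma for nested admissible intervals, then a common refinement $J\supset I_1\cup I_2$) is the natural one and is surely the shape of the argument in \cite{NSS}. But as written it has a genuine gap, concentrated in one load-bearing assertion: that the left-extension restriction ${\bf k}\mapsto [n_J+1,n_I]\cup{\bf k}$ sends $\cBZ_J$ to $\cBZ_I$ (``restriction from $\cBZ_I$ to $\cBZ_K$ preserves the $w_0$-BZ property''). This is false. Unlike the $e$-normalization, the $w_0$-normalization is not preserved under left-extension: for ${\bf N}\in\cBZ_J$ the value $N_{[n_J+1,n_I]\cup[i+1,n_I+m_I+1]}$, which must vanish for $({\bf N})_I$ to satisfy {\rm (BZ-0)} on $I$, is not one of the normalized components of ${\bf N}$ and is nonzero in general (likewise the boundary instances of {\rm (BZ-1)}, where the conventions $M_{\phi}=M_{\widetilde{I}}=0$ enter, do not transport for free). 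This matters twice: first, your $\te_p$-compatibility step invokes the \emph{uniqueness} in Proposition \ref{prop:ord-action}(1), which is uniqueness \emph{among $w_0$-BZ data}, so you must first know that $(\te_p{\bf M}_J)_I$ is a $w_0$-BZ datum --- precisely the statement you have not established; second, your one-line verification of condition (1-a) for $\te_p{\bf M}$ rests on the same false claim. The gap is fillable, but only by using that ${\bf M}$ is a c-BZ datum: the diagrams $[n_J+1,n_I]\cup[i+1,n_I+m_I+1]$ lie outside $\cM_J^{\times}(p)$, hence are untouched by $\te_p$ and $\tf_p$, and their values vanish because condition (1-a) holds for ${\bf M}$ at the \emph{smaller} interval $I$ (not because ${\bf M}_J\in\cBZ_J$); the boundary cases of {\rm (BZ-1)} then follow from the nonpositivity of all components. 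None of this appears in your write-up, and the lemma as stated ``for any ${\bf N}\in\cBZ_J$'' is ill-posed since ${\bf N}_I\notin\cBZ_I$ in general. (Your $\tf_p$ case, by contrast, is sound: formula (2.3.1) expresses every component of $\tf_p{\bf M}_I$ directly in terms of components of ${\bf M}_I$ and the constant $c_p$, so compatibility is a computation needing no uniqueness statement.)

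A second, smaller gap sits in your treatment of condition (1-b) for part (2). The stabilization (1-ii) compares values at $\Omega_J^c({\bf k})$ for varying $J$, and under the identifications these correspond to \emph{right}-extensions of $\widetilde{I}\setminus\mbox{res}_I^c({\bf k})$ (together with the complementation twist), not to the left-extensions your compatibility lemma controls. Saying ``the compatibility lemma together with the stabilization already established for ${\bf M}$ forces'' the conclusion skips the actual content. The clean route is to observe that whether $\Omega_J^c({\bf k})$ lies in the set of diagrams modified by $\te_p$ or $\tf_p$ is independent of $J$; in the unmodified case stabilization is inherited from ${\bf M}$, and in the modified case one needs either the explicit min-formula (for $\tf_p$, using that $M_{\Omega_J^c({\bf k})}$ and $M_{\Omega_J^c(\sigma_p{\bf k})}$ both stabilize) or a separate argument for $\te_p$. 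You should make this case division explicit rather than folding it into the left-extension lemma.
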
 

For ${\bf M}\in \cBZ_{\snz}^e$, set $\eps_p^*({\bf M}):=\eps_p({\bf M}^*),~
p\in\nz$. We define the Kashiwara operators $\te_p^*$ and $\tf_p^*$ on 
$\cBZ_{\snz}^e$ by
$$\te_p^*{\bf M}:=\begin{cases}
\bigl(\te_p({\bf M}^*)\bigr)^*& \mbox{if }\eps_p^*({\bf M})>0,\\
0 & \mbox{if }\eps_p^*({\bf M})=0,
\end{cases}\quad\mbox{and}\quad 
\tf_p^*{\bf M}:=\bigl(\tf_p({\bf M}^*)\bigr)^*.$$
The following corollary is easily obtained from the proposition above.
\begin{cor}
For each ${\bf M}\in\mathcal{BZ}_{\snz}^e$ and $p\in\nz$,
$\te_p^*{\bf M}$ {\rm (}{\it resp}., $\tf_p^*{\bf M}${\rm )} is contained in 
$\mathcal{BZ}_{\snz}^e\cup\{0\}$ {\rm (}{\it resp}., 
$\mathcal{BZ}_{\snz}^e${\rm )}.
\end{cor}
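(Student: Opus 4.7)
The plan is to deduce this corollary directly from the preceding proposition by transporting the statement across the bijection $\ast:\cBZ_{\snz}\overset{\sim}{\to}\cBZ_{\snz}^e$. Recall from Section 2.6 that this map is defined by $M^*_{\bf k}:=M_{{\bf k}^c}$ and satisfies $\ast^2=\mathrm{id}$; in particular it is a genuine bijection between the two sets. I would extend $\ast$ to the formal symbol $0$ by $0^*:=0$, so that it becomes a bijection between $\cBZ_{\snz}\cup\{0\}$ and $\cBZ_{\snz}^e\cup\{0\}$.

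First, I take ${\bf M}\in\cBZ_{\snz}^e$ and $p\in\nz$, and form ${\bf M}^*\in\cBZ_{\snz}$. For the $\tf_p^*$ statement, by the definition $\tf_p^*{\bf M}=\bigl(\tf_p({\bf M}^*)\bigr)^*$. The preceding proposition guarantees that $\tf_p({\bf M}^*)\in\cBZ_{\snz}$, and applying the bijection $\ast$ puts its image in $\cBZ_{\snz}^e$, which is exactly what is claimed.

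For the $\te_p^*$ statement I split into cases according to the value of $\eps_p^*({\bf M})=\eps_p({\bf M}^*)$. If $\eps_p^*({\bf M})=0$, then by definition $\te_p^*{\bf M}=0\in\cBZ_{\snz}^e\cup\{0\}$, so there is nothing to do. If $\eps_p^*({\bf M})>0$, then $\eps_p({\bf M}^*)>0$, so by the preceding proposition $\te_p({\bf M}^*)\in\cBZ_{\snz}$; applying $\ast$ gives $\te_p^*{\bf M}=\bigl(\te_p({\bf M}^*)\bigr)^*\in\cBZ_{\snz}^e$, again as required.

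I do not anticipate any obstacle here: all the work has been done in establishing (i) that $\ast$ is well-defined and involutive between $\cBZ_{\snz}$ and $\cBZ_{\snz}^e$, and (ii) that $\te_p,\tf_p$ preserve $\cBZ_{\snz}\cup\{0\}$ and $\cBZ_{\snz}$ respectively. The corollary is merely the image of (ii) under the bijection (i), which is why the authors assert that it is ``easily obtained''. The only minor point worth recording is the convention $0^*=0$, which is implicit in the very definition of $\te_p^*$ adopted just before the statement.
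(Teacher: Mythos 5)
Your argument is correct and is precisely the one the paper intends: the authors give no written proof, merely asserting the corollary is ``easily obtained from the proposition above,'' and your transport of that proposition across the involution $\ast$ (with the convention $0^*=0$ and the case split on $\eps_p^*({\bf M})$) is exactly the omitted reasoning.
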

\subsection{BZ data of type $A_{l-1}^{(1)}$}
Fix $l\in \nz_{\geq 3}$. Let $\widehat{\gtg}$ be the affine Lie algebra of 
type $A_{l-1}^{(1)}$, 
$\widehat{\gth}$ the Cartan subalgebra of $\widehat{\gtg}$,
$\widehat{h}_i\in\widehat{\gth},~i\in\widehat{I}:=\{0,1,\cdots,l-1\}$, 
the simple coroots of $\widehat{\gtg}$, and 
$\widehat{\alpha}_i\in \widehat{\gth}^*:=
\homc(\widehat{\gth},\nc),~i\in\widehat{I}$, the simple roots of 
$\widehat{\gtg}$. 
We set $\widehat{Q}^+:=\sum_i\nz_{\geq 0}\widehat{\alpha}_i$
and $\widehat{Q}^-:=-\widehat{Q}^+$.
Note that $\langle \widehat{h}_i,\widehat{\alpha}_j\rangle
=\widehat{a}_{ij}$ for $i,j\in\widehat{I}$. Here, $\langle\cdot,\cdot\rangle:
\widehat{\gth}\times \widehat{\gth}^*\to \nc$ is the canonical pairing, and
$\widehat{A}=(\widehat{a}_{ij})_{i,j\in\widehat{I}}$ is the Cartan matrix
of type $A_{l-1}^{(1)}$  with index set $\widehat{I}$; the
entries $\widehat{a}_{ij}$ are given by
$$\widehat{a}_{ij}:=\left\{\begin{array}{ll}
2 & \mbox{if }i=j,\\
-1 & \mbox{if }|i-j|=1\mbox{ or }l-1,\\
0 & \mbox{otherwise}.
\end{array}\right.$$ 

Now, consider a bijection $\tau:\nz\to\nz$ given by $\tau(j):=j+1$ for 
$j\in\nz$. It induces an automorphism $\tau:\gtt^*\overset{\sim}{\to}\gtt^*$
such that $\tau(\Lambda_j)=\Lambda_{j+1}$ and $\tau(\Lambda_j^c)=
\Lambda_{j+1}^c$ for all $j\in\nz$. 
It follows that $\tau\circ \sigma_j=\sigma_{j+1}\circ \tau$. 
Also, for $i\in \widehat{I}$, define a family $S_i$ of automorphism of 
$\gtt^*$ by
$$S_i:=\{\sigma_{i+al}~|~a\in \nz\}.$$
Since $l\geq 3$,  
$\sigma_{j_1}\sigma_{j_2}=\sigma_{j_2}\sigma_{j_1}$
for all $\sigma_{j_1},\sigma_{j_2}\in S_i$, and for a fixed ${\bf k}\in \cM_{\nz}$
or $\cM_{\snz}^c$, there exists a finite subset $S_i({\bf k})\subset S_i$ 
such that $\sigma_j({\bf k})={\bf k}$ for every$\sigma_j\in S_i\setminus 
S_i({\bf k})$. 
Therefore, we can define an infinite product 
$\widehat{\sigma}_i:=\prod_{\sigma_j\in S_i}\sigma_j$ of operators acting on 
$\cM_{\nz}$ and $\cM_{\snz}^c$. Note that we have 
$\tau \circ \widehat{\sigma}_i
=\widehat{\sigma}_{i+1}\circ \tau$, where we regard $i\in \widehat{I}$ as an 
element of $\nz/l\nz$. 

Set $\sigma:=\tau^l$. For ${\bf M}\in \cBZ_{\snz}$, we define new 
collections $\sigma({\bf M})$ and 
$\sigma^{-1}({\bf M})$ of integers indexed by $\cM_{\snz}^c$ by
$\sigma({\bf M})_{{\bf k}}:={\bf M}_{\sigma^{-1}({\bf k})}$ and 
$\sigma^{-1}({\bf M})_{{\bf k}}:={\bf M}_{\sigma({\bf k})}$ for each
${\bf k}\in \cM_{\snz}^c$, respectively. It is shown in \cite{NSS} that
$\sigma({\bf M})$ and $\sigma^{-1}({\bf M})$ are both elements of 
$\cBZ_{\snz}$. 

Similarly, for ${\bf M}\in \cBZ_{\snz}^{e}$, we can
define new collections $\sigma^{\pm}({\bf M})$, and prove that they are
both elements of $\cBZ_{\snz}^{e}$.
\begin{lemma}[\cite{NSS}]
{\rm (1)} On $\cBZ_{\snz}$, we have $\Theta\circ \sigma=\sigma\circ\Theta$.
\vskip 1mm
\noindent
{\rm (2)} For ${\bf M}\in \cBZ_{\snz}$ and $p\in\nz$, 
$\eps_p(\sigma({\bf M}))=\eps_{\sigma^{-1}(p)}({\bf M})$.
\vskip 1mm
\noindent
{\rm (3)} The equalities $\sigma\circ\te_p=\te_{\sigma(p)}\circ\sigma$ and 
$\sigma\circ\tf_p=\tf_{\sigma(p)}\circ\sigma$ hold on $\cBZ_{\snz}\cup\{{0}\}$
for all $p\in\nz$. Here it is understood that $\sigma({0})={0}$.
\end{lemma}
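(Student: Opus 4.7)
The plan is to reduce all three parts to the single observation that $\sigma=\tau^l$ is translation of $\nz$ by $l$, and that every ingredient in the construction of $\cBZ_{\snz}$ is manifestly natural under this translation. Concretely, for a finite interval $I$, the translation $\sigma$ induces a bijection $\cM_I^{\times}\overset{\sim}{\to}\cM_{\sigma(I)}^{\times}$ which intertwines $\mbox{res}_I$ with $\mbox{res}_{\sigma(I)}$, intertwines $\Omega_I^c$ with $\Omega_{\sigma(I)}^c$, and induces a crystal isomorphism $\cBZ_I\overset{\sim}{\to}\cBZ_{\sigma(I)}$ carrying ${\bf M}_I$ to $\sigma({\bf M})_{\sigma(I)}$ and sending $\te_p,\tf_p$ to $\te_{p+l},\tf_{p+l}$. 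The first step is to record these elementary naturality statements, in particular that $I\in\mbox{Int}^c({\bf M};{\bf k}^c)$ if and only if $\sigma(I)\in\mbox{Int}^c(\sigma({\bf M});\sigma({\bf k})^c)$.

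For part (1), I would fix ${\bf k}\in\cM_{\snz}$ and compute both sides of $\Theta(\sigma({\bf M}))_{\bf k}=\Theta({\bf M})_{\sigma^{-1}({\bf k})}$ from the defining formula for $\Theta$ using compatible witnesses: pick any $I\in\mbox{Int}^c({\bf M};(\sigma^{-1}({\bf k}))^c)$, and use $\sigma(I)$ on the left, which is a valid witness by the naturality above. The two resulting entries of ${\bf M}$ and $\sigma({\bf M})$ are then identified by the very definition of $\sigma({\bf M})$. Part (2) is an immediate corollary: the defining formula for $\eps_p(\sigma({\bf M}))$ involves $\Theta(\sigma({\bf M}))_{\bf k}$ for ${\bf k}\in\{{\bf k}(\Lambda_p),{\bf k}(\sigma_p\Lambda_p),{\bf k}(\Lambda_{p\pm 1})\}$, and applying (1) rewrites each as $\Theta({\bf M})_{\sigma^{-1}({\bf k})}$; since $\sigma^{-1}$ sends this quadruple to the analogous quadruple for $p-l=\sigma^{-1}(p)$, the result is precisely $\eps_{\sigma^{-1}(p)}({\bf M})$.

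For part (3), I would use the interval-local description of $\te_p$ and $\tf_p$ recalled in Section 2.7. Given ${\bf k}\in\cM_{\snz}^c$, choose a finite interval $I$ containing ${\bf k}$ such that both $I$ and $\sigma(I)$ lie in the intersections of $\mbox{Int}^c$-sets making $(\te_p{\bf M})_{\bf k}$ and $(\te_{\sigma(p)}\sigma({\bf M}))_{\bf k}$ computable. Then $(\te_{\sigma(p)}\sigma({\bf M}))_{\bf k}=(\te_{\sigma(p)}\sigma({\bf M})_{\sigma(I)})_{\mbox{res}_{\sigma(I)}^c({\bf k})}$, while $\sigma(\te_p{\bf M})_{\bf k}=(\te_p{\bf M}_I)_{\mbox{res}_I^c(\sigma^{-1}({\bf k}))}$, and these agree because the shift isomorphism $\cBZ_I\overset{\sim}{\to}\cBZ_{\sigma(I)}$ intertwines $\te_p$ with $\te_{\sigma(p)}$ (directly visible from the formulas in Propositions \ref{prop:ord-action} and \ref{prop:ord-AM}, whose ingredients depend only on the combinatorics of $\cM_I^{\times}$ transported by $\sigma$) and intertwines $\mbox{res}_I^c\circ\sigma^{-1}$ with $\mbox{res}_{\sigma(I)}^c$. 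The same argument handles $\tf_p$. The main obstacle throughout is the bookkeeping of choosing finite-interval witnesses compatibly on both sides of each identity; this is handled uniformly by the naturality recorded in the first paragraph.
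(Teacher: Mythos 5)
This lemma is quoted from \cite{NSS} and the present paper gives no proof of it, so there is nothing here to compare against line by line; your argument is the natural translation-equivariance proof and is correct. The key points you rely on --- that the shift $\sigma=\tau^l$ carries $\cM_I^{\times}$ to $\cM_{\sigma(I)}^{\times}$ compatibly with $\mbox{res}_I$, $\Omega_I^c$, the defining inequalities of $\cBZ_I$, and the interval-local formulas for $\te_p$, $\tf_p$, and hence that $I\in\mbox{Int}^c({\bf M};{\bf k}^c)$ if and only if $\sigma(I)\in\mbox{Int}^c(\sigma({\bf M});\sigma({\bf k}^c))$ --- are all elementary and suffice for parts (1)--(3). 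The only point worth making explicit in part (3) is the degenerate case: when $\eps_p({\bf M})=0$ one has $\te_p{\bf M}=0$, and part (2) guarantees $\eps_{\sigma(p)}(\sigma({\bf M}))=0$ as well, so both sides of $\sigma\circ\te_p=\te_{\sigma(p)}\circ\sigma$ equal ${0}$ under the convention $\sigma({0})={0}$.
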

\begin{defn}\label{defn:affineBZ}
Set
$$\cBZ_{\snz}^{\sigma}:=\{{\bf M}\in \cBZ_{\snz}~|~
\sigma({\bf M})={\bf M}\}\quad\mbox{and}\quad
(\cBZ_{\snz}^e)^{\sigma}:=\{{\bf M}\in \cBZ_{\snz}^e~|~
\sigma({\bf M})={\bf M}\}.$$
An element ${\bf M}$ of $\cBZ_{\snz}^{\sigma}$ {\rm (}{\it resp}., 
$(\cBZ_{\snz}^e)^{\sigma}${\rm )}  
is called a $c$-BZ {\rm (}{\it resp}., 
$e$-BZ{\rm )} datum of type $A_{l-1}^{(1)}$.
\end{defn}
\subsection{Crystal structure on $\cBZ_{\nz}^{\sigma}$}
Now we define a crystal structure on $\cBZ_{\snz}^{\sigma}$, 
following \cite{NSS}. For ${\bf M}\in \cBZ_{\snz}^{\sigma}$ and $p\in
\widehat{I}$, we set
$$\mbox{wt}({\bf M}):=\sum_{p\in\widehat{I}}\Theta({\bf M})_{{\bf k}(\Lambda_p)}
\widehat{\alpha}_p,\quad \widehat{\eps}_p({\bf M}):=\eps_p({\bf M}),\quad
\widehat{\vphi}_p({\bf M}):=\widehat{\eps}_p({\bf M})+
\langle \widehat{h}_p,\mbox{wt}({\bf M})\rangle.$$

In order to define the action of Kashiwara operators, we need the following.
\begin{lemma}[\cite{NSS}]\label{lemma:L}
Let $q,q'\in \nz$, with $|q-q'|\geq 2$. Then, we have $\te_q\te_{q'}
=\te_{q'}\te_q$, $\tf_q\tf_{q'}=\tf_{q'}\tf_q$, and $\te_q\tf_{q'}=\tf_{q'}\te_q$, 
as operators on $\cBZ_{\snz}\cup\{{0}\}$.
\end{lemma}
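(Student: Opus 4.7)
The plan is to reduce the statement from the projective-limit setting on $\cBZ_{\snz}$ to the finite-interval setting on $\cBZ_I$, where a direct verification is feasible. By the construction of $\te_p$ and $\tf_p$ on $\cBZ_{\snz}$ in Section~2.7 (via restriction to $\cBZ_I$'s), for any ${\bf M}\in\cBZ_{\snz}$ and any ${\bf k}\in\cM_{\snz}^c$ one can choose a single finite interval $I\subset\nz$ containing $q$, $q'$ together with $q\pm 1$ and $q'\pm 1$ such that the ${\bf k}$-components of $\te_q\te_{q'}{\bf M}$, $\te_{q'}\te_q{\bf M}$, $\tf_q\tf_{q'}{\bf M}$, $\tf_{q'}\tf_q{\bf M}$, $\te_q\tf_{q'}{\bf M}$ and $\tf_{q'}\te_q{\bf M}$ are all computed from ${\bf M}_I$ via the corresponding compositions of the finite-case operators on $\cBZ_I$. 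The key point is that the constraints of the form $\mbox{Int}^c({\bf N};\cdot)$ used for each single application of $\te_p$ or $\tf_p$ remain in force after $I$ is enlarged, so a single sufficiently large interval handles all the compositions at once.

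It then suffices to verify the three commutation relations on $\cBZ_I$ for $q,q'\in I$ with $|q-q'|\geq 2$. I would transport the question to the Lusztig-datum side. By Theorem~\ref{thm:finA}, the composite $\Psi_I\circ\ast:\cBZ_I\to\cB_I$ carries the ordinary crystal structure on $\cBZ_I$ to the $\ast$-crystal structure on $\cB_I$. Inspecting the formulas in Section~2.4, the quantity $\eps_p^*({\bf a})$ and the operators $\te_p^*$, $\tf_p^*$ each depend on, and each modify, only the rows $p$ and $p+1$ of the matrix ${\bf a}=(a_{k,l})$: the sums $A^{\ast(p)}_l({\bf a})$ involve only $a_{p,\cdot}$ and $a_{p+1,\cdot}$, and the entry modifications in ${\bf a}^{(3)}$, ${\bf a}^{(4)}$ occur only at $(p,\cdot)$ and $(p+1,\cdot)$. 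For $|q-q'|\geq 2$, the row-index sets $\{q,q+1\}$ and $\{q',q'+1\}$ are disjoint, so the action of the pair $(\te_q^*,\tf_q^*)$ is completely independent of that of $(\te_{q'}^*,\tf_{q'}^*)$. The three desired commutation relations then follow at once from this independence.

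The main obstacle I foresee is the bookkeeping in the reduction step: one must verify that a single finite interval $I$ truly does work simultaneously for both orders of composition on each side of the claimed identities, and that the finite-level commutativity propagates correctly through the restriction procedure back to $\cBZ_{\snz}$. Concretely, after applying $\te_{q'}$ or $\tf_{q'}$ to ${\bf M}$ the appropriate $\mbox{Int}^c$-conditions must still be satisfied by $I$ in order to apply $\te_q$ or $\tf_q$; this is what forces $I$ to be enlarged enough in advance. Once this cofinality is in place, the finite-level verification via Lusztig data is transparent, since the row disjointness makes the commutations manifest.
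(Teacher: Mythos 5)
The paper does not actually prove this lemma: it is quoted verbatim from \cite{NSS}, so there is no in-paper argument to compare against. Judged on its own terms, your proposal is correct and gives a plausible reconstruction. The two-step structure is the right one. For the reduction step, the point you need (and correctly identify) is that each set $\mbox{Int}^c({\bf N};{\bf m})$ is nonempty and upward-closed under inclusion of intervals, so that a single sufficiently large $I$ lies simultaneously in all the finitely many $\mbox{Int}^c$-sets attached to ${\bf M}$, to $\te_{q'}{\bf M}$ (resp.\ $\tf_{q'}{\bf M}$), and to the Maya diagram ${\bf k}$ at hand; then $(\te_{q'}{\bf M})_I=\te_{q'}({\bf M}_I)$ and both orders of composition are computed inside $\cBZ_I$. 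For the finite-level step, your transport to Lusztig data is sound, with one cosmetic slip: the intertwining map is $\Psi_I^{-1}\circ\ast:\cBZ_I\to\cB_I$ (carrying the ordinary structure on $\cBZ_I$ to the $\ast$-structure on $\cB_I$), not $\Psi_I\circ\ast$. Granting that, the observation that $\eps_q^*$, $\te_q^*$, $\tf_q^*$ read and modify only rows $q,q+1$ of the array, which are disjoint from rows $q',q'+1$ when $|q-q'|\geq 2$, does give all three commutation relations at once, including the degenerate cases where one side is $0$ (since $\eps_q^*$ is unchanged by the $q'$-operators). An alternative, more self-contained finite-level argument stays entirely inside $\cBZ_I$: either compute $(\tf_q\tf_{q'}{\bf M})_{\bf k}$ directly from the Anderson--Mirkovi\'c-type formula (2.3.1) and check symmetry of the iterated minima on $\cM_I^{\times}(q)\cap\cM_I^{\times}(q')$, or show that both orders of composition satisfy the characterizing conditions of Proposition \ref{prop:ord-action} (this needs a short extra uniqueness argument for the two-index version). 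Your route buys cleanliness at the cost of invoking the full strength of Theorem \ref{thm:finA}.
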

For ${\bf M}\in \cBZ_{\snz}^{\sigma}$ and $p\in\widehat{I}$, 
we define $\hte_p{\bf M}$ and $\htf_p{\bf M}$ as follows.
If $\widehat{\eps}_p({\bf M})=0$, then we set $\hte_p{\bf M}:={0}$. If
$\widehat{\eps}_p({\bf M})>0$, then we define a new collection $\hte_p{\bf M}=
(M'_{\bf k})$ of integers indexed by $\cM_{\snz}^c$ by 
$$M'_{\bf k}:=\bigl(e_{L({\bf k},p)}{\bf M}\bigr)_{\bf k}\quad\mbox{for each }
{\bf k}\in \cM_{\snz}^c.$$
Here, $L({\bf k},p):=\{q\in p+l\nz~|~q\in {\bf k}\mbox{ and }q+1\not\in 
{\bf k}\}$ and 
$e_{L({\bf k},p)}:=\prod_{q\in L({\bf k},p)}\te_q$. By the definition, 
$L({\bf k},p)$ is a finite set such that $|q-q'|>2$ for all $q,q'\in 
L({\bf k},p)$ with $q\ne q'$. Therefore, by Lemma \ref{lemma:L}, 
$e_{L({\bf k},p)}$ is a well-defined operator on $\cBZ_{\snz}$. 

A collection $\htf_p{\bf M}=(M''_{\bf k})$ of integers indexed by $\cM_{\snz}^c$ 
is defined by 
$$M''_{\bf k}:=\bigl(f_{L({\bf k},p)}{\bf M}\bigr)_{\bf k}\quad\mbox{for each }
{\bf k}\in \cM_{\snz}^c,$$
where $f_{L({\bf k},p)}:=\prod_{q\in L({\bf k},p)}\tf_q$. By the same reasoning
as above, we see that $f_{L({\bf k},p)}$ is a well-defined operator on 
$\cBZ_{\snz}$.
\begin{prop}[\cite{NSS}]
{\rm (1)} We have $\hte_p{\bf M}\in \cBZ_{\snz}^{\sigma}\cup\{{0}\}$ 
and $\htf_p{\bf M}\in \cBZ_{\snz}^{\sigma}$.
\vskip 1mm
\noindent
{\rm (2)} The set $\cBZ_{\snz}^{\sigma}$, equipped with the maps
$\mbox{\rm wt},\widehat{\eps}_p,\widehat{\vphi}_p,\hte_p,\htf_p$, is a 
$U_q(\widehat{\gtsl}_l)$-crystal.
\end{prop}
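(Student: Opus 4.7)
The plan is to verify in order: (i) $\hte_p {\bf M}$ and $\htf_p {\bf M}$ lie in $\cBZ_{\snz}$; (ii) they are $\sigma$-invariant; (iii) the crystal axioms hold.

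For (i), well-definedness of $e_{L({\bf k},p)}$ and $f_{L({\bf k},p)}$ is immediate: $L({\bf k},p) \subset p + l\nz$ has all pairs at distance $\geq l \geq 3$, so pairwise commutativity follows from Lemma \ref{lemma:L}, and finiteness follows from the fact that any ${\bf k} \in \cM_{\snz}^c$ agrees with $\nz_{>r}$ outside a bounded set and hence has only finitely many ``ascents'' at positions congruent to $p$ modulo $l$. The delicate point is that $\hte_p{\bf M}$ glues together ${\bf k}$-components of a priori different c-BZ data $e_{L({\bf k},p)}{\bf M}$, so one must check that the glued collection still satisfies (BZ-1) and (BZ-2). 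I would reduce this to a large finite interval $J'$ chosen so that every $L({\bf k},p)$ with ${\bf k} \in \cM_{\snz}^c(J)$ (for a prescribed $J \subset J'$) is contained in $\widetilde{J'}$, and then use the fact that each $\te_q$ changes $M_{{\bf k}'}$ only when ${\bf k}'$ has an ascent at $q$ to verify the BZ relations by case analysis on the four Maya diagrams entering each relation.

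For (ii), suppose $\sigma({\bf M}) = {\bf M}$. A direct check shows $L(\sigma({\bf k}), p) = \sigma(L({\bf k}, p))$, and the intertwining $\sigma \circ \te_q = \te_{\sigma(q)} \circ \sigma$ from the lemma immediately preceding Definition \ref{defn:affineBZ} yields $e_{L(\sigma({\bf k}),p)} = \sigma \circ e_{L({\bf k},p)} \circ \sigma^{-1}$. Combined with $\sigma^{-1}{\bf M} = {\bf M}$, this gives
\[
(\hte_p {\bf M})_{\sigma({\bf k})}
= (\sigma \circ e_{L({\bf k},p)}\, {\bf M})_{\sigma({\bf k})}
= (e_{L({\bf k},p)}\, {\bf M})_{{\bf k}}
= (\hte_p {\bf M})_{{\bf k}},
\]
and the same argument works for $\htf_p$. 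For (iii), the weight and $\widehat{\eps}_p$ identities, along with the inverse relation $\hte_p \htf_p {\bf M} = {\bf M}$ when $\widehat{\eps}_p(\htf_p {\bf M})>0$, follow by localizing to the finite interval $J'$ from (i) and invoking the crystal structure on $\cBZ_{J'}$. The weight shift $\mbox{wt}(\hte_p {\bf M}) = \mbox{wt}({\bf M}) + \widehat{\alpha}_p$ is obtained by tracking how the $\Theta({\bf M})_{{\bf k}(\Lambda_q)}$ change, noting that each ascent in $L({\bf k}(\Lambda_p),p)$ contributes the same affine root $\widehat{\alpha}_p$.

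The main obstacle is (i): verifying that the collection $\hte_p {\bf M}$, built by applying different operators $e_{L({\bf k},p)}$ to different components, assembles into a bona fide c-BZ datum. For a single instance of (BZ-1) or (BZ-2), the four Maya diagrams involved may have different sets $L$. The remedy is to apply, by commutativity, a common finite product (the union of the relevant $L$'s) of $\te_q$'s, thereby reducing the verification to the known (BZ-1) and (BZ-2) axioms for the intermediate c-BZ datum; the ``extra'' $\te_q$'s act as the identity on components where no ascent is present.
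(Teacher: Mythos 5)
Your reduction of the finite Pl\"ucker-type relations to a single operator $e_{L'}$, with $L'$ the union of the sets $L({\bf k}_i,p)$ attached to the finitely many Maya diagrams entering one instance of (BZ-1) or (BZ-2), is sound: by Proposition \ref{prop:ord-action} each extra $\te_q$ fixes every component without an ascent at $q$, so all the relevant components of $\hte_p{\bf M}$ agree with those of the genuine c-BZ datum $e_{L'}{\bf M}$. That settles condition (1-a) of Definition \ref{defn:BZ}, and your $\sigma$-invariance argument is also correct. However, membership in $\cBZ_{\snz}$ also requires condition (1-b) --- the stabilization $M_{\Omega_J^c({\bf k})}=M_{\Omega_I^c({\bf k})}$ for all $J\supset I$ --- and your proposal never addresses it. Your mechanism ``the extra $\te_q$'s act as the identity on components where no ascent is present'' does not cover it: as $J$ grows, the Maya diagram $\Omega_J^c({\bf k})$ itself changes and acquires new ascents at the boundary indices of $J$, so the new operators indexed by $L(\Omega_J^c({\bf k}),p)\setminus L(\Omega_I^c({\bf k}),p)$ act on a component where an ascent \emph{is} present, and there is no a priori reason for them to fix $M_{\Omega_J^c({\bf k})}$.

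This is exactly where the real work lies. In the parallel statement that this paper proves in detail (Proposition \ref{prop:well-def-aff}, for the ordinary structure on $(\cBZ_{\nz}^e)^{\sigma}$), the analogue of (1-a) is dispatched in one line and the entire effort goes into (2-b), via Lemma \ref{lemma:well-def-aff}: using the quiver realization and the vanishing of the composite maps $B^L_{\sigma(n_I\to n_K)}$, one shows that the boundary operators $\te_{n_J}$, $\te_{n_J+m_J+1}$ nevertheless fix the $\Omega_J({\bf k})$-component once $J$ is large enough. An analogous stabilization lemma (established in \cite{NSS} for the $\ast$-structure on $\cBZ_{\snz}$) is indispensable here, and its absence also undermines your step (iii): the weight map is defined through $\Theta$, i.e.\ through components $M_{\Omega_J^c({\bf k}(\Lambda_q))}$ for large $J$, so the identity $\mbox{wt}(\hte_p{\bf M})=\mbox{wt}({\bf M})+\widehat{\alpha}_p$ (compare Lemma \ref{lemma:wt-eps}) again requires the triviality of the boundary operators, not merely the crystal structure on a fixed $\cBZ_{J'}$. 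Until you supply this lemma, the proof of part (1), and hence of part (2), is incomplete.
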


Let ${\bf O}$ be a collection of integers indexed by $\cM_{\snz}^c$ whose
${\bf k}$-component is equal to $0$ for all ${\bf k}\in \cM_{\snz}^c$. 
It is obvious that ${\bf O}\in \cBZ_{\snz}^{\sigma}$. Let 
$\cBZ_{\snz}^{\sigma}({\bf O})$ denote the connected component of the
crystal $\cBZ_{\snz}^{\sigma}$ containing ${\bf O}$. The following is
the main result of \cite{NSS}.
\begin{thm}[\cite{NSS}]\label{thm:NSS-main}
As a crystal, $\left(\cBZ_{\snz}^{\sigma}({\bf O});\mbox{\rm wt},
\widehat{\eps}_p,\widehat{\vphi}_p,\hte_p,\htf_p\right)$ is isomorphic to
$B(\infty)$ for $U_q(\widehat{\gtsl}_l)$.
\end{thm}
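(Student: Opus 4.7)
The plan is to apply the Kashiwara-Saito uniqueness theorem for $B(\infty)$ (Theorem \ref{thm:seven-cond}). The first step is to observe that ${\bf O}$ is a highest weight element of weight $0$: since every component of ${\bf O}$ vanishes, we have $\Theta({\bf O})_{{\bf k}} = 0$ for all ${\bf k}\in \cM_{\snz}$, so $\widehat{\eps}_p({\bf O}) = \eps_p({\bf O}) = 0$ for every $p \in \widehat{I}$, and $\mbox{wt}({\bf O}) = 0$. Hence $\cBZ_{\snz}^{\sigma}({\bf O})$ is a connected crystal with a highest weight vertex of the correct weight.

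Next, I would equip $\cBZ_{\snz}^{\sigma}$ with a star crystal structure by transporting the involution $\ast : \cBZ_{\snz} \to \cBZ_{\snz}^e$ to the $\sigma$-fixed loci, using the compatibility of $\ast$ with $\sigma$ on the finite-type slices $\cBZ_I$ and $\cBZ_I^e$. With both the ordinary and the star crystal structures in hand, I would construct a crystal morphism $\Phi : B(\infty) \to \cBZ_{\snz}^{\sigma}({\bf O})$ sending $u_{\infty}$ to ${\bf O}$. Surjectivity is automatic from the definition of $\cBZ_{\snz}^{\sigma}({\bf O})$ as the connected component of ${\bf O}$; injectivity follows once the Kashiwara-Saito axioms are verified, since they reduce combinatorial identities involving the affine operators $\hte_p$, $\htf_p$, $\hte_p^*$, $\htf_p^*$ to their finite-type counterparts on $\cBZ_I^e$ via Lemma \ref{lemma:int^e} and the results reviewed in Sections 2.3--2.5.

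The main obstacle, as I see it, is the interaction between the infinite products $e_{L({\bf k},p)} = \prod_{q \in L({\bf k},p)} \te_q$ defining $\hte_p$ and the abstract crystal axioms needed for the Kashiwara-Saito characterization. Although Lemma \ref{lemma:L} guarantees pairwise commutativity of the far-apart operators $\te_q$, verifying that the affine operators $\hte_p$, $\htf_p$ together with their star counterparts satisfy the precise Kashiwara-Saito relations (and not merely the abstract crystal axioms) requires careful bookkeeping of how the $\sigma$-action interacts with the restriction maps $\mbox{res}_I$ and with $\ast$. Once these compatibilities are in place, each of the seven Kashiwara-Saito conditions localizes to a statement about a sufficiently large finite interval $I$, where it follows from the known isomorphism $\cBZ_I^e \cong B(\infty)$ for $U_q(\gtsl_{|I|+1})$ together with the Lagrangian description of $\cBZ_I^e$ and Lemma \ref{lemma:KS}.
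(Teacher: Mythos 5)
The first thing to note is that this paper does not prove Theorem \ref{thm:NSS-main}: it is quoted from \cite{NSS}, so there is no proof in this document to compare against. More importantly, the strategy you propose --- verifying the seven Kashiwara--Saito conditions of Theorem \ref{thm:seven-cond} --- is precisely the strategy this paper uses to prove the strictly \emph{stronger} Theorem \ref{thm:main} (that the whole of $(\cBZ_{\snz}^e)^{\sigma}$ is isomorphic to $B(\infty)$), and the authors state explicitly that this could not be carried out in \cite{NSS}. The reason is that condition (5) of Theorem \ref{thm:seven-cond} requires a second crystal structure (the ``ordinary'' one) on the affine BZ data, interacting with the $\ast$-structure as in Proposition \ref{prop:aff-strict}; constructing it and proving it is well defined is the content of Sections 3 and 4. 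Your proposal acknowledges this as ``the main obstacle'' and then assumes it away as ``careful bookkeeping,'' but it is not routine: for instance, showing that the ordinary operator $\te_p$ preserves the stabilization condition (2-b) of Definition \ref{defn:BZ} requires the quiver-theoretic input of Lemmas \ref{lemma:claim2}--\ref{lemma:claim4} and Proposition \ref{prop:K-op}, and condition (2) of Theorem \ref{thm:seven-cond} (uniqueness of the weight-zero element) is Proposition \ref{prop:eps=0}, proved by a nontrivial induction with the tropical Pl\"ucker relations. None of this ``localizes to a finite interval'' for free, because an element of $\cBZ_{\snz}^e$ is defined by a stabilization condition over all sufficiently large intervals that must be re-verified after applying an operator.

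There is also a structural confusion in the proposal. The Kashiwara--Saito theorem, once its hypotheses are verified for a crystal $B$, directly yields $B\cong B(\infty)$; there is no separate morphism $\Phi:B(\infty)\to\cBZ_{\snz}^{\sigma}({\bf O})$ to construct, and no surjectivity or injectivity to check. Conversely, if you want to apply the theorem only to the connected component $\cBZ_{\snz}^{\sigma}({\bf O})$ (so as to recover exactly the cited statement rather than the full connectedness result), you must first show that this component is stable under the second family of operators used to define the strict embedding $\Psi_p^*$, which you do not address. As it stands, the proposal names the right characterization theorem but omits essentially all of the mathematics needed to invoke it.
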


In a manner similar to the one in \cite{NSS}, we can define a crystal
structure on 
$(\cBZ_{\snz}^{e})^{\sigma}$. By the construction, it is easy to see that 
$\ast\circ \sigma=\sigma\circ\ast$. Therefore, the restriction of 
$\ast:\cBZ_{\snz}\overset{\sim}{\to}\cBZ_{\snz}^e$ to
the subset $\cBZ_{\snz}^{\sigma}$ gives rise to a bijection
$\ast:\cBZ_{\snz}^{\sigma}\overset{\sim}{\to}(\cBZ_{\snz}^e)^{\sigma}$.
We denote by ${\bf O}^*$ the image of ${\bf O}\in \cBZ_{\snz}^{\sigma}$
under the bijection $\ast$. Then, ${\bf O}^*$ is  
a collection of integers indexed by $\cM_{\snz}$ whose
${\bf k}$-component is equal to $0$ for all ${\bf k}\in \cM_{\snz}$.

For ${\bf M}\in(\mathcal{BZ}_{\snz}^e)^{\sigma}$ and $p\in\nz$, we define
$$\mbox{wt}({\bf M}):=\mbox{wt}({\bf M}^*),\quad 
\widehat{\eps}_p^*({\bf M}):=\widehat{\eps}_p({\bf M}^*),\quad
\widehat{\vphi}_p^*({\bf M}):=\widehat{\eps}_p^*({\bf M})+
\langle \widehat{h}_p,\mbox{wt}({\bf M})\rangle,$$
and
$$\hte_p^*{\bf M}:=\left\{\begin{array}{ll}
(\hte_p({\bf M}^*))^* & \mbox{if }~\widehat{\eps}_p^*({\bf M})>0,\\
{0} & \mbox{if }~\widehat{\eps}_p^*({\bf M})=0,
\end{array}\right.\qquad
\htf_p^*:=(\htf_p({\bf M}^*))^*.$$
The following corollary is an easy consequence of Theorem \ref{thm:NSS-main}.
\begin{cor}
{\rm (1)} The set $(\cBZ_{\snz}^{e})^{\sigma}$, equipped with the maps
$\mbox{\rm wt},\widehat{\eps}_p^*,\widehat{\vphi}_p^*,\hte_p^*,\htf_p^*$, is 
a $U_q(\widehat{\gtsl}_l)$-crystal.\\
{\rm (2)} Let
$(\cBZ_{\snz}^{e})^{\sigma}({\bf O}^*)$ be the connected component of the
crystal $(\cBZ_{\snz}^{e})^{\sigma}$ containing ${\bf O}^*\in 
(\cBZ_{\snz}^{e})^{\sigma}$. Then, 
$\left((\cBZ_{\snz}^{e})^{\sigma}({\bf O}^*);\mbox{\rm wt},\widehat{\eps}_p^*,
\widehat{\vphi}_p^*,\hte_p^*,\htf_p^*\right)$ is isomorphic as a crystal to
$B(\infty)$ for $U_q(\widehat{\gtsl}_l)$.
\end{cor}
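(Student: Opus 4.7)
The plan is to derive the corollary directly from Theorem \ref{thm:NSS-main} by transporting the crystal structure across the $\ast$-bijection. The key observation is that, by construction, the starred structural data on $(\cBZ_{\snz}^e)^{\sigma}$ are defined by
\[
\mbox{wt}({\bf M}) = \mbox{wt}({\bf M}^*), \quad \widehat{\eps}_p^*({\bf M}) = \widehat{\eps}_p({\bf M}^*), \quad \hte_p^* = \ast \circ \hte_p \circ \ast, \quad \htf_p^* = \ast \circ \htf_p \circ \ast,
\]
while $\ast^2 = \mbox{id}$ together with $\ast \circ \sigma = \sigma \circ \ast$ implies that the restriction $\ast : \cBZ_{\snz}^{\sigma} \overset{\sim}{\to} (\cBZ_{\snz}^e)^{\sigma}$ is a well-defined involution.

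For part (1), I would verify each $U_q(\widehat{\gtsl}_l)$-crystal axiom for the starred data on $(\cBZ_{\snz}^e)^{\sigma}$ by pulling it back under $\ast$ to the corresponding unstarred axiom on $\cBZ_{\snz}^{\sigma}$, which was established in the preceding proposition. For instance, the identity $\widehat{\vphi}_p^* = \widehat{\eps}_p^* + \langle \widehat{h}_p, \mbox{wt} \rangle$ is built into the definition once combined with $\mbox{wt}({\bf M}) = \mbox{wt}({\bf M}^*)$; and the weight-shift relations for $\hte_p^*, \htf_p^*$, together with the inversion compatibilities between them, transport routinely because $\ast$ is an involutive bijection intertwining the two sets of Kashiwara operators.

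For part (2), the same observation upgrades $\ast$ to an isomorphism of $U_q(\widehat{\gtsl}_l)$-crystals from $\cBZ_{\snz}^{\sigma}$ (with the unstarred data) onto $(\cBZ_{\snz}^e)^{\sigma}$ (with the starred data). The zero collection ${\bf O}$ is sent by $\ast$ to the zero collection ${\bf O}^*$, since $({\bf O})^*_{\bf k} = {\bf O}_{{\bf k}^c} = 0$ for every ${\bf k} \in \cM_{\snz}$; hence $\ast$ restricts to a crystal isomorphism of connected components $\cBZ_{\snz}^{\sigma}({\bf O}) \overset{\sim}{\to} (\cBZ_{\snz}^e)^{\sigma}({\bf O}^*)$. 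Since Theorem \ref{thm:NSS-main} identifies the left-hand side with $B(\infty)$ for $U_q(\widehat{\gtsl}_l)$, the right-hand side is also isomorphic to $B(\infty)$, as required.

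No genuine obstacle arises here — this is precisely why the statement is labeled an ``easy consequence'' of Theorem \ref{thm:NSS-main}. The argument is purely formal bookkeeping through the involution $\ast$, and the only point requiring any care is confirming $\ast({\bf O}) = {\bf O}^*$, which is immediate from the definition of $\ast$ on the level of components.
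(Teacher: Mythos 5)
Your proposal is correct and matches the paper's intent: the corollary is stated there without proof as an "easy consequence" of Theorem \ref{thm:NSS-main}, precisely because the starred structure on $(\cBZ_{\snz}^e)^{\sigma}$ is defined by transport along the involution $\ast$, and ${\bf O}^*$ is by definition the image of ${\bf O}$. Your write-up simply makes this formal bookkeeping explicit, with no deviation from the paper's route.
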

\section{Ordinary crystal structure on $\cBZ_I^e$}
\subsection{The operator $\sharp$}
Let ${\bf M}=(M_{\bf k})_{{\bf k}\in \cM_I^{\times}}$ be an $e$-BZ datum
associated to a finite interval $I=[n+1,n+m]$. Set 
$\mbox{wt}^{\vee}({\bf M}):=\sum_{i\in I}M_{[i+1,n+m+1]}h_i$. Then the
following equality holds: 
$$\langle \mbox{wt}^{\vee}({\bf M}),\alpha_i^I\rangle_I
=\langle h_i,\mbox{wt}({\bf M})\rangle_I.$$
\begin{defn}
For each ${\bf M}=(M_{\bf k})_{{\bf k}\in \cM_I}\in \cBZ_I^e$, we define
a new collection ${\bf M}^{\sharp}=(M_{\bf k}^{\sharp})_{{\bf k}\in 
\cM_I^{\times}}$ of integers by
$$M_{\bf k}^{\sharp}:=M_{{\bf k}^c}-\langle \mbox{\rm wt}^{\vee}({\bf M}),
{\bf k}\rangle_I.$$ 
\end{defn}
It is easy to verify the following lemma.
\begin{lemma}\label{lemma:sharp1}
{\em (1)} $\mbox{\rm wt}({\bf M}^{\sharp})=
\mbox{\rm wt}({\bf M})$ and 
$\mbox{\rm wt}^{\vee}({\bf M}^{\sharp})=
\mbox{\rm wt}^{\vee}({\bf M})$.
\vskip 1mm
\noindent
{\rm (2)} $({\bf M}^{\sharp})^{\sharp}={\bf M}.$
\end{lemma}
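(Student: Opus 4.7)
The plan is to prove both parts by direct computation, exploiting the explicit pairing formula (2.2.1) together with the $e$-normalization condition $M_{[n+1,i]}=0$ for all $i\in I$.

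For part (1), I would first evaluate $\langle \mbox{wt}^{\vee}({\bf M}), [i+1,n+m+1]\rangle_I$ explicitly. Writing out $\mbox{wt}^{\vee}({\bf M})=\sum_{j\in I}M_{[j+1,n+m+1]}h_j$ and applying (2.2.1), only the term $j=i$ survives: indeed, for $j>i$ both $j$ and $j+1$ belong to $[i+1,n+m+1]$, while for $j<i$ neither does, so each such term pairs to zero; the case $j=i$ contributes $-1$. Hence $\langle \mbox{wt}^{\vee}({\bf M}), [i+1,n+m+1]\rangle_I = -M_{[i+1,n+m+1]}$. Since $[i+1,n+m+1]^c=[n+1,i]$ and $M_{[n+1,i]}=0$ by the $e$-normalization, the definition of $\sharp$ yields
\[
M^{\sharp}_{[i+1,n+m+1]} = M_{[n+1,i]} - \langle \mbox{wt}^{\vee}({\bf M}), [i+1,n+m+1]\rangle_I = M_{[i+1,n+m+1]}.
\]
Since $\mbox{wt}({\bf M})=\sum_i M_{[i+1,n+m+1]}\alpha_i^I$ (via the $\ast$-definition of wt on $\cBZ_I^e$) and $\mbox{wt}^{\vee}({\bf M})=\sum_i M_{[i+1,n+m+1]}h_i$, both weights for ${\bf M}^{\sharp}$ agree with those for ${\bf M}$.

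For part (2), I would compute $({\bf M}^{\sharp})^{\sharp}_{\bf k}$ by unfolding the definition twice. Using $({\bf k}^c)^c={\bf k}$ and the equality $\mbox{wt}^{\vee}({\bf M}^{\sharp})=\mbox{wt}^{\vee}({\bf M})$ from part (1),
\[
({\bf M}^{\sharp})^{\sharp}_{\bf k} = M_{\bf k} - \langle \mbox{wt}^{\vee}({\bf M}), {\bf k}^c\rangle_I - \langle \mbox{wt}^{\vee}({\bf M}), {\bf k}\rangle_I.
\]
So it suffices to show $\langle h_j,{\bf k}\rangle_I+\langle h_j,{\bf k}^c\rangle_I=0$ for every $j\in I$. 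This is an immediate case-check from (2.2.1): in each of the four possible membership patterns for $j$ and $j+1$ with respect to ${\bf k}$, the corresponding pattern for ${\bf k}^c$ exactly negates the value.

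I do not expect any serious obstacle here; the result is essentially a bookkeeping verification once the pairing $\langle h_j,{\bf k}\rangle_I$ and the $e$-normalization are correctly applied. The only mildly delicate point is recognizing that $\langle \mbox{wt}^{\vee}({\bf M}),[i+1,n+m+1]\rangle_I$ telescopes to a single term because the complement of an initial interval has exactly one ``descent'' at the index $i$; once this is observed, both (1) and (2) follow without further input.
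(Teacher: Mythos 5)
Your proof is correct, and since the paper omits the argument entirely (stating only that the lemma "is verified easily"), your direct computation via the pairing formula (2.2.1) and the $e$-normalization $M_{[n+1,i]}=0$ is exactly the intended verification. Both key observations — that $\langle \mbox{wt}^{\vee}({\bf M}),[i+1,n+m+1]\rangle_I=-M_{[i+1,n+m+1]}$ and that $\langle h_j,{\bf k}\rangle_I+\langle h_j,{\bf k}^c\rangle_I=0$ — check out.
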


\begin{lemma}\label{lemma:sharp2}
${\bf M}^{\sharp}\in \cBZ_I^e.$
\end{lemma}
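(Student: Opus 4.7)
\medskip

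\textbf{Proof plan.} The plan is to verify the three conditions (BZ-0) (in the $e$-normalization), (BZ-1), and (BZ-2) for ${\bf M}^{\sharp}$ one by one, exploiting the fact that most of the work has already been done by ${\bf M}^{*}$.

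First, I would note that since ${\bf M}\in\cBZ_I^e$, its image ${\bf M}^{*}\in\cBZ_I$ already satisfies (BZ-1) and (BZ-2), and by definition
$M_{\bf k}^{\sharp} = (M^{*})_{\bf k}-\ell({\bf k})$, where $\ell({\bf k}):=\langle\mbox{wt}^{\vee}({\bf M}),{\bf k}\rangle_I$.
The key observation is that $\ell$ is \emph{additive on disjoint unions of subsets of $\widetilde{I}$}: indeed, formula (2.2.1) can be rewritten as $\langle h_i,{\bf k}\rangle_I=\chi_i({\bf k})-\chi_{i+1}({\bf k})$ where $\chi_j$ is the indicator of $j\in\widetilde I$, so for any $h\in\gth_I$ and disjoint ${\bf k}_1,{\bf k}_2\subset\widetilde I$ we have $\langle h,{\bf k}_1\cup{\bf k}_2\rangle_I=\langle h,{\bf k}_1\rangle_I+\langle h,{\bf k}_2\rangle_I$.

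Next I would feed this additivity into (BZ-1). For ${\bf k}\cap\{i,j\}=\phi$,
$\ell({\bf k}\cup\{i\})+\ell({\bf k}\cup\{j\})=2\ell({\bf k})+\ell(\{i\})+\ell(\{j\})=\ell({\bf k})+\ell({\bf k}\cup\{i,j\})$,
so subtracting $\ell$ from every component of ${\bf M}^{*}$ shifts both sides of the (BZ-1) inequality by the same amount, and (BZ-1) for ${\bf M}^{\sharp}$ follows from (BZ-1) for ${\bf M}^{*}$. The same cancellation works for (BZ-2): each of the two candidates inside the $\mbox{min}$ is shifted by exactly the quantity needed to match the shift on the left-hand side, so the tropical Pl\"ucker relation passes from ${\bf M}^{*}$ to ${\bf M}^{\sharp}$.

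Finally, I would verify the $e$-normalization (BZ-0) by direct computation. For $i\in I$,
$M_{[n+1,i]}^{\sharp}=(M^{*})_{[n+1,i]}-\langle\mbox{wt}^{\vee}({\bf M}),[n+1,i]\rangle_I=M_{[i+1,n+m+1]}-\sum_{j\in I}M_{[j+1,n+m+1]}\langle h_j,[n+1,i]\rangle_I$,
and (2.2.1) gives $\langle h_j,[n+1,i]\rangle_I=\delta_{j,i}$ for $j\in I$, so the expression collapses to $M_{[i+1,n+m+1]}-M_{[i+1,n+m+1]}=0$, as required. I do not expect a serious obstacle here; the entire argument is bookkeeping around the additivity of $\ell$, and the only mildly delicate step is (BZ-2), where one must observe that the min over two expressions is preserved precisely because both expressions are shifted by the same additive constant.
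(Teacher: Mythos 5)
Your proposal is correct and follows essentially the same route as the paper: the paper likewise verifies (BZ-0) by a direct computation and deduces (BZ-1) and (BZ-2) from the corresponding conditions on the complement-indexed data, using exactly the cancellation $\langle h_k,{\bf k}\cup\{i\}\rangle_I+\langle h_k,{\bf k}\cup\{j\}\rangle_I=\langle h_k,{\bf k}\cup\{i,j\}\rangle_I+\langle h_k,{\bf k}\rangle_I$ that your additivity of $\ell$ packages more cleanly. The only cosmetic difference is that the paper checks this identity case by case rather than via the identity $\langle h_i,{\bf k}\rangle_I=\chi_i({\bf k})-\chi_{i+1}({\bf k})$, and leaves the (BZ-0) computation to the reader.
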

\begin{proof}
It suffices to check conditions (BZ-0), (BZ-1), and (BZ-2). Condition
(BZ-0) is checked by an easy calculation. 
Let us check (BZ-1): for ${\bf k}\cap\{i,j\}=\phi$,
$$M_{{\bf k}\cup \{i\}}^{\sharp}+M_{{\bf k}\cup \{j\}}^{\sharp}\leq
M_{{\bf k}\cup \{i,j\}}^{\sharp}+M_{{\bf k}}^{\sharp}.$$ 
Since
$$\langle h_k,{\bf k}\cup\{i\}\rangle_I=\begin{cases}
1 & \mbox{if }k=i,\\
-1 & \mbox{if }k=i-1,\\
\langle h_k,{\bf k}\rangle_I & \mbox{otherwise},
\end{cases}
\quad
\langle h_k,{\bf k}\cup\{j\}\rangle_I=\begin{cases}
1 & \mbox{if }k=j,\\
-1 & \mbox{if }k=j-1,\\
\langle h_k,{\bf k}\rangle_I & \mbox{otherwise},
\end{cases}
$$
and
$$\langle h_k,{\bf k}\cup\{i,j\}\rangle_I=\begin{cases}
1 & \mbox{if }k=i\mbox{ or }j,\\
-1 & \mbox{if }k=i-1\mbox{ or }j-1,\\
\langle h_k,{\bf k}\rangle_I & \mbox{otherwise},
\end{cases}
$$
we obtain the following equalities:
$$\langle h_k,{\bf k}\cup\{i\}\rangle_I+\langle h_k,{\bf k}\cup\{j\}\rangle_I
=\langle h_k,{\bf k}\cup\{i,j\}\rangle_I+\langle h_k,{\bf k}\rangle_I\quad
\mbox{for all }k\in I.$$ 
From this, we deduce that
$$\langle\mbox{wt}^{\vee}({\bf M}),{\bf k}\cup\{i\}\rangle_I+
\langle\mbox{wt}^{\vee}({\bf M}),{\bf k}\cup\{j\}\rangle_I=
\langle\mbox{wt}^{\vee}({\bf M}),{\bf k}\cup\{i,j\}\rangle_I+
\langle\mbox{wt}^{\vee}({\bf M}),{\bf k}\rangle_I.$$
Since  $M_{({\bf k}\cup \{i\})^c}+M_{({\bf k}\cup \{j\})^c}\leq
M_{({\bf k}\cup \{i,j\})^c}+M_{{\bf k}^c}$, condition (BZ-1) is satisfied for 
${\bf M}^{\sharp}$.

Now, suppose that ${\bf k}\cap\{i,j,k\}=\phi$ with $i<j<k$. 
Then, for every $l\in I$, we have
\begin{align*}
&\langle h_l,{\bf k}\cup\{i,k\}\rangle_I+\langle h_l,{\bf k}\cup\{j\}\rangle_I
=\langle h_l,{\bf k}\cup\{j,k\}\rangle_I+\langle h_l,{\bf k}\rangle_I
=\langle h_l,{\bf k}\cup\{i,j\}\rangle_I+\langle h_l,{\bf k}\rangle_I.
\end{align*}
From this equality, we see that condition (BZ-2) is satisfied for 
${\bf M}^{\sharp}$ by the same argument as for condition (BZ-1).
\end{proof}
For ${\bf M}\in\cBZ_I^e$, set $\eps_i({\bf M}):=-M_{[n+1,i-1]\cup\{i+1\}}$.
\begin{lemma}\label{lemma:sharp3}
$\eps_i({\bf M})=\eps_i^*({\bf M}^{\sharp})$.
\end{lemma}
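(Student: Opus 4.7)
The plan is to unwind both sides to the same formula. On the left, the definition gives $\eps_i({\bf M})=-M_{[n+1,i-1]\cup\{i+1\}}$. On the right, $\eps_i^{\ast}({\bf M}^{\sharp})=\eps_i\bigl(({\bf M}^{\sharp})^{\ast}\bigr)$, and using the definitions of $\sharp$ and $\ast$ one computes, for every ${\bf k}\in\cM_I^{\times}$,
\[
\bigl(({\bf M}^{\sharp})^{\ast}\bigr)_{\bf k}
=({\bf M}^{\sharp})_{{\bf k}^c}
=M_{\bf k}-\langle\mbox{wt}^{\vee}({\bf M}),{\bf k}^c\rangle_I.
\]
So I would substitute this into the $w_0$-BZ formula
\[
\eps_i({\bf N})=-\bigl(N_{[n+1,i]}+N_{[n+1,i-1]\cup\{i+1\}}-N_{[n+1,i-1]}-N_{[n+1,i+1]}\bigr)
\]
with ${\bf N}=({\bf M}^{\sharp})^{\ast}$, and split the result into an $M$-part and a pairing-part.

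For the $M$-part, the $e$-normalization condition (BZ-0) of ${\bf M}\in\cBZ_I^e$ kills three out of the four terms, since $[n+1,i]$, $[n+1,i-1]$, and $[n+1,i+1]$ are all of the form $[n+1,j]$ (with the usual conventions $M_{\phi}=M_{\widetilde{I}}=0$ covering the boundary cases $i=n+1$ and $i=n+m$); what remains is $-M_{[n+1,i-1]\cup\{i+1\}}$, which is precisely $\eps_i({\bf M})$. Thus the whole task reduces to showing that the pairing-part vanishes, i.e.
\[
\langle\mbox{wt}^{\vee}({\bf M}),\,[n+1,i]^c+([n+1,i-1]\cup\{i+1\})^c-[n+1,i-1]^c-[n+1,i+1]^c\rangle_I=0,
\]
where I am formally adding and subtracting characteristic functions.

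For this, first note that a case-check on the definition (2.2.1) shows $\langle h_l,{\bf k}^c\rangle_I=-\langle h_l,{\bf k}\rangle_I$ for every $l\in I$, which reduces the pairing-part to the analogous alternating sum over the four sets $[n+1,i]$, $[n+1,i-1]\cup\{i+1\}$, $[n+1,i-1]$, $[n+1,i+1]$. But setting ${\bf k}=[n+1,i-1]$ these four sets are ${\bf k}\cup\{i\}$, ${\bf k}\cup\{i+1\}$, ${\bf k}$, and ${\bf k}\cup\{i,i+1\}$, so the vanishing is exactly the identity
\[
\langle h_l,{\bf k}\cup\{i\}\rangle_I+\langle h_l,{\bf k}\cup\{i+1\}\rangle_I
=\langle h_l,{\bf k}\cup\{i,i+1\}\rangle_I+\langle h_l,{\bf k}\rangle_I,\qquad l\in I,
\]
which was already verified inside the proof of Lemma \ref{lemma:sharp2}. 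Combining these two observations yields $\eps_i^{\ast}({\bf M}^{\sharp})=-M_{[n+1,i-1]\cup\{i+1\}}=\eps_i({\bf M})$.

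There is no real obstacle here; the argument is a direct calculation. The only point that demands care is the boundary behavior at $i=n+1$ and $i=n+m$, which is handled by the conventions $M_{\phi}=M_{\widetilde{I}}=0$ on the $M$-side and by the fact that $\mbox{wt}^{\vee}({\bf M})$ is a sum indexed only by $j\in I$ on the pairing side; once those are noted, the cancellation is uniform in $i$.
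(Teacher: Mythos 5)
Your proof is correct. It takes the same basic route as the paper --- a direct unwinding of the definitions of $\sharp$ and $\ast$ --- but organizes the computation differently: the paper first invokes $({\bf M}^{\sharp})^{\sharp}={\bf M}$ (Lemma \ref{lemma:sharp1}\,(2)) to replace the claim by $\eps_i({\bf M}^{\sharp})=\eps_i^*({\bf M})$, so that the left-hand side is the single term $-M^{\sharp}_{[n+1,i-1]\cup\{i+1\}}$ and only one pairing $\langle \mbox{wt}^{\vee}({\bf M}),[n+1,i-1]\cup\{i+1\}\rangle_I$ needs to be expanded, whereas you evaluate $\eps_i\bigl(({\bf M}^{\sharp})^{\ast}\bigr)$ head-on via the four-term formula. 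Your version therefore has to do two extra pieces of bookkeeping that the paper's does not: killing three of the four $M$-terms by the $e$-normalization (BZ-0) together with the conventions $M_{\phi}=M_{\widetilde{I}}=0$ at the boundary, and cancelling the pairing contribution via $\langle h_l,{\bf k}^c\rangle_I=-\langle h_l,{\bf k}\rangle_I$ and the parallelogram identity from the proof of Lemma \ref{lemma:sharp2}; both steps are carried out correctly, and the trade-off is only one of economy, not of substance.
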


\begin{proof}
By Lemma \ref{lemma:sharp1} (2), it suffices to show that 
$\eps_i({\bf M}^{\sharp})=\eps_i^*({\bf M})$. By the definitions, we have
$$\eps_i^*({\bf M})=
-M_{[i+1,n+m+1]}-M_{\{i\}\cup[i+2,n+m+1]}+M_{[i+2,n+m+1]}+M_{[i,n+m+1]}.$$
Also, we compute:
\begin{align*}
\eps_i({\bf M}^{\sharp})
&= -M^{\sharp}_{[n+1,i-1]\cup\{i+1\}}\\
&= -M_{([n+1,i-1]\cup\{i+1\})^c}+\langle \mbox{wt}^{\vee}({\bf M}),
[n+1,i-1]\cup\{i+1\}\rangle_I\\
&=-M_{\{i\}\cup[i+2,n+m+1]}+\sum_{l\in I}M_{[l+1,n+m+1]}\langle h_l,
[n+1,i-1]\cup\{i+1\}\rangle_I\\
&=-M_{\{i\}\cup[i+2,n+m+1]}-M_{[i+1,n+m+1]}+M_{[i+2,n+m+1]}+M_{[i,n+m+1]}.
\end{align*}
Thus, we obtain the desired equality.
\end{proof}

\begin{lemma}\label{lemma:sharp4}
{\rm (1)} If $\eps_i({\bf M})>0$, then
\vskip 1mm
{\rm (a)} $\bigl(\te_i^*({\bf M}^{\sharp})\bigr)^{\sharp}_{\bf k}
={M}_{\bf k}+1$ for ${\bf k}\in \cM_I^{\times}(i)^*$,
\vskip 1mm
{\rm (b)} $\left(\te_i^*({\bf M}^{\sharp})\right)^{\sharp}_{\bf k}
={M}_{\bf k}$ for ${\bf k}\in \cM_I^{\times}\setminus
\left(\cM_I^{\times}(i)\cup\cM_I^{\times}(i)^*\right)$.
\vskip 1mm
\noindent
{\rm (2)} For every ${\bf M}\in \cBZ_I^e$, 
\vskip 1mm
{\rm (a)} $\bigl(\tf_i^*({\bf M}^{\sharp})\bigr)^{\sharp}_{\bf k}
={M}_{\bf k}-1$ for ${\bf k}\in \cM_I^{\times}(i)^*$,
\vskip 1mm
{\rm (b)} $\left(\tf_i^*({\bf M}^{\sharp})\right)^{\sharp}_{\bf k}
={M}_{\bf k}$ for ${\bf k}\in \cM_I^{\times}\setminus
\left(\cM_I^{\times}(i)\cup\cM_I^{\times}(i)^*\right)$.
\end{lemma}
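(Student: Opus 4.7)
The plan is to reduce the statement to a direct bookkeeping computation combining Lemma \ref{lemma:sharp1}, the explicit formulas for $\te_i^*$ and $\tf_i^*$ from Corollary \ref{prop:ast-action}, and the pairing formula (2.2.1). By Lemma \ref{lemma:sharp3}, the hypothesis $\eps_i({\bf M})>0$ in (1) is equivalent to $\eps_i^*({\bf M}^{\sharp})>0$, so $\te_i^*({\bf M}^{\sharp})$ is defined; part (2) requires no positivity hypothesis. In either case $\te_i^*({\bf M}^{\sharp})$ and $\tf_i^*({\bf M}^{\sharp})$ belong to $\cBZ_I^e$, so taking $\sharp$ a second time is legitimate by Lemma \ref{lemma:sharp2}.

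The first step is to identify $\mbox{wt}^{\vee}$ of the perturbed datum. By Corollary \ref{prop:ast-action}(1)--(2), passing from ${\bf M}^{\sharp}$ to $\te_i^*({\bf M}^{\sharp})$ (resp., $\tf_i^*({\bf M}^{\sharp})$) alters only those components indexed by $\cM_I^{\times}(i)^*$, with the component at $[i+1,n+m+1]$ changed by $+1$ (resp., $-1$). Among the indices $[j+1,n+m+1]$ occurring in the defining sum of $\mbox{wt}^{\vee}$, a direct check shows that $[j+1,n+m+1]\in\cM_I^{\times}(i)^*$ exactly when $j=i$. Hence $\mbox{wt}^{\vee}(\te_i^*({\bf M}^{\sharp}))=\mbox{wt}^{\vee}({\bf M}^{\sharp})+h_i$, which equals $\mbox{wt}^{\vee}({\bf M})+h_i$ by Lemma \ref{lemma:sharp1}(1); analogously $\mbox{wt}^{\vee}(\tf_i^*({\bf M}^{\sharp}))=\mbox{wt}^{\vee}({\bf M})-h_i$.

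Next, fix any ${\bf k}\in\cM_I^{\times}\setminus\cM_I^{\times}(i)$. Then ${\bf k}^c\notin\cM_I^{\times}(i)^*$, so by Corollary \ref{prop:ast-action} the ${\bf k}^c$-component is unperturbed: $(\te_i^*({\bf M}^{\sharp}))_{{\bf k}^c}=(M^{\sharp})_{{\bf k}^c}=M_{\bf k}-\langle \mbox{wt}^{\vee}({\bf M}),{\bf k}^c\rangle_I$. Substituting into the definition of $\sharp$ and using the identity $\langle h_k,{\bf k}\rangle_I+\langle h_k,{\bf k}^c\rangle_I=0$ for every $k\in I$ (an immediate four-case check from (2.2.1)), the two $\mbox{wt}^{\vee}({\bf M})$-contributions cancel and one obtains
$$(\te_i^*({\bf M}^{\sharp}))^{\sharp}_{\bf k}=M_{\bf k}-\langle h_i,{\bf k}\rangle_I.$$
Since $\langle h_i,{\bf k}\rangle_I=-1$ on $\cM_I^{\times}(i)^*$ and $0$ on $\cM_I^{\times}\setminus(\cM_I^{\times}(i)\cup\cM_I^{\times}(i)^*)$, parts (1a) and (1b) follow at once. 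The argument for (2) is word-for-word the same with $h_i$ replaced by $-h_i$, flipping the sign of the residual term.

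There is no real conceptual obstacle here; the only care needed is to observe that the involution ${\bf k}\mapsto{\bf k}^c$ interchanges $\cM_I^{\times}(i)$ with $\cM_I^{\times}(i)^*$, so the assumption ${\bf k}\notin\cM_I^{\times}(i)$ built into the statement is precisely what is required in order to use the unperturbed formula $(\te_i^*({\bf M}^{\sharp}))_{{\bf k}^c}=(M^{\sharp})_{{\bf k}^c}$, and to verify that the $\pm h_i$ shift in $\mbox{wt}^{\vee}$ contributes exactly the residual term $\mp\langle h_i,{\bf k}\rangle_I$ left over after the pairing cancellation.
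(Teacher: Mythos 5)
Your proposal is correct and follows essentially the same route as the paper: for ${\bf k}\notin\cM_I^{\times}(i)$ one has ${\bf k}^c\notin\cM_I^{\times}(i)^*$, so the ${\bf k}^c$-component is untouched by $\te_i^*$ (resp.\ $\tf_i^*$), and then unwinding the two applications of $\sharp$ with $\mbox{wt}^{\vee}({\bf M}^{\sharp})=\mbox{wt}^{\vee}({\bf M})$ and the shift $\mbox{wt}^{\vee}(\te_i^*({\bf M}^{\sharp}))=\mbox{wt}^{\vee}({\bf M})\pm h_i$ leaves exactly the residual term $\mp\langle h_i,{\bf k}\rangle_I$. Your write-up is in fact slightly more explicit than the paper's, which folds the $\pm h_i$ shift into its citation of Lemma \ref{lemma:sharp1}(1) without spelling out that $[j+1,n+m+1]\in\cM_I^{\times}(i)^*$ only for $j=i$.
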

\begin{proof}
Since part (2) is proved in a similar way, we only give a proof of part (1).
Suppose that ${\bf k}\in \cM_I^{\times}(i)^*$ or ${\bf k}\in\cM_I^{\times}
\setminus\left(\cM_I^{\times}(i)\cup\cM_I^{\times}(i)^*\right)$. Then, 
${\bf k}^c\in \cM_I^{\times}\setminus \cM_I^{\times}(i)^*$. Also, 
since $\eps_i^*({\bf M}^{\sharp})=\eps_i({\bf M})>0$, it follows that
\begin{align*}
\te_i^*({\bf M}^{\sharp})_{{\bf k}^c}
&={\bf M}^{\sharp}_{{\bf k}^c}\qquad
\mbox{by Proposition \ref{prop:ast-action}}\\
&={M}_{\bf k}-\langle \mbox{wt}^{\vee}({\bf M}),{\bf k}^c\rangle_I\\
&={M}_{\bf k}+\langle \mbox{wt}^{\vee}({\bf M}),{\bf k}\rangle_I.
\end{align*}
Therefore, we have
\begin{align*}
\left(\te_i^*({\bf M}^{\sharp})\right)^{\sharp}_{\bf k}
&=\te_i^*({\bf M}^{\sharp})_{{\bf k}^c}-
\langle \mbox{wt}^{\vee}(\te_i^*({\bf M}^{\sharp})),{\bf k}\rangle_I\\
&={M}_{\bf k}+\langle \mbox{wt}^{\vee}({\bf M}),{\bf k}\rangle_I
-\langle \mbox{wt}^{\vee}({\bf M}),{\bf k}\rangle_I
-\langle h_i,{\bf k}\rangle_I
\qquad\mbox{by Lemma \ref{lemma:sharp1} (1)}\\
&=M_{\bf k}-\langle h_i,{\bf k}\rangle_I\\
&=\begin{cases}
M_{\bf k}+1 & \mbox{if }{\bf k}\in \cM_I^{\times}(i)^*,\\
M_{\bf k} & \mbox{if }\cM_I^{\times}\setminus
\left(\cM_I^{\times}(i)\cup\cM_I^{\times}(i)^*\right).
\end{cases}
\end{align*}
This proves the lemma.
\end{proof}

\begin{prop}\label{prop:te}
{\rm (1)} Assume that  $\eps_i({\bf M})>0$. Then, there exists a unique $e$-BZ
datum ${\bf M}^{[1]}$ such that 
\vskip 1mm
{\rm (a)} $({\bf M}^{[1]})_{\bf k}=M_{\bf k}+1$ for ${\bf k}\in\cM_I^{\times}(i)^*$,
\vskip 1mm
{\rm (b)} $({\bf M}^{[1]})_{\bf k}=M_{\bf k}$ for ${\bf k}\in\cM_I^{\times}
\setminus\left(\cM_I^{\times}(i)\cup\cM_I^{\times}(i)^*\right).$
\vskip 1mm
\noindent
{\rm (2)}  There exists a unique $e$-BZ datum ${\bf M}^{[2]}$ such that
\vskip 1mm
{\rm (a)} $({\bf M}^{[2]})_{\bf k}=M_{\bf k}-1$ for ${\bf k}\in\cM_I^{\times}(i)^*$,
\vskip 1mm
{\rm (b)} $({\bf M}^{[2]})_{\bf k}=M_{\bf k}$ for ${\bf k}\in\cM_I^{\times}
\setminus\left(\cM_I^{\times}(i)\cup\cM_I^{\times}(i)^*\right).$
\vskip 1mm
\noindent
\end{prop}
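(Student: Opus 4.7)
The natural candidates for ${\bf M}^{[1]}$ and ${\bf M}^{[2]}$ are
$${\bf M}^{[1]} := \bigl(\te_i^*({\bf M}^{\sharp})\bigr)^{\sharp} \qquad \text{and} \qquad {\bf M}^{[2]} := \bigl(\tf_i^*({\bf M}^{\sharp})\bigr)^{\sharp}.$$
Existence then follows by assembling the preceding lemmas. By Lemma \ref{lemma:sharp2}, ${\bf M}^{\sharp} \in \cBZ_I^e$. By Lemma \ref{lemma:sharp3}, $\eps_i^*({\bf M}^{\sharp})=\eps_i({\bf M})$, so the hypothesis $\eps_i({\bf M})>0$ in part (1) guarantees via Corollary \ref{prop:ast-action} that $\te_i^*({\bf M}^{\sharp})$ is a well-defined element of $\cBZ_I^e$; in part (2), $\tf_i^*({\bf M}^{\sharp})\in\cBZ_I^e$ unconditionally. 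A second application of Lemma \ref{lemma:sharp2} then places ${\bf M}^{[1]}$ and ${\bf M}^{[2]}$ in $\cBZ_I^e$, and Lemma \ref{lemma:sharp4} verifies that they fulfill properties (a) and (b).

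For the uniqueness assertion in part (1), let ${\bf N}$ be any $e$-BZ datum satisfying (a) and (b). The crucial observation is that among the ``staircase'' sets $[j+1,n+m+1]$ with $j\in I$, none lies in $\cM_I^{\times}(i)$ and exactly one, namely $[i+1,n+m+1]$, lies in $\cM_I^{\times}(i)^*$. Consequently ${N}_{[j+1,n+m+1]}=M_{[j+1,n+m+1]}$ for all $j\ne i$ while ${N}_{[i+1,n+m+1]}=M_{[i+1,n+m+1]}+1$, which gives
$$\mbox{wt}^{\vee}({\bf N})=\mbox{wt}^{\vee}({\bf M})+h_i.$$
Plugging this into the definition of $\sharp$ and using the values of $\langle h_i,{\bf k}\rangle_I$ dictated by (2.2.1) on the three pieces $\cM_I^{\times}(i)$, $\cM_I^{\times}(i)^*$, and their common complement, the increments cancel exactly on $\cM_I^{\times}(i)$ and on the complement, and reinforce on $\cM_I^{\times}(i)^*$. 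One therefore obtains
$$({\bf N}^{\sharp})_{\bf k}=({\bf M}^{\sharp})_{\bf k}\mbox{ for }{\bf k}\in\cM_I^{\times}\setminus\cM_I^{\times}(i)^*,\qquad ({\bf N}^{\sharp})_{[i+1,n+m+1]}=({\bf M}^{\sharp})_{[i+1,n+m+1]}+1.$$
Since ${\bf N}^{\sharp}\in\cBZ_I^e$ by Lemma \ref{lemma:sharp2}, the uniqueness clause in Corollary \ref{prop:ast-action}(1) (available because $\eps_i^*({\bf M}^{\sharp})=\eps_i({\bf M})>0$) forces ${\bf N}^{\sharp}=\te_i^*({\bf M}^{\sharp})$. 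Applying $\sharp$ once more and invoking Lemma \ref{lemma:sharp1}(2) yields ${\bf N}={\bf M}^{[1]}$. The argument for part (2) is entirely parallel, using Corollary \ref{prop:ast-action}(2) in place of (1).

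The only substantive calculation is the weight-coroot shift and the ensuing cancellation described above; everything else reduces to a formal transport of structure along $\sharp$, already encoded in Lemmas \ref{lemma:sharp1}--\ref{lemma:sharp4}. The proposition is thus essentially a corollary of the fact that $\sharp$ conjugates the $\ast$-crystal structure on $\cBZ_I^e$ into the structure acting on the ``(i)$^*$-side'' of $\cM_I^{\times}$.
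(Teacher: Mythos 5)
Your existence argument is exactly the paper's: both take ${\bf M}^{[1]}=\bigl(\te_i^*({\bf M}^{\sharp})\bigr)^{\sharp}$ and ${\bf M}^{[2]}=\bigl(\tf_i^*({\bf M}^{\sharp})\bigr)^{\sharp}$ and cite Lemmas \ref{lemma:sharp2}--\ref{lemma:sharp4}. For uniqueness, however, you take a genuinely different route, and it works. The paper argues directly on the undetermined components: it reduces to the interval components $[s+1,i]\in\cM_I^{\times}(i)$ (implicitly using that a BZ datum is recovered from its interval components via the chamber ansatz), anchors the base case $s=n$ with the normalization condition, and then runs a descending induction in $s$ through the tropical Pl\"ucker relation for ${\bf k}=[s+1,i-1]$ and $s<i<i+1$, which expresses $({\bf M}^{[1]})_{[s+1,i]}$ in terms of $({\bf M}^{[1]})_{[s,i]}$ and components already fixed by (a) and (b). You instead transport the whole problem through $\sharp$: you observe that conditions (a) and (b) force $\mbox{wt}^{\vee}({\bf N})=\mbox{wt}^{\vee}({\bf M})\pm h_i$, deduce that ${\bf N}^{\sharp}$ satisfies precisely the two conditions that characterize $\te_i^*({\bf M}^{\sharp})$ (resp.\ $\tf_i^*({\bf M}^{\sharp})$) in Corollary \ref{prop:ast-action}, and conclude ${\bf N}^{\sharp}=\te_i^*({\bf M}^{\sharp})$ by the uniqueness clause there, hence ${\bf N}={\bf M}^{[1]}$ by Lemma \ref{lemma:sharp1}(2). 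This is cleaner in that it recycles a uniqueness statement already available and avoids both the Pl\"ucker induction and the appeal to recoverability from interval components; the paper's version has the minor virtue of being self-contained at the level of the defining inequalities. One small point of precision: your phrase that the increments ``reinforce on $\cM_I^{\times}(i)^*$'' is not literally available for a general ${\bf k}\in\cM_I^{\times}(i)^*$, since then ${\bf k}^c\in\cM_I^{\times}(i)$ and $N_{{\bf k}^c}$ is not pinned down by (a) and (b); your displayed conclusion correctly restricts to the single component $[i+1,n+m+1]$, but there you are tacitly using the normalization condition (BZ-0) to get $N_{[n+1,i]}=M_{[n+1,i]}=0$. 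That step deserves to be said explicitly, as it is the only place where the hypothesis that ${\bf N}$ is an $e$-BZ datum (rather than a mere BZ datum) enters the computation of ${\bf N}^{\sharp}$.
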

\begin{proof}
Since part (2) is proved in a similar way, we only give a proof of part (1). 
The existence of the required ${\bf M}^{[1]}$ is already
proved in Lemma \ref{lemma:sharp4}. 
Let ${\bf N}^{[1]}$ be another $e$-BZ  datum which satisfy conditions (a) and (b).
For the uniqueness, it suffices to show
that ${\bf M}^{[1]}_{{\bf k}}={\bf N}^{[1]}_{{\bf k}}$ for an arbitrary subinterval 
${\bf k}=[s+1,t]$ of $\widetilde{I}$, where $\widetilde{I}=[n+1,n+m+1]$.
If $[s+1,t]\in\cM_I^{\times}\setminus \cM_I^{\times}(i)$, then the assertion is
obvious from conditions (a) and (b). Assume that $[s+1,t]\in
\cM_I^{\times}(i)$. Here we note
that such an interval $[s+1,t]$ has the following form:
$$[s+1,i],\quad n\leq s\leq i-1.$$
If $s=n$, then we have $({\bf M}^{[1]})_{[n+1,i]}=({\bf N}^{[1]})_{[n+1,i]}=0$ by 
the normalization condition. 
Now, suppose that $({\bf M}^{[1]})_{[s,i]}=({\bf N}^{[1]})_{[s,i]}$. 
Then, by the tropical Pl\"ucker
relation for ${\bf k}=[s+1,i-1]$ and $s<i<i+1$, we have
\begin{align*}
&({\bf M}^{[1]})_{[s+1,i]}+({\bf M}^{[1]})_{[s,i-1]\cup\{i+1\}}\\
&\qquad\qquad
+\mbox{min}\left\{
({\bf M}^{[1]})_{[s,i-1]}+({\bf M}^{[1]})_{[s+1,i+1]},~
({\bf M}^{[1]})_{[s+1,i-1]\cup\{i+1\}}+({\bf M}^{[1]})_{[s,i]}
\right\}.
\end{align*}
Also, by conditions (a) and (b), we have
$$({\bf M}^{[1]})_{[s,i-1]\cup\{i+1\}}=M_{[s,i-1]\cup\{i+1\}}+1,\quad
({\bf M}^{[1]})_{[s+1,i-1]\cup\{i+1\}}=M_{[s+1,i-1]\cup\{i+1\}}+1,$$
$$({\bf M}^{[1]})_{[s,i-1]}=M_{[s,i-1]},\quad 
({\bf M}^{[1]})_{[s+1,i+1]}=M_{[s+1,i+1]}.$$
Therefore, we deduce that
\begin{align*}
({\bf M}^{[1]})_{[s+1,i]}&=-M_{[s,i-1]\cup\{i+1\}}-1\\
&\qquad
+\mbox{min}\left\{
M_{[s,i-1]}+M_{[s+1,i+1]},~
M_{[s+1,i-1]\cup\{i+1\}}+1+({\bf M}^{[1]})_{[s,i]}
\right\}.
\end{align*}
Similarly, we obtain
\begin{align*}
({\bf N}^{[1]})_{[s+1,i]}&=-M_{[s,i-1]\cup\{i+1\}}-1\\
&\qquad
+\mbox{min}\left\{
M_{[s,i-1]}+M_{[s+1,i+1]},~
M_{[s+1,i-1]\cup\{i+1\}}+1+({\bf N}^{[1]})_{[s,i]}
\right\}.
\end{align*}
Consequently, we obtain $({\bf M}^{[1]})_{[s+1,i]}=({\bf N}^{[1]})_{[s+1,i]}$. 
This proves the proposition.
\end{proof}
\begin{cor}
For ${\bf k}\in\cM_I^{\times}(i)$, we have
$$({\bf M}^{[2]})_{\bf k}=\mbox{\rm min}\left\{
M_{\bf k}+1,~M_{\sigma_i {\bf k}}+\eps_i({\bf M})
\right\}.$$
\end{cor}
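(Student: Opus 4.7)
The plan is to reduce the computation of $({\bf M}^{[2]})_{\bf k}$ on $\cM_I^{\times}(i)$ to the already-known explicit formula (2.3.2) for the $\ast$-crystal Kashiwara operator, via the involution $\sharp$ on $\cBZ_I^e$. Specifically, Lemma \ref{lemma:sharp4}(2) shows that $(\tf_i^*({\bf M}^{\sharp}))^{\sharp}$ satisfies conditions (a) and (b) of Proposition \ref{prop:te}(2), so by the uniqueness part of that proposition one must have
$$
{\bf M}^{[2]} \;=\; (\tf_i^*({\bf M}^{\sharp}))^{\sharp}.
$$
Thus the task becomes to evaluate the right-hand side at ${\bf k}\in\cM_I^{\times}(i)$.

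For this, I would first note that ${\bf k}\in\cM_I^{\times}(i)$ if and only if ${\bf k}^c\in\cM_I^{\times}(i)^*$, so Corollary \ref{prop:ast-action}(3) applies to $\tf_i^*({\bf M}^{\sharp})$ at the argument ${\bf k}^c$ and gives $(\tf_i^*{\bf M}^{\sharp})_{{\bf k}^c} = \min\{{\bf M}^{\sharp}_{{\bf k}^c},\,{\bf M}^{\sharp}_{\sigma_i{\bf k}^c}+c_i^*({\bf M}^{\sharp})\}$. Using $(\sigma_i{\bf k})^c=\sigma_i({\bf k}^c)$ and the identity $\langle\mbox{wt}^{\vee}({\bf M}),\widetilde{I}\rangle_I=0$ (which follows from (2.2.1) since both $k,k+1\in\widetilde{I}$), the two arguments of the minimum unpack into $M_{\bf k}$ and $M_{\sigma_i{\bf k}}$, each shifted by a pairing against $\mbox{wt}^{\vee}({\bf M})$. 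Lemma \ref{lemma:sharp1}(1) and Lemma \ref{lemma:sharp3} permit the replacements $\mbox{wt}({\bf M}^{\sharp})=\mbox{wt}({\bf M})$ and $\eps_i^{*}({\bf M}^{\sharp})=\eps_i({\bf M})$, whence $c_i^*({\bf M}^{\sharp})=\langle h_i,\mbox{wt}({\bf M})\rangle_I+\eps_i({\bf M})-1$.

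The final step is to strip off the outer $\sharp$ via $(\tf_i^*{\bf M}^{\sharp})^{\sharp}_{\bf k} = (\tf_i^*{\bf M}^{\sharp})_{{\bf k}^c} - \langle\mbox{wt}^{\vee}(\tf_i^*{\bf M}^{\sharp}),{\bf k}\rangle_I$. Corollary \ref{prop:ast-action}(2) shows that $\tf_i^*$ alters only the $[i+1,n+m+1]$-component of a datum (by $-1$), so $\mbox{wt}^{\vee}(\tf_i^*{\bf M}^{\sharp})=\mbox{wt}^{\vee}({\bf M})-h_i$. A brief bookkeeping using the reflection identity $\langle h_k,\sigma_i{\bf k}\rangle_I-\langle h_k,{\bf k}\rangle_I = -\langle h_i,{\bf k}\rangle_I\,\langle h_k,\alpha_i^I\rangle_I$ yields $\langle\mbox{wt}^{\vee}({\bf M}),\sigma_i{\bf k}\rangle_I-\langle\mbox{wt}^{\vee}({\bf M}),{\bf k}\rangle_I = -\langle h_i,\mbox{wt}({\bf M})\rangle_I$ for ${\bf k}\in\cM_I^{\times}(i)$ (where $\langle h_i,{\bf k}\rangle_I=1$). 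Substituting and simplifying, the pairing-correction terms cancel pairwise and the expression collapses to $\min\{M_{\bf k}+1,\,M_{\sigma_i{\bf k}}+\eps_i({\bf M})\}$.

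The main obstacle is organizational: several pairing-type correction terms have to be tracked through the translations between ${\bf k}$ and ${\bf k}^c$, and between ${\bf M}$ and ${\bf M}^{\sharp}$, before they visibly cancel. The conceptual content, however, is entirely contained in the identification ${\bf M}^{[2]}=(\tf_i^*({\bf M}^{\sharp}))^{\sharp}$ obtained by invoking uniqueness; once this is in hand, the corollary is a direct though delicate calculation from formula (2.3.2).
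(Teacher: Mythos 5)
Your proposal is correct and follows essentially the same route as the paper: identify ${\bf M}^{[2]}=\bigl(\tf_i^*({\bf M}^{\sharp})\bigr)^{\sharp}$ via the uniqueness in Proposition \ref{prop:te}(2), evaluate with formula (2.3.2) at ${\bf k}^c$, and cancel the $\mbox{wt}^{\vee}$-pairing corrections using $\mbox{wt}^{\vee}(\tf_i^*{\bf M}^{\sharp})=\mbox{wt}^{\vee}({\bf M})-h_i$, Lemmas \ref{lemma:sharp1} and \ref{lemma:sharp3}, and $\sigma_i{\bf k}-{\bf k}=-\alpha_i^I$ (your reflection identity is exactly this step in the paper).
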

\begin{proof}
From the uniqueness of ${\bf M}^{[2]}$, it follows that 
${\bf M}^{[2]}=\bigl(\tf_i^*({\bf M}^{\sharp})\bigr)^{\sharp}$.
Therefore,
\begin{align*}
({\bf M}^{[2]})_{\bf k}
&=\bigl(\tf_i^*({\bf M}^{\sharp})\bigr)^{\sharp}_{\bf k}\\
&=\bigl(\tf_i^*({\bf M}^{\sharp})\bigr)_{{\bf k}^c}
-\langle \mbox{wt}^{\vee}(\tf_i^*({\bf M}^{\sharp})),{\bf k}\rangle_I\\
&=\mbox{min}\left\{({\bf M}^{\sharp})_{{\bf k}^c},
({\bf M}^{\sharp})_{\sigma_i{\bf k}^c}+c_i^*({\bf M}^{\sharp})\right\}
-\langle \mbox{wt}^{\vee}({\bf M}),{\bf k}\rangle_I
+\langle h_i,{\bf k}\rangle_I\\
&=\mbox{min}\left\{M_{\bf k}+1,
M_{\sigma_i\bf k}+\langle \mbox{wt}^{\vee}({\bf M}),\sigma_i{\bf k}
-{\bf k}\rangle_I+c_i^*({\bf M}^{\sharp})+1\right\}.
\end{align*}
Here, we remark that $\langle h_i,{\bf k}\rangle=1$ since ${\bf k}\in 
\cM_I^{\times}(i)$.
Let us compute the second term on the right-hand side of the last equality.
Note that $\sigma_i{\bf k}-{\bf k}=-\langle h_i,{\bf k}\rangle_I\alpha_i^I
=-\alpha_i^I$. Hence we deduce that
\begin{align*}
\mbox{the second term}
&=M_{\sigma_i\bf k}-\langle \mbox{wt}^{\vee}({\bf M}),\alpha_i^I\rangle_I
+\langle h_i,\mbox{wt}({\bf M}^{\sharp})\rangle_I+\eps_i^*({\bf M}^{\sharp})-1
+1\\
&=M_{\sigma_i\bf k}+\eps_i({\bf M}).
\end{align*}
This proves the corollary.
\end{proof}
\subsection{Ordinary crystal structure on $\cBZ_I^e$}
We define another crystal structure on $\cBZ_I^e$ via the bijections
$\cB_I\overset{\sim}{\to}\bigsqcup_{\nu\in Q_+}\mbox{Irr}\Lambda(\nu)
\overset{\sim}{\to}\cBZ_I^e$.
Let ${\bf M}=(M_{\bf k})_{{\bf k}\in\cM_I^{\times}}$ be an $e$-BZ datum. 
Then, there exists a unique Lusztig datum ${\bf a}$
(or equivalently, a unique irreducible Lagrangian $\Lambda_{\bf a}$) such that 
${\bf M}={\bf M}({\bf a})$. Now we define 
$$\eps_i({\bf M}):=\eps_i({\bf a})=\eps_i(\Lambda_{\bf a}),\qquad
\vphi_i({\bf M}):=\vphi_i({\bf a})=\vphi_i(\Lambda_{\bf a}),$$
$$\te_i{\bf M}:=\begin{cases}
{\bf M}(\te_i{\bf a})={\bf M}(\te_i\Lambda_{\bf a}) & \mbox{if }~\eps_i({\bf a})>0,\\
0 & \mbox{if }~\eps_i({\bf a})=0,
\end{cases}
\quad\mbox{and}\quad
\tf_i{\bf M}:={\bf M}(\tf_i{\bf a})={\bf M}(\tf_i\Lambda_{\bf a}).$$
By the definitions, it is obvious that the set $\cBZ_I^e$, equipped with the maps
$\mbox{wt}$, $\eps_i$, $\vphi_i$, $\te_i$, $\tf_i$, is a $U_q(\gtsl_{m+1})$-crystal,
and the bijections above give rise to isomorphisms of crystals
$$\left(\cB_I;\mbox{\rm wt},\eps_i,\vphi_i,\te_i,\tf_i\right)
\overset{\sim}{\longrightarrow}
\left(\bigsqcup_{\nu\in Q_+}\mbox{Irr}\Lambda(\nu)
;\mbox{\rm wt},\eps_i,\vphi_i,\te_i,\tf_i\right)
\overset{\sim}{\longrightarrow}
\left(\cBZ_I^{e};\mbox{\rm wt},\eps_i,\vphi_i,\te_i,\tf_i\right).$$
We call this crystal structure the ordinary crystal structure on $\cBZ_I^e$. 

\begin{lemma}\label{lemma:ast=sharp}
For $\Lambda\in\mbox{\rm Irr}\Lambda(\nu)$, we have
$${\bf M}(\Lambda^*)={\bf M}(\Lambda)^{\sharp}.$$
\end{lemma}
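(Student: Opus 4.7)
The strategy is to evaluate both sides on a general point of $\Lambda$. Since $\ast$ is an algebraic automorphism of $X(\nu)$ preserving $\Lambda(\nu)$, for any general $B\in\Lambda$ the point ${}^tB$ is a general point of $\Lambda^{\ast}=\ast(\Lambda)$, so $M_{{\bf k}}(\Lambda^{\ast})=M_{{\bf k}}({}^tB)$. Writing ${\bf M}:={\bf M}(\Lambda)$, the task thus reduces to proving
$$M_{{\bf k}}({}^tB)=M_{{\bf k}^c}(B)-\langle\mbox{wt}^{\vee}({\bf M}),{\bf k}\rangle_I\qquad\text{for every }{\bf k}\in\cM_I^{\times}.$$

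The first step is a combinatorial comparison between the ``frames'' used to define $M_{{\bf k}}$ and $M_{{\bf k}^c}$. Writing ${\bf k}=[s_1+1,t_1]\sqcup\cdots\sqcup[s_l+1,t_l]$ and case-analyzing whether $s_1=n$ and whether $t_l=n+m+1$, I would verify that $\out({\bf k}^c)=\inn({\bf k})$ and $\inn({\bf k}^c)=\out({\bf k})$, and moreover that the prescriptions for $I_t$ and $I_s$ in \S2.5 swap roles under ${\bf k}\leftrightarrow{\bf k}^c$. Since the orientation of a type-$A$ Dynkin quiver is determined by its sources and sinks, this gives $\Omega({\bf k}^c)=\overline{\Omega({\bf k})}$.

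The second step is linear algebra. Unwinding the definition of $\ast$ gives $({}^tB)_{\tau}={}^t(B_{\overline{\tau}})$, so for any path $\mu=\tau_{i_1}\cdots\tau_{i_q}$ in $\Omega({\bf k})$ from $k$ to $l$, one has $({}^tB)_{\mu}={}^t(B_{\overline{\mu}})$, where $\overline{\mu}:=\overline{\tau}_{i_q}\cdots\overline{\tau}_{i_1}$ is a path from $l\in\inn({\bf k})=\out({\bf k}^c)$ to $k\in\out({\bf k})=\inn({\bf k}^c)$ in $\Omega({\bf k}^c)=\overline{\Omega({\bf k})}$. Hence the linear map $f_{{\bf k},{}^tB}\colon\bigoplus_{k\in\sout({\bf k})}V(\nu)_k\to\bigoplus_{l\in\sinn({\bf k})}V(\nu)_l$ defining $M_{{\bf k}}({}^tB)$ is precisely the transpose (under the tacit identifications $V(\nu)_i\cong V(\nu)_i^{\ast}$) of the map $f_{{\bf k}^c,B}$ defining $M_{{\bf k}^c}(B)$. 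The elementary identity $\dimc\mbox{Coker}({}^tg)=\dimc\mbox{Coker}(g)+\dimc U-\dimc V$ for $g\colon U\to V$ then yields
$$M_{{\bf k}}({}^tB)=M_{{\bf k}^c}(B)+\sum_{k\in\sout({\bf k})}\nu_k-\sum_{l\in\sinn({\bf k})}\nu_l.$$

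Finally, I would match the correction term with $-\langle\mbox{wt}^{\vee}({\bf M}),{\bf k}\rangle_I$. Because $\mbox{wt}(\Lambda)=-\nu$ and $\mbox{wt}({\bf M})=\sum_i M_{[i+1,n+m+1]}\alpha_i^I$ for ${\bf M}\in\cBZ_I^e$, we read off $\nu_i=-M_{[i+1,n+m+1]}$. The pairing formula (2.2.1) shows that $\langle h_i,{\bf k}\rangle_I$ equals $+1$ exactly when $i\in\out({\bf k})$, $-1$ exactly when $i\in\inn({\bf k})$, and $0$ otherwise, so
$$\langle\mbox{wt}^{\vee}({\bf M}),{\bf k}\rangle_I=-\sum_{k\in\sout({\bf k})}\nu_k+\sum_{l\in\sinn({\bf k})}\nu_l,$$
and substituting into the previous display produces the required identity. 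The main obstacle I anticipate is the combinatorial first step: the four-case definitions of $I_t$ and $I_s$ must be reconciled carefully at the boundary vertices $n+1$ and $n+m$ to confirm that the source/sink swap is exact; once that is settled, the transpose identification and the weight bookkeeping are short and formal.
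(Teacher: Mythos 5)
Your proposal is correct and follows essentially the same route as the paper's proof: evaluate at a general point $B$, observe that ${}^tB$ is general in $\Lambda^*$, identify the defining map of $M_{\bf k}({}^tB)$ with the transpose of that of $M_{{\bf k}^c}(B)$ via the source/sink swap $\out({\bf k}^c)=\inn({\bf k})$, $\inn({\bf k}^c)=\out({\bf k})$, and absorb the resulting dimension shift into $-\langle\mbox{wt}^{\vee}({\bf M}),{\bf k}\rangle_I$ using $(2.2.1)$ and $\nu_i=-M_{[i+1,n+m+1]}$. The only cosmetic difference is that you invoke the rank identity $\dimc\mbox{Coker}({}^tg)=\dimc\mbox{Coker}(g)+\dimc U-\dimc V$ directly, where the paper passes through $\dimc\mbox{Ker}$, and you spell out the verification $\Omega({\bf k}^c)=\overline{\Omega({\bf k})}$ that the paper leaves implicit.
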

\begin{proof}
We write $\nu=\sum_{i\in I}\nu_i\alpha_i^I$.
Let $B$ be a general point of $\Lambda$. Then its transpose ${}^tB$ is
also a general point of $\Lambda^*$. Therefore, we compute:
\begin{align*}
{M}_{\bf k}(\Lambda^*)&={M}_{\bf k}({}^tB)\\
&=-\dimc\mbox{Coker}
\left(\mathop{\bigoplus}_{k\in \mbox{\rm out}({\bf k})}V(\nu)_k
\overset{\oplus {}^tB_{\mu}}{\longrightarrow}
\mathop{\bigoplus}_{l\in \mbox{\rm in}({\bf k})}V(\nu)_l\right)\\
&=-\dimc\mbox{Ker}
\left(\mathop{\bigoplus}_{l\in \mbox{\rm in}({\bf k})}V(\nu)_l
\overset{\oplus {}B_{\mu}}{\longrightarrow}
\mathop{\bigoplus}_{k\in \mbox{\rm out}({\bf k})}V(\nu)_k\right)\\
&=-\dimc\mbox{Coker}
\left(\mathop{\bigoplus}_{l\in \mbox{\rm out}({\bf k}^c)}V(\nu)_l
\overset{\bigoplus {}B_{\mu}}{\longrightarrow}
\mathop{\bigoplus}_{k\in \mbox{\rm in}({\bf k}^c)}V(\nu)_k\right)\\
&\qquad\qquad\qquad\qquad
+\sum_{k\in \mbox{\rm in}({\bf k}^c)}\dimc V(\nu)_k
-\sum_{l\in\mbox{\rm out}({\bf k}^c)}\dimc V(\nu)_l\\
&={M}_{{\bf k}^c}(\Lambda)+\sum_{k\in \mbox{\rm out}({\bf k})}\nu_k
-\sum_{l\in\mbox{\rm in}({\bf k})}\nu_l.
\end{align*}  
Here, for the third equality, we take the transpose ${}^t({}^tB)=B$ of ${}^tB$. 
By the definitions of $\mbox{\rm out}({\bf k})$ and $\mbox{\rm in}({\bf k})$, 
we have
$$\langle h_p,{\bf k}\rangle_I=\begin{cases}
1 & \mbox{if $p\in \out({\bf k})$},\\
-1 & \mbox{if $p\in \inn({\bf k})$},\\
0 & \mbox{otherwise},
\end{cases}$$
and hence
$$\langle \mbox{wt}^{\vee}({\bf M}(\Lambda)),{\bf k}\rangle_I
=-\sum_{k\in \mbox{\rm out}({\bf k})}\nu_k
+\sum_{l\in\mbox{\rm in}({\bf k})}\nu_l.$$
From these, it follows that
$$M_{\bf k}(\Lambda)={M}_{{\bf k}^c}(\Lambda)
-\langle \mbox{wt}^{\vee}({\bf M}(\Lambda)),{\bf k}\rangle_I=
\bigl({\bf M}(\Lambda)^{\sharp}\bigr)_{\bf k}.$$
This proves the lemma.
\end{proof}

\begin{prop}\label{prop:sharp}
As operators on $\cBZ_I^e$, 
$$\te_i=\sharp\circ\te_i^*\circ\sharp\quad\mbox{and}\quad
\tf_i=\sharp\circ\tf_i^*\circ\sharp.$$
\end{prop}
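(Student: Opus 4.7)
The plan is to combine three structural facts that have already been set up in Sections 2.5 and 3.1: (i) the ordinary crystal structure on $\cBZ_I^e$ is, by its very definition, the pull-back along $\Psi_I$ of the ordinary crystal structure on $\bigsqcup_{\nu\in Q_+}\mbox{Irr}\,\Lambda(\nu)$; (ii) on the Lagrangian side, the unstarred and starred Kashiwara operators are conjugate by the transpose involution $\ast:\Lambda\mapsto\Lambda^*$, i.e., $\te_i=\ast\circ\te_i^*\circ\ast$ and $\tf_i=\ast\circ\tf_i^*\circ\ast$ (Lemma \ref{lemma:KS}); (iii) under $\Psi_I$, this transpose involution corresponds precisely to the operator $\sharp$ on $\cBZ_I^e$ (Lemma \ref{lemma:ast=sharp}). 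In other words, $\sharp$ plays on the combinatorial side the role that $\ast$ plays on the geometric side, so the identities to be proved are nothing but the pull-back of Lemma \ref{lemma:KS} along $\Psi_I$.

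Concretely, I would fix ${\bf M}\in\cBZ_I^e$ and let $\Lambda\in\mbox{Irr}\,\Lambda(\nu)$ be the unique irreducible component with $\Psi_I(\Lambda)={\bf M}$. In the case $\eps_i({\bf M})>0$, one has $\te_i\Lambda\ne 0$, and by the definition of the ordinary crystal structure $\te_i{\bf M}=\Psi_I(\te_i\Lambda)$. Then I would compute
$$
\te_i{\bf M}=\Psi_I(\te_i\Lambda)=\Psi_I\bigl((\te_i^*\Lambda^*)^*\bigr)
=\bigl(\Psi_I(\te_i^*\Lambda^*)\bigr)^{\sharp}
=\bigl(\te_i^*\Psi_I(\Lambda^*)\bigr)^{\sharp}
=\bigl(\te_i^*({\bf M}^{\sharp})\bigr)^{\sharp},
$$
where the second equality uses Lemma \ref{lemma:KS}, the third and fifth use Lemma \ref{lemma:ast=sharp} rewritten as $\Psi_I\circ\ast=\sharp\circ\Psi_I$, and the fourth uses the fact that $\Psi_I$ is an isomorphism of crystals for the $\ast$-crystal structures (the last proposition of Section 2.5). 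The argument for the identity $\tf_i=\sharp\circ\tf_i^*\circ\sharp$ is verbatim the same, and is in fact cleaner since $\tf_i$ never produces $0$.

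The only detail that requires a separate remark is the vanishing case of $\te_i$: when $\eps_i({\bf M})=0$, the left-hand side is $0$ by definition, while Lemma \ref{lemma:sharp3} gives $\eps_i^*({\bf M}^{\sharp})=\eps_i({\bf M})=0$, so $\te_i^*({\bf M}^{\sharp})=0$ and (adopting the convention $\sharp(0)=0$) the right-hand side also vanishes. I do not foresee any genuine obstacle here; all of the real content has already been extracted in Lemmas \ref{lemma:sharp1}--\ref{lemma:sharp4} and in Lemma \ref{lemma:ast=sharp}, and this proposition amounts simply to reading off their joint consequence. If anything is mildly delicate it is the bookkeeping with the involution $\ast$ on Lagrangians vs.\ the involution $\sharp$ on $\cBZ_I^e$, but Lemma \ref{lemma:ast=sharp} packages exactly the compatibility needed.
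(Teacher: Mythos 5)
Your argument is correct and is essentially identical to the paper's own proof: both handle the vanishing case via $\eps_i({\bf M})=\eps_i^*({\bf M}^{\sharp})$ and, in the nonzero case, chain together Lemma \ref{lemma:ast=sharp} (i.e., $\Psi_I\circ\ast=\sharp\circ\Psi_I$), the fact that $\Psi_I$ intertwines the $\ast$-crystal structures, and Lemma \ref{lemma:KS} on the Lagrangian side. The only difference is cosmetic — you run the chain of equalities from $\te_i{\bf M}$ to $\bigl(\te_i^*({\bf M}^{\sharp})\bigr)^{\sharp}$ while the paper runs it in the opposite direction.
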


\begin{proof}
Let ${\bf M}\in \cBZ_I^e$. We only give a proof of the first equality, since
the proof of the second one is similar. 

If $\eps_i({\bf M})=\eps_i^*({\bf M}^{\sharp})=0$, 
then $\te_i{\bf M}=\bigl(\te_i^*({\bf M}^{\sharp})\bigr)^{\sharp}=0$ by
the definitions.
So, assume that $\eps_i({\bf M})=\eps_i^*({\bf M}^{\sharp})>0$. 
Let $\Lambda$ be a unique irreducible Lagrangian such that 
${\bf M}={\bf M}(\Lambda)$. Since the bijection $\Xi_I:\Lambda \mapsto
{\bf M}(\Lambda)$ is an isomorphism with respect to both the ordinary and 
$\ast$-crystal structures, we have
\begin{align*}
&\bigl(\te_i^*({\bf M}^{\sharp})\bigr)^{\sharp}=
\bigl(\te_i^*({\bf M}(\Lambda)^{\sharp})\bigr)^{\sharp}=
\bigl(\te_i^*({\bf M}(\Lambda^*))\bigr)^{\sharp}=
{\bf M}(\te_i^*\Lambda^*)^{\sharp}=
{\bf M}\bigl((\te_i\Lambda)^*\bigr)^{\sharp}={\bf M}(\te_i\Lambda)\\
&\qquad=\te_i{\bf M}(\Lambda)=\te_i{\bf M},
\end{align*}
as desired.
\end{proof}
The following corollary is obvious by the consideration above.
\begin{cor}
{\rm (1)} Let ${\bf M}\in \cBZ_I^e$, and assume that $\eps_i({\bf M})>0$. 
Then, $\te_i{\bf M}$ is a unique 
$e$-BZ datum such that
\vskip 1mm
{\rm (a)} $(\te_i M)_{\bf k}=M_{\bf k}+1$ for ${\bf k}\in\cM_I^{\times}(i)^*$,
\vskip 1mm
{\rm (b)} $(\te_i M)_{\bf k}=M_{\bf k}$ for ${\bf k}\in\cM_I^{\times}\setminus
\left(\cM_I^{\times}(i)\cup\cM_I^{\times}(i)^*\right).$
\vskip 1mm
\noindent
{\rm (2)}  For every ${\bf M}\in\cBZ_I^e$,
$$(\tf_i{\bf M})_{\bf k}=\begin{cases}
\mbox{\rm min}\left\{M_{\bf k}+1,M_{\sigma_i{\bf k}}+\eps_i({\bf M})\right\}
&\mbox{if }~{\bf k}\in \cM_I^{\times}(i).\\
M_{\bf k}-1&\mbox{if }~{\bf k}\in\cM_I^{\times}(i)^*,\\
M_{\bf k} &\mbox{if }~{\bf k}\in\cM_I^{\times}\setminus
\left(\cM_I^{\times}(i)\cup\cM_I^{\times}(i)^*\right).
\end{cases}$$
\end{cor}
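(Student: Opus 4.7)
The key observation is that this corollary is an immediate consequence of the identifications already set up, so the proof is essentially a bookkeeping of earlier results rather than a new computation.

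My plan is to reduce both statements to Proposition \ref{prop:te} and its corollary via Proposition \ref{prop:sharp}. By Proposition \ref{prop:sharp}, we have
\[
\te_i{\bf M}=\bigl(\te_i^*({\bf M}^{\sharp})\bigr)^{\sharp}\quad\text{and}\quad
\tf_i{\bf M}=\bigl(\tf_i^*({\bf M}^{\sharp})\bigr)^{\sharp},
\]
so the question reduces to computing the $\sharp$-conjugates of the $\ast$-Kashiwara operators applied to ${\bf M}^{\sharp}$. Since $\eps_i({\bf M})=\eps_i^*({\bf M}^{\sharp})$ by Lemma \ref{lemma:sharp3}, the assumption $\eps_i({\bf M})>0$ in part (1) is exactly the condition needed so that $\te_i^*({\bf M}^{\sharp})\neq 0$.

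For part (1), Lemma \ref{lemma:sharp4}(1) directly establishes that $\te_i{\bf M}$ satisfies conditions (a) and (b). In other words, $\te_i{\bf M}$ is exactly the element ${\bf M}^{[1]}$ produced in Proposition \ref{prop:te}(1); the uniqueness claim of the present corollary is then precisely the uniqueness assertion of that proposition. For part (2), Lemma \ref{lemma:sharp4}(2) gives conditions (a) and (b) for $\tf_i{\bf M}$ on $\cM_I^{\times}(i)^*$ and on $\cM_I^{\times}\setminus(\cM_I^{\times}(i)\cup\cM_I^{\times}(i)^*)$, identifying $\tf_i{\bf M}$ with the element ${\bf M}^{[2]}$ of Proposition \ref{prop:te}(2). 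The remaining values of $(\tf_i{\bf M})_{\bf k}$ on $\cM_I^{\times}(i)$ are then determined by the corollary to Proposition \ref{prop:te}, which gives exactly the $\min$-formula claimed here.

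There is no real obstacle: every needed formula has been assembled in Section 3.1, and the present corollary is merely the assertion that the ordinary crystal structure defined via ${\bf M}\mapsto{\bf M}({\bf a})$ matches the explicit description obtained by $\sharp$-conjugating the $\ast$-crystal structure on $\cBZ_I^e$. If anything requires a moment of care, it is just confirming that on $\cM_I^{\times}(i)$ the formula from the corollary to Proposition \ref{prop:te} agrees in form with the $\min$ in the statement, which it does verbatim.
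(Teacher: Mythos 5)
Your proof is correct and follows exactly the route the paper intends: the paper dismisses this corollary as ``obvious by the consideration above,'' meaning precisely the combination of Proposition \ref{prop:sharp} with Lemma \ref{lemma:sharp4}, Proposition \ref{prop:te}, and its corollary that you spell out. Nothing is missing.
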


\begin{prop}\label{prop:fin-strict}
Let $i,j\in I$, and ${\bf M}\in \cBZ_{I}^e$.
Set $c:=\eps_i({\bf M})$ and ${\bf M}'=\te_p^c{\bf M}$. 
\vskip 1mm
\noindent
{\rm (1)} We have
$$\eps^*_i({\bf M})=\mbox{\rm max}\left\{\eps^*_i({\bf M}'),~c-
\bigl\langle {h}_i,\mbox{\rm wt}({\bf M}')\bigr\rangle_I\right\}.$$
{\rm (2)} If $i\ne j$ and $\eps_j^*({\bf M})>0$, then
$$\eps_j(\te_p^*{\bf M})=c,\qquad \te_j^c(\te_i^*{\bf M})=\te_i^*{\bf M}'.$$
{\rm (3)} If $\eps_i^*({\bf M})>0$, then we have
$$\eps_i(\te_i^*{\bf M})=\begin{cases}
\eps_i({\bf M}) & \mbox{if}\quad\eps^*_i({\bf M}')\geq 
c-\bigl\langle {h}_i,\mbox{\rm wt}({\bf M}')\bigr\rangle_I,\\
\eps_i({\bf M})-1& \mbox{if}\quad\eps^*_i({\bf M}')<
c-\bigl\langle {h}_i,\mbox{\rm wt}({\bf M}')\bigr\rangle_I,
\end{cases}$$
and
$$\te_i^{c'}\bigl(\te_i^*{\bf M}\bigr)=
\begin{cases}
\te_i^*{\bf M}' & \mbox{if}\quad\eps^*_i({\bf M}')\geq 
c-\bigl\langle {h}_i,\mbox{\rm wt}({\bf M}')\bigr\rangle_I,\\
{\bf M}' & \mbox{if}\quad\eps^*_i({\bf M}')<
c-\bigl\langle {h}_i,\mbox{\rm wt}({\bf M}')\bigr\rangle_I.
\end{cases}$$
Here, we set $c':=\eps_i(\te_i^*{\bf M}).$
\end{prop}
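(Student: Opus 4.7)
The approach is to reduce all three parts to the standard compatibility relations between the ordinary and $\ast$-Kashiwara operators on $B(\infty)$ established by Kashiwara--Saito \cite{KS}, which are precisely (part of) the seven characterizing conditions referenced in Theorem \ref{thm:seven-cond}.

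First I would assemble the two crystal structures on $\cBZ_I^e$ into a single coherent picture. The ordinary structure defined in Section 3.2 and the $\ast$-structure inherited through the bijection with $\cBZ_I$ are each isomorphic to $B(\infty)$, and by Proposition \ref{prop:sharp} they are exchanged by the involution $\sharp$: we have $\te_i = \sharp \circ \te_i^* \circ \sharp$ and $\tf_i = \sharp \circ \tf_i^* \circ \sharp$. Moreover, Lemma \ref{lemma:ast=sharp} shows that the isomorphism $\Psi_I : \bigsqcup_{\nu} \mbox{Irr}\Lambda(\nu) \to \cBZ_I^e$ carries the geometric involution $\Lambda \mapsto \Lambda^*$ to $\sharp$, while Lemma \ref{lemma:KS} identifies the geometric $\ast$ with the canonical involution of $B(\infty)$. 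Thus the triple (ordinary operators, $\ast$-operators, $\sharp$) on $\cBZ_I^e$ corresponds, under $\Psi_I$, precisely to (ordinary operators, $\ast$-operators, canonical involution) on $B(\infty)$.

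Under this identification the three assertions become intrinsic statements about $B(\infty)$ for a crystal element $b$ with $c = \eps_i(b)$ and $b' = \te_i^c b$: part (1) is the formula $\eps_i^*(b) = \max\{\eps_i^*(b'),\, c - \langle h_i, \mbox{wt}(b')\rangle\}$; part (2), for $i \ne j$ with $\eps_j^*(b) > 0$, is the commutation $\te_i^c \te_j^* b = \te_j^* b'$ together with $\eps_i(\te_j^* b) = c$; and part (3), for $i = j$ with $\eps_i^*(b) > 0$, is the dichotomous behavior of $\te_i^*$ relative to the $i$-string---either $\eps_i(\te_i^* b) = c$ and $\te_i^c \te_i^* b = \te_i^* b'$ (the commutation case), or $\eps_i(\te_i^* b) = c - 1$ and $\te_i^{c-1} \te_i^* b = b'$ (the absorption case). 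All three are proved in \cite{KS} using the Lagrangian construction---the very construction we are using through $\Psi_I$.

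\textbf{Main obstacle.} The substantive geometric work is already carried out in \cite{KS}; what remains here is essentially bookkeeping, i.e., verifying that the identifications with $B(\infty)$ in the two crystal structures are genuinely coherent (which is ensured by Lemma \ref{lemma:ast=sharp} and Proposition \ref{prop:sharp}). The one genuine subtlety is part (3), where the dichotomy reflects whether $\te_i^*$ acts on an $i$-direction independent of the $i$-string governed by $\te_i$ (so that the simple commutation of part (2) applies) or on the $i$-string itself (so that one factor of $\te_i$ is effectively consumed); this is captured precisely by the comparison of $\eps_i^*(b')$ with $c - \langle h_i, \mbox{wt}(b')\rangle$. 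Giving a self-contained proof entirely in the language of $e$-BZ data, without invoking the abstract characterization, would be the main technical hurdle---but this is circumvented by the reduction above.
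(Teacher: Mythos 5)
Your proposal is correct and follows essentially the same route as the paper: the paper's proof consists precisely of noting that the bijection onto $\bigsqcup_{\nu\in Q_+}\mbox{Irr}\Lambda(\nu)$ is an isomorphism for both the ordinary and $\ast$-crystal structures, and then importing all three identities from the corresponding statements in \cite{KS}. The extra coherence checks you cite (Lemma \ref{lemma:ast=sharp} and Proposition \ref{prop:sharp}) are exactly the ingredients the paper establishes beforehand so that this one-line reduction is legitimate.
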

\begin{proof}
Recall that the bijection $\Xi_I:\Lambda \mapsto
{\bf M}(\Lambda)$ is an isomorphism with respect to both the ordinary and 
$\ast$-crystal structures. Therefore, all of the desired equations follow 
immediately from the corresponding ones, which hold in 
$\bigsqcup_{\nu\in Q_+}\mbox{Irr}\Lambda(\nu)$ (see \cite{KS}). 
This proves the proposition.
\end{proof}
\section{Ordinary crystal structure on $(\cBZ_{\nz}^e)^{\sigma}$}
\subsection{Definition of ordinary Kashiwara operators on $\cBZ_{\nz}^e$}
For ${\bf M}=(M_{\bf k})_{{\bf k}\in\cM_{\nz}}\in \cBZ_{\nz}^e$ and $p\in\nz$,
we set
$$\eps_p({\bf M}):=-M_{{\bf k}(\sigma_p\Lambda_p)}.$$
Observe that if ${\bf k}(\sigma_p\Lambda_p)\in \cM_{\nz}(I)$, then
$$\eps_p({\bf M})=-M_{{\bf k}(\sigma_p\Lambda_p)}=
-({\bf M}_I)_{\mbox{res}_I({\bf k}(\sigma_p\Lambda_p))}
=-({\bf M}_I)_{{\bf k}(\sigma_p\varpi_p^I)}=\eps_p({\bf M}_I).$$
First, let us define the ordinary raising Kashiwara operators on  $\cBZ_{\nz}^e$. 
If $\eps_p({\bf M})>0$, the we define a new collection ${\bf M}^{[1]}=
\left(M_{\bf k}^{[1]}\right)_{{\bf k}\in\cM_{\nz}}$ of integers as follows. 
For a given ${\bf k}\in \cM_{\nz}$,  take a finite interval $I$ in $\nz$
such that ${\bf k},\sigma_p{\bf k}\in \cM_{\nz}(I)$, and 
$I\in\mbox{Int}^e({\bf M};{\bf k}(\Lambda_p))\cap
\mbox{Int}^e({\bf M};{\bf k}(\sigma_p\Lambda_p))\cap
\mbox{Int}^e({\bf M};{\bf k}(\Lambda_{p+1}))\cap
\mbox{Int}^e({\bf M};{\bf k}(\Lambda_{p-1})).$ Then, we set
$$M_{\bf k}^{[1]}:=\left(\te_p{\bf M}_I\right)_{\mbox{res}_I({\bf k})}.$$
Here, $\te_p$ is the ordinary raising Kashiwara operator on $\cBZ_I^e$ defined
in the previous section. 
Now, we define the action of $\te_p$ on $\cBZ_{\nz}^e$ by
$$\te_p{\bf M}:=\begin{cases}
{\bf M}^{[1]} & \mbox{if }\eps_p({\bf M})>0,\\
0 & \mbox{if }\eps_p({\bf M})=0.
\end{cases}$$
Note that the definition above does not depend on the choice of $I$.

Next, let us define the ordinary lowering Kashiwara operators on $\cBZ_{\nz}^e$.
For ${\bf M}=(M_{\bf k})_{{\bf k}\in\cM_{\nz}}\in \cBZ_{\nz}^e$ and $p\in\nz$, 
we define a new collection $\tf_p{\bf M}=(M_{\bf k}^{[2]})_{{\bf k}\in\cM_{\nz}}$
of integers as follows. For a given ${\bf k}\in \cM_{\nz}$, take a finite interval $I$
in $\nz$ such that ${\bf k}, \sigma_p{\bf k}\in
\cM_{\nz}(I)$, and $I\in \mbox{Int}^e({\bf M};{\bf k}(\Lambda_p))\cap 
\mbox{Int}^e({\bf M};{\bf k}(\sigma_p\Lambda_p))$. Then we set
$$M_{\bf k}^{[2]}:=\left(\tf_p{\bf M}_I\right)_{\mbox{res}_I({\bf k})}.$$
Here, $\tf_p$ is the ordinary lowering Kashiwara operator on $\cBZ_I^e$ defined
in the previous section. \\

For $p\in\nz$, we set
$$\cM_{\nz}(p):=\{{\bf k}\in\cM_{\nz}~|~p\in {\bf k},~p+1\not
\in{\bf k}\},\quad
\cM_{\nz}(p)^*:=\{{\bf k}\in\cM_{\nz}~|~p\not\in {\bf k},~p+1
\in{\bf k}\}.
$$
The following lemma follows easily from the definitions.
\begin{lemma}\label{cor:ord-kas}
Let ${\bf M}=(M_{\bf k})_{{\bf k}\in\cM_{\nz}}\in \cBZ_{\nz}^e$.
\vskip 1mm
\noindent
{\rm (1)} If $\eps_p({\bf M})>0$, then
\vskip 1mm
{\rm (a)} $(\te_p M)_{\bf k}=M_{\bf k}+1$ for ${\bf k}\in\cM_{\nz}^{\times}(p)^*$,
\vskip 1mm
{\rm (b)} $(\te_p M)_{\bf k}=M_{\bf k}$ for ${\bf k}\in\cM_{\nz}^{\times}\setminus
\left(\cM_{\nz}^{\times}(p)\cup\cM_{\nz}^{\times}(p)^*\right).$
\vskip 1mm
\noindent
{\rm (2)}  For each ${\bf M}\in\cBZ_{\nz}^e$, we have
$$(\tf_p{\bf M})_{\bf k}=\begin{cases}
\mbox{\rm min}\left\{M_{\bf k}+1,M_{\sigma_p{\bf k}}+\eps_p({\bf M})\right\}
&\mbox{if }~{\bf k}\in \cM_{\nz}^{\times}(p),\\
M_{\bf k}-1&\mbox{if }~{\bf k}\in\cM_{\nz}^{\times}(p)^*,\\
M_{\bf k} &\mbox{if }~{\bf k}\in\cM_{\nz}^{\times}\setminus
\left(\cM_{\nz}^{\times}(p)\cup\cM_{\nz}^{\times}(p)^*\right).
\end{cases}$$
\end{lemma}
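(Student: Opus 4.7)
The plan is to reduce both parts of the lemma to the finite-interval analogue (namely the unnumbered corollary immediately following Proposition \ref{prop:sharp}) by unpacking the very definition of the ordinary Kashiwara operators $\te_p,\tf_p$ on $\cBZ_{\nz}^e$. Fix ${\bf k}\in\cM_{\nz}^{\times}$ and, as permitted by the definition, choose a finite interval $I\subset\nz$ containing both $p$ and $p+1$ large enough that ${\bf k}$ and $\sigma_p{\bf k}$ lie in $\cM_{\nz}(I)$ and $I$ belongs to all the $\mbox{Int}^e$-sets prescribed in the recipes for $\te_p{\bf M}$ and $\tf_p{\bf M}$. Such an $I$ exists because each of those conditions involves only finitely many Maya diagrams.

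Next, record the elementary dictionary under the bijection $\mbox{res}_I:\cM_{\nz}(I)\overset{\sim}{\to}\cM_I^{\times}$. Since $p,p+1\in I$, the membership conditions ``$p\in{\bf k}$ and $p+1\notin{\bf k}$'' (resp. ``$p\notin{\bf k}$ and $p+1\in{\bf k}$'') are preserved by $\mbox{res}_I$, so ${\bf k}\in\cM_{\nz}^{\times}(p)$ (resp. $\cM_{\nz}^{\times}(p)^*$) if and only if $\mbox{res}_I({\bf k})\in\cM_I^{\times}(p)$ (resp. $\cM_I^{\times}(p)^*$); moreover $\mbox{res}_I$ intertwines $\sigma_p$, and the paragraph preceding the lemma already records $\eps_p({\bf M})=\eps_p({\bf M}_I)$.

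With these identifications in hand, the conclusion is immediate: by definition
\[
(\te_p{\bf M})_{\bf k}=(\te_p{\bf M}_I)_{\mbox{res}_I({\bf k})},\qquad (\tf_p{\bf M})_{\bf k}=(\tf_p{\bf M}_I)_{\mbox{res}_I({\bf k})},
\]
while the corollary after Proposition \ref{prop:sharp} computes the right-hand sides according to the three cases $\mbox{res}_I({\bf k})\in\cM_I^{\times}(p)$, $\mbox{res}_I({\bf k})\in\cM_I^{\times}(p)^*$, or neither. Using $({\bf M}_I)_{\mbox{res}_I({\bf k})}=M_{\bf k}$ and $({\bf M}_I)_{\mbox{res}_I(\sigma_p{\bf k})}=M_{\sigma_p{\bf k}}$, these expressions translate verbatim into the formulas (1)(a)(b) and (2) claimed by the lemma.

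The only point requiring a brief sanity check is that the formulas do not depend on the enlargement of $I$. For the right-hand side this is automatic since $M_{\bf k}$ and $M_{\sigma_p{\bf k}}$ are intrinsic components of ${\bf M}$. For the left-hand side, well-definedness of $\te_p{\bf M}$ and $\tf_p{\bf M}$ is asserted already in the text following their definition; at the level of bookkeeping it comes from axiom (2-b)(ii) of Definition \ref{defn:BZ} together with the fact that the finite-interval operators only alter components indexed by $\cM_J^{\times}(p)\cup\cM_J^{\times}(p)^*$, on which the values of ${\bf M}_J$ for $J\supset I$ agree with those of ${\bf M}_I$ under the restriction map. This is the only place where care is needed, but it is purely routine.
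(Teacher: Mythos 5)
Your proposal is correct and is exactly the argument the paper intends: the lemma is stated there with only the remark that it "follows easily from the definitions," and unwinding the definition of $\te_p,\tf_p$ on $\cBZ_{\nz}^e$ via a sufficiently large interval $I$ and then invoking the finite-interval corollary after Proposition \ref{prop:sharp} (together with $\eps_p({\bf M})=\eps_p({\bf M}_I)$ and the compatibility of $\mbox{res}_I$ with $\sigma_p$ and the membership conditions) is precisely that. Your closing remark on independence of the choice of $I$ is the right point to flag, and it is covered by the well-definedness already asserted in the paper.
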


\begin{prop}\label{prop:well-def}
{\rm (1)} If $\eps_p({\bf M})>0$, then $\te_p{\bf M}\in \cBZ_{\nz}^e$.
\vskip 1mm
\noindent
{\rm (2)} For every ${\bf M}\in\cBZ_{\nz}^e$ and $p\in\nz$, we have
$\tf_p{\bf M}\in\cBZ_{\nz}^e$. 
\end{prop}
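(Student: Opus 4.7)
The plan is to verify conditions (2-a) and (2-b) of Definition \ref{defn:BZ} for the collections $\te_p{\bf M}$ and $\tf_p{\bf M}$; since the arguments for $\te_p$ and $\tf_p$ run in parallel, I will focus on $\te_p$.

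For condition (2-a), fix a finite interval $K$ in $\nz$. Because $\cM_K^{\times}$ is finite, I can find a single interval $I\supset K$ lying in the intersection of all the required $\mbox{Int}^e$-sets simultaneously for every ${\bf k}\in\cM_{\nz}(K)$. Granting the well-definedness asserted just after the definition (independence of the choice of $I$), $(\te_p{\bf M})_K$ then equals the restriction $(\te_p{\bf M}_I)_K$, which lies in $\cBZ_K^e$ because the restriction of an $e$-BZ datum to a sub-interval is again an $e$-BZ datum (Section 2.2).

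For condition (2-b), fix ${\bf k}\in\cM_{\nz}$ and take for it the interval $I$ used in defining $(\te_p{\bf M})_{\bf k}$; I will show $(\te_p{\bf M})_{\Omega_J({\bf k})}=(\te_p{\bf M})_{\Omega_I({\bf k})}$ for every $J\supset I$. Using the explicit formulas of Lemma \ref{cor:ord-kas}, this is immediate in the two cases where $\Omega_I({\bf k})$ lies in $\cM_{\nz}(p)^{*}$ (both sides equal the original entry $+\,1$) or outside $\cM_{\nz}(p)\cup\cM_{\nz}(p)^{*}$ (both sides equal the original entry), each time invoking stability of ${\bf M}$ itself. When $\Omega_I({\bf k})\in\cM_{\nz}(p)$, the value is defined only implicitly through the tropical Pl\"ucker relations, and one runs the inductive argument from the uniqueness proof in Proposition \ref{prop:te}, using stability of ${\bf M}$ at the ``boundary'' entries to propagate the equality from $I$ up to $J$.

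The main obstacle is the $\cM_{\nz}(p)$-case in condition (2-b), and underlying it the well-definedness of $\te_p{\bf M}$ itself. The inductive computation of $({\bf M}^{[1]})_{[s+1,p]}$ in the proof of Proposition \ref{prop:te} is anchored at the left-boundary normalization $({\bf M}^{[1]})_{[n+1,p]}=0$ of $I$; enlarging $I$ shifts this anchor, so the enlarged Pl\"ucker recursion must conspire with the stability of ${\bf M}$ at the newly introduced boundary entries to reproduce the same final value. I expect the bookkeeping to go through using only the tropical Pl\"ucker relations and condition (2-b) for ${\bf M}$, but organizing the induction across the two initial conditions carefully is the main technical step.
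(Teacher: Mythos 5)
Your reduction is set up correctly, and your treatment of the two easy cases (where $\Omega_I({\bf k})$ lies in $\cM_{\nz}(p)^{*}$ or outside $\cM_{\nz}(p)\cup\cM_{\nz}(p)^{*}$) is sound. But the case $\Omega_I({\bf k})\in\cM_{\nz}(p)$ --- which you yourself single out as the main obstacle --- is exactly where the content of the proposition lies, and your proposal does not close it: you state that you ``expect the bookkeeping to go through'' via the tropical Pl\"ucker recursion from the proof of Proposition \ref{prop:te}, but that recursion only determines the components indexed by \emph{intervals} $[s+1,p]$, anchored at the normalization $M_{[n+1,p]}=0$, whereas condition (2-b) must be verified for arbitrary Maya diagrams $\Omega_J({\bf k})$, which are in general disjoint unions of several intervals. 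Propagating from interval components to general ones, and controlling the ``anchor shift'' when passing from $I$ to $J=\{n_I\}\cup I$, is not a routine bookkeeping matter, and you have not carried it out.

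The paper resolves this by a genuinely different mechanism. It writes $\te_p=\sharp\circ\te_p^{*}\circ\sharp$ (Proposition \ref{prop:sharp}), transfers the problem to the $\ast$-crystal structure on the sharped data, and then passes to the quiver model: for a general point $B^J$ of the irreducible Lagrangian attached to ${\bf M}_J$ it proves that the composite map along the path from $n_I$ to $n_K$ vanishes (Lemma \ref{lemma:claim2}); this forces the extra Lusztig-data coordinates $b^{*}_{n_I,l}$ to vanish (Lemma \ref{lemma:claim5}) and yields $b^{*}_{i,j}=a^{*}_{i,j}$ (Corollary \ref{cor:claim6}), after which the tableau formula of Lemma \ref{lemma:BZ-int} gives stability of \emph{all} components, not just the interval ones (Propositions \ref{prop:K-op} and \ref{prop:e-ast-action}). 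This geometric input is precisely what the introduction refers to when it says a quiver-theoretical interpretation is needed ``to overcome some technical difficulties''; a purely combinatorial Pl\"ucker-relation induction of the kind you sketch is not known to suffice. So the proposal identifies the right difficulty but leaves the essential step unproved.
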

In the next subsection, we give a proof of this proposition.
\subsection{Proof of Proposition \ref{prop:well-def}}
Since part (2) is obtained in a similar way, we only give a proof of part (1). 
We will only verify that condition 
(2-b) in Definition \ref{defn:BZ} is satisfied for $\te_p{\bf M}$ with
$\eps_p({\bf M})>0$, since the remaining ones 
are easily verified. 
Namely, we will prove the following: 
\vskip 3mm
\noindent
{\bf Claim 1}. {\it Assume that $\eps_p({\bf M})>0$, and let ${\bf k}\in\cM_{\nz}$. 
Then, there exists a finite interval 
$I$ in $\nz$ such that for every $J\supset I$,  
$$\left(\te_p{\bf M}\right)_{\Omega_I({\bf k})}=
\left(\te_p{\bf M}\right)_{\Omega_J({\bf k})}.\eqno{(4.2.1)}$$}

Take a finite interval $K=[n_K+1,n_K+m_K]$ in $\nz$ such that 
${\bf k}, \sigma_p{\bf k}\in\cM_{\nz}(K)$,
and $K\in \mbox{Int}^e({\bf M};{\bf k}(\Lambda_p))\cap
\mbox{Int}^e({\bf M};{\bf k}(\sigma_p\Lambda_p))\cap
\mbox{Int}^e({\bf M};{\bf k}(\Lambda_{p+1}))\cap
\mbox{Int}^e({\bf M};{\bf k}(\Lambda_{p-1}))$. 
Set $K':=[n_K,n_k+m_K+1]$. Since $\cM_{\nz}(K')$ is a finite set, 
we can take a finite interval $I=[n_I+1,n_I+m_I]$ in $\nz$ with the 
following properties:
$$I\supset K,\quad\mbox{and}\quad I\in
\mbox{Int}^e({\bf M};{\bf n})
\mbox{ for all }{\bf n}\in \cM_{\nz}({K'}).\eqno{(4.2.2)}$$

In the following, we will show that such an interval $I$ satisfies the condition in 
Claim 1. We may assume that $J=\{n_I\}\cup I$ (case (i)) or 
$J=I\cup \{n_I+m_I+1\}$ (case (ii)).
We show equation (4.2.1) only in case (i); the assertion in case (ii) follows by
a similar (and easier) argument.\\

Before starting a proof, we give some lemmas. We set
${\bf a}=(a_{i,j})_{(i,j)\in\Delta_I^+}:=\Psi_I^{-1}({\bf M}_I)\in\cB_I$,
${\bf b}=(b_{k,l})_{(k,l)\in\Delta_J^+}:=\Psi_J^{-1}({\bf M}_J)\in\cB_J$,
$\Lambda_{\bf a}:=\Xi_I^{-1}({\bf M}_I)\in\mbox{Irr}\Lambda(\nu_I)$,
and
$\Lambda_{\bf b}:=\Xi_J^{-1}({\bf M}_J)\in\mbox{Irr}\Lambda(\nu_J)$, 
where $\nu_I=\mbox{wt}({\bf M}_I)$ and $\nu_J=\mbox{wt}({\bf M}_J)$.
Let $B^I=(B_{\tau}^I)\in \Lambda_{\bf a}$ and 
$B^J=(B_{\tau}^J)\in \Lambda_{\bf b}$ be general points. 

\begin{lemma}\label{lemma:claim2}
Let $\sigma(n_I\to n_K)$ be the path from $n_I$ to $n_K$ defined as follows:
\begin{center}
\setlength{\unitlength}{1mm}
\begin{picture}(83,10)
\put(-10,5){$\sigma(n_I\to n_K):$}
\put(17,6){\vector(1,0){6}}\put(17,6){\line(1,0){10}}
\put(29,6){\vector(1,0){6}}\put(29,6){\line(1,0){10}}
\put(41,6){\vector(1,0){6}}\put(41,6){\line(1,0){10}}
\put(54,4.5){$\cdots$}
\put(64,6){\vector(1,0){6}}\put(64,6){\line(1,0){10}}
\put(76,6){\vector(1,0){6}}\put(76,6){\line(1,0){10}}
\put(88,6){\vector(1,0){6}}\put(88,6){\line(1,0){10}}
\put(15,1){{\scriptsize $n_I$}}
\put(23.5,1){{\scriptsize $n_I+1$}}
\put(35.5,1){{\scriptsize $n_I+2$}}
\put(70.5,1){{\scriptsize $n_K-2$}}
\put(82.5,1){{\scriptsize $n_K-1$}}
\put(97.5,1){{\scriptsize $n_K$}}
\put(16,6){\circle{2}}\put(28,6){\circle{2}}
\put(40,6){\circle{2}}\put(75,6){\circle{2}}
\put(87,6){\circle{2}}\put(99,6){\circle{2}}
\put(101,5){.}
\end{picture}
\end{center}
Then, the corresponding composite map
$B^J_{\sigma(n_I\to n_K)}:V(\nu_J)_{n_I}\to V(\nu_J)_{n_K}$ is a zero map.
\end{lemma}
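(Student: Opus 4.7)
My plan is to prove the lemma by translating the vanishing of the composite $B^J_{\sigma(n_I\to n_K)}$ into a statement about the Lusztig datum ${\bf b}=(b_{k,l})\in\cB_J$, and then extracting that statement from the stability condition (4.2.2) on the choice of $I$. Since $n_I$ is now the leftmost vertex of $J$, and since for generic $B^J\in\Lambda_{\bf b}$ the composite $B^J_{\sigma(n_I\to n_K)}$ vanishes if and only if no Lusztig multiplicity $b_{k,l}$ with $k\leq n_I$ and $l\geq n_K+1$ is positive, the claim reduces to
\[
b_{n_I,l}=0\quad\text{for all }l\geq n_K+1.
\]

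To establish this, I would first observe that for every ${\bf n}\in\cM_{\snz}(K')$, the Maya diagrams $\Omega_J({\bf n})$ and $\Omega_I({\bf n})$ differ only by the inclusion of $n_I$, while their restrictions satisfy $\mbox{res}_J\Omega_J({\bf n})=\mbox{res}_I\Omega_I({\bf n})$ as subsets of $\widetilde{I}\subset\widetilde{J}$. Combined with (4.2.2), this yields the batch of identities $(M_J)_{\Theta}=(M_I)_{\Theta}$ for $\Theta$ of the form $[n_K+m_K+2,n_I+m_I+1]\cup S$ with $S\in\cM_{K'}^{\times}$. Applying Lemma \ref{lemma:BZ-int}(2) to such $\Theta$, both sides become explicit expressions in the Lusztig data ${\bf b}$ and ${\bf a}$ respectively; since $n_I\notin I$, the right-hand side has no $b_{n_I,\cdot}$-type terms, whereas the left-hand side can. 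Comparing the two sides as $S$ varies, a downward induction on $l$ (starting at $l=n_I+m_I+1$ and choosing $S$ judiciously at each stage) should force $b_{n_I,l}=0$ for all $l\geq n_K+1$, completing the argument.

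The main obstacle will be the combinatorial bookkeeping in the inductive step: the minima over ${\bf k}$-tableaux appearing in Lemma \ref{lemma:BZ-int}(2) interact with the variation of $S$ in a subtle manner, and isolating a single $b_{n_I,l}$ at each stage requires choosing $S$ so that the tableau minima on both sides are attained at a controllable configuration. A subsidiary technical point is justifying the interval-summand interpretation of generic points in $\Lambda_{\bf b}$, which follows from the isomorphism $\Psi_J$ of Theorem \ref{thm:finA} together with the classification of the irreducible components of $\Lambda(\nu)$ in terms of Lusztig data for type $A$.
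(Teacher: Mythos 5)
There is a genuine gap, on two counts. First, your opening reduction --- that for a generic point $B^J$ of $\Lambda_{\bf b}$ the composite $B^J_{\sigma(n_I\to n_K)}$ vanishes if and only if $b_{n_I,l}=0$ for all $l\geq n_K+1$ --- is asserted, not proved, and it is doubtful as stated. A generic point of $\Lambda_{\bf b}\subset T^*E_{V(\nu_J),\Omega}$ has only its components along one orientation pinned down by the interval decomposition encoded by ${\bf b}$; the components along the reversed arrows are generic in the conormal directions, so the composite along the increasing path is not read off from ${\bf b}$ in the naive way. Indeed, in the paper the vanishing of $B^J_{\sigma(n_I\to n_K)}$ is what yields (in Lemma \ref{lemma:claim5}) the vanishing of the entries $b^*_{n_I,l}$ of the $\sharp$-twisted datum ${\bf b}^*=\Psi_J^{-1}({\bf M}_J^{\sharp})$, not of ${\bf b}=\Psi_J^{-1}({\bf M}_J)$, which suggests you are working with the wrong datum. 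Second, the heart of your argument --- the downward induction on $l$ extracting $b_{n_I,l}=0$ from the identities $(M_J)_{\Theta}=(M_I)_{\Theta}$ via Lemma \ref{lemma:BZ-int}(2) --- is only a plan: you yourself flag that controlling the tableau minima is unresolved, and in addition the comparison of the two sides presupposes a relation between ${\bf b}$ and ${\bf a}$ (an analogue of Corollary \ref{cor:claim6}) that you have not established and that in the paper is only available \emph{after} the present lemma and Lemma \ref{lemma:claim4}. So the proposal risks circularity as well as incompleteness.

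For comparison, the paper's proof needs none of this machinery. One applies the stability condition (4.2.2) to the single Maya diagram ${\bf k}(\Lambda_{n_K})\in\cM_{\nz}(K')$, giving $M_{\Omega_I({\bf k}(\Lambda_{n_K}))}=M_{\Omega_J({\bf k}(\Lambda_{n_K}))}$. The right-hand side is $({\bf M}_J)_{[n_K+1,n_I+m_I+1]}=-(\nu_J)_{n_K}$ (empty source set), while the left-hand side equals $({\bf M}_J)_{\{n_I\}\cup[n_K+1,n_I+m_I+1]}=-\dimc\mbox{Coker}\bigl(B^J_{\sigma(n_I\to n_K)}\bigr)$ by the cokernel description of the components of ${\bf M}(\Lambda_{\bf b})$. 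Equating the two forces the cokernel to be all of $V(\nu_J)_{n_K}$, i.e.\ the composite is zero. If you want to salvage your approach, you should at minimum (i) replace ${\bf b}$ by ${\bf b}^*$ and justify the dictionary between composites of $B^J$ and the interval decomposition, and (ii) actually execute the tableau induction; but the direct argument above makes both steps unnecessary.
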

\begin{proof}
Since $\Lambda_{n_K}\in \cM_{\nz}(K')$, we have
\begin{align*}
&({\bf M}_I)_{[n_K+1,n_I+m_I+1]}=M_{\Omega_I(\Lambda_{n_K})}\\
&\qquad\qquad\qquad
=M_{\Omega_J(\Lambda_{n_K})}=({\bf M}_J)_{[n_K+1,n_I+m_I+1]}=-(\nu_J)_{n_K}
.\end{align*}
Also, by the definition, we have 
\begin{align*}
({\bf M}_I)_{[n_K+1,n_I+m_I+1]}&=({\bf M}_J)_{\{n_I\}\cup[n_K+1,n_I+m_I+1]}\\
&=-\dimc\mbox{Coker}\left(V(\nu_J)_{n_I}\overset{B^J}{\longrightarrow}
V(\nu_J)_{n_K}\right).
\end{align*}
From these, we obtain
$$(\nu_J)_{n_K}=\dimc\mbox{Coker}\left(V(\nu_J)_{n_I}
\overset{B^J}{\longrightarrow}V(\nu_J)_{n_K}\right).$$
This shows that the map $B^J_{\sigma(n_I\to n_K)}$ is a zero map, as desired .
\end{proof}

Let ${\bf a}^*=(a^*_{i,j})_{(i,j)\in\Delta_I^+}:=\Psi_I^{-1}({\bf M}_I^{\sharp})$ and
${\bf b}^*=(b^*_{k,l})_{(k,l)\in\Delta_J^+}:=\Psi_J^{-1}({\bf M}_J^{\sharp})$. 

\begin{lemma}\label{lemma:claim5}
We have $b^*_{n_I,l}=0$ for $n_K+1\leq l\leq n_I+m_I+1$.
\end{lemma}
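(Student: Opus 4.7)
The key observation is that by Lemma~\ref{lemma:ast=sharp}, ${\bf M}_J^\sharp = {\bf M}(\Lambda_{\bf b}^*)$, and so by Theorem~\ref{thm:finA} the Lusztig datum ${\bf b}^* = \Psi_J^{-1}({\bf M}_J^\sharp)$ is attached to the transposed Lagrangian $\Lambda_{\bf b}^*$, a general point of which is ${}^tB^J$. The hypothesis I want to exploit is the vanishing $B^J_{\sigma(n_I\to n_K)}=0$ from Lemma~\ref{lemma:claim2}, which under transposition becomes the vanishing of the reverse composite on ${}^tB^J$. The strategy is to feed this into the relation $M_{\bf k}({\bf b}^*)=({\bf M}_J^\sharp)_{\bf k}$ at carefully chosen ${\bf k}$'s and invert.

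Concretely, I would test against the Maya diagrams ${\bf k}=[n_I+1,l]$ for $l$ ranging over $[n_K,n_I+m_I+1]$, whose complements are $\{n_I\}\cup[l+1,n_I+m_I+1]$. Using the definition
\[
({\bf M}_J^\sharp)_{\bf k}=({\bf M}_J)_{{\bf k}^c}-\langle\mbox{wt}^\vee({\bf M}_J),{\bf k}\rangle_J
\]
and the cokernel formulas in Proposition 2.5.3 applied to $\{n_I\}\cup[l+1,n_I+m_I+1]$, $[l+1,n_I+m_I+1]$, and $[n_I+1,n_I+m_I+1]$, the vanishing from Lemma~\ref{lemma:claim2} (which extends along the longer path $n_I\to l$ whenever $l\geq n_K$) gives $({\bf M}_J)_{\{n_I\}\cup[l+1,n_I+m_I+1]}=-(\nu_J)_{l}$; expanding the pairing via (2.2.1) and cancelling then yields the clean stabilisation $({\bf M}_J^\sharp)_{[n_I+1,l]}=-(\nu_J)_{n_I}$ uniformly in $l\in[n_K,n_I+m_I+1]$.

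Finally, matching this stabilisation against the BFZ expression for $M_{[n_I+1,l]}({\bf b}^*)$ (using Lemma~\ref{lemma:BZ-int} to control which $b^*_{i,j}$ enter) and taking successive differences in $l$ isolates each column $b^*_{*,l+1}$; an induction on $l$, combined with the tropical Pl\"ucker relations (BZ-2) to peel off entries one at a time, forces $b^*_{n_I,l}=0$ for $l\in[n_K+1,n_I+m_I+1]$. The main obstacle here is the nonlinear min-over-tableaux that appears in the BFZ formula, which complicates the peeling. A cleaner alternative, which I would pursue in parallel, is to argue purely geometrically: interpret $b^*_{n_I,l}$ as the multiplicity of the interval indecomposable $M[n_I,l-1]$ in the quiver-representation decomposition of ${}^tB^J$, and rule out such summands for $l\geq n_K+1$ directly from the transposed vanishing, which precludes any non-zero composition in ${}^tB^J$ along the full segment from $n_K$ down to $n_I$.
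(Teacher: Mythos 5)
Your proposal follows essentially the same route as the paper: transport the vanishing of $B^J_{\sigma(n_I\to n_K)}$ from Lemma \ref{lemma:claim2} through Lemma \ref{lemma:ast=sharp} to compute $({\bf M}_J^{\sharp})_{[n_I+1,l]}=-(\nu_J)_{n_I}$ for $l\geq n_K$, and then read off the row $b^*_{n_I,\cdot}$ from these components. One clarification on your final step: the obstacle you flag (the min over ${\bf k}$-tableaux) is illusory here, because for ${\bf k}=[n_I+1,l]$ the monotonicity constraints force $c_{p,q}=p+1$, so the tableau is unique and $M_{[n_I+1,l]}({\bf b}^*)=-\sum_{l'=n_I+1}^{l}b^*_{n_I,l'}$ is just a negative partial row sum — no tropical Pl\"ucker peeling is needed, and the successive difference isolates the single entry $b^*_{n_I,l+1}$ (not a whole column). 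The paper closes even more economically, comparing only the two partial sums at $l=n_K$ and $l=n_I+m_I+1$ and invoking nonnegativity of the $b^*_{n_I,l}$ to kill the tail sum.
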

\begin{proof}
First, note that
$$({\bf M}_J^{\sharp})_{[n_I+1,n_I+m_I+1]}=-\sum_{l=n_I+1}^{n_I+m_I+1}b^*_{n_I,l}\quad
\mbox{and}\quad
({\bf M}_J^{\sharp})_{[n_I+1,n_K]}=-\sum_{l=n_I+1}^{n_K}b^*_{n_I,l}.$$
Next, by Lemma \ref{lemma:ast=sharp}, we have
\begin{align*}
({\bf M}_J^{\sharp})_{[n_I+1,n_K]}=M_{[n_I+1,n_K]}(\Lambda_{\bf b}^*)=
-\dimc\mbox{Coker}\left(V(\nu_J)_{n_K}\overset{{}^t(B_J)}{\longrightarrow}
V(\nu_J)_{n_I}\right).
\end{align*}
Since $\bigl({}^t(B^J)\bigr)_{\sigma(n_K\to n_I)}
={}^t\left(B^J_{\sigma(n_I\to n_K)}\right)=0$ by Lemma \ref{lemma:claim2}, 
we deduce that  
$$({\bf M}_J^{\sharp})_{[n_I+1,n_K]}=-(\nu_J)_{n_I}.$$
Therefore, we deduce that
\begin{align*}
\sum_{l=n_K+1}^{n_I+m_I+1}b^*_{n_I,l}
&=-({\bf M}_J^{\sharp})_{[n_I+1,n_I+m_I+1]}+({\bf M}_J^{\sharp})_{[n_I+1,n_K]}
=0.
\end{align*}
Since $b^*_{n_I,l}$ is nonnegative for all $l$, we obtain the desired equality.
\end{proof}

\begin{lemma}\label{lemma:claim4}
For every $n_I+1\leq s\leq t\leq n_I+m_I+1$, we have 
$$\bigl({\bf M}_J^{\sharp}\bigr)_{[s,t]}=\bigl({\bf M}_I^{\sharp}\bigr)_{[s,t]}
+(\nu_I)_{s-1}-(\nu_J)_{s-1}-(\nu_I)_t+(\nu_J)_t.$$
Here, by convention,  $(\nu_I)_{s-1}=0$ for $s=n_I+1$ and $(\nu_I)_t=
(\nu_J)_t=0$ for $t=n_I+m_I+1$.
\end{lemma}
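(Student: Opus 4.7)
The plan is to evaluate both sides directly from the defining formula
\[
({\bf M}^{\sharp})_{\bf k}=({\bf M})_{{\bf k}^c}-\bigl\langle \mbox{wt}^{\vee}({\bf M}),\,{\bf k}\bigr\rangle
\]
and then subtract. The first step is the set-theoretic identity $[s,t]^c_J=\{n_I\}\cup[s,t]^c_I$, which is immediate since $n_I\notin[s,t]\subset\widetilde{I}$ and $\widetilde{J}=\{n_I\}\cup\widetilde{I}$. Because $\{n_I\}\cup[s,t]^c_I$ lies in $\cM_J^{\times}(I)$ and maps under $\mbox{res}_I^J$ to $[s,t]^c_I\in\cM_I^{\times}$, the consistency of the two restrictions---namely $({\bf M}_J)_{\{n_I\}\cup {\bf k}'}=({\bf M}_I)_{{\bf k}'}$ for every ${\bf k}'\in\cM_I^{\times}$, which is built into the definitions of ${\bf M}_I,{\bf M}_J$ as restrictions of the same ${\bf M}\in\cBZ_{\nz}^e$---immediately yields $({\bf M}_J)_{[s,t]^c_J}=({\bf M}_I)_{[s,t]^c_I}$. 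Hence the difference $({\bf M}_J^{\sharp})_{[s,t]}-({\bf M}_I^{\sharp})_{[s,t]}$ is controlled entirely by the two $\mbox{wt}^{\vee}$-pairings.

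Next I would compute each pairing. Expanding $\mbox{wt}^{\vee}({\bf M}_?)=\sum_{i}({\bf M}_?)_{[i+1,n_I+m_I+1]}h_i$ and invoking (2.2.1), since $[s,t]$ is a single interval the only potentially non-zero contributions come from $i=t$ (coefficient $+1$) and $i=s-1$ (coefficient $-1$), each present exactly when the index lies in the respective indexing interval. Using the identity $({\bf M}_?)_{[i+1,n_I+m_I+1]}=-(\nu_?)_i$---which is just the unwinding of $\mbox{wt}({\bf M}_?)=-\nu_?$---and extending it by the convention $(\nu_?)_i=0$ for $i$ outside the relevant interval, each pairing collapses to
\[
\bigl\langle \mbox{wt}^{\vee}({\bf M}_?),\,[s,t]\bigr\rangle_?=-(\nu_?)_t+(\nu_?)_{s-1}.
\]
Subtracting the $I$- and $J$-versions then produces precisely $(\nu_I)_{s-1}-(\nu_J)_{s-1}-(\nu_I)_t+(\nu_J)_t$, as asserted.

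The point that requires most care is the boundary bookkeeping---the only place where $I$ and $J$ really differ. When $s=n_I+1$ the index $s-1=n_I$ is outside $I$ but inside $J$, so the $J$-pairing picks up a genuine $(\nu_J)_{n_I}$ with no $I$-counterpart; when $t=n_I+m_I+1$ the index $t$ is outside both intervals. The stated conventions $(\nu_I)_{s-1}=0$ for $s=n_I+1$ and $(\nu_I)_t=(\nu_J)_t=0$ for $t=n_I+m_I+1$ are exactly what absorb these asymmetries into the uniform formula above, so no separate analysis of the three boundary sub-cases is needed once the conventions are in force. I do not expect Lemmas~\ref{lemma:claim2} and \ref{lemma:claim5} to enter at this step; they look like preparatory input for later stages of the verification of Claim~1.
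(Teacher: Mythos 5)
Your proof is correct, but it takes a genuinely different route from the paper's. Both arguments hinge on the same pivot, namely $[s,t]^c_J=\{n_I\}\cup[s,t]^c_I$ and hence $({\bf M}_J)_{[s,t]^c_J}=({\bf M}_I)_{[s,t]^c_I}$, so that only the correction terms differ. But where you evaluate those corrections combinatorially --- directly from the definition of $\sharp$, formula (2.2.1), and the identity $({\bf M}_?)_{[i+1,n_I+m_I+1]}=-(\nu_?)_i$ --- the paper instead passes through the quiver realization: it takes a general point $B$ of the corresponding irreducible Lagrangian, uses Lemma \ref{lemma:ast=sharp} to write $\bigl({\bf M}_?^{\sharp}\bigr)_{[s,t]}$ as $-\dimc\mbox{Coker}$ of a transposed map $V(\nu_?)_t\to V(\nu_?)_{s-1}$, converts this to a kernel and then back to a cokernel of the untransposed map, picking up $-(\nu_?)_{s-1}+(\nu_?)_t$ along the way. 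The two computations of the correction terms agree, as they must, since the cokernel identity is exactly what proves Lemma \ref{lemma:ast=sharp}. Your version buys two things: it is more elementary (no general points, no dimension counts), and it treats the boundary cases $s=n_I+1$ and $t=n_I+m_I+1$ uniformly via the stated conventions, whereas the paper only writes out the interior case $n_I+1<s\leq t<n_I+m_I+1$ and defers the rest to ``a similar (and easier) argument.'' You are also right that Lemmas \ref{lemma:claim2} and \ref{lemma:claim5} play no role here; they are used elsewhere in the proof of Claim 1. One small point worth making explicit in a write-up: the identity $({\bf M}_?)_{[i+1,n_I+m_I+1]}=-(\nu_?)_i$ follows from $\mbox{\rm wt}({\bf M}_?)=\mbox{\rm wt}({\bf M}_?^*)=\sum_i({\bf M}_?)_{[i+1,n_I+m_I+1]}\alpha_i^?$ together with the convention that $\nu_?\in Q_+$ is the dimension vector (the paper's literal phrase ``$\nu_I=\mbox{\rm wt}({\bf M}_I)$'' has a sign slip, as its own use of $-(\nu_J)_{n_K}$ in Lemma \ref{lemma:claim2} confirms).
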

\begin{proof}
We assume that $n_I+1<s\leq t< n_I+m_I+1$; in the remaining case,
the desired equation follows by a similar (and easier) argument.\\

Write ${\bf n}_I=[s,t]\in \cM_I^{\times}$. Then we have
\begin{align*}
\bigl({\bf M}_I^{\sharp}\bigr)_{{\bf n}_I}&=
-\dimc\mbox{Coker}\left(V(\nu_I)_t\overset{{}^t(B_I)}{\longrightarrow}
V(\nu_I)_{s-1}\right)\\
&=-\dimc\mbox{Ker}\left(V(\nu_I)_{s-1}\overset{B_I}{\longrightarrow}
V(\nu_I)_t\right)\\
&=-\dimc\mbox{Coker}\left(V(\nu_I)_{s-}\overset{B_I}{\longrightarrow}
V(\nu_I)_t\right)-(\nu_I)_{s-1}+(\nu_I)_t\\
&=({\bf M}_I)_{{\bf n}_I^c}-(\nu_I)_{s-1}+(\nu_I)_t.
\end{align*}
Similarly, we have
$$\bigl({\bf M}_J^{\sharp}\bigr)_{{\bf n}_J}=({\bf M}_J)_{{\bf n}_J^c}
-(\nu_J)_{s-1}+(\nu_J)_t.$$
Here we set ${\bf n}_J:=[s,t]\in\cM_J^{\times}$. Since ${\bf n}_J^c=\{n_I\}\cup
{\bf n}_I^c$, we obtain
$$({\bf M}_I)_{{\bf n}_I^c}=({\bf M}_I)_{{\bf n}_J^c}.$$
Therefore, we deduce that
\begin{align*}
\bigl({\bf M}_J^{\sharp}\bigr)_{{\bf n}_J}&=({\bf M}_I)_{{\bf n}_I^c}
-(\nu_J)_{s-1}+(\nu_J)_t\\
&=\bigl({\bf M}_I^{\sharp}\bigr)_{{\bf n}_I}
+(\nu_I)_{s-1}-(\nu_I)_t-(\nu_J)_{s-1}+(\nu_J)_t.
\end{align*}
This proves the lemma.
\end{proof}

\begin{cor}\label{cor:claim6}
For every $n_I+1\leq i<j\leq n_I+m_I+1$, we have
$$b^*_{i,j}=a^*_{i,j}.$$ 
\end{cor}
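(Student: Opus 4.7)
\textbf{Proof proposal for Corollary \ref{cor:claim6}.}
The strategy is to derive an explicit inversion formula expressing each entry $a^*_{s,t}$ (respectively $b^*_{s,t}$) as a linear combination of the values of ${\bf M}_I^{\sharp}$ (respectively ${\bf M}_J^{\sharp}$) on subintervals $[s',t']$, and then to apply Lemma \ref{lemma:claim4} to check that the resulting difference $b^*_{s,t}-a^*_{s,t}$ telescopes to zero for every $n_I+1\le s<t\le n_I+m_I+1$.

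The key observation is that, for a Maya diagram of the form ${\bf k}=[s+1,t]$, the monotonicity conditions $c_{p,p}=k_p$, $c_{p,q}\le c_{p,q+1}$ and $c_{p,q}<c_{p+1,q}$ force the ${\bf k}$-tableau to be unique, with $c_{p,q}=k_p$ for all $p\le q$. Plugging this unique tableau into the formula of Lemma \ref{lemma:BZ-int} and simplifying, the tableau contribution cancels with part of the other summand, and one is left with the compact formula
\[
\bigl({\bf M}_I^{\sharp}\bigr)_{[s+1,t]}\;=\;-\sum_{r=s+1}^{t}\sum_{i=n_I+1}^{s}a^*_{i,r}\qquad(n_I\le s<t\le n_I+m_I+1),
\]
and similarly for $({\bf M}_J^{\sharp})_{[s+1,t]}$ with $n_I$ replaced by $n_I-1=n_J$ and $a^*_{i,r}$ replaced by $b^*_{i,r}$. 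Set $f(s,t):=({\bf M}_I^{\sharp})_{[s,t]}$ and $g(s,t):=({\bf M}_J^{\sharp})_{[s,t]}$. Taking the second difference of the displayed formula in $s$ and $t$ gives the inversion identity
\[
a^*_{s,t}\;=\;f(s,t)+f(s+1,t-1)-f(s+1,t)-f(s,t-1),
\]
and likewise $b^*_{s,t}$ is expressed by the same alternating sum in $g$; the normalization (BZ-0) reads $f(n_I+1,\cdot)=0$ and $g(n_J+1,\cdot)=g(n_I,\cdot)=0$.

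With these formulas in hand, one computes $b^*_{s,t}-a^*_{s,t}$ as the corresponding second difference of the function $h(s,t):=g(s,t)-f(s,t)$. By Lemma \ref{lemma:claim4}, for $s\ge n_I+2$ we have $h(s,t)=A(s-1)-A(t)$ with $A(r):=(\nu_I)_r-(\nu_J)_r$, so $h$ is a sum of a function of $s-1$ alone and a function of $t$ alone; consequently its mixed second difference vanishes, giving $b^*_{s,t}=a^*_{s,t}$ for $s\ge n_I+2$. For the remaining boundary case $s=n_I+1$, one has $f(n_I+1,\cdot)=0$ and, again by Lemma \ref{lemma:claim4} together with $(\nu_I)_{n_I}=0$, the explicit value $g(n_I+1,t)=-(\nu_J)_{n_I}-(\nu_I)_t+(\nu_J)_t$. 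Plugging these into the alternating sum for $b^*_{n_I+1,t}-a^*_{n_I+1,t}$, the constant term $-(\nu_J)_{n_I}$ cancels in the second difference and the remaining $A(t)-A(t-1)$ terms cancel against $-(\nu_I)_t+(\nu_J)_t+(\nu_I)_{t-1}-(\nu_J)_{t-1}$, yielding the desired equality.

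The only nonroutine point in this scheme is the simplification of Lemma \ref{lemma:BZ-int} for ${\bf k}=[s+1,t]$: all other steps are elementary linear algebra on the resulting formula, and Lemma \ref{lemma:claim4} is invoked as a black box. Note that Lemma \ref{lemma:claim5} is not directly needed here, although it is consistent with the conclusion (the entries $b^*_{n_I,l}$ for $l>n_K$ vanish, while $b^*_{i,j}=a^*_{i,j}$ for $i\ge n_I+1$ is exactly what we prove).
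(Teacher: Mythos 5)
Your proof is correct and follows essentially the same route as the paper: both express $a^*_{i,j}$ and $b^*_{i,j}$ as the alternating second difference of $({\bf M}_I^{\sharp})$, resp.\ $({\bf M}_J^{\sharp})$, over the four intervals $[i,j]$, $[i+1,j-1]$, $[i+1,j]$, $[i,j-1]$, and then use Lemma \ref{lemma:claim4} to see that the difference of the two collections on intervals splits as a function of the left endpoint plus a function of the right endpoint, so the mixed second difference vanishes. The only divergence is that the paper quotes this inversion formula as the chamber ansatz of \cite{BFZ}, whereas you rederive it from Lemma \ref{lemma:BZ-int} via the uniqueness of the $[s+1,t]$-tableau --- a harmless, self-contained addition.
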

\begin{proof} The desired equality follows easily from Lemma \ref{lemma:claim4}
and the chamber ansatz maps (see \cite{BFZ}):
$$b^*_{i,j}=\bigl({\bf M}_J^{\sharp}\bigr)_{[i,j]}+
\bigl({\bf M}_J^{\sharp}\bigr)_{[i+1,j-1]}-\bigl({\bf M}_J^{\sharp}\bigr)_{[i+,j]}
-\bigl({\bf M}_J^{\sharp}\bigr)_{[i,j-1]},$$
$$a^*_{i,j}=\bigl({\bf M}_I^{\sharp}\bigr)_{[i,j]}+
\bigl({\bf M}_I^{\sharp}\bigr)_{[i+1,j-1]}-\bigl({\bf M}_I^{\sharp}\bigr)_{[i+,j]}
-\bigl({\bf M}_I^{\sharp}\bigr)_{[i,j-1]}.$$
\end{proof}

\begin{prop}\label{prop:K-op}
We have
$$\left(\bigl(\te_p^*({\bf M}_I^{\sharp})\bigr)_K\right)_{[p+1,n_K+m_K+1]}=
\left(\bigl(\te_p^*({\bf M}_J^{\sharp})\bigr)_K\right)_{[p+1,n_K+m_K+1]}.$$
\end{prop}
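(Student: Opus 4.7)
Plan of proof. The strategy is to pass through the Lusztig-datum parametrizations $\Psi_I$ and $\Psi_J$, and then invoke Lemma~\ref{lemma:BZ-int}(1) to reduce the claimed equality to a short combinatorial statement about tableaux.

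Set ${\bf a}^* := \Psi_I^{-1}({\bf M}_I^{\sharp}) \in \cB_I$ and ${\bf b}^* := \Psi_J^{-1}({\bf M}_J^{\sharp}) \in \cB_J$. Lemma~\ref{lemma:claim5} yields $b^*_{n_I,l} = 0$ for $n_K + 1 \leq l \leq n_I + m_I + 1$, while Corollary~\ref{cor:claim6} gives $b^*_{i,j} = a^*_{i,j}$ for $n_I + 1 \leq i < j \leq n_I + m_I + 1$. Since $p \geq n_K + 1 > n_I$ and $n_J + m_J = n_I + m_I$, every sum $A^{\ast (p)}_l$ involved in the definition of $\te_p^*$ takes equal values on ${\bf a}^*$ and ${\bf b}^*$; consequently $\eps_p^*$ and the index $l_e$ agree. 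Thus $\te_p^* {\bf a}^*$ and $\te_p^* {\bf b}^*$ modify their inputs at the same two positions $(p, l_e + 1)$ and $(p+1, l_e + 1)$, both in rows $\geq n_I + 1$, by the same amounts $\pm 1$. Hence $(\te_p^* {\bf a}^*)_{i,j} = (\te_p^* {\bf b}^*)_{i,j}$ whenever $i \geq n_I + 1$, while $(\te_p^* {\bf b}^*)_{n_I,l} = b^*_{n_I,l} = 0$ for all $l \geq n_K + 1$.

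Next, the Maya diagram $[p+1, n_K + m_K + 1] \in \cM_K^{\times}$ pulls back under $(\mbox{res}_K^I)^{-1}$ to ${\bf n}_I := [n_I + 1, n_K] \cup [p+1, n_K + m_K + 1]$ and under $(\mbox{res}_K^J)^{-1}$ to ${\bf n}_J := [n_I, n_K] \cup [p+1, n_K + m_K + 1]$, so the proposition amounts to the equality $M_{{\bf n}_I}(\te_p^* {\bf a}^*) = M_{{\bf n}_J}(\te_p^* {\bf b}^*)$. Both ${\bf n}_I$ and ${\bf n}_J$ begin with a flush-left interval ending at $n_K$, so Lemma~\ref{lemma:BZ-int}(1) applies with $s = n_K$, expressing each $M$-value as a linear sum plus a minimum over tableaux, involving only entries of column index $\geq n_K + 1$. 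The $J$-formula differs from the $I$-formula by an extra $i = n_I$ term in the linear sum and an extra row $P = n_I$ in the tableaux.

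I would then verify that both extras contribute nothing. The additional linear terms vanish because $(\te_p^* {\bf b}^*)_{n_I, k_Q} = 0$ for every $Q$ appearing. For the tableau minima I would argue equality by two one-sided inequalities: given any ${\bf n}_I$-tableau $C^I$, extending it to a ${\bf n}_J$-tableau by $c^J_{n_I, Q} := n_I$ for all $Q$ is legitimate (the monotonicity $n_I < c^I_{n_I + 1, Q}$ is immediate) and produces a row-$n_I$ contribution $\sum_Q b^*_{n_I, Q} = 0$, matching the $C^I$-sum; conversely, the restriction of any ${\bf n}_J$-tableau to rows $P \geq n_I + 1$ is a valid ${\bf n}_I$-tableau whose partial sum lower-bounds the full $C^J$-sum, because Lusztig-datum entries are nonnegative. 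This yields equality of the two minima and hence of the two $M$-values. The main delicate point is this last combinatorial min-comparison: one must confirm that the extra freedom in choosing the first row of a ${\bf n}_J$-tableau is absorbed by the vanishing of $b^*_{n_I, l}$ in the relevant column range, so that constant-$n_I$ first rows suffice to attain the minimum. Everything else reduces to bookkeeping already prepared in Section~4.2.
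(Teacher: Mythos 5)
Your proposal follows essentially the same route as the paper's own proof: transfer to Lusztig data via $\Psi_I,\Psi_J$, observe that $\te_p^*$ acts identically on ${\bf a}^*$ and ${\bf b}^*$ so that the resulting data agree in rows $\geq n_I+1$ while the row-$n_I$ entries of the $J$-datum vanish in columns $\geq n_K+1$, and then compare the two expressions from Lemma \ref{lemma:BZ-int}(1) with $s=n_K$ by the same two-sided tableau argument (constant-$n_I$ first row versus restriction to rows $\geq n_I+1$). The argument is correct and matches the paper.
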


\begin{proof}
Note that the desired equation is equivalent to the following:
$$\bigl(\te_p^*({\bf M}_I^{\sharp})\bigr)_{[n_I+1,n_K]\cup[p+1,n_K+m_K+1]}=
\bigl(\te_p^*({\bf M}_J^{\sharp})\bigr)_{[n_I,n_K]\cup[p+1,n_K+m_K+1]}.
\eqno{(4.2.3)}$$
Let ${\bf a}'=(a'_{i,j})_{(i,j)\in\Delta_I^+}:=\Psi_I^{-1}
\left(\te_p^*({\bf M}_I^{\sharp})\right)$ and
${\bf b}'=(b'_{k,l})_{(k,l)\in\Delta_J^+}:=\Psi_J^{-1}
\left(\te_p^*({\bf M}_J^{\sharp})\right)$, and
set ${\bf l}:=[n_I+1,n_K]\cup[p+1,n_K+m_K+1]$ and ${\bf m}:=[n_I,n_K]\cup
[p+1,n_K+m_K+1]$. 
Then, equation (4.2.3) is equivalent to the following:
$$M_{\bf l}({\bf a}')=M_{\bf m}({\bf b}').\eqno{(4.2.4)}$$

Observe that by Lemma \ref{lemma:claim5}, Corollary \ref{cor:claim6}, 
and the definition of the action of $\te_p^*$, 
$$a'_{i,j}=b'_{i,j}~((i,j)\in\Delta_I^+)\quad\mbox{and}\quad
b'_{n_I,l}=0~(n_K+1\leq l\leq n_I+m_I+1).\eqno{(4.2.5)}$$
Since the Maya diagrams ${\bf l}$ and ${\bf m}$ satisfy the condition of 
Lemma \ref{lemma:BZ-int} (1) with $s=n_K$, we have
\begin{align*}
M_{\bf l}({\bf a}')&=-\sum_{j=p+1}^{n_K+m_K+1}\sum_{i=n_I+1}^{l-1}a'_{i,j}\\
&\qquad\qquad+\mbox{\rm min}\left\{\left.
\sum_{t=n_K+1}^{2n_K+m_K-p+1}\sum_{s=n_I+1}^{t-1}a_{c_{s,t},c_{s,t}+(t-s)}'
~\right|\begin{array}{c}C=(c_{s,t})~\mbox{\rm is }\\ 
\mbox{\rm an ${\bf l}$-tableau}\end{array}\right\},
\end{align*}
and
\begin{align*}
M_{\bf m}({\bf b}')&=-\sum_{l=p+1}^{n_K+m_K+1}\sum_{k=n_I}^{l-1}b'_{k,l}\\
&\qquad\qquad+\mbox{\rm min}\left\{\left.
\sum_{t=n_K+1}^{2n_K+m_K-p+1}\sum_{s=n_I}^{t-1}b'_{d_{s,t},d_{s,t}+(t-s)}
~\right|\begin{array}{c}D=(d_{s,t})~\mbox{\rm is }\\ 
\mbox{\rm an ${\bf m}$-tableau}\end{array}\right\}.
\end{align*}
Here, by (4.2.5), 
$$\sum_{j=p+1}^{n_K+m_K+1}\sum_{i=n_I+1}^{l-1}a'_{i,j}=
\sum_{l=p+1}^{n_K+m_K+1}\sum_{k=n_I}^{l-1}b'_{k,l}.$$ 
Now, let $D=(d_{s,t})_{n_I\leq s\leq t\leq 2n_k+m_k-p+1}$ be an ${\bf m}$-tableau, 
and define two upper-triangular matrices 
$D'=(d'_{s,t})_{n_I\leq s\leq t\leq 2n_k+m_k-p+1}$ and
$D''=(d_{s,t}'')_{n_I+1\leq s\leq t\leq 2n_k+m_k-p+1}$ by 
$$d'_{s,t}:=\begin{cases}
n_I & \mbox{if }s=n_I,\\
d_{p,q} & \mbox{otherwise},
\end{cases}
\quad\mbox{and}\quad d''_{s,t}:=d_{s,t}.
$$
Then, $D'$ is an ${\bf m}$-tableau and $D''$ is an ${\bf l}$-tableau. 
From the definitions and (4.2.5), we see that
\begin{align*}
\sum_{t=n_K+1}^{2n_K+m_K-p+1}\sum_{s=n_I}^{t-1}b'_{d_{s,t},d_{s,t}+(t-s)}
&\geq \sum_{t=n_K+1}^{2n_K+m_K-p+1}\sum_{s=n_I}^{t-1}b'_{d'_{s,t},d'_{s,t}+(t-s)}\\
&=\sum_{t=n_K+1}^{2n_K+m_K-p+1}
\sum_{s=n_I+1}^{t-1}b'_{d''_{s,t},d''_{s,t}+(t-s)}\\
&=\sum_{t=n_K+1}^{2n_K+m_K-p+1}
\sum_{s=n_I+1}^{t-1}a'_{d''_{s,t},d''_{s,t}+(t-s)}.
\end{align*}
Therefore, we obtain
\begin{align*}
&\mbox{\rm min}\left\{\left.
\sum_{t=n_K+1}^{2n_K+m_K-p+1}\sum_{s=n_I+1}^{t-1}a_{c_{s,t},c_{s,t}+(t-s)}'
~\right|\begin{array}{c}C=(c_{s,t})~\mbox{\rm is }\\ 
\mbox{\rm an ${\bf l}$-tableau}\end{array}\right\}\\
&\qquad\qquad=
\mbox{\rm min}\left\{\left.
\sum_{t=n_K+1}^{2n_K+m_K-p+1}\sum_{s=n_I}^{t-1}b'_{d_{s,t},d_{s,t}+(t-s)}
~\right|\begin{array}{c}D=(d_{s,t})~\mbox{\rm is }\\ 
\mbox{\rm an ${\bf m}$-tableau}\end{array}\right\},
\end{align*}
and hence (4.2.4).
\end{proof}

Now let us start the proof of equation (4.2.1). By (4.2.2) and 
Proposition \ref{prop:sharp}, we have
\begin{align*}
\left(\te_p{\bf M}\right)_{\Omega_I({\bf k})}
&=\left(\te_p{\bf M}_I\right)_{\mbox{res}_I(\Omega_I({\bf k}))}
=\left(\te_p{\bf M}_I\right)_{{\bf k}_I^c}
=\left(\bigl(\te_p^*({\bf M}_I^{\sharp})\bigr)^{\sharp}\right)_{{\bf k}_I^c}\\
&=\bigl(\te_p^*({\bf M}_I^{\sharp})\bigr)_{{\bf k}_I}
-\langle \mbox{wt}^{\vee}(\te_p^*({\bf M}_I^{\sharp})),{\bf k}_I^c\rangle_I\\
&=\bigl(\te_p^*({\bf M}_I^{\sharp})\bigr)_{{\bf k}_I}
+\langle \mbox{wt}^{\vee}({\bf M}_I),{\bf k}_I\rangle_I
+\langle h_p,{\bf k}_I\rangle_I.
\end{align*}
Here, ${\bf k}_I:=\mbox{res}_I({\bf k})$ and ${\bf k}_I^c:=
\widetilde{I}\setminus {\bf k}_I$. Similarly, we have
$$\left(\te_p{\bf M}\right)_{\Omega_J({\bf k})}=
\bigl(\te_p^*({\bf M}_J^{\sharp})\bigr)_{{\bf k}_J}
+\langle \mbox{wt}^{\vee}({\bf M}_J),{\bf k}_J\rangle_J
+\langle h_p,{\bf k}_J\rangle_J.
$$
By these, the proof of equation (4.2.1) is reduced to showing:
\vskip 1mm 
(a) $\langle h_q,{\bf k}_I\rangle_I=\langle h_q,{\bf k}_J\rangle_J$ 
for all $q\in {K'}$;
\vskip 1mm 
(b) $\langle \mbox{wt}^{\vee}({\bf M}_I),{\bf k}_I\rangle_I=
\langle \mbox{wt}^{\vee}({\bf M}_J),{\bf k}_J\rangle_J$;
\vskip 1mm
(c) $\bigl(\te_p^*({\bf M}_I^{\sharp})\bigr)_{{\bf k}_I}=
\bigl(\te_p^*({\bf M}_J^{\sharp})\bigr)_{{\bf k}_J}$.\\

By the definitions, (a) is easily shown. 
Let us show (b).
Since ${\bf k}\in\cM_{\nz}(K')$, we have  
$\langle h_q,{\bf k}_I\rangle_I=0$ for $q\not\in {K'}$. 
Therefore, we see that   
\begin{align*}
\langle \mbox{wt}^{\vee}({\bf M}_I),{\bf k}_I\rangle_I&=
\sum_{q\in I}\langle h_q,{\bf k}_I\rangle_I ({\bf M}_I)_{[q+1,n_I+m_I+1]}
=\sum_{q\in I}\langle h_q,{\bf k}_I\rangle_I M_{\Omega_I({\bf k}(\Lambda_q))}
\\
&=\sum_{q\in K'}\langle h_q,{\bf k}_I\rangle_I M_{\Omega_I({\bf k}(\Lambda_q))}.
\end{align*}
Similarly, we see that
$$\langle \mbox{wt}^{\vee}({\bf M}_J),{\bf k}_J\rangle_J
=\sum_{q\in K'}\langle h_q,{\bf k}_J\rangle_J 
M_{\Omega_J({\bf k}(\Lambda_p))}.$$
Consequently, in view of (a), it suffices to show that 
$M_{\Omega_I({\bf k}(\Lambda_q))}=M_{\Omega_J({\bf k}(\Lambda_q))}$
for all $q\in {K'}$, which follows from (4.2.1). Thus, we have shown (b). For (c), 
it suffices to show the following proposition.

\begin{prop}\label{prop:e-ast-action}
$$\bigl(\te_p^*({\bf M}_I^{\sharp})\bigr)_K=
\bigl(\te_p^*({\bf M}_J^{\sharp})\bigr)_K.$$
\end{prop}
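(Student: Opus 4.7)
The plan is to prove that the two $e$-BZ data ${\bf P}_I:=\bigl(\te_p^*({\bf M}_I^{\sharp})\bigr)_K$ and ${\bf P}_J:=\bigl(\te_p^*({\bf M}_J^{\sharp})\bigr)_K$ in $\cBZ_K^e$ coincide by showing that they agree on a set of entries large enough to force their equality through the tropical Pl\"ucker relation (BZ-2). For ${\bf n}\in\cM_K^{\times}$, let ${\bf n}_I:=[n_I+1,n_K]\cup{\bf n}\in\cM_I^{\times}$ and ${\bf n}_J:=[n_I,n_K]\cup{\bf n}\in\cM_J^{\times}$ be its natural images. The condition $K\in\mbox{Int}^e({\bf M};{\bf k}(\sigma_p\Lambda_p))$ forces $p\in K$, and from this one checks ${\bf n}_I\in\cM_I^{\times}(p)^*\Leftrightarrow{\bf n}_J\in\cM_J^{\times}(p)^*\Leftrightarrow p\notin{\bf n}\mbox{ and }p+1\in{\bf n}$.

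The first step is to establish $({\bf M}_I^{\sharp})_{{\bf n}_I}=({\bf M}_J^{\sharp})_{{\bf n}_J}$ for every ${\bf n}\in\cM_K^{\times}$. Unfolding the definition of $\sharp$ reduces this to the pair of equalities $({\bf M}_I)_{{\bf n}_I^c}=({\bf M}_J)_{{\bf n}_J^c}$ and $\langle\mbox{wt}^{\vee}({\bf M}_I),{\bf n}_I\rangle_I=\langle\mbox{wt}^{\vee}({\bf M}_J),{\bf n}_J\rangle_J$. A short computation identifies both sides of the first with $M_{\Omega_I(\nz_{\leq n_K}\cup{\bf n})}=M_{\Omega_J(\nz_{\leq n_K}\cup{\bf n})}$, while each weight pairing collapses to a sum indexed by $i\in K'$ whose coefficients equal $M_{\Omega_I({\bf k}(\Lambda_i))}=M_{\Omega_J({\bf k}(\Lambda_i))}$ (the indices $i\notin K'$ contribute nothing because $\langle h_i,{\bf n}_{\bullet}\rangle_{\bullet}$ vanishes there). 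Since both $\nz_{\leq n_K}\cup{\bf n}$ and ${\bf k}(\Lambda_i)$ for $i\in K'$ lie in $\cM_{\nz}(K')$, the choice (4.2.2) of $I$ together with condition (2-ii) of Definition \ref{defn:BZ} yields the required equalities.

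Writing ${\bf N}:=({\bf M}_I^{\sharp})_K=({\bf M}_J^{\sharp})_K$, clause (ii) of Corollary \ref{prop:ast-action} immediately gives $({\bf P}_I)_{\bf n}=({\bf N})_{\bf n}=({\bf P}_J)_{\bf n}$ for every ${\bf n}\in\cM_K^{\times}\setminus\cM_K^{\times}(p)^*$, and Proposition \ref{prop:K-op} supplies the remaining equality at the top element ${\bf n}=[p+1,n_K+m_K+1]$. To propagate the agreement to the remaining elements ${\bf k}\in\cM_K^{\times}(p)^*$, I would adapt the uniqueness argument used in the proof of Proposition \ref{prop:te}: applying (BZ-2) to the triple $(p,p+1,k)$ for a suitable $k\notin{\bf k}$ with base set ${\bf k}\setminus\{p+1\}$ expresses $({\bf P}_{\bullet})_{\bf k}$ in terms of a single ``larger'' element ${\bf k}\cup\{k\}\in\cM_K^{\times}(p)^*$ together with values already known to coincide, and a downward induction from $[p+1,n_K+m_K+1]$ then forces ${\bf P}_I={\bf P}_J$.

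The main obstacle is this final induction. For sub-intervals ${\bf k}=[p+1,t]$ the cascade proceeds essentially as in the proof of Proposition \ref{prop:te}, but $\cM_K^{\times}(p)^*$ also contains disconnected Maya diagrams, so one must verify that for each such ${\bf k}$ a suitable index $k$ can be chosen to reduce $({\bf P}_{\bullet})_{\bf k}$ to already-known values without leaving the determined region. This is the $\ast$-side analog of the interval-by-interval argument in Proposition \ref{prop:te}, but demands somewhat more delicate combinatorial bookkeeping.
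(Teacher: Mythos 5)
Your two substantive computations are correct and coincide with the paper's: the agreement at the top interval $[p+1,n_K+m_K+1]$ is exactly Proposition \ref{prop:K-op}, and your first step (showing $({\bf M}_I^{\sharp})_{{\bf n}_I}=({\bf M}_J^{\sharp})_{{\bf n}_J}$ for all ${\bf n}\in\cM_K^{\times}$, hence agreement of the two sides on $\cM_K^{\times}\setminus\cM_K^{\times}(p)^*$ after applying $\te_p^*$) is the same unfolding of $\sharp$ and appeal to (4.2.2) that the paper carries out in its item (e). Where you diverge is the conclusion. The paper remarks that $\bigl(\te_p^*({\bf M}_I^{\sharp})\bigr)_K$ and $\bigl(\te_p^*({\bf M}_J^{\sharp})\bigr)_K$ are both elements of $\cBZ_K^e$ and then invokes the uniqueness clause of Corollary \ref{prop:ast-action}: an $e$-BZ datum is uniquely determined by its components on $\cM_K^{\times}\setminus\cM_K^{\times}(p)^*$ together with the single component at $[p+1,n_K+m_K+1]$. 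That is precisely the statement you set out to re-prove by hand in your last two paragraphs, and it is already available.

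Consequently the ``main obstacle'' you flag --- propagating the agreement to disconnected Maya diagrams in $\cM_K^{\times}(p)^*$ --- is not a real obstacle, but as written your argument is incomplete there, since you never exhibit the required index $k$ for a general disconnected ${\bf k}$. If you prefer not to cite Corollary \ref{prop:ast-action} directly, the clean way to close the gap is the one implicit in the proof of Proposition \ref{prop:te}: by the chamber ansatz (as used in Corollary \ref{cor:claim6}), an $e$-BZ datum is determined by its components on intervals $[s,t]$, so it suffices to compare the two data on intervals only; the only intervals belonging to $\cM_K^{\times}(p)^*$ are those of the form $[p+1,t]$ with $p+1\leq t\leq n_K+m_K+1$, and these are exactly the ones your downward tropical Pl\"ucker induction starting from $[p+1,n_K+m_K+1]$ already handles. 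No case analysis for disconnected ${\bf k}$ is needed.
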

\begin{proof} 
We remark that 
$\bigl(\te_p^*({\bf M}_I^{\sharp})\bigr)_K$ and 
$\bigl(\te_p^*({\bf M}_J^{\sharp})\bigr)_K$ are both elements of $\cBZ_K^e$.
Hence it suffices to show the following:
\vskip 1mm
(d) $\left(\bigl(\te_p^*({\bf M}_I^{\sharp})\bigr)_K\right)_{[p+1,n_K+m_K+1]}
=\left(\bigl(\te_p^*({\bf M}_J^{\sharp})\bigr)_K\right)_{[p+1,n_K+m_K+1]}$;
\vskip 1mm
(e) $\left(\bigl(\te_p^*({\bf M}_I^{\sharp})\bigr)_K\right)_{\bf m}
=\left(\bigl(\te_p^*({\bf M}_J^{\sharp})\bigr)_K\right)_{\bf m}$
for all ${\bf m}\in \cM_K^{\times}\setminus \cM_K^{\times}(p)^*$.\\

Because (d) is already shown in Proposition \ref{prop:K-op},
the remaining task is to show (e). 
We set ${\bf m}^I:=(\mbox{res}_K^I)^{-1}({\bf m})$. Since
${\bf m}^I\in \cM_I^{\times}\setminus \cM_I^{\times}(p)^*$, we have
\begin{align*}
\left(\bigl(\te_p^*({\bf M}_I^{\sharp})\bigr)_K\right)_{\bf m}
&=\bigl(\te_p^*({\bf M}_I^{\sharp})\bigr)_{{\bf m}^I}
=({\bf M}_I^{\sharp})_{{\bf m}^I}=({\bf M}_I)_{({\bf m}^I)^c}
-\langle \mbox{wt}^{\vee}({\bf M}_I),{\bf m}^I\rangle_I.
\end{align*}
We set ${\bf m}^{\nz}:=\mbox{res}_K^{-1}({\bf m})\in \cM_{\nz}(K)\subset 
\cM_{\nz}$.
Then we have ${\bf m}^I=\mbox{res}_I({\bf m}^{\nz})$ and 
$({\bf m}^I)^c=\mbox{res}_I\bigl(\Omega_I({\bf m}^{\nz})\bigr)$. Therefore, 
we obtain
$$\left(\bigl(\te_p^*({\bf M}_I^{\sharp})\bigr)_K\right)_{\bf m}
=M_{\Omega_I({\bf m}^{\nz})}-
\langle \mbox{wt}^{\vee}({\bf M}_I),\mbox{res}_I({\bf m}^{\nz})\rangle_I.$$
Also, we obtain a similar equation, with $I$ replaced by $J$.
By the same argument as in the proof of (b), we deduce that 
$\langle \mbox{wt}^{\vee}({\bf M}_I),\mbox{res}_I({\bf m}^{\nz})\rangle_I
=\langle \mbox{wt}^{\vee}({\bf M}_J),\mbox{res}_J({\bf m}^{\nz})\rangle_J$.
Now it remains to verify that $M_{\Omega_I({\bf m}^{\nz})}=
M_{\Omega_J({\bf m}^{\nz})}$, which follows easily from (4.2.1).
Thus, we have shown (e). This proves the proposition.
\end{proof}
\subsection{Ordinary crystal structure on $(\cBZ_{\nz}^e)^{\sigma}$}
First, we give some properties of ordinary Kashiwara operators
on $\cBZ_{\nz}^e$. Because all of those are obtained by the same 
argument as in \cite{NSS}, we omit the proofs of them.
\begin{lemma}\label{lemma:propaties}
{\rm (1)} Let ${\bf M}\in \cBZ_{\nz}^e$ and $p\in\nz$. Then, $\te_p\tf_p{\bf M}
={\bf M}$. Also, if $\eps_p({\bf M})\ne 0$, then $\tf_p\te_p{\bf M}
={\bf M}$.
\vskip 1mm
\noindent
{\rm (2)} For ${\bf M}\in \cBZ_{\nz}^e$ and $p,q\in\nz$ with $|p-q|\geq 2$, 
we have
$\eps_p(\tf_p{\bf M})=\eps_p({\bf M})+1$ and 
$\eps_q(\tf_p{\bf M})=\eps_q({\bf M})$. Also, if $\eps_p({\bf M})\ne 0$, 
then $\eps_p(\te_p{\bf M})=\eps_p({\bf M})-1$ and 
$\eps_q(\te_p{\bf M})=\eps_q({\bf M})$.
\vskip 1mm
\noindent
{\rm (3)} For $q,q'\in\nz$ with $|q-q'|\geq 2$, we have 
$\te_q\te_{q'}=\te_{q'}\te_q$, $\tf_q\tf_{q'}=\tf_{q'}\tf_q$ and 
$\te_q\tf_{q'}=\tf_{q'}\te_q$
as operators on $\cBZ_{\nz}^e\cup\{0\}$.
\vskip 1mm
\noindent
{\rm (4)} For ${\bf M}\in \cBZ_{\nz}^e$, we have $\eps_p(\sigma({\bf M}))=
\eps_{\sigma^{-1}(p)}({\bf M})$.
\vskip 1mm
\noindent
{\rm (5)} The equalities $\sigma\circ\te_p=\te_{\sigma(p)}\circ\sigma$ and 
$\sigma\circ\tf_p=\tf_{\sigma(p)}\circ\sigma$ hold on $\cBZ_{\nz}^e\cup\{0\}$.
\end{lemma}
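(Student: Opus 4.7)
The plan is to derive all five assertions by a uniform strategy: reduce each identity componentwise at an arbitrary ${\bf k}\in\cM_{\nz}$ (or $\cM_{\nz}^c$) to the corresponding statement for the ordinary crystal structure on $\cBZ_I^e$ of Section 3, which is a $U_q(\gtsl_{m+1})$-crystal isomorphic to $B(\infty)$. For each identity, I would fix the data (${\bf M}$, $p$, possibly $q$ or $\sigma(p)$, and the test diagram ${\bf k}$) and then choose a single finite interval $I\subset\nz$ large enough that all Maya diagrams entering the various definitions — ${\bf k}$, $\sigma_p{\bf k}$, $\sigma_q{\bf k}$, ${\bf k}(\Lambda_p)$, ${\bf k}(\sigma_p\Lambda_p)$, ${\bf k}(\Lambda_{p\pm 1})$ (and their $\sigma$-images when treating (4)–(5)) — all lie in $\cM_{\nz}(I)$, and moreover $I$ lies in the intersection of the finitely many relevant sets $\mbox{Int}^e({\bf M};\cdot)$, $\mbox{Int}^e(\tf_p{\bf M};\cdot)$, etc. Since Definition \ref{defn:BZ} supplies one such interval per diagram and only finitely many diagrams are involved, such an $I$ exists, and one can enlarge it further using the stability condition (2-b).

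The technical heart is the compatibility
\[
(\tf_p{\bf M})_I \;=\; \tf_p({\bf M}_I) \quad \text{in } \cBZ_I^e
\]
(and likewise for $\te_p$) for such $I$. This follows directly from the componentwise definition of $\tf_p$ on $\cBZ_{\nz}^e$ together with the independence-of-$I$ statement proved in Subsection 4.2: once $I$ is large enough to serve in the recipe of $M''_{\bf k}$ for every ${\bf k}\in\cM_{\nz}(I)$, the restriction of $\tf_p{\bf M}$ coincides with the finite-interval operator $\tf_p{\bf M}_I$ computed in $\cBZ_I^e$. Granting this, (1) follows by computing $(\te_p\tf_p{\bf M})_{\bf k}=(\te_p\tf_p{\bf M}_I)_{\mbox{\scriptsize res}_I({\bf k})}=({\bf M}_I)_{\mbox{\scriptsize res}_I({\bf k})}=M_{\bf k}$; (2) reduces to the corresponding identity $\eps_p(\tf_p{\bf M}_I)=\eps_p({\bf M}_I)+1$ on $\cBZ_I^e$ together with the observation (used in Subsection 4.1) that $\eps_p({\bf M})=\eps_p({\bf M}_I)$ whenever ${\bf k}(\sigma_p\Lambda_p)\in\cM_{\nz}(I)$; and (3) follows from the $B(\infty)$-commutations on $\cBZ_I^e$ after taking $I$ large enough to accommodate both indices $q,q'$.

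For (4) and (5), the point is that the restriction procedure is $\sigma$-equivariant in the following sense: the map ${\bf k}\mapsto \sigma({\bf k})$ identifies $\cM_{\nz}(I)$ with $\cM_{\nz}(\sigma(I))$, and under the induced identification $\cBZ_{\sigma(I)}^e\cong \cBZ_I^e$ (which is just relabeling of the underlying root datum by $\sigma$), the datum $(\sigma({\bf M}))_{\sigma(I)}$ corresponds to ${\bf M}_I$. Combined with the componentwise definitions of $\te_p,\tf_p$ and the already-proved well-definedness, both equalities in (5) and the weight shift in (4) are then transcriptions of the obvious compatibility of finite-interval ordinary operators with this relabeling. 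The main obstacle throughout is really the compatibility displayed above: the proof that the locally-defined $\tf_p{\bf M}$ genuinely restricts to the global $\tf_p({\bf M}_I)$ for large $I$. Once this is established — and it is exactly the type of stability argument carried out for the $\ast$-crystal in \cite{NSS} and for $\te_p$ in Subsection 4.2 above — the rest is automatic, which is why the paper allows itself to omit the details.
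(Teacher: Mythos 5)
Your proposal is correct and follows essentially the route the paper intends: the paper omits the proof entirely, referring to the analogous argument in \cite{NSS}, and that argument is exactly your reduction to a sufficiently large finite interval via the compatibility $(\tf_p{\bf M})_I=\tf_p({\bf M}_I)$ (and its $\te_p$ analogue), after which everything follows from the finite-interval ordinary crystal structure of Section 3 and the $\sigma$-equivariance of restriction. The only point worth making explicit is that the single large interval $I$ must be chosen in the finitely many relevant sets $\mbox{Int}^e(\,\cdot\,;\,\cdot\,)$ simultaneously and with $p,p+1$ (and $q,q+1$) in its interior so that $\sigma_p{\bf k}\in\cM_{\snz}(I)$ for all ${\bf k}\in\cM_{\snz}(I)$ — which you do address.
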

Next, let us define the ordinary $U_q(\widehat{\gtsl}_l)$-crystal structure on 
$(\cBZ_{\nz}^e)^{\sigma}$. Recall that the map $\mbox{wt}:(\cBZ_{\nz}^e)^{\sigma}
\to \widehat{P}$ is already defined. Here, $\widehat{P}$ is the weight 
lattice for $\widehat{\gtsl}_{l}$. For ${\bf M}\in (\cBZ_{\nz}^e)^{\sigma}$ and
$p\in \widehat{I}$, we define
$$\heps_p({\bf M}):=\eps_p({\bf M}),\quad
\hvphi_p({\bf M}):=\heps_p({\bf M})
+\langle \widehat{h}_p,\mbox{wt}({\bf M})\rangle.$$
For given ${\bf k}\in\cM_{\nz}$ and $p\in\widehat{I}$, we set
$L^e({\bf k},p):=\{q\in p+l\nz~|~\langle h_q,{\bf k}\rangle_{\nz}\ne 0 \}$;
note that $L^e({\bf k},p)$ is a finite set.
For ${\bf M}\in (\cBZ_{\nz}^e)^{\sigma}$, we define
$$\hte_p{\bf M}:=\begin{cases}
{\bf M}^{(1)} & \mbox{if }~\heps_p({\bf M})>0,\\
0 & \mbox{if }~\heps_p({\bf M})=0,
\end{cases}\quad\mbox{and}\quad
\htf_p{\bf M}={\bf M}^{(2)},$$
where ${\bf M}^{(i)}=(M^{(i)}_{\bf k})_{{\bf k}\in\cM_{\nz}}$, $i=1,2$, are the 
collections of integers defined by
$$M_{\bf k}^{(1)}:=\bigl(e_{L^e({\bf k},p)}{\bf M}\bigr)_{\bf k},\quad
M_{\bf k}^{(2)}:=\bigl(f_{L^e({\bf k},p)}{\bf M}\bigr)_{\bf k}\quad\mbox{for each }
{\bf k}\in\cM_{\nz}.$$

\begin{prop}\label{prop:well-def-aff} 
Let ${\bf M}\in (\cBZ_{\nz}^e)^{\sigma}$ and $p\in\widehat{I}$.
Then, we have $\hte_p{\bf M}\in (\cBZ_{\nz}^e)^{\sigma}\cup\{0\}$ and 
$\htf_p{\bf M}\in (\cBZ_{\nz}^e)^{\sigma}$.
\end{prop}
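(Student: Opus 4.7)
The plan is to verify $\hte_p{\bf M}, \htf_p{\bf M} \in (\cBZ_{\nz}^e)^{\sigma} \cup \{0\}$ (resp.\ $(\cBZ_{\nz}^e)^{\sigma}$) by combining the local commutation relations of Lemma \ref{lemma:propaties} with the stabilization built into Definition \ref{defn:BZ}. I will handle $\hte_p$ under the hypothesis $\heps_p({\bf M}) > 0$ (otherwise $\hte_p{\bf M} = 0$ trivially lies in the target); the argument for $\htf_p$ is parallel and needs no positivity assumption. Because ${\bf M}$ is $\sigma$-fixed, Lemma \ref{lemma:propaties} (4) gives $\eps_q({\bf M}) = \eps_p({\bf M}) > 0$ for every $q \in p + l\nz$, and since $l \geq 3$ any two such indices differ by at least $l$; then Lemma \ref{lemma:propaties} (2)--(3) ensures that the $\te_q$ pairwise commute on $\cBZ_{\nz}^e \cup \{0\}$ and mutually preserve positivity of every $\eps_{q'}$ with $q' \in p + l\nz$. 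Hence for every finite $L \subset p + l\nz$, the operator $e_L$ is unambiguously defined on $\cBZ_{\nz}^e$ and lands back in $\cBZ_{\nz}^e$.

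To verify condition (2-a) for ${\bf M}^{(1)} := \hte_p{\bf M}$, I will fix a finite interval $K = [n_K + 1, n_K + m_K]$ and put
\[
L^e_K(p) := \{\, q \in p + l\nz : \{q, q+1\} \cap [n_K,\, n_K + m_K + 1] \neq \emptyset \,\},
\]
which is finite. For every ${\bf k} \in \cM_{\nz}(K)$ one has $L^e({\bf k}, p) \subseteq L^e_K(p)$, and each $q \in L^e_K(p) \setminus L^e({\bf k}, p)$ satisfies $\langle h_q, {\bf k}\rangle_{\nz} = 0$, so ${\bf k} \notin \cM_{\nz}(q) \cup \cM_{\nz}(q)^*$; by Lemma \ref{cor:ord-kas} such $\te_q$ fixes the ${\bf k}$-component. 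Consequently $M^{(1)}_{\bf k} = (e_{L^e_K(p)}{\bf M})_{\bf k}$ for all such ${\bf k}$, so ${\bf M}^{(1)}_K$ coincides with the restriction of the single element $e_{L^e_K(p)}{\bf M} \in \cBZ_{\nz}^e$ to $K$; in particular ${\bf M}^{(1)}_K \in \cBZ_K^e$.

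For condition (2-b), I will fix ${\bf k} \in \cM_{\nz}$ and choose an interval $I$ such that ${\bf k} \in \cM_{\nz}(I)$, ${\bf M}$ satisfies (2-b) at ${\bf k}$ with interval $I$, and $I$ is long enough that enlarging to any $J \supset I$ introduces no new index of $p + l\nz$ into the boundary pattern of $\Omega_J({\bf k})$. For any such $J$, the reasoning of the previous paragraph, applied to a finite interval $K \supset J$, expresses both $M^{(1)}_{\Omega_J({\bf k})}$ and $M^{(1)}_{\Omega_I({\bf k})}$ as components of the single element $e_{L^e_K(p)}{\bf M} \in \cBZ_{\nz}^e$; the stabilization of ${\bf M}$ itself, combined with the explicit formulas of Lemma \ref{cor:ord-kas}, then yields $M^{(1)}_{\Omega_J({\bf k})} = M^{(1)}_{\Omega_I({\bf k})}$.

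Finally, for the $\sigma$-invariance, a direct computation shows $\sigma(L^e({\bf k}, p)) = L^e(\sigma({\bf k}), p)$; combining this with the intertwining $\sigma \circ \te_q = \te_{\sigma(q)} \circ \sigma$ from Lemma \ref{lemma:propaties} (5) produces the relation $e_{L^e(\sigma({\bf k}), p)} \circ \sigma = \sigma \circ e_{L^e({\bf k}, p)}$ on $\cBZ_{\nz}^e \cup \{0\}$. Applying this to $\sigma({\bf M}) = {\bf M}$ and evaluating at $\sigma({\bf k})$ yields $M^{(1)}_{\sigma({\bf k})} = M^{(1)}_{\bf k}$, i.e.\ $\sigma({\bf M}^{(1)}) = {\bf M}^{(1)}$. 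The main technical obstacle lies in the verification of (2-b): one must select $I$ so that the stabilization intervals for ${\bf M}$ at several auxiliary Maya diagrams can be taken equal to $I$ simultaneously, and so that the boundary of $\Omega_J({\bf k})$ does not interact with the arithmetic progression $p + l\nz$ for any $J \supset I$. This parallels the analogous verification for $\cBZ_{\nz}^{\sigma}$ carried out in \cite{NSS}.
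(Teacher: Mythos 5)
Your reductions for condition (2-a) and for the $\sigma$-invariance are sound and essentially match what the paper does (the paper dispatches (2-a) by reference to \cite{NSS} and $\sigma$-invariance via Lemma \ref{lemma:propaties}). The gap is in your treatment of condition (2-b), which is precisely where the real content of the proposition lies.

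You propose to choose $I$ ``long enough that enlarging to any $J\supset I$ introduces no new index of $p+l\nz$ into the boundary pattern of $\Omega_J({\bf k})$.'' No such $I$ exists. For any finite interval $J=[n_J+1,n_J+m_J]$ containing $I$, one has $n_J\in\Omega_J({\bf k})$ and $n_J+1\notin\Omega_J({\bf k})$ (and symmetrically $n_J+m_J+1\in\Omega_J({\bf k})$, $n_J+m_J+2\notin\Omega_J({\bf k})$), so $\langle h_{n_J},\Omega_J({\bf k})\rangle_{\nz}\ne 0$. Hence $n_J$ (resp.\ $n_J+m_J+1$) enters $L^e(\Omega_J({\bf k}),p)$ whenever it is congruent to $p$ modulo $l$, and this happens for infinitely many $J\supset I$ no matter how $I$ is chosen; this is exactly the identity $L^e(\Omega_J({\bf k}),p)=L^e({\bf k},p)\cup\delta(J,p;l)$ used in the paper. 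Your fallback — that ``the explicit formulas of Lemma \ref{cor:ord-kas} then yield'' the stabilization — also fails, because $\Omega_J({\bf k})$ lies in $\cM_{\nz}(n_J)$ and in $\cM_{\nz}(n_J+m_J+1)$, which is the one case for which Lemma \ref{cor:ord-kas}(1) gives \emph{no} formula for the action of $\te_q$ on the ${\bf k}$-component. Separately, your device of expressing both sides as components of the single element $e_{L^e_K(p)}{\bf M}$ does not close the argument, since that element depends on $K$, which must grow with $J$.

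What is actually needed is the statement that the extra boundary operators $\te_{n_J}$ and $\te_{n_J+m_J+1}$ fix the $\Omega_J({\bf k})$-component once $J$ is sufficiently large; this is Lemma \ref{lemma:well-def-aff} of the paper, and its proof is not formal: it passes through the quiver-theoretic realization $\Xi_L$, the vanishing of the composite maps $B^L_{\sigma(n_I\to n_K)}$ and $B^L_{\sigma(n_I+m_I+1\to n_K+m_K+1)}$ at a general point of the relevant irreducible Lagrangian, and the resulting coincidence of the two cokernel dimensions in (4.3.5) and (4.3.7). Your proposal contains no substitute for this lemma, so the verification of (2-b) — and with it the proposition — is not established.
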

In order to prove the proposition above, we need the next lemma.
\begin{lemma}\label{lemma:well-def-aff}
For given ${\bf M}\in \cBZ_{\nz}^e$ and ${\bf k}\in\cM_{\nz}$, 
there exists a finite interval $I=[n_I+1,n_I+m_I]$ such that
for every $J=[n_J+1,n_J+m_J]$ with $n_J<n_I$ and $n_J+m_J>n_I+m_I$, 
\vskip 1mm
\noindent
{\rm (1)} $\bigl(\te_{n_J}{\bf M}\bigr)_{\Omega_J({\bf k})}=
\bigl(\te_{n_J+m_J+1}{\bf M}\bigr)_{\Omega_J({\bf k})}=
\bigl(\te_{n_J}\te_{n_J+m_J+1}{\bf M}\bigr)_{\Omega_J({\bf k})}
=M_{\Omega_J({\bf k})}$;
\vskip 1mm
\noindent
{\rm (2)} $\bigl(\tf_{n_J}{\bf M}\bigr)_{\Omega_J({\bf k})}=
\bigl(\tf_{n_J+m_J+1}{\bf M}\bigr)_{\Omega_J({\bf k})}=
\bigl(\tf_{n_J}\tf_{n_J+m_J+1}{\bf M}\bigr)_{\Omega_J({\bf k})}
=M_{\Omega_J({\bf k})}$.
\vskip 1mm
\noindent
Here we remark that the equalities in part {\rm (1)} hold under the assumption 
that $\te_{n_J}{\bf M}\ne \{0\}$ and $\te_{n_J+m_J+1}{\bf M}\ne \{0\}$.
\end{lemma}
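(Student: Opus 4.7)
The strategy is to work inside a sufficiently large auxiliary finite interval $I'\supset J$, where the ordinary Kashiwara operators on $\cBZ_{\snz}^e$ can be computed via the finite crystal structure on $\cBZ_{I'}^e$, and to combine the dependency information furnished by Lemma \ref{lemma:BZ-int} with the explicit description of $\te_p$ and $\tf_p$ on Lusztig data recalled in Section 2.4. First, choose $I=[n_I+1,n_I+m_I]$ such that ${\bf k}\in\cM_{\snz}(I)$ and $I$ is a common witness to the stabilization condition (2-b) in Definition \ref{defn:BZ} for every Maya diagram entering into the computation of $\eps_p({\bf M})$ and of the components $(\te_p{\bf M})_{\bf k}$, $(\tf_p{\bf M})_{\bf k}$, with $p\in\{n_J,\, n_J+m_J+1\}$. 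For a given $J$ as in the hypothesis, enlarge to $I'\supset J$ whose interior contains both $n_J$ and $n_J+m_J+1$, so that each of the four expressions appearing in (1) and (2) equals the corresponding finite Kashiwara operator applied to ${\bf M}_{I'}$ and evaluated at $\mbox{res}_{I'}(\Omega_J({\bf k}))$.

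Writing ${\bf k}=\nz_{\leq n_I}\cup{\bf k}_I$ with ${\bf k}_I\in\cM_I^{\times}$, a direct calculation yields
\[\mbox{res}_{I'}(\Omega_J({\bf k}))=[n_{I'}+1,n_J]\,\sqcup\,(\widetilde{I}\setminus{\bf k}_I)\,\sqcup\,[n_I+m_I+2,n_J+m_J+1],\]
which possesses a long initial run $[n_{I'}+1,n_J]$. Hence Lemma \ref{lemma:BZ-int}(1) with $s=n_J$ applies: as a function of the Lusztig datum ${\bf a}'=\Psi_{I'}^{-1}({\bf M}_{I'})$, the component $M_{\mbox{res}_{I'}(\Omega_J({\bf k}))}(\,\cdot\,)$ depends only on entries $a'_{i,j}$ with $j\geq n_J+1$. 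To treat $\te_{n_J+m_J+1}$ on an equal footing, invoke the $\sharp$-involution of Section 3.1 and Proposition \ref{prop:sharp}, together with Lemma \ref{lemma:BZ-int}(2) applied to the complementary Maya diagram $\mbox{res}_{I'}(\Omega_J({\bf k}))^c$, which carries the terminal run $[n_J+m_J+2,n_{I'}+m_{I'}+1]$; this confines the relevant dependency for $\te_{n_J+m_J+1}$ to Lusztig entries far from the boundary index $n_J+m_J+1$.

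By the formulas of Section 2.4, the operator $\te_{n_J}$ acts on ${\bf a}'$ by modifying exactly the two entries $a'_{k_e,n_J}\mapsto a'_{k_e,n_J}+1$ and $a'_{k_e,n_J+1}\mapsto a'_{k_e,n_J+1}-1$ for some $k_e\in[n_{I'}+1,n_J]$. The first modification is irrelevant since $n_J<n_J+1$. For the second, one analyzes the tableau-minimum formula of Lemma \ref{lemma:BZ-int}(1): the forced pattern $c_{p,p}=p$ for $p\leq n_J$ (coming from the initial run), combined with the monotonicity constraints $c_{p,q}\leq c_{p,q+1}$ and $c_{p,q}<c_{p+1,q}$ on ${\bf k}'$-tableaux, allows one to select an optimal tableau that avoids the position $(k_e,n_J+1)$; equivalently, a stabilization statement for boundary Lusztig entries (in the spirit of Lemma \ref{lemma:claim5} and Corollary \ref{cor:claim6}) shows that the perturbation of $a'_{k_e,n_J+1}$ does not shift the minimum. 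This yields $(\te_{n_J}{\bf M})_{\Omega_J({\bf k})}=M_{\Omega_J({\bf k})}$. The argument for $\te_{n_J+m_J+1}$ is symmetric via $\sharp$; the composite $\te_{n_J}\te_{n_J+m_J+1}{\bf M}$ is handled by iteration, since the first application preserves both the value and the dependency structure at $\Omega_J({\bf k})$. Part (2) for the lowering operators $\tf_{n_J},\tf_{n_J+m_J+1}$ is then proved verbatim, with $k_e,l_e$ replaced by $k_f,l_f$.

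The main obstacle is the combinatorial verification alluded to above: proving that the unit decrease in $a'_{k_e,n_J+1}$ (and analogously for the other three cases) does not lower the tableau minimum. This requires either exhibiting an explicit optimal tableau that avoids the critical position, or a stabilization argument for boundary Lusztig entries in the spirit of the technical core of Section 4.2 (culminating in Proposition \ref{prop:K-op}); adapting that chain of lemmas from the $\ast$-crystal setting to the present ordinary-crystal setting is the principal work of the proof.
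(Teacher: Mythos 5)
Your reduction to a large finite interval $I'$ and the identification, via Lemma \ref{lemma:BZ-int}, of which Lusztig entries could possibly matter are both sound, and the observation that $\te_{n_J}$ touches only the two entries $a'_{k_e,n_J}$ and $a'_{k_e,n_J+1}$ (the first of which is killed outright by the dependency statement) is correct. But the proof has a genuine gap exactly where you flag "the main obstacle": the entry $a'_{k_e,n_J+1}$ \emph{does} lie in the dependency range of $M_{\mbox{res}_{I'}(\Omega_J({\bf k}))}$ — a ${\bf k}$-tableau with $c_{p,n_J+1}=p$ for all $p$ in the initial run accesses precisely the entries $a_{p,n_J+1}$ with $p\leq n_J$, including position $(k_e,n_J+1)$ — so showing that the unit perturbation of this entry does not shift the tableau minimum is not a routine verification to be deferred; it \emph{is} the content of the lemma. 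The same unresolved issue recurs for $\te_{n_J+m_J+1}$ (the $\sharp$-symmetry only transposes the problem, it does not solve it) and for the lowering operators. As written, the argument establishes only that at most one Lusztig entry per operator is in play, not that the value is unchanged.

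For comparison, the paper does not attempt this combinatorial verification at all; it argues geometrically. Passing to a general point $B^L$ of the irreducible Lagrangian $\Lambda_{\bf b}$ over a still larger interval $L\supset J$, it proves (Claim 2, an analogue of Lemma \ref{lemma:claim2}) that the composite path maps $B^L_{\sigma(n_I\to n_K)}$ and $B^L_{\sigma(n_I+m_I+1\to n_K+m_K+1)}$ vanish. Writing $\mbox{res}_L(\Omega_J({\bf k}))=[s_1+1,t_1]\sqcup\cdots\sqcup[s_l+1,t_l]$ with $t_1=n_J$ and $t_l=n_J+m_J+1$, this vanishing lets one drop the summands $V(\nu_L)_{t_1}$ and $V(\nu_L)_{t_l}$ from the cokernel computing $M_{\mbox{res}_L(\Omega_J({\bf k}))}$, so that only vertices strictly between $n_J$ and $n_J+m_J+1$ remain; since a general point of $\te_{n_J}\te_{n_J+m_J+1}\Lambda_{\bf b}$ can be chosen to agree with $B^L$ on all arrows not incident to $n_J$ or $n_J+m_J+1$, the cokernel dimension — hence the BZ component — is unchanged. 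If you want to salvage your combinatorial route, you would need to prove a vanishing statement for the boundary Lusztig entries analogous to Lemma \ref{lemma:claim5} (e.g.\ that the entries $a'_{i,j}$ with $i\leq n_J<n_I<j$ that the optimal tableau could access are all zero once $I$ is chosen as in (4.2.2)), which is essentially the combinatorial shadow of Claim 2; without such a statement the argument does not close.
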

\begin{proof}
We only prove that 
$$\bigl(\te_{n_J}\te_{n_J+m_J+1}{\bf M}\bigr)_{\Omega_J({\bf k})}
=M_{\Omega_J({\bf k})}\eqno{(4.3.1)}$$ 
under the condition that $\te_{n_J}{\bf M}\ne \{0\}$ and 
$\te_{n_J+m_J+1}{\bf M}\ne \{0\}$; the other equalities can be proved 
by a similar (and easier) argument.  

For the ${\bf M}$ and ${\bf k}$ above, take finite intervals 
$K=[n_K+1,n_K+m_K]$ and 
$I=[n_I+1,n_I+m_J]$ as in (4.2.2). Let $J=[n_J+1,n_J+m_J]\supsetneq I$, with 
$n_J<n_I$ and $n_J+m_J>n_I+m_I$. Take another finite interval
$L=[n_L+1,n_L+m_L]\supset J$ such that 
$$M_{\Omega_J({\bf k})}=
\bigl({\bf M}_L\bigr)_{{\rm res}_L(\Omega_J({\bf k}))},\quad
\bigl(\te_{n_J}\te_{n_J+m_J+1}{\bf M}\bigr)_{\Omega_J({\bf k})}=
\bigl(\te_{n_J}\te_{n_J+m_J+1}{\bf M}_L\bigr)_{{\rm res}_L(\Omega_J({\bf k}))},
$$
$$\eps_{n_J}({\bf M})=\eps_{n_J}({\bf M}_L),\quad\mbox{and}\quad
\eps_{n_J+m_J+1}({\bf M})=\eps_{n_J+m_J+1}({\bf M}_L).
$$
Note that such an interval $L$ always exists. Hence equation (4.3.1) 
is equivalent to
$$
\bigl(\te_{n_J}\te_{n_J+m_J+1}{\bf M}_L\bigr)_{{\rm res}_L(\Omega_J({\bf k}))}
=\bigl({\bf M}_L\bigr)_{{\rm res}_L(\Omega_J({\bf k}))}.
\eqno{(4.3.2)}$$
In what follows, we use the notation of Subsection 4.2. Namely, set
${\bf b}=(b_{k,l})_{(k,l)\in\Delta_L^+}
:=\Psi_L^{-1}({\bf M}_L)\in\cB_L$,
and
$\Lambda_{{\bf b}}:=\Xi_L^{-1}({\bf M}_L)\in\mbox{Irr}\Lambda(\nu_L)$, 
where
$\nu_L=\mbox{wt}({\bf M}_L)$.
Let 
$B^{L}=(B_{\tau}^{L})\in \Lambda_{{\bf b}}$ be a general point.

Since $L\supsetneq I$, we can show the following claim by an argument
similar to the one for Lemma \ref{lemma:claim2}:
\vskip 3mm
\noindent
{\bf Claim 2}. {\it Both of the composite maps
$$B^L_{\sigma(n_I\to n_K)}:V(\nu_J)_{n_I}\to V(\nu_J)_{n_K}\quad\mbox{and}$$
$$B^L_{\sigma(n_I+m_J+1\to n_K+m_K+1)}:V(\nu_J)_{n_I+m_I+1}\to 
V(\nu_J)_{n_K+m_K+1}$$
are zero maps.}
\vskip 3mm
Write
${\rm res}_L(\Omega_J({\bf k}))$ as a disjoint union of
finite intervals:
$${\rm res}_L(\Omega_J({\bf k}))=[s_1+1,t_1]\sqcup [s_2+1,t_2]\sqcup\cdots
\sqcup [s_l+1,t_l].$$
Then, by the construction, 
$$s_1=n_L,\quad t_1=n_J,\quad s_2=\mbox{min}\{q\in\nz~|~q\not\in {\bf k}\}-1
,$$
$$s_l=\mbox{max}\{q\in\nz~|~q\in {\bf k}\},\quad 
t_l=n_J+m_J+1,$$
and
$$s_1+1=n_L+1<t_1=n_J<n_I<n_K<s_2,\eqno{(4.3.3)}$$
$$s_l<n_K+m_K+1<n_I+m_I+1<n_J+m_J+1=t_l<n_L+m_L+1.\eqno{(4.3.4)}$$
From these, we deduce that
$$\out\bigl({\rm res}_L(\Omega_J({\bf k}))\bigr)=\{t_1,t_2,\cdots,t_l\},
\quad\inn\bigl({\rm res}_L(\Omega_J({\bf k}))\bigr)=\{s_2,s_3,\cdots,s_l\}.$$
Because
$$B_{\sigma(t_1\to s_2)}^L=B_{\sigma(n_K\to s_2)}^L\circ B_{\sigma(n_I\to n_K)}^L
\circ B_{\sigma(t_1\to n_I)}^L=0,$$
$$B_{\sigma(t_l\to s_l)}^L=B_{\sigma(n_K+m_K+1\to s_l)}^L\circ 
B_{\sigma(n_I+m_I+1\to n_K+m_K+1)}^L\circ B_{\sigma(t_l\to n_I+m_I+1)}^L=0$$
by Claim 2, (4.3.3), and (4.3.4), we see that
$$\begin{array}{ll}
\bigl({\bf M}_L\bigr)_{{\rm res}_L(\Omega_J({\bf k}))}
&=-\displaystyle{\dim_{\nc}\mbox{Coker}
\left(\mathop{\bigoplus}_{1\leq u\leq l}V(\nu_L)_{t_u}
\overset{\oplus B^L_{\sigma}}{\longrightarrow}
\mathop{\bigoplus}_{2\leq v\leq l}V(\nu_L)_{s_v}\right)}\\
&=-\displaystyle{\dim_{\nc}\mbox{Coker}
\left(\mathop{\bigoplus}_{2\leq u\leq l-1}V(\nu_L)_{t_u}
\overset{\oplus B^L_{\sigma}}{\longrightarrow}
\mathop{\bigoplus}_{2\leq v\leq l}V(\nu_L)_{s_v}\right)}.
\end{array}\eqno{(4.3.5)}$$
Let ${\bf b}_1:=\te_{n_J}\te_{n_J+m_J+1}{\bf b}$, and  
consider the corresponding irreducible Lagrangian 
$\Lambda_{{\bf b}_1}\in \mbox{Irr}\Lambda(\nu_L^1)$. 
Here we write $\nu_L^1=\nu_L-\alpha_{n_J}-\alpha_{n_J+m_L+1}$.
Let $B^{L}_1=\left((B^{L}_1)_{\tau}\right)\in 
\Lambda_{{\bf b}_1}$ be a general point. 
By the definitions of $\te_{n_J}$ and $\te_{n_J+m_J+1}$
(see Subsection 2.5), we may assume that
$$(B_1^L)_{\tau}=B_{\tau}^L\quad\mbox{if }\out(\tau)\ne n_J,n_J+m_J+1
\mbox{ and if }
\inn(\tau)\ne n_J,n_J,n_J+m_J+1.\eqno{(4.3.6)}$$
Then, by Claim 2, 
$$(B^L_1)_{\sigma(n_I\to n_K)}=0,\quad 
(B^L_1)_{\sigma(n_I+m_J+1\to n_K+m_K+1)}=0.$$
Therefore, by the same argument as for 
$\bigl({\bf M}_L\bigr)_{{\rm res}_L(\Omega_J({\bf k}))}$, we deduce that
$$\begin{array}{ll}
&\bigl(\te_{n_J}\te_{n_J+m_J+1}{\bf M}_L\bigr)_{{\rm res}_L(\Omega_J({\bf k}))}\\
&\qquad\qquad\qquad=
-\displaystyle{\dim_{\nc}\mbox{Coker}
\left(\mathop{\bigoplus}_{2\leq u\leq l-1}V(\nu_L^1)_{t_u}
\overset{\oplus (B^L_1)_{\sigma}}{\longrightarrow}
\mathop{\bigoplus}_{2\leq v\leq l}V(\nu_L^1)_{s_v}\right)}.
\end{array}
\eqno{(4.3.7)}$$
Since $n_J<s_2<t_2<\cdots<s_{l-1}<t_{l-1}<s_l<n_J+m_J+1$, 
the right-hand side of the last equality in (4.3.5) is equal to that of (4.3.7). 
Thus, we have proved equation (4.3.2). This completes the proof of the lemma.
\end{proof}
\vskip 3mm
\noindent
{\it Proof of Proposition \ref{prop:well-def-aff}}. 
We only prove that $\hte_p{\bf M}\in (\cBZ_{\nz}^e)^{\sigma}\cup\{0\}$. If 
$\heps_p({\bf M})=0$, then the assertion is obvious. 
So, we assume that $\heps_p({\bf M})>0$.\\

First, we prove that $\hte_p{\bf M}\in \cBZ_{\nz}^e$. Condition (2-a)
in Definition \ref{defn:BZ} can be checked by the same argument as in 
\cite{NSS}. Let us show that condition (2-b) is satisfied. 
Fix ${\bf k}\in \cM_{\nz}$ and take a finite interval $I=[n_I+1,n_I+m_I]$
satisfying condition (4.2.2), with ${\bf M}$ replaced by 
$\te_{L^e({\bf k},p)}{\bf M}$. 
Let $I'=[n_{I'}+1,n_{I'}+m_{I'}]$ be an interval such that
$n_{I'}<n_I$, $n_{I'}+m_{I'}>n_I+m_I$, and 
$I'\in \mbox{Int}^e(\te_{L^e({\bf k},p)}{\bf M},{\bf k})$. 
We will show that this $I'$ satisfies condition (2-b).

Take $J=[n_J+1,n_J+m_J]\supset I'$. By the definitions, we have
$$L^e(\Omega_{J}({\bf k}),p)=L^e({\bf k},p)\cup \delta(J,p;l).$$
Here,
$$\delta(J,p;l):=
\begin{cases}
\{n_J,n_J+m_J+1\} & 
\mbox{if }n_{J}\equiv p,~n_{J}+m_{J}+1\equiv p~(\mbox{mod } l),\\
\{n_{J}\} & 
\mbox{if }n_{J}\equiv p,~n_{J}+m_{J}+1\not\equiv p~(\mbox{mod } l),\\
\{n_{J}+m_{J}+1\} & 
\mbox{if }n_{J}\not\equiv p,~n_{J}+m_{J}+1\equiv p~(\mbox{mod } l),\\
\phi& 
\mbox{if }n_{J}\not\equiv p,~n_{I'}+m_{J}+1\not\equiv p~(\mbox{mod } l).
\end{cases}
$$
From this, we deduce by Lemma \ref{lemma:well-def-aff} that
$$(\hte_p{\bf M})_{\Omega_{J}({\bf k})}=
\bigl(\te_{L^e(\Omega_{J}({\bf k}),p)}{\bf M}\bigr)_{\Omega_{J}({\bf k})}
=\bigl(\te_{L^e({\bf k},p)}{\bf M}\bigr)_{\Omega_{I'}({\bf k})}\quad
\mbox{for every }J\supset I'.$$
Note that by Lemma \ref{lemma:propaties}, 
$\eps_q({\bf M})=\heps_p({\bf M})>0$ for every $q\in p+l\nz$.
Since $I'\in \mbox{Int}^e(\te_{L^e({\bf k},p)}{\bf M},{\bf k})$, we conclude that
$$\bigl(\te_{L^e({\bf k},p)}{\bf M}\bigr)_{\Omega_{J}({\bf k})}=
\bigl(\te_{L^e({\bf k},p)}{\bf M}\bigr)_{\Omega_{I'}({\bf k})}.
$$
This shows that condition (2-b) is satisfied.\\

It remains to show that $\hte_p{\bf M}$ is $\sigma$-invariant. However, this
follows easily from Lemma \ref{lemma:propaties}. This proves the proposition. 
\hfill$\square$
\vskip 3mm
Now we are ready to state one of the main results of this paper.
\begin{thm}\label{thm:ord-aff}
$\left((\cBZ_{\nz}^e)^{\sigma};\mbox{\rm wt},\heps_p,\hvphi_p,\hte_p,\htf_p
\right)$
is a $U_q(\widehat{\gtsl}_l)$-crystal.
\end{thm}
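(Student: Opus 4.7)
The plan is to verify Kashiwara's axioms for a crystal of the $\widehat{\gtsl}_l$ root datum. The identity $\hvphi_p({\bf M}) = \heps_p({\bf M}) + \langle \widehat{h}_p,\mbox{wt}({\bf M})\rangle$ holds by definition of $\hvphi_p$, and by Proposition \ref{prop:well-def-aff} the operators $\hte_p,\htf_p$ restrict to self-maps of $(\cBZ_{\nz}^e)^{\sigma}\cup\{0\}$. What remains is:
\begin{itemize}
\item[(a)] $\mbox{wt}(\hte_p{\bf M}) = \mbox{wt}({\bf M}) + \widehat{\alpha}_p$ when $\hte_p{\bf M}\neq 0$, and the analogous identity for $\htf_p$;
\item[(b)] $\heps_p(\hte_p{\bf M}) = \heps_p({\bf M}) - 1$ when $\heps_p({\bf M}) > 0$, and $\heps_p(\htf_p{\bf M}) = \heps_p({\bf M}) + 1$;
\item[(c)] $\hte_p\htf_p{\bf M} = {\bf M}$, and $\htf_p\hte_p{\bf M} = {\bf M}$ whenever $\hte_p{\bf M}\neq 0$.
\end{itemize}
My global strategy is to exploit the componentwise description: for each ${\bf k}\in\cM_{\nz}$, $(\hte_p{\bf M})_{\bf k} = (e_{L^e({\bf k},p)}{\bf M})_{\bf k}$ and $(\htf_p{\bf M})_{\bf k} = (f_{L^e({\bf k},p)}{\bf M})_{\bf k}$, with $L^e({\bf k},p)\subset p+l\nz$ a finite set whose elements are pairwise separated by at least $l\geq 3$. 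By Lemma \ref{lemma:propaties}(3) the involved operators $\te_q$ (resp.\ $\tf_q$) pairwise commute, so these products are unambiguous, and by Lemma \ref{lemma:propaties}(1)--(2) each $\te_q$ behaves like a Kashiwara operator along the $q$-th direction on the ambient set $\cBZ_{\nz}^e$.

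For (b), the key observation is that $L^e({\bf k}(\sigma_p\Lambda_p),p) = \{p\}$: the Maya diagram ${\bf k}(\sigma_p\Lambda_p) = \nz_{\leq p-1}\cup\{p+1\}$ has $\langle h_q,{\bf k}(\sigma_p\Lambda_p)\rangle_{\nz}\neq 0$ only for $q\in\{p-1,p\}$, and only $p$ lies in $p+l\nz$ (using $l\geq 3$). Hence $(\hte_p{\bf M})_{{\bf k}(\sigma_p\Lambda_p)} = (\te_p{\bf M})_{{\bf k}(\sigma_p\Lambda_p)} = M_{{\bf k}(\sigma_p\Lambda_p)} + 1$ by Lemma \ref{cor:ord-kas}(1)(a); similarly Lemma \ref{cor:ord-kas}(2) gives $(\htf_p{\bf M})_{{\bf k}(\sigma_p\Lambda_p)} = M_{{\bf k}(\sigma_p\Lambda_p)} - 1$. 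Plugging into $\heps_p({\bf M}) = -M_{{\bf k}(\sigma_p\Lambda_p)}$ yields (b). For (c), I would note that $L^e({\bf k},p)$ depends only on ${\bf k}$, so the same finite index set governs both $\hte_p$ and $\htf_p$ at the ${\bf k}$-component; the composition $e_{L^e({\bf k},p)}f_{L^e({\bf k},p)}$ then collapses to the identity by Lemma \ref{lemma:propaties}(1) after moving all $\te$'s past all $\tf$'s using Lemma \ref{lemma:propaties}(3).

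The main obstacle is (a), the weight shift. One must compute $\Theta((\hte_p{\bf M})^*)_{{\bf k}(\Lambda_q)} - \Theta({\bf M}^*)_{{\bf k}(\Lambda_q)}$ for each $q\in\widehat{I}$, pass to a large finite interval $I$ via condition (2-b), and invoke the known weight shift of the ordinary $\te_p$ on $\cBZ_I^e$. The $\sigma$-invariance of ${\bf M}$ together with Lemma \ref{lemma:propaties}(5) ensures that the simultaneous shifts coming from $\te_{q'}$ for $q'\in p+l\nz$ aggregate in a $\sigma$-coherent way; projecting down to $\widehat{P}$ collapses the contribution into a single copy of $\widehat{\alpha}_p$. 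This bookkeeping mirrors the proof of the analogous statement for the $\ast$-crystal structure in \cite{NSS}; once (a) is in hand, the remaining axioms of a crystal are immediate from (b) and (c).
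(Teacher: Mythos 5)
Your proposal is correct and follows essentially the same route as the paper: the paper packages your items (a) and (b) into Lemma \ref{lemma:wt-eps} (proved, as you suggest, by passing to a sufficiently large finite interval and using Lemma \ref{lemma:well-def-aff} to show the boundary operators in $\delta(J,p;l)$ act trivially on the relevant components), and then reduces the theorem to $\hte_p\htf_p{\bf M}={\bf M}$, which it likewise deduces from Lemmas \ref{lemma:propaties} and \ref{lemma:well-def-aff}. Only a trivial slip: $\langle h_q,{\bf k}(\sigma_p\Lambda_p)\rangle_{\snz}\neq 0$ for $q\in\{p-1,p,p+1\}$ rather than $\{p-1,p\}$, but since $l\geq 3$ this still yields $L^e({\bf k}(\sigma_p\Lambda_p),p)=\{p\}$, so your computation of the $\heps_p$-shift is unaffected.
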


\begin{lemma}\label{lemma:wt-eps}
Let ${\bf M}\in (\cBZ_{\nz}^e)^{\sigma}$ and $p\in\widehat{I}$. Then, 
the following hold:
\vskip 1mm
\noindent
{\rm (1)} 
$\mbox{\rm wt}(\hte_p{\bf M})=\mbox{\rm wt}({\bf M})+\widehat{\alpha}_p$ if 
$\heps_p({\bf M})>0$, and 
$\mbox{\rm wt}(\htf_p{\bf M})=\mbox{\rm wt}({\bf M})-\widehat{\alpha}_p;$
\vskip 1mm
\noindent
{\rm (2)}
$\heps_p(\hte_p{\bf M})=\heps_p({\bf M})-1$ if $\heps_p({\bf M})>0$, and
$\heps_p(\htf_p{\bf M})=\heps_p({\bf M})+1.$
\end{lemma}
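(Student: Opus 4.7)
Part (2) follows from a direct componentwise computation. By the definition of the ordinary $\eps_p$ on $\cBZ_{\nz}^e$ given in Section~4.1, $\heps_p({\bf M})=\eps_p({\bf M})=-M_{{\bf k}(\sigma_p\Lambda_p)}$, so it suffices to track how this single component transforms under $\hte_p$ and $\htf_p$. The Maya diagram ${\bf k}(\sigma_p\Lambda_p)=\nz_{\leq p-1}\cup\{p+1\}$ satisfies $\langle h_q,{\bf k}(\sigma_p\Lambda_p)\rangle_{\nz}\ne 0$ exactly for $q\in\{p-1,p,p+1\}$; since $l\geq 3$, only $q=p$ lies in $p+l\nz$, hence $L^e({\bf k}(\sigma_p\Lambda_p),p)=\{p\}$. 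Consequently $(\hte_p{\bf M})_{{\bf k}(\sigma_p\Lambda_p)}=(\te_p{\bf M})_{{\bf k}(\sigma_p\Lambda_p)}$ and $(\htf_p{\bf M})_{{\bf k}(\sigma_p\Lambda_p)}=(\tf_p{\bf M})_{{\bf k}(\sigma_p\Lambda_p)}$. Because ${\bf k}(\sigma_p\Lambda_p)\in\cM_{\nz}(p)^*$, Lemma~\ref{cor:ord-kas} directly gives $(\te_p{\bf M})_{{\bf k}(\sigma_p\Lambda_p)}=M_{{\bf k}(\sigma_p\Lambda_p)}+1$ (when $\heps_p({\bf M})>0$) and $(\tf_p{\bf M})_{{\bf k}(\sigma_p\Lambda_p)}=M_{{\bf k}(\sigma_p\Lambda_p)}-1$. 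Negating both equalities yields the claimed $\mp 1$ shifts in $\heps_p$.

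For part (1), the plan is to reduce to the finite-interval crystal structure on $\cBZ_I^e$. Fix $p\in\widehat{I}$ and a finite interval $I=[n+1,n+m]$ chosen so large that (i) $I$ simultaneously stabilizes the components of ${\bf M}$ and $\hte_p{\bf M}$ needed to express $\mbox{wt}$ via $\Theta$, and (ii) every index in $p+l\nz$ that can occur in $L^e({\bf k},p)$ for some ${\bf k}\in\cM_{\nz}(I)$ lies inside $I$. Using the commutativity of distinct ordinary operators $\te_q$ with $|q-q'|\geq 2$ (Lemma~\ref{lemma:propaties}(3)) together with the local triviality of $\te_q$ on components with $\langle h_q,\cdot\rangle_{\nz}=0$ (Lemma~\ref{cor:ord-kas}), one identifies the restriction $(\hte_p{\bf M})_I$ with the single finite-type composition $\bigl(\prod_{q\in(p+l\nz)\cap I}\te_q\bigr){\bf M}_I\in\cBZ_I^e$. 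On the finite side each $\te_q$ increments the weight by $\alpha_q^I$, so $\mbox{wt}((\hte_p{\bf M})_I)=\mbox{wt}({\bf M}_I)+\sum_{q\in(p+l\nz)\cap I}\alpha_q^I$. The $\sigma$-equivariant correspondence, which unfolds each affine simple root $\widehat{\alpha}_p$ into $\sum_{q\equiv p\,(\mathrm{mod}\,l)}\alpha_q^I$ inside $\mbox{wt}({\bf M}_I)$, identifies this finite shift with one copy of $\widehat{\alpha}_p$, giving $\mbox{wt}(\hte_p{\bf M})=\mbox{wt}({\bf M})+\widehat{\alpha}_p$; the case of $\htf_p$ is entirely parallel.

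The main obstacle is making the identification $(\hte_p{\bf M})_I=\bigl(\prod_{q\in(p+l\nz)\cap I}\te_q\bigr){\bf M}_I$ rigorous: because $\hte_p$ is defined componentwise through the ${\bf k}$-dependent set $L^e({\bf k},p)$, turning this recipe into a uniform product of ordinary operators on ${\bf M}_I$ requires invoking both commutativity of distinct $\te_q$'s and locality of each $\te_q$, and then carefully arranging that no ``extra'' $\te_q$ disturbs the stabilization data. The bookkeeping parallels that of Proposition~\ref{prop:well-def-aff} and Lemma~\ref{lemma:well-def-aff} in the preceding subsection; once a sufficiently large stabilizing $I$ has been fixed, both assertions of the lemma reduce to applications of the finite-type crystal axioms on $\cBZ_I^e$ together with the $\sigma$-invariance of ${\bf M}$.
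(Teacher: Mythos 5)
Your proposal is correct and takes essentially the same route as the paper: for part (1) the paper also reduces to the finite-interval ordinary crystal structure over a sufficiently large stabilizing interval, evaluating $\Theta(\hte_p{\bf M})_{{\bf k}(\Lambda_q)}$ component by component via $L^e(\Omega_{J}({\bf k}(\Lambda_q)),p)=L^e({\bf k}(\Lambda_q),p)\cup\delta(J,p;l)$ and discarding the boundary operators $\te_{n_J},\te_{n_J+m_J+1}$ by Lemma \ref{lemma:well-def-aff} --- which is exactly your product identification read off at the $l$ relevant components. Your part (2), the one-component computation of $-M_{{\bf k}(\sigma_p\Lambda_p)}$ using $L^e({\bf k}(\sigma_p\Lambda_p),p)=\{p\}$ and Lemma \ref{cor:ord-kas}, is correct and is precisely the ``similar and easier'' argument the paper leaves to the reader.
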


\begin{proof} We only prove the first equation of part (1), since
the other ones follow by a similar (and easier) argument.

Let ${\bf M}\in (\cBZ_{\nz}^e)^{\sigma}$, with $\heps_p({\bf M})>0$, and 
$J=[n_{J}+1,n_{J}+m_{J}]\in 
\bigcap_{q\in \widehat{I}}~\mbox{Int}^e(\hte_p{\bf M},{\bf k}(\Lambda_q))$. 
Then, 
\begin{align*}
\mbox{wt}(\hte_p{\bf M})&=\sum_{q\in\widehat{I}}
\Theta\bigl(\hte_p{\bf M}\bigr)_{{\bf k}(\Lambda_q)}\widehat{\alpha}_q\\
&=\sum_{q\in\widehat{I}}\bigl(\hte_p{\bf M}\bigr)_{\Omega_{J}({\bf k}(\Lambda_q))}
\widehat{\alpha}_q\\
&=\sum_{q\in\widehat{I}}
\bigl(\te_{L^e(\Omega_{J}({\bf k}(\Lambda_q)),p)}{\bf M}
\bigr)_{\Omega_{J}({\bf k}(\Lambda_q))}\widehat{\alpha}_q.
\end{align*}
Here we note that
$$L^e(\Omega_{J}({\bf k}(\Lambda_q)),p)=L^e({\bf k}(\Lambda_q),p)\cup
\delta(J,p;l)=
\begin{cases}
\{q\}\cup \delta(J,p;l)& 
\mbox{if }p=q,\\
\delta(J,p;l)& 
\mbox{if }p\ne q.
\end{cases}
$$
Now we assume that $J$ is sufficiently large. More precisely, for each 
$q\in \widehat{I}$, let us take an interval $I'=I'_q$ as in the proof of 
Proposition \ref{prop:well-def-aff}, and then take $J$ in such a way that 
$J\supset \bigcup_{q\in \widehat{I}}~I'_q$. Then, by Lemma 
\ref{lemma:well-def-aff}, we deduce that
$$\bigl(\te_{L^e(\Omega_{J}({\bf k}(\Lambda_q)),p)}{\bf M}
\bigr)_{\Omega_{J}({\bf k}(\Lambda_q))}
=\begin{cases}
({\bf M})_{\Omega_{J}({\bf k}(\Lambda_p))}+1 & \mbox{if }p=q,\\ 
({\bf M})_{\Omega_{J}({\bf k}(\Lambda_p))} & \mbox{if }p\ne q.
\end{cases}
$$
Therefore, we obtain
$$\mbox{\rm wt}(\hte_p{\bf M})=\mbox{\rm wt}({\bf M})+\widehat{\alpha}_p.$$
\end{proof}
\vskip 5mm
\noindent
{\it Proof of Theorem \ref{thm:ord-aff}.}
By Lemma \ref{lemma:wt-eps}, it suffices to prove the following:
$$\hte_p\htf_p{\bf M}={\bf M}\quad
\mbox{for every }{\bf M}\in(\cBZ_{\nz}^e)^{\sigma}\mbox{ and }p\in\widehat{I}.$$
Since this follows easily from Lemmas \ref{lemma:propaties} and
\ref{lemma:well-def-aff}, we omit the details of its proof.
\hfill $\square$
\subsection{Uniqueness of an element of weight zero}
It is easy to show the following lemma.
\begin{lemma}\label{lemma:non-pos}
Let ${\bf M}=(M_{\bf k})_{{\bf k}\in\cM_{\nz}}\in\cBZ_{\nz}^e$. Then, 
each component $M_{\bf k}$ for ${\bf k}\in\cM_{\nz}$ is a nonpositive integer.
\end{lemma}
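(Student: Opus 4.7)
The plan is to reduce the nonpositivity of each component $M_{\bf k}$ to the finite-interval case, and then invoke the quiver-theoretic realization of $\cBZ_I^e$ recalled at the end of Section~2.5, under which every component is manifestly the negative of a complex dimension.

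First, I would fix ${\bf k}\in\cM_{\snz}$ and choose a finite interval $I=[n+1,n+m]\subset\nz$ large enough that ${\bf k}\in\cM_{\snz}(I)$; such an $I$ exists because a Maya diagram of charge $r$ satisfies $\nz_{\leq r-p}\subset{\bf k}\subset\nz_{\leq r+q}$ for some $p,q\in\nz_{\geq 0}$, so it suffices to take $n<r-p$ and $n+m+1>r+q$ (this also guarantees that ${\bf k}_I:=\mbox{res}_I({\bf k})$ lies in $\cM_I^{\times}$, i.e.\ is neither $\phi$ nor $\widetilde{I}$). The definition of ${\bf M}_I$ then gives directly $M_{\bf k}=(M_I)_{{\bf k}_I}$, and condition (2-a) of Definition~\ref{defn:BZ} ensures that ${\bf M}_I\in\cBZ_I^e$.

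Next, I would apply the Lagrangian realization at the end of Section~2.5: since the map $\Psi_I:\bigsqcup_{\nu\in Q_+}\mbox{Irr}\,\Lambda(\nu)\overset{\sim}{\to}\cBZ_I^e$ is a bijection, there exist $\nu\in Q_+$ and $\Lambda\in\mbox{Irr}\,\Lambda(\nu)$ such that ${\bf M}_I={\bf M}(\Lambda)$. For a general point $B$ of $\Lambda$, the defining formula
$$
M_{{\bf k}_I}(B)=-\dimc\mbox{Coker}\left(\mathop{\bigoplus}_{k\in\sout({\bf k}_I)}V(\nu)_k\overset{\oplus B_{\mu}}{\longrightarrow}\mathop{\bigoplus}_{l\in\sinn({\bf k}_I)}V(\nu)_l\right)
$$
exhibits $M_{\bf k}=M_{{\bf k}_I}(B)$ as the negative of a nonnegative integer, and hence as $\leq 0$.

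I do not anticipate any real obstacle; the lemma is essentially an immediate consequence of the geometric construction. The only minor nuisance is handling charges for which ${\bf k}$ would produce ${\bf k}_I\in\{\phi,\widetilde{I}\}$, but these are avoided by enlarging $I$ and in any case satisfy $M_{\bf k}=0$ by the convention $M_{\phi}=M_{\widetilde{I}}=0$.
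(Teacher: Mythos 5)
Your proposal is correct. The paper itself gives no argument for this lemma (it is simply declared ``easy to show''), so there is nothing to compare line by line; your route --- choosing $I$ large enough that ${\bf k}\in\cM_{\snz}(I)$ with $\mbox{res}_I({\bf k})\in\cM_I^{\times}$, invoking condition (2-a) to get ${\bf M}_I\in\cBZ_I^e$, and then using the surjectivity of $\Psi_I$ together with the formula $M_{{\bf k}_I}(B)=-\dimc\mbox{Coker}(\cdots)$ --- is a legitimate and clean way to obtain nonpositivity, and your bookkeeping on the charge condition (2.6.1) to ensure ${\bf k}_I\notin\{\phi,\widetilde{I}\}$ is right. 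The only remark worth adding is that the same conclusion can be reached purely combinatorially from (BZ-0), (BZ-1) by induction on intervals (in the spirit of the paper's own Proposition \ref{prop:eps=0}), which avoids importing the quiver realization, but nothing is wrong with your argument as it stands.
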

The next corollary is a direct consequence of this lemma.
\begin{cor}\label{cor:Q^-}
Let ${\bf M}\in (\cBZ_{\nz}^e)^{\sigma}$. Then, $\mbox{\rm wt}({\bf M})\in
\widehat{Q}^-$.
\end{cor}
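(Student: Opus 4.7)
The plan is to reduce the statement to Lemma \ref{lemma:non-pos} by tracing through the definitions of $\mbox{wt}$, $\ast$, $\Theta$, and the various restriction maps. By definition, for ${\bf M}\in(\cBZ_{\nz}^e)^{\sigma}$ we have
$$\mbox{wt}({\bf M})=\mbox{wt}({\bf M}^*)=\sum_{p\in\widehat{I}}\Theta({\bf M}^*)_{{\bf k}(\Lambda_p)}\widehat{\alpha}_p,$$
so it suffices to prove that $\Theta({\bf M}^*)_{{\bf k}(\Lambda_p)}\le 0$ for every $p\in\widehat{I}$. Since the sum is finite, once each coefficient is non-positive we immediately get $\mbox{wt}({\bf M})\in\widehat{Q}^-$.

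The key computation is to identify $\Theta({\bf M}^*)_{{\bf k}(\Lambda_p)}$ with the single entry $M_{{\bf k}(\Lambda_p)}$ of the original $e$-BZ datum ${\bf M}$. To do this, I would choose a finite interval $I=[n+1,n+m]$ with $n<p<n+m+1$ that lies in $\mbox{Int}^c({\bf M}^*;{\bf k}(\Lambda_p)^c)=\mbox{Int}^e({\bf M};{\bf k}(\Lambda_p))$ (using (2.6.2)), and unwind the definition of $\Theta$. Note that $\mbox{res}_I({\bf k}(\Lambda_p))=\mbox{res}_I(\nz_{\le p})=[n+1,p]$, and the unique ${\bf k}'\in\cM_{\nz}^c(I)$ with $\mbox{res}_I^c({\bf k}')=[n+1,p]$ is obtained by demanding $c({\bf k}')=\nz_{\le n}\cup[n+1,p]=\nz_{\le p}$, i.e.\ ${\bf k}'=\nz_{>p}={\bf k}(\Lambda_p)^c$. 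Hence
$$\Theta({\bf M}^*)_{{\bf k}(\Lambda_p)}=({\bf M}^*)_{{\bf k}(\Lambda_p)^c}=M_{{\bf k}(\Lambda_p)}$$
by the definition of the $\ast$-map ${\bf M}^*_{\bf k}=M_{{\bf k}^c}$.

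Combining the two steps: $\mbox{wt}({\bf M})=\sum_{p\in\widehat{I}}M_{{\bf k}(\Lambda_p)}\widehat{\alpha}_p$, and by Lemma \ref{lemma:non-pos} each integer $M_{{\bf k}(\Lambda_p)}$ is non-positive. Therefore $-M_{{\bf k}(\Lambda_p)}\in\nz_{\ge 0}$ for all $p\in\widehat{I}$, and $\mbox{wt}({\bf M})\in-\sum_{p\in\widehat{I}}\nz_{\ge 0}\widehat{\alpha}_p=\widehat{Q}^-$, as required.

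The argument is essentially a bookkeeping exercise once the maps are organized correctly; there is no serious obstacle beyond carefully verifying the identification $(\mbox{res}_I^c)^{-1}(\mbox{res}_I({\bf k}(\Lambda_p)))={\bf k}(\Lambda_p)^c$, which is the only non-trivial bit of unwinding. Note that we do not need the $\sigma$-invariance of ${\bf M}$ for the non-positivity itself—$\sigma$-invariance only enters implicitly via the fact that wt takes values in the $\widehat{\gtsl}_l$-weight lattice $\widehat{P}$, so that the sum is indexed by the finite set $\widehat{I}$ rather than all of $\nz$.
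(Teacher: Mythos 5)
Your proposal is correct and follows exactly the route the paper intends: the paper states the corollary is a ``direct consequence'' of Lemma \ref{lemma:non-pos}, and your unwinding of $\mbox{wt}$, $\ast$, and $\Theta$ to identify $\Theta({\bf M}^*)_{{\bf k}(\Lambda_p)}$ with the nonpositive integer $M_{{\bf k}(\Lambda_p)}$ is precisely the bookkeeping that justifies that claim.
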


\begin{prop}\label{prop:eps=0}
For ${\bf M}\in (\cBZ_{\nz}^e)^{\sigma}$, the following are equivalent.
\vskip 1mm
\noindent
{\rm (a)} $\heps_p({\bf M})=0$ for every $p\in \widehat{I}$.
\vskip 1mm
\noindent
{\rm (b)} ${\bf M}={\bf O}^*$.
\end{prop}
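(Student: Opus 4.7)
The direction (b) $\Rightarrow$ (a) is immediate, since every component of ${\bf O}^*$ vanishes and hence $\heps_p({\bf O}^*) = -({\bf O}^*)_{\nz_{\leq p-1} \cup \{p+1\}} = 0$. For the converse, my plan is to push the hypothesis down to each finite-interval restriction ${\bf M}_I \in \cBZ_I^e$ and then invoke the uniqueness of the highest weight vector in $B(\infty)$ for $U_q(\gtsl_{|I|+1})$.

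First, I would promote $\heps_p({\bf M}) = \eps_p({\bf M}) = 0$ for $p \in \widehat{I}$ to $\eps_p({\bf M}) = 0$ for every $p \in \nz$, using $\sigma({\bf M}) = {\bf M}$ together with Lemma \ref{lemma:propaties}(4). Next, for any finite interval $I = [n+1, n+m]$ and any $p \in I$, the defining formulae unwind to
$$\eps_p({\bf M}) \;=\; -M_{\nz_{\leq p-1} \cup \{p+1\}} \;=\; -({\bf M}_I)_{[n+1, p-1] \cup \{p+1\}} \;=\; \eps_p({\bf M}_I),$$
where the first equality is the definition of $\eps_p$ on $\cBZ_{\nz}^e$ from Section 4.1, the second uses the restriction bijection $\mbox{res}_I : \cM_{\nz}(I) \overset{\sim}{\to} \cM_I^{\times}$, and the third is the Section 3.1 formula for the ordinary $\eps_p$ on $\cBZ_I^e$; that this matches the ordinary crystal structure of Section 3.2 (defined via $\Psi_I$) follows from Lemma \ref{lemma:sharp3} combined with Lemma \ref{lemma:ast=sharp}. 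Thus $\eps_p({\bf M}_I) = 0$ for every $p \in I$.

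Now, by Theorem \ref{thm:finA} and the construction in Section 3.2, $\Psi_I : \cB_I \overset{\sim}{\to} \cBZ_I^e$ is a crystal isomorphism for the ordinary structures, so $(\cBZ_I^e; \mbox{wt}, \eps_p, \vphi_p, \te_p, \tf_p)$ is isomorphic to $B(\infty)$ for $U_q(\gtsl_{|I|+1})$. The unique element of $B(\infty)$ annihilated by every $\eps_p$ is the highest weight vector, which in $\cB_I$ is the zero Lusztig datum; direct inspection of the defining formula for $M_{\bf k}({\bf a})$ shows $\Psi_I$ sends it to the $e$-BZ datum with every component equal to zero. Consequently $M_{\bf k} = 0$ for all ${\bf k} \in \cM_{\nz}(I)$. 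Since by (2.6.1) every ${\bf k} \in \cM_{\nz}$ belongs to $\cM_{\nz}(I)$ for a sufficiently large interval $I = [n+1, n+m]$ (choose $n, m$ so that $\nz_{\leq n} \subset {\bf k} \subset \nz_{\leq n+m+1}$), we conclude ${\bf M} = {\bf O}^*$.

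The only point demanding care, rather than posing a genuine obstacle, is reconciling the three parallel versions of $\eps_p$ in play (on $\cBZ_{\nz}^e$, on $\cBZ_I^e$ via the Section 3.1 formula, and on $\cBZ_I^e$ via the Section 3.2 transport through $\Psi_I$); all three are identified by the restriction bijection together with the $\sharp$--$\ast$ comparison in Lemmas \ref{lemma:sharp3} and \ref{lemma:ast=sharp}. Once this bookkeeping is in place, the argument reduces to the well-known fact that the highest weight vector of $B(\infty)$ is the unique element with $\eps_p = 0$ for all $p$.
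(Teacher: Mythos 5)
Your proof is correct, but it takes a genuinely different route from the paper's. The paper argues entirely inside the BZ-data formalism: after noting (b) $\Rightarrow$ (a) is trivial, it proves $M_{\bf k}=0$ for every ${\bf k}\in\cM_{\nz}$ by induction on the length of the smallest interval $I_{\bf k}$ with ${\bf k}\in\cM_{\nz}(I_{\bf k})$, at each step invoking a well-chosen tropical Pl\"ucker relation (BZ-2) together with the nonpositivity of all components (Lemma \ref{lemma:non-pos}) to force the new components to vanish. You instead globalize the hypothesis to $\eps_q({\bf M})=0$ for all $q\in\nz$ via $\sigma$-invariance and Lemma \ref{lemma:propaties}(4), restrict to each finite interval to get $\eps_p({\bf M}_I)=0$ for all $p\in I$ (the identification of the three versions of $\eps_p$ that you flag is indeed the delicate point, and Lemmas \ref{lemma:sharp3} and \ref{lemma:ast=sharp}, together with $\eps_i(\Lambda)=\eps_i^*(\Lambda^*)$, do settle it), and then invoke the uniqueness of the highest weight element of $B(\infty)$ for $U_q(\gtsl_{m+1})$ under the isomorphism $\Psi_I:\cB_I\overset{\sim}{\to}\cBZ_I^e$ of ordinary crystals, checking that the zero Lusztig datum maps to the all-zero $e$-BZ datum. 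Both arguments are sound; the paper's is elementary and self-contained (it needs only (BZ-2) and Lemma \ref{lemma:non-pos}, not the Section 3 machinery), while yours is shorter once the ordinary crystal structure on $\cBZ_I^e$ and its identification with $B(\infty)$ are in hand, and it makes transparent that the proposition is really the finite-type statement ``the highest weight vector is the unique element killed by all $\eps_i$'' propagated over all finite intervals.
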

\begin{proof}
Since (b) $\Rightarrow$ (a) is obvious, we will prove that $M_{\bf k}=0$ 
for all ${\bf k}\in \cM_{\nz}$ under the assumption that
$\heps_p({\bf M})=0$ for every $p\in \widehat{I}$.

We note that $M_{{\bf k}(\Lambda_q)}=0$ for every $q\in\nz$ by 
the normalization condition. Let ${\bf k}\in\cM_{\nz}\setminus 
\left(\bigcup_{q\in\nz}~{\bf k}(\Lambda_q)\right)$. Then there exists
the smallest finite interval $I_{\bf k}$ such that 
${\bf k}\in \cM_{\nz}(I_{\bf k})$. 
We prove the assertion above by induction on $t:=|I_{\bf k}|\geq 1$.\\

Assume that $t=1$. Then, ${\bf k}=\sigma_q{\bf k}(\Lambda_q)=
\nz_{\leq q-1}\cup \{q+1\}$ for some $q\in \nz$. If we take $q'\in \widehat{I}$
such that $q\equiv q'~(\mbox{mod}~l)$, then we have
$$M_{\sigma_q{\bf k}(\Lambda_q)}=\eps_q({\bf M})=\heps_{q'}({\bf M})=0.$$

Now, we assume that $t>1$, and  
\begin{itemize}
\item[(i)] the assertion holds for every ${\bf m}\in \cM_{\nz}$ with 
$|I_{\bf m}|<t$. 
\end{itemize}
{\bf Step 1}. Let ${\bf k}=\nz_{\leq n}\cup\{n+t+1\}$ 
for some $n\in\nz$. We use the tropical Pl\"ucker relation for 
$i=n+1,j=n+t,k=n+t+1$:
\begin{align*}
&M_{\nz_{\leq n}\cup\{n+t\}}+M_{\nz_{\leq n}\cup\{n+1,n+t+1\}}\\
&\qquad=\mbox{min}
\left\{M_{\nz_{\leq n}\cup\{n+1\}}+M_{\nz_{\leq n}\cup\{n+t,n+t+1\}},~
M_{\nz_{\leq n}\cup\{n+t+1\}}+M_{\nz_{\leq n}\cup\{n+1,n+t\}}\right\}.
\end{align*}
By the assumption (i), we see that
$$M_{\nz_{\leq n}\cup\{n+t\}}=M_{\nz_{\leq n}\cup\{n+1,n+t+1\}}=
M_{\nz_{\leq n}\cup\{n+1\}}=M_{\nz_{\leq n}\cup\{n+1,n+t\}}=0.$$
Since $M_{\nz_{\leq n}\cup\{n+t+1\}}=M_{\bf k}$ and 
$M_{\nz_{\leq n}\cup\{n+t,n+t+1\}}=M_{{\bf k}\cup\{n+t\}}$ are both 
nonpositive integers, we obtain
$$M_{\bf k}=M_{{\bf k}\cup\{n+t\}}=0.$$
{\bf Step 2}. Let ${\bf k}=\nz_{\leq n}\cup\{k_1<\cdots<k_r\}$, with 
$k_1=n+s+1~(1<s\leq t)$, and $k_r=n+t+1$. We prove the assertion by  
descending induction on s. If $s=t$, then $r=1$ and the assertion is
already proved in Step 1. Assume that 
\begin{itemize}
\item[(ii)] the assertion holds for every 
${\bf m}=\nz_{\leq n}\cup\{m_1<\cdots<m_{r'}\}$, with $m_1=n+s'+1>k_1$, and
$m_{r'}=n+t+1$.
\end{itemize} 
Set ${\bf k}':={\bf k}\setminus\{n+s+1,n+t+1\}$, and use the 
tropical Pl\"ucker relation for $i=n+1,j=n+s+1,k=n+t+1$, and ${\bf k}'$:
\begin{align*}
&M_{{\bf k}'\cup\{n+s+1\}}+M_{{\bf k}'\cup\{n+1,n+t+1\}}\\
&\qquad=\mbox{min}
\left\{M_{{\bf k}'\cup\{n+1\}}+M_{{\bf k}'\cup\{n+s+1,n+t+1\}},~
M_{{\bf k}'\cup\{n+t+1\}}+M_{{\bf k}'\cup\{n+1,n+s+1\}}\right\}.
\end{align*}
By the assumption (i), we obtain
$$M_{{\bf k}'\cup\{n+s+1\}}=M_{{\bf k}'\cup\{n+1,n+t+1\}}=M_{{\bf k}'\cup\{n+1\}}
=M_{{\bf k}'\cup\{n+1,n+s+1\}}=0.$$
Also, we have 
$$M_{{\bf k}'\cup\{n+t+1\}}=0$$
by the assumption (ii). Therefore, by Lemma \ref{lemma:non-pos}, we conclude
that
$$M_{\bf k}=M_{{\bf k}'\cup\{n+s+1,n+t+1\}}=0.$$
This proves the proposition.
\end{proof}

The following corollary is a key to the proof of the connectedness of 
the crystal graph of the $U_q(\widehat{\gtsl}_l)$-crystal 
$\left((\cBZ_{\snz}^{e})^{\sigma};\mbox{\rm wt},\widehat{\eps}_p^*,
\widehat{\vphi}_p^*,\hte_p^*,\htf_p^*\right)$, which will be given in the next
section.
\begin{cor}
${\bf O}^*$ is the unique element of $(\cBZ_{\nz}^e)^{\sigma}$ of 
weight zero. 
\end{cor}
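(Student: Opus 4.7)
The plan is to deduce the corollary directly from Proposition \ref{prop:eps=0} together with Corollary \ref{cor:Q^-}, via a one-line weight-raising argument. The direction ${\bf O}^* \mapsto $ weight zero is immediate, since every component of ${\bf O}^*$ is zero and hence $\mbox{wt}({\bf O}^*) = \mbox{wt}({\bf O}) = 0$. The content is the uniqueness claim: if ${\bf M} \in (\cBZ_{\snz}^e)^{\sigma}$ satisfies $\mbox{wt}({\bf M}) = 0$, then ${\bf M} = {\bf O}^*$.

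To prove uniqueness, I would reduce to Proposition \ref{prop:eps=0} by showing that $\heps_p({\bf M}) = 0$ for every $p \in \widehat{I}$. Suppose for contradiction that $\heps_p({\bf M}) > 0$ for some $p$. Then Theorem \ref{thm:ord-aff} (or Proposition \ref{prop:well-def-aff}) gives $\hte_p {\bf M} \in (\cBZ_{\snz}^e)^{\sigma}$, and Lemma \ref{lemma:wt-eps}(1) computes its weight as
$$\mbox{wt}(\hte_p {\bf M}) = \mbox{wt}({\bf M}) + \widehat{\alpha}_p = \widehat{\alpha}_p.$$
On the other hand, Corollary \ref{cor:Q^-} forces $\mbox{wt}(\hte_p {\bf M}) \in \widehat{Q}^-$, whereas $\widehat{\alpha}_p$ sits in $\widehat{Q}^+$ with coefficient $+1$ at the simple root $\widehat{\alpha}_p$; this is the desired contradiction. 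Hence $\heps_p({\bf M}) = 0$ for all $p \in \widehat{I}$, and Proposition \ref{prop:eps=0} yields ${\bf M} = {\bf O}^*$.

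There is essentially no obstacle at this stage: all the substantive work, namely the nonpositivity of components (Lemma \ref{lemma:non-pos}), the $\widehat{Q}^-$-valuedness of $\mbox{wt}$ (Corollary \ref{cor:Q^-}), the construction of the ordinary crystal structure on $(\cBZ_{\snz}^e)^{\sigma}$ together with its weight-raising behaviour (Theorem \ref{thm:ord-aff} and Lemma \ref{lemma:wt-eps}), and the inductive tropical Plücker computation characterising $\heps_p \equiv 0$ (Proposition \ref{prop:eps=0}), has already been carried out. The corollary simply packages these ingredients into the uniqueness of the highest-weight element, which is exactly what will be needed as the input to the connectedness argument via the Kashiwara--Saito characterisation of $B(\infty)$ in the next section.
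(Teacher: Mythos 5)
Your proposal is correct and follows essentially the same route as the paper: both arguments combine Proposition \ref{prop:eps=0}, Corollary \ref{cor:Q^-}, and the weight-raising identity $\mbox{wt}(\hte_p{\bf M})=\mbox{wt}({\bf M})+\widehat{\alpha}_p$ to derive the contradiction $\widehat{\alpha}_p\in\widehat{Q}^-$. The only difference is cosmetic (you argue by contradiction from $\mbox{wt}({\bf M})=0$, the paper proves the contrapositive), so nothing further is needed.
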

\begin{proof}
It suffices to show the following:
\begin{center}
{\it If ${\bf M}\ne {\bf O}^*$, then $\mbox{\rm wt}({\bf M})\ne 0$.}
\end{center}
Let ${\bf M}\ne {\bf O}^*$. By Proposition \ref{prop:eps=0}, there exists
$p\in\widehat{I}$ such that $\heps_p({\bf M})>0$. This implies that
$\hte_p{\bf M}\in (\cBZ_{\nz}^e)^{\sigma}$. Therefore, by Corollary \ref{cor:Q^-}, 
we have
$$\mbox{wt}(\hte_p{\bf M})\in \widehat{Q}^-.\eqno{(4.4.1)}$$
Also, because 
$\left((\cBZ_{\snz}^{e})^{\sigma};\mbox{\rm wt},\widehat{\eps}_p,
\widehat{\vphi}_p,\hte_p,\htf_p\right)$ is a $U_q(\widehat{\gtsl}_l)$-crystal
(Theorem \ref{thm:ord-aff}), we have
$$\mbox{wt}(\hte_p{\bf M})=\mbox{wt}({\bf M})+\widehat{\alpha}_p.
\eqno{(4.4.2)}$$
Now, suppose that $\mbox{wt}({\bf M})=0$. Then, by (4.4.2), we obtain
$$\mbox{wt}(\hte_p{\bf M})=\widehat{\alpha}_p,$$
which contradicts (4.4.1). Thus, we conclude that 
$\mbox{\rm wt}({\bf M})\ne 0$.  This proves the corollary.
\end{proof}
\subsection{Some other properties}
The results of this subsection will be used in the next section. 
\begin{lemma}\label{lemma:eps1}
Let $p,q\in\nz$ with $p\ne q$, and ${\bf M}\in \cBZ_{\nz}^e$.
\vskip 1mm
\noindent
{\rm (1)} If $\eps_p({\bf M})>0$, then 
$\eps_q^*(\te_p{\bf M})=\eps_q^*({\bf M})$.
\vskip 1mm
\noindent
{\rm (2)} If $\eps_p^*({\bf M})>0$, then 
$\eps_q(\te_p^*{\bf M})=\eps_q({\bf M})$.
\end{lemma}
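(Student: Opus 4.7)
The strategy for both parts is to localize on a sufficiently large finite interval $I$ and then appeal to the compatibility of the ordinary and $\ast$-crystal structures on $\cBZ_I^e$. This compatibility, recorded in Proposition~\ref{prop:fin-strict}, traces through Theorem~\ref{thm:finA} and the isomorphism $\Xi_I$ back to the Kashiwara--Saito theorem for the Lagrangian model $\bigsqcup_{\nu}\mbox{Irr}\Lambda(\nu)$.

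\textbf{Part (1).} Fix $p\ne q$ and ${\bf M}\in\cBZ_{\nz}^e$ with $\eps_p({\bf M})>0$; then $\te_p{\bf M}\in\cBZ_{\nz}^e$ by Proposition~\ref{prop:well-def}. I will choose a finite interval $I$ containing $\{p-1,p,p+1,q-1,q,q+1\}$ and large enough that $I\in\mbox{Int}^e({\bf M};\cdot)\cap\mbox{Int}^e(\te_p{\bf M};\cdot)$ for each of the finitely many Maya diagrams appearing in the definitions of $\eps_p({\bf M})$, $\eps_q^*({\bf M})$, $\eps_q^*(\te_p{\bf M})$, and of the ordinary action of $\te_p$; such an $I$ exists by Lemma~\ref{lemma:int^e}. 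For this $I$ the localizations
\[
(\te_p{\bf M})_I=\te_p({\bf M}_I),\quad \eps_p({\bf M})=\eps_p({\bf M}_I),\quad \eps_q^*({\bf M})=\eps_q^*({\bf M}_I),\quad \eps_q^*(\te_p{\bf M})=\eps_q^*(\te_p{\bf M}_I)
\]
all hold, and the assertion reduces to the finite-interval claim: for ${\bf N}\in\cBZ_I^e$ with $\eps_p({\bf N})>0$ and $p\ne q$, one has $\eps_q^*(\te_p {\bf N})=\eps_q^*({\bf N})$. Transporting this through $\Xi_I^{-1}$, which is an isomorphism of crystals for both the ordinary and the $\ast$-structures simultaneously, turns the problem into the corresponding assertion in $\bigsqcup_{\nu}\mbox{Irr}\Lambda(\nu)$, which is standard in the Kashiwara--Saito framework \cite{KS} and is a sibling of Proposition~\ref{prop:fin-strict}(2).

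\textbf{Part (2).} The argument is identical with $(\te_p,\eps_q^*)$ replaced by $(\te_p^*,\eps_q)$. The only additional ingredient is the localization $(\te_p^*{\bf M})_I=\te_p^*({\bf M}_I)$, which follows from the definition $\te_p^*{\bf M}=\bigl(\te_p({\bf M}^*)\bigr)^*$ together with the compatibility of the $\ast$-involution with restriction to a finite interval recorded in Section~2.6; the finite-interval statement is again handled via $\Xi_I^{-1}$ and \cite{KS}.

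\textbf{Main obstacle.} The substantive content is concentrated in the localization step: one must verify that a single interval $I$ simultaneously controls all four of $\eps_p({\bf M})$, $\eps_q^*({\bf M})$, $\eps_q^*(\te_p{\bf M})$, and the ordinary $\te_p$-action on ${\bf M}$, and realizes the required identifications on the nose. This parallels the interval-enlarging arguments in Section~4.2 (in particular the choice (4.2.2) and Lemma~\ref{lemma:well-def-aff}), and will be dispatched by direct appeal to those constructions.
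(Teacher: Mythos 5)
Your proposal is correct, but it handles the heart of the lemma differently from the paper. Both arguments begin by localizing to a sufficiently large finite interval, and your bookkeeping of intervals is sound: the sets $\mbox{Int}^e(\cdot;\cdot)$ are upward-closed, so a single $I$ controlling $\eps_p({\bf M})$, $\eps_q^*({\bf M})$, $\eps_q^*(\te_p{\bf M})$ and the identity $(\te_p{\bf M})_I=\te_p({\bf M}_I)$ does exist, and the compatibility $({\bf N}^*)_I=({\bf N}_I)^*$ needed for part (2) is available via $\Omega_I^c({\bf k}^c)=(\Omega_I({\bf k}))^c$. Where you diverge is in how the finite-interval claim is settled: you transport it through $\Xi_I$ to $\bigsqcup_{\nu}\mbox{Irr}\Lambda(\nu)$ and quote the Kashiwara--Saito commutation of $\te_p$ with $\eps_q^*$ for $p\ne q$ (equivalently, Proposition \ref{prop:fin-strict}(2) for part (2), and its $\sharp$-transform --- obtainable from Lemma \ref{lemma:sharp3} and Proposition \ref{prop:sharp} --- for part (1)). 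The paper instead avoids any appeal to the finite-rank crystal fact: it expands $\eps_q^*(\te_p{\bf M})$ as the alternating sum of the four components $\Theta\bigl((\te_p{\bf M})^*\bigr)$ at ${\bf k}(\Lambda_q)$, ${\bf k}(\sigma_q\Lambda_q)$, ${\bf k}(\Lambda_{q\pm 1})$, rewrites each as $(\te_p{\bf M})_{\Omega_I(\cdot)}$, and uses the explicit component formula of Lemma \ref{cor:ord-kas} to see that the only changes are $+1$'s occurring at ${\bf k}(\sigma_q\Lambda_q)$ and at ${\bf k}(\Lambda_{q\pm1})$ when $p=q\pm 1$, which cancel in the alternating sum. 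The paper's route is more elementary and makes the cancellation mechanism visible; yours is more uniform (parts (1) and (2) become the same argument) and reuses the localization machinery already developed for Proposition \ref{prop:aff-strict}, at the cost of a heavier reliance on the isomorphisms $\Psi_I$, $\Xi_I$ and on \cite{KS}. I see no gap in your argument.
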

\begin{proof}
Because part (2) can be proved by a similar (and easier) argument,
we only give a proof of part (1). By the definitions, we have
$$\begin{array}{lll}
\eps_q^*(\te_p{\bf M})
&=&-~\Theta\bigl((\te_p{\bf M})^*\bigr)_{{\bf k}(\Lambda_q)}
-\Theta\bigl((\te_p{\bf M})^*\bigr)_{{\bf k}(\sigma_q\Lambda_q)}\\
&&\qquad\qquad\qquad
+~\Theta\bigl((\te_p{\bf M})^*\bigr)_{{\bf k}(\Lambda_{q+1})}
+\Theta\bigl((\te_p{\bf M})^*\bigr)_{{\bf k}(\Lambda_{q-1})}.
\end{array}\eqno{(4.5.1)}$$ 
For simplicity of notation, we write
${\bf k}_1={\bf k}(\Lambda_q),{\bf k}_2={\bf k}(\sigma_q\Lambda_q),
{\bf k}_3={\bf k}(\Lambda_{q+1}),{\bf k}_4={\bf k}(\Lambda_{q-1})$.
Take a finite interval $I$ such that $p\in I$ and 
$I\in \bigcap_{k=1}^4~
\mbox{Int}^c\bigl((\te_p{\bf M})^*,{\bf k}_k\bigr)$.  

Let us compute the second term on the right-hand side of (4.5.1). 
\begin{align*}
\Theta\bigl((\te_p{\bf M})^*\bigr)_{{\bf k}(\sigma_q\Lambda_q)}
=\bigl((\te_p{\bf M})^*\bigr)_{\Omega_{I}^c({\bf k}(\sigma_q\Lambda_q)^c)}
=(\te_p{\bf M})_{\Omega_{I}({\bf k}(\sigma_q\Lambda_q))}.
\end{align*}
Since $p\ne q$, the following two cases occur:
\vskip 1mm
case (a): $p=q\pm 1$ ($\Leftrightarrow~
\Omega_{I}({\bf k}(\sigma_q\Lambda_q))\in \cM_{\nz}(p)^*$), 
\vskip 1mm
case (b): $|p-q|\geq 2$
$\left(\Leftrightarrow~\Omega_{I}({\bf k}(\sigma_q\Lambda_q))\in 
\cM_{\nz}\setminus \bigl(\cM_{\nz}(p)\cup\cM_{\nz}(p)^*\bigr)\right)$.
\vskip 3mm
\noindent
By Lemma \ref{cor:ord-kas}, we have
$$(\te_p{\bf M})_{\Omega_{I}({\bf k}(\sigma_q\Lambda_q))}
=\begin{cases}
M_{\Omega_{I}({\bf k}(\sigma_q\Lambda_q))}+1 & \mbox{in case (a)},\\
M_{\Omega_{I}({\bf k}(\sigma_q\Lambda_q))} & \mbox{in case (b)}.
\end{cases}
$$

\vskip 3mm
By a similar computation, we obtain
$$\Theta\bigl((\te_p{\bf M})^*\bigr)_{{\bf k}(\Lambda_q)}
=M_{\Omega_{I}({\bf k}(\Lambda_{q}))},$$
$$(\te_p{\bf M})_{\Omega_{I}({\bf k}(\Lambda_{q+1}))}
=\begin{cases}
M_{\Omega_{I}({\bf k}(\Lambda_{q+1}))}+1 & \mbox{if }p=q+1,\\
M_{\Omega_{I}({\bf k}(\Lambda_{q+1}))} & \mbox{otherwise},
\end{cases}$$
$$
(\te_p{\bf M})_{\Omega_{I}({\bf k}(\Lambda_{q-1}))}
=\begin{cases}
M_{\Omega_{I}({\bf k}(\Lambda_{q-1}))}+1 & \mbox{if }p=q-1,\\
M_{\Omega_{I}({\bf k}(\Lambda_{q-1}))} & \mbox{otherwise}.
\end{cases}
$$
Combining the above, we deduce that
\begin{align*}
\eps_q^*(\te_p{\bf M})
&=-M_{\Omega_{I}({\bf k}(\Lambda_{q}))}
-M_{\Omega_{I}({\bf k}(\sigma_q\Lambda_q))} 
+M_{\Omega_{I}({\bf k}(\Lambda_{q+1}))}
+M_{\Omega_{I}({\bf k}(\Lambda_{q-1}))}\\
&=\eps_q^*({\bf M}).
\end{align*} 
This proves the lemma.
\end{proof}

\begin{prop}\label{prop:aff-strict}
Let $p,q\in \widehat{I}$, and ${\bf M}\in (\cBZ_{\nz}^e)^{\sigma}$.
Set $c:=\heps_p({\bf M})$ and ${\bf M}':=\hte_p^c{\bf M}$. 
\vskip 1mm
\noindent
{\rm (1)} We have
$$\heps^*_p({\bf M})=\mbox{\rm max}\left\{\heps^*_p({\bf M}'),~c-
\bigl\langle\widehat{h}_p,\mbox{\rm wt}({\bf M}')\bigr\rangle\right\}.$$
{\rm (2)} If $p\ne q$ and $\heps_q^*({\bf M})>0$, then
$$\heps_q(\hte_p^*{\bf M})=c,\qquad \hte_q^c(\hte_p^*{\bf M})=\hte_p{\bf M}'.$$
{\rm (3)} If $\heps_p^*({\bf M})>0$, then
$$\heps_p(\hte_p^*{\bf M})=\begin{cases}
\heps_p({\bf M}) & \mbox{if}\quad\heps^*_p({\bf M}')\geq 
c-\bigl\langle\widehat{h}_p,\mbox{\rm wt}({\bf M}')\bigr\rangle,\\
\heps_p({\bf M})-1& \mbox{if}\quad\heps^*_p({\bf M}')<
c-\bigl\langle\widehat{h}_p,\mbox{\rm wt}({\bf M}')\bigr\rangle,
\end{cases}$$
and
$$\hte_p^{c'}\bigl(\hte_p^*{\bf M}\bigr)=
\begin{cases}
\hte_p^*{\bf M}' & \mbox{if}\quad\heps^*_p({\bf M}')\geq 
c-\bigl\langle\widehat{h}_p,\mbox{\rm wt}({\bf M}')\bigr\rangle,\\
{\bf M}' & \mbox{if}\quad\heps^*_p({\bf M}')<
c-\bigl\langle\widehat{h}_p,\mbox{\rm wt}({\bf M}')\bigr\rangle.
\end{cases}$$
Here, we set $c':=\heps_p(\hte_p^*{\bf M}).$
\end{prop}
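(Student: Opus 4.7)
The plan is to reduce Proposition \ref{prop:aff-strict} to its finite-interval counterpart Proposition \ref{prop:fin-strict} by working inside a sufficiently large finite window. Given ${\bf M} \in (\cBZ_{\nz}^e)^{\sigma}$ and $p \in \widehat{I}$, I would first fix a finite interval $I \subset \nz$ with the following stabilization properties (whose existence follows by essentially the same argument used in the proofs of Proposition \ref{prop:well-def-aff} and Lemma \ref{lemma:well-def-aff}): each restriction appearing in the statement --- namely ${\bf M}_I$, $({\bf M}')_I = (\hte_p^c {\bf M})_I$, $(\hte_p^* {\bf M})_I$, and $(\hte_p^* {\bf M}')_I$, together with the analogous restrictions for the index $q \in \widehat{I}$ occurring in Part (2) --- is stable under enlargement of $I$ and can be computed using only finite Kashiwara operators in $\cBZ_I^e$. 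Moreover, for any interior $q_0 \in I \cap (p + l\nz)$ chosen sufficiently far from the endpoints of $I$, the $\sigma$-invariance of ${\bf M}$ yields the identifications
\[
\eps_{q_0}({\bf M}_I) = \heps_p({\bf M}) = c, \qquad
\eps_{q_0}^*({\bf M}_I) = \heps_p^*({\bf M}), \qquad
\langle h_{q_0}, \mbox{wt}({\bf M}'_I)\rangle_I = \langle \widehat{h}_p, \mbox{wt}({\bf M}')\rangle.
\]

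Pick such an interior $q_0$. Because $l \geq 3$, distinct elements of $p + l\nz$ lie at pairwise distance at least $3$, so by Lemma \ref{lemma:propaties}(3) the ordinary operators $\te_{q}, \te_{q'}$ for $q, q' \in I \cap (p + l\nz)$ commute on $\cBZ_I^e$ (via the stabilization above); the starred and mixed versions commute as well, by the $\ast$-involution $B \mapsto {}^tB$ of Lemma \ref{lemma:KS}, transported through the quiver-to-datum isomorphism of Subsection 2.5. Consequently, on each component of interest in the $I$-window, the infinite products defining $\hte_p^c$ and $\hte_p^*$ collapse to the single factor $\te_{q_0}^c$ or $\te_{q_0}^*$ at the chosen interior index; the other factors act only on components attached to distant parts of $I$ and leave the quantities controlling the statement unchanged.

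With these translations in hand, Part (1) is immediate from Proposition \ref{prop:fin-strict}(1) applied at index $q_0$ in $\cBZ_I^e$, with the finite max-formula converting verbatim to the desired formula for $\heps_p^*({\bf M})$. For Part (2), since $p \neq q$ in $\widehat{I}$ the $\sigma$-orbits $p + l\nz$ and $q + l\nz$ are disjoint, and one applies Proposition \ref{prop:fin-strict}(2) at a pair of interior representatives, using once more the commutativity of the starred and mixed operators noted above. For Part (3), one applies Proposition \ref{prop:fin-strict}(3) at $q_0$, and the dichotomous case distinction therein pulls back directly to the affine level via the identifications of the first paragraph. The main obstacle is the stabilization step: verifying carefully that a common finite window faithfully captures every affine quantity in the statement, and that the commutativity of starred and mixed operators indexed by $p + l\nz$ extends to the restricted $\cBZ_I^e$-setting. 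Once this bookkeeping is in place, the affine statement is a direct consequence of its finite counterpart.
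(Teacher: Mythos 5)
Your proposal is correct and takes essentially the same route as the paper: the authors likewise reduce each part to Proposition \ref{prop:fin-strict} by choosing a sufficiently large finite interval $I$ on which $c$, $\heps_p^*$, $\langle\widehat{h}_p,\mbox{wt}(\cdot)\rangle$, and the restrictions of $\hte_p^c{\bf M}$ and $\hte_p^*{\bf M}$ are all computed by the finite Kashiwara operators, using Lemma \ref{lemma:eps1} together with the commutativity in Lemma \ref{lemma:propaties} to discard the distant factors of the infinite products $\hte_p$, $\hte_p^*$ --- exactly your ``collapse to the single factor at an interior representative'' step.
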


\begin{proof}
By taking a sufficiently large finite interval $I$, each of the equations above 
follows from the corresponding one in the case of finite intervals (Proposition 
\ref{prop:fin-strict}). 

As an example, let us show part (1). By the definitions and Lemma 
\ref{lemma:eps1}, it suffices to show that
$$\eps^*_p({\bf M})=\mbox{\rm max}\left\{\eps^*_p(\te_p^c{\bf M}),~-c-
\bigl\langle\widehat{h}_p,\mbox{\rm wt}({\bf M})\bigr\rangle\right\}.$$
Let ${\bf k}_k$, $k=1,2,3,4$, be the Maya diagrams which we introduced in the
proof of Lemma \ref{lemma:eps1}. Note that there exists a finite interval $I$ 
such that
\vskip 1mm
(a) $I\in\left(\displaystyle{\bigcap_{k=1}^4}\mbox{Int}^c({\bf M}^*,{\bf k}_k)\right)
\cap \left(\displaystyle{\bigcap_{k=1}^4}\mbox{Int}^c((\te_p^c{\bf M})^*,{\bf k}_k)\right)$,
\vskip 1mm
(b) $(\te_p^c{\bf M})_I=\te_p^c{\bf M}_I$,
\vskip 1mm
(c) $c=\eps_p({\bf M})=\eps_p({\bf M}_I)$,
\vskip 1mm
(d) $\displaystyle{\bigl\langle\widehat{h}_p,\mbox{\rm wt}({\bf M})\bigr\rangle}
=\langle h_p,\mbox{wt}({\bf M}_I)\rangle_I.$
\vskip 3mm
\noindent
For such an interval $I$, we have 
\begin{align*}
\eps^*_p(\te_p^c{\bf M}) &= \eps_p\bigl((\te_p^c{\bf M})^*\bigr)\\
&=\eps_p\left(\bigl((\te_p^c{\bf M})^*\bigr)_I\right)\quad\mbox{by (a)}\\
&=\eps_p\left(\bigl((\te_p^c{\bf M})_I\bigr)^*\right)\\
&=\eps_p\left(\bigl(\te_p^c{\bf M}_I\bigr)^*\right)\quad\mbox{by (b)}\\
&=\eps_p^*\left(\te_p^{\eps_p({\bf M}_I)}{\bf M}_I\right)\quad\mbox{by (c)}.
\end{align*}
Similarly, we obtain $\eps^*_p({\bf M})=\eps^*_p({\bf M}_I)$. 
Therefore, it suffices to show that
$$\eps^*_p({\bf M}_I)
=\mbox{\rm max}\left\{\eps^*_p({\bf M}'_I),~\eps_p({\bf M}_I)
-\bigl\langle {h}_p,\mbox{\rm wt}({\bf M}_I')\bigr\rangle_I\right\},
\eqno{(4.5.2)}$$
where we set ${\bf M}_I':=\te_p^{\eps_p({\bf M}_I)}{\bf M}_I$. Here, we note that
equation (4.5.2) 
is just the equation in part (1) of Proposition \ref{prop:fin-strict}. Thus, we have
shown part (1).

Since the other equations are shown in a similar way, we omit 
the details of their proofs. 
\end{proof}
\section{Proof of the connectedness of  
$\left((\cBZ_{\snz}^{e})^{\sigma};\mbox{\rm wt},\widehat{\eps}_p^*,
\widehat{\vphi}_p^*,\hte_p^*,\htf_p^*\right)$}
\subsection{Strategy}
The aim of this section is to prove the following theorem.
\begin{thm}[Main theorem]\label{thm:main}
As a crystal, $\left((\cBZ_{\snz}^{e})^{\sigma};\mbox{\rm wt},\widehat{\eps}_p^*,
\widehat{\vphi}_p^*,\hte_p^*,\htf_p^*\right)$ is isomorphic to $B(\infty)$
for $U_q(\widehat{\gtsl}_l)$. In particular, the crystal graph of this crystal 
is connected.
\end{thm}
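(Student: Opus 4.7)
The strategy, as outlined in the introduction, is to apply the Kashiwara--Saito characterization (Theorem \ref{thm:seven-cond}) to the pair consisting of the ordinary crystal structure and the $\ast$-crystal structure on $(\cBZ_{\snz}^e)^{\sigma}$. Since $B(\infty)$ is uniquely characterized by the conditions in that theorem, once they are all verified, the $\ast$-crystal $\left((\cBZ_{\snz}^e)^{\sigma};\mbox{wt},\heps_p^*,\hvphi_p^*,\hte_p^*,\htf_p^*\right)$ will be isomorphic to $B(\infty)$. In particular its crystal graph is connected, so it coincides with the distinguished connected component $(\cBZ_{\snz}^e)^{\sigma}({\bf O}^*)$ provided by \cite{NSS}, which is exactly the connectedness we are after.

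First I would record the structural prerequisites already in hand. Both the $\ast$-structure (Corollary at the end of Subsection 2.9) and the ordinary structure (Theorem \ref{thm:ord-aff}) are $U_q(\widehat{\gtsl}_l)$-crystals on the same underlying set $(\cBZ_{\snz}^e)^{\sigma}$ with a common weight map taking values in $\widehat{Q}^-$ (Corollary \ref{cor:Q^-}). The uniqueness of an element of weight zero, namely ${\bf O}^*$, has been extracted as the corollary to Proposition \ref{prop:eps=0}; and the sharper statement that every $\mathbf{M}\neq {\bf O}^*$ admits some $p\in\widehat{I}$ with $\heps_p(\mathbf{M})>0$ is Proposition \ref{prop:eps=0} itself. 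The corresponding fact for $\heps_p^*$ follows upon applying $\ast$ to Proposition \ref{prop:eps=0} and using $\ast\circ\sigma=\sigma\circ\ast$. Together these cover the ``skeletal'' hypotheses in the seven conditions.

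The heart of the argument is the verification of the compatibility axioms relating the ordinary and $\ast$-Kashiwara operators. Here Proposition \ref{prop:aff-strict} is the decisive input: parts (1)--(3) of that proposition are exactly the three non-trivial relations that the Kashiwara--Saito axioms demand between $\hte_p$, $\hte_p^*$, $\heps_p$, and $\heps_p^*$ for each $p\in\widehat{I}$; the mixed commutation $\hte_p^*\hte_q^c = \hte_q^c\hte_p^*$ for $p\ne q$ that appears in part (2) handles the ``strictness'' condition. The few remaining axioms of Kashiwara--Saito type are direct consequences of the elementary relations in Lemma \ref{lemma:propaties} (commutation of $\hte_q$, $\htf_{q'}$ for distant indices and the $\sigma$-equivariance), combined with the inverse relations $\hte_p\htf_p=\mathrm{id}$ and $\htf_p\hte_p=\mathrm{id}$ (on the locus where $\heps_p>0$).

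The main conceptual obstacle is to be sure that every check done on $\cBZ_I^e$ via the quiver-theoretic picture actually survives the passage to the projective-limit set $(\cBZ_{\snz}^e)^{\sigma}$ in a $\sigma$-equivariant manner. This is precisely the reason Section 4 invested effort in defining the ordinary crystal structure on $(\cBZ_{\snz}^e)^{\sigma}$ via a careful choice of stabilising interval, in proving the well-definedness Proposition \ref{prop:well-def} and its affine counterpart Proposition \ref{prop:well-def-aff}, and in lifting Proposition \ref{prop:fin-strict} to the affine statement Proposition \ref{prop:aff-strict}. Once those technical lemmas are in place, the seven conditions of Theorem \ref{thm:seven-cond} reduce to assembling the facts listed above, and Theorem \ref{thm:seven-cond} then delivers a unique crystal isomorphism $(\cBZ_{\snz}^e)^{\sigma}\overset{\sim}{\to}B(\infty)$ sending ${\bf O}^*$ to the highest-weight vector, which is Theorem \ref{thm:main}.
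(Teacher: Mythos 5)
Your proposal is correct and follows essentially the same route as the paper: conditions (1)--(4) of Theorem \ref{thm:seven-cond} from Corollary \ref{cor:Q^-} and Proposition \ref{prop:eps=0} with its corollary, the embedding $\Psi_p^*:{\bf M}\mapsto \hte_p^{c}{\bf M}\otimes(\htf_p^*)^{c}b_p^*$ with $c=\heps_p({\bf M})$ built from the ordinary operators of Section 4, strictness via Proposition \ref{prop:aff-strict}, and conditions (6)--(7) falling out of the definition together with Proposition \ref{prop:eps=0}. You correctly single out the interplay between the ordinary and $\ast$-structures as the heart of the matter, which is exactly how the paper organizes the argument.
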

In order to prove this theorem, we use a characterization of 
$B(\infty)$, which was obtained in \cite{KS}; although it is valid 
for an arbitrary symmetrizable Kac-Moody Lie algebra, we restrict 
ourselves to the case of type $A_{l-1}^{(1)}$.

For $p\in \widehat{I}$, we define a crystal 
$\left(B_p^*;\mbox{\rm wt},{\heps}_p^*,{\hvphi}_p^*,\hte_p^*,\htf_p^*\right)$ 
as follows. 
$$B_p^*:=\{b_p^*(n)~|~n\in\nz\},$$
$$\mbox{wt}(b_p^*(n)):=n\widehat{\alpha}_p,\quad 
\heps_q^*(b_p^*(n)):=\begin{cases}
-n & \mbox{if }~q=p,\\
-\infty & \mbox{if }~q\ne p,
\end{cases}\quad
\hvphi_q^*(b_p^*(n)):=\begin{cases}
n & \mbox{if }~q=p,\\
-\infty & \mbox{if }~q\ne p,
\end{cases}
$$
$$\hte_q^*(b_p^*(n)):=\begin{cases}
b_p^*(n+1) & \mbox{if }~q=p,\\
0 & \mbox{if }~q\ne p,
\end{cases}\qquad
\htf_q^*(b_p^*(n)):=\begin{cases}
b_p^*(n-1) & \mbox{if }~q=p,\\
0 & \mbox{if }~q\ne p.
\end{cases}$$
For simplicity of notation, we set $b_p^*:=b_p^*(0)$.

\begin{thm}[\cite{KS}]\label{thm:seven-cond}
Let $\left(B;\mbox{\rm wt},{\heps}_p^*,{\hvphi}_p^*,\hte_p^*,\htf_p^*\right)$ be 
a $U_q(\widehat{\gtsl}_l)$-crystal,
and let $b_{\infty}^*$ be an element of $B$ of weight zero. We assume that
the following seven conditions are satisfied.
\begin{itemize}
\item[(1)] $\mbox{\rm wt}(B)\subset \widehat{Q}^-$.
\item[(2)] $b_{\infty}^*$ is the unique element of $B$ of weight zero.
\item[(3)] $\heps_p^*(b_{\infty}^*)=0$ for all $p\in\widehat{I}$.
\item[(4)] $\heps_p^*(b)\in\nz$ for all $p\in\widehat{I}$ and $b\in B$.
\item[(5)] For each $p\in\widehat{I}$, there exists a strict embedding 
$\Psi_p^*:B\to B\otimes B_p^*$.
\item[(6)] $\Psi_p^*(B)\subset B\otimes \left\{\left.(\htf_p^*)^nb_p^*~\right|
~n\geq 0\right\}$ for all $p\in\widehat{I}$.
\item[(7)] For every $b\in B$ such that $b\ne b_{\infty}^*$, there exists 
$p\in \widehat{I}$ such that
$\Psi_p^*(b)=b'\otimes (\htf_p^*)^nb_p^*$ with $n>0$.
\end{itemize}
Then, $\left(B;\mbox{\rm wt},{\heps}_p^*,{\hvphi}_p^*,\hte_p^*,\htf_p^*\right)$ 
is isomorphic as a crystal to $B(\infty)$.
\end{thm}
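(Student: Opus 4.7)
The plan is to deduce the theorem from the fact that $B(\infty)$ itself satisfies the seven conditions (with $u_{\infty}$ playing the role of $b_{\infty}^*$ and $\Psi_p^*$ being the Kashiwara $\ast$-embedding $B(\infty) \hookrightarrow B(\infty) \otimes B_p^*$, which is part of the standard theory of $B(\infty)$). Granting this, it suffices to construct a crystal isomorphism $\Phi : B \overset{\sim}{\to} B(\infty)$ sending $b_{\infty}^*$ to $u_{\infty}$, because any crystal morphism between two crystals satisfying (1)--(7) and matching the distinguished vertices is necessarily unique.

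I would build $\Phi$ by induction on the depth $\mathrm{dep}(b) := \sum_p n_p$, where $-\mathrm{wt}(b) = \sum_p n_p \widehat{\alpha}_p$; this is a nonnegative integer by condition (1). The base case $\mathrm{dep}(b)=0$ is handled by conditions (2) and (3): $b = b_{\infty}^*$ is forced, and we set $\Phi(b_{\infty}^*) := u_{\infty}$. For the inductive step, given $b$ with $\mathrm{dep}(b) > 0$, condition (7) supplies some $p \in \widehat{I}$ and $n>0$ with $\Psi_p^*(b) = b_1 \otimes (\htf_p^*)^n b_p^*$. Since $\mathrm{wt}(b_1) = \mathrm{wt}(b) + n\widehat{\alpha}_p$, the element $b_1$ has strictly smaller depth, so $\Phi(b_1) \in B(\infty)$ is already defined, and I define $\Phi(b)$ as the unique element of $B(\infty)$ whose image under the analogous Kashiwara $\ast$-embedding equals $\Phi(b_1) \otimes (\htf_p^*)^n b_p^*$; existence and uniqueness follow from condition (5) and (6) applied to $B(\infty)$.

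The principal obstacle, and the technical heart of the argument, is to prove that the recipe above does not depend on the choice of $p$ in condition (7), and that the resulting $\Phi$ intertwines every $\hte_q^*$ and $\htf_q^*$ simultaneously. Both reduce to the following compatibility: if $p \ne q$ and $\Psi_p^*(b) = b_1 \otimes (\htf_p^*)^n b_p^*$, $\Psi_q^*(b) = b_2 \otimes (\htf_q^*)^m b_q^*$ hold with $n,m > 0$, then the iterated decompositions $(\Psi_q^* \otimes \mathrm{id})\circ \Psi_p^*(b)$ and $(\Psi_p^* \otimes \mathrm{id})\circ \Psi_q^*(b)$ are related by the canonical flip $B \otimes B_p^* \otimes B_q^* \leftrightarrow B \otimes B_q^* \otimes B_p^*$, with matching components. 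This is proved by analyzing the signature rule for the Kashiwara operators $\hte_r^*, \htf_r^*$ acting on the triple tensor product, using the very simple structure of $B_p^*$ (a single chain of $\htf_p^*$-arrows on which all $\hte_q^*, \htf_q^*$ with $q\ne p$ act as $0$).

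Once this compatibility is settled, $\Phi$ is a well-defined map that intertwines $\hte_p^*, \htf_p^*$ (hence also $\mathrm{wt}, \heps_p^*, \hvphi_p^*$) by construction. Running the symmetric construction starting from $B(\infty)$ and using conditions (2), (3), (5), (6), (7), which $B(\infty)$ satisfies, produces a map $B(\infty) \to B$ sending $u_{\infty}$ to $b_{\infty}^*$; by the uniqueness at each recursion step this map is a two-sided inverse of $\Phi$. Therefore $\Phi$ is a crystal isomorphism and $B \simeq B(\infty)$.
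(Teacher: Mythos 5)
A framing remark first: the paper does not prove Theorem \ref{thm:seven-cond} at all --- it is quoted from \cite{KS} and merely applied --- so your proposal has to stand on its own as a proof of the characterization. It does not, because the compatibility statement that you yourself identify as the ``technical heart'' of the argument is false: it fails in $B(\infty)$ itself, and since $B(\infty)$ satisfies conditions (1)--(7), no signature-rule analysis (nor any other argument) can deduce it from those conditions.

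Here is a concrete counterexample. Realize the hypotheses on $B=B(\infty)$ in the standard way: take the operators $\hte_p^*,\htf_p^*$ of the statement to be the ordinary Kashiwara operators of $B(\infty)$, take $b_{\infty}^*=u_{\infty}$, and take $\Psi_p^*$ to be the Kashiwara embedding, i.e.\ the unique strict embedding $B(\infty)\to B(\infty)\otimes B_p^*$ with $u_{\infty}\mapsto u_{\infty}\otimes b_p^*$. Let $1,2\in\widehat{I}$ be adjacent indices and set $b:=(\htf_2^*)^2\htf_1^*u_{\infty}$. A direct computation with the tensor product rule (it involves only the indices $1,2$, so it is the same in type $A^{(1)}_{l-1}$ for every $l\geq 3$) gives
$$\Psi_1^*(b)=(\htf_2^*)^2u_{\infty}\otimes \htf_1^*b_1^*,\qquad
\Psi_2^*(b)=\htf_2^*\htf_1^*u_{\infty}\otimes \htf_2^*b_2^*,$$
so both embeddings strip strictly positive powers, which is exactly your hypothesis $n=m=1>0$; but iterating,
$$(\Psi_2^*\otimes\mathrm{id})\Psi_1^*(b)=u_{\infty}\otimes (\htf_2^*)^2b_2^*\otimes \htf_1^*b_1^*,\qquad
(\Psi_1^*\otimes\mathrm{id})\Psi_2^*(b)=\htf_2^*u_{\infty}\otimes \htf_1^*b_1^*\otimes \htf_2^*b_2^*.$$
Neither the first components ($u_{\infty}$ versus $\htf_2^*u_{\infty}$) nor the exponents ($(2,1)$ versus $(1,1)$) correspond under the flip. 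The reason is structural: in this realization the exponent $n$ in $\Psi_p^*(x)=\cdot\otimes(\htf_p^*)^n b_p^*$ equals $\eps'_p(x)$, where $\eps'_p$ denotes Kashiwara's $\ast$-function of $B(\infty)$ --- a quantity attached to the \emph{second} crystal structure, which the abstract $B$ of the statement does not even possess --- and stripping the $1$-component changes $\eps'_2$ (here from $1$ to $2$). The relations that do hold mix the two structures one index at a time (compare Lemma \ref{lemma:eps1} and Proposition \ref{prop:aff-strict} of this paper); there is no flip symmetry between $\Psi_p^*$ and $\Psi_q^*$, unsurprisingly, since the flip of tensor factors is not a morphism of crystals. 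With this claim gone, both the well-definedness of your $\Phi$ and its compatibility with the operators collapse.

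Independently of this, your inductive step has an existence gap even for a single fixed $p$: you define $\Phi(b)$ as the preimage of $\Phi(b_1)\otimes(\htf_p^*)^nb_p^*$ under the embedding on the $B(\infty)$ side, but conditions (5)--(6) do not say that this element lies in the image. The image of the Kashiwara embedding is the \emph{proper} subset $\left\{x\otimes(\htf_p^*)^nb_p^*~:~\eps'_p(x)=0\right\}$ of $B(\infty)\otimes\left\{(\htf_p^*)^nb_p^*\right\}$, and your induction provides no control whatsoever on $\eps'_p(\Phi(b_1))$; indeed, that condition cannot even be formulated in terms of the crystal structure $B$ is assumed to carry. Both gaps are precisely what the proof in \cite{KS} is designed to avoid: it proceeds by induction working with one index at a time, powered by the one-index relations between the two crystal structures --- the relations this paper records as Proposition \ref{prop:aff-strict} and uses to verify condition (5) --- and it never invokes any commutation between $\Psi_p^*$ and $\Psi_q^*$ for distinct $p,q$.
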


Let us check the seven conditions above for 
the crystal $\left((\cBZ_{\snz}^{e})^{\sigma};\mbox{\rm wt},\widehat{\eps}_p^*,
\widehat{\vphi}_p^*,\hte_p^*,\htf_p^*\right)$, with $b_{\infty}^*={\bf O}^*$. 
Conditions (1)$\sim$(4) are obvious from the definitions. In the next
subsection, we construct a strict embedding 
$\Psi_p^*:(\cBZ_{\snz}^{e})^{\sigma}\to (\cBZ_{\snz}^{e})^{\sigma}\otimes B_p^*$
for each $p\in\widehat{I}$, and check conditions (6) and (7).
\begin{rem}
Since our aim is to prove Theorem \ref{thm:main}, we consider 
the $\ast$-crystal structure on $(\cBZ_{\snz}^{e})^{\sigma}$, not the
ordinary crystal structure. 
\end{rem}
\subsection{Proof of Theorem \ref{thm:main} and the connectedness}

\begin{defn}
Let $p\in\widehat{I}$. We define a map 
$\Psi_p^*: (\cBZ_{\snz}^{e})^{\sigma}\to (\cBZ_{\snz}^{e})^{\sigma}\otimes B_p^*$ by
$\Psi_p({\bf M}):={\bf M}'\otimes (\htf_p^*)^cb_p^*$. Here, $c:=\heps_p({\bf M})$
and ${\bf M}':=\hte_p^c{\bf M}$.
\end{defn}
The following lemma is obvious from the definitions.
\begin{lemma}\label{lemma:inj-wt}
{\rm (1)} $\Psi_p$ is an injective map.
\vskip 1mm
\noindent
{\rm (2)} For every ${\bf M}\in (\cBZ_{\snz}^{e})^{\sigma}$, we have
$\mbox{\rm wt}(\Psi_p^*({\bf M}))=\mbox{\rm wt}({\bf M})$. 
\end{lemma}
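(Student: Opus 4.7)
The plan is to unpack the definition $\Psi_p^*({\bf M}) = {\bf M}' \otimes (\htf_p^*)^c b_p^*$, where $c = \heps_p({\bf M})$ and ${\bf M}' = \hte_p^c {\bf M}$, and verify both assertions directly. For part (1), I will argue that ${\bf M}$ is recoverable from the pair $({\bf M}', c)$. The integer $c$ is already determined by the second tensor factor, since the elements $b_p^*(-n) = (\htf_p^*)^n b_p^*$ are pairwise distinct for $n \geq 0$ by the very definition of $B_p^*$. With $c$ fixed, ${\bf M} = \htf_p^c{\bf M}'$ by iterated application of the crystal identity $\htf_p \hte_p = \mbox{id}$ on elements with $\heps_p > 0$, which is built into the ordinary $U_q(\widehat{\gtsl}_l)$-crystal structure established in Theorem \ref{thm:ord-aff} and explicitly recorded in Lemma \ref{lemma:propaties}(1). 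Hence $\Psi_p^*({\bf M}_1) = \Psi_p^*({\bf M}_2)$ forces $c_1 = c_2$ from the second tensor factor and ${\bf M}_1' = {\bf M}_2'$ from the first, yielding ${\bf M}_1 = \htf_p^{c_1}{\bf M}_1' = \htf_p^{c_2}{\bf M}_2' = {\bf M}_2$.

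For part (2), I will use the weight-additivity rule on tensor products of crystals. With $c = \heps_p({\bf M})$, iterating Lemma \ref{lemma:wt-eps}(1) gives $\mbox{wt}(\hte_p^c{\bf M}) = \mbox{wt}({\bf M}) + c\widehat{\alpha}_p$, while the definition of $B_p^*$ yields $\mbox{wt}((\htf_p^*)^c b_p^*) = \mbox{wt}(b_p^*(-c)) = -c\widehat{\alpha}_p$. Adding these contributions across the tensor product, the $\pm c\widehat{\alpha}_p$ terms cancel, leaving $\mbox{wt}(\Psi_p^*({\bf M})) = \mbox{wt}({\bf M})$. Since the lemma is labeled obvious from the definitions, there is no genuine obstacle; the only step that warrants care is verifying that $\htf_p$ really does invert $\hte_p$ at every stage of the descent from ${\bf M}'$ back to ${\bf M}$, but this is precisely the crystal axiom supplied by Lemma \ref{lemma:propaties}(1), which applies since the values of $\heps_p$ along the $\hte_p$-string are strictly positive until we reach ${\bf M}'$ itself.
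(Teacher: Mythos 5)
Your proof is correct and matches the intended argument: the paper declares this lemma ``obvious from the definitions'' and supplies no proof, and your verification (recovering $c$ from the second tensor factor, recovering ${\bf M}=\htf_p^{c}{\bf M}'$ via the crystal identity from Theorem \ref{thm:ord-aff}, and cancelling the $\pm c\widehat{\alpha}_p$ contributions using Lemma \ref{lemma:wt-eps}(1) and the definition of $B_p^*$) is exactly the routine unpacking the authors had in mind. The only cosmetic point is that the inversion identity for the hatted operators $\hte_p,\htf_p$ is the one established in (the proof of) Theorem \ref{thm:ord-aff} rather than Lemma \ref{lemma:propaties}(1) itself, which concerns the unhatted operators on $\cBZ_{\snz}^e$; since you cite both, this does not affect correctness.
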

\vskip 3mm
\noindent
{\it Proof of Theorem \ref{thm:main}}.
If condition (5) is satisfied for the $\Psi_p^*$ above, then conditions (6) and (7)
are automatically satisfied by the definitions. Therefore, the remaining task 
is to check condition (5). However, by an argument similar to the one in \cite{KS}, 
this follows from Proposition \ref{prop:aff-strict}. Thus, we
have established the theorem.\hfill$\square$

\begin{cor}\label{cor:conn} 
{\rm (1)} $(\cBZ_{\nz}^e)^{\sigma}({\bf O}^*)=(\cBZ_{\nz}^e)^{\sigma}.$
\vskip 1mm
\noindent
{\rm (2)} $\cBZ_{\nz}^{\sigma}({\bf O})=\cBZ_{\nz}^{\sigma}.$
\end{cor}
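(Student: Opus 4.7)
The plan is to deduce both statements as immediate formal consequences of Theorem \ref{thm:main}. First I would dispatch part (1): by Theorem \ref{thm:main}, the crystal $\left((\cBZ_{\snz}^{e})^{\sigma};\mbox{wt},\heps_p^*,\hvphi_p^*,\hte_p^*,\htf_p^*\right)$ is isomorphic to $B(\infty)$ for $U_q(\widehat{\gtsl}_l)$, and $B(\infty)$ is well known to be connected as a crystal graph (generated from its highest weight element by the lowering Kashiwara operators). Since a crystal isomorphism necessarily preserves connected components, $(\cBZ_{\snz}^{e})^{\sigma}$ is itself connected. In particular, the connected component $(\cBZ_{\snz}^{e})^{\sigma}({\bf O}^*)$ containing the distinguished vertex ${\bf O}^*$ must exhaust the whole set, which is precisely the assertion of (1).

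For part (2), the key is the bijection $\ast:\cBZ_{\snz}^{\sigma}\overset{\sim}{\to}(\cBZ_{\snz}^{e})^{\sigma}$ constructed in Section 2.9. By the very definition of the $\ast$-crystal structure, namely
\[
\mbox{wt}({\bf M})=\mbox{wt}({\bf M}^*),\quad \heps_p^*({\bf M})=\heps_p({\bf M}^*),\quad \hte_p^*=\ast\circ\hte_p\circ\ast,\quad \htf_p^*=\ast\circ\htf_p\circ\ast,
\]
the map $\ast$ is a crystal isomorphism between $\left(\cBZ_{\snz}^{\sigma};\mbox{wt},\heps_p,\hvphi_p,\hte_p,\htf_p\right)$ and $\left((\cBZ_{\snz}^{e})^{\sigma};\mbox{wt},\heps_p^*,\hvphi_p^*,\hte_p^*,\htf_p^*\right)$, and it sends ${\bf O}$ to ${\bf O}^*$. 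Therefore $\ast$ maps $\cBZ_{\snz}^{\sigma}({\bf O})$ bijectively onto $(\cBZ_{\snz}^{e})^{\sigma}({\bf O}^*)$. Combining this with part (1) and applying $\ast^{-1}=\ast$, we conclude $\cBZ_{\snz}^{\sigma}({\bf O})=\cBZ_{\snz}^{\sigma}$.

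There is no real obstacle here; all the substantive work has already been carried out in the proof of Theorem \ref{thm:main} via the verification of the Kashiwara--Saito seven-condition characterization for the $\ast$-crystal structure. The corollary is essentially a bookkeeping step which transports connectedness from $B(\infty)$ to $(\cBZ_{\snz}^{e})^{\sigma}$ via the isomorphism of Theorem \ref{thm:main}, and then from $(\cBZ_{\snz}^{e})^{\sigma}$ to $\cBZ_{\snz}^{\sigma}$ via the crystal isomorphism $\ast$. Both transports are purely formal, so the proof need be no more than a few lines.
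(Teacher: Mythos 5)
Your proposal is correct and follows essentially the same route as the paper: the authors likewise derive (1) directly from Theorem \ref{thm:main} (connectedness of $B(\infty)$ transported through the crystal isomorphism) and obtain (2) by applying the bijection $\ast$ to both sides of (1). Your write-up merely makes explicit the two formal transports that the paper leaves implicit.
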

\begin{proof}
(1) is a direct consequence of the main theorem. Applying the map $\ast$ on
both sides of (1), we obtain (2).
\end{proof}
The second equality above is what we announced in the ``note added in proof''
of \cite{NSS}.

\end{document}